\documentclass{article}






\usepackage[preprint, nonatbib]{neurips_2019}

\usepackage{amssymb, amsmath, amsthm, latexsym}
\usepackage[dvipsnames]{xcolor}
\usepackage{url}
\usepackage{algorithm}
\usepackage{algpseudocode}
\usepackage{tabularx}
\usepackage{paralist}
\usepackage{mathtools}
\usepackage{tcolorbox}
\usepackage{makecell}
\usepackage{enumitem}

\usepackage{pifont}
\newcommand{\cmark}{{\color{PineGreen}\ding{51}}}%
\newcommand{\xmark}{{\color{BrickRed}\ding{55}}}%

\usepackage{pifont}
\usepackage{subfigure} 
\newcommand{\Exp}{\mathbf{E}}
\newcommand{\Prob}{\mathbf{P}}
\newcommand{\R}{\mathbb{R}}

\newcommand{\eqdef}{\coloneqq}

\def\<#1,#2>{\left\langle #1,#2\right\rangle}

\newcommand{\compactify}{} 

\theoremstyle{definition}
\newtheorem{lemma}{Lemma}[section]
\newtheorem{theorem}{Theorem}[section]
\newtheorem{definition}{Definition}[section]

\newtheorem{assumption}{Assumption}[section]
\newtheorem{corollary}{Corollary}[section]
\newtheorem{example}{Example}[section]

\newtheorem{remark}[theorem]{Remark}

\newcommand{\argmin}{\mathop{\arg\!\min}}

\usepackage[colorinlistoftodos,bordercolor=orange,backgroundcolor=orange!20,linecolor=orange,textsize=scriptsize]{todonotes}


\newcommand{\cD}{{\cal D}}

\newcommand{\cL}{{\cal L}}

\newcommand{\cO}{{\cal O}}

\newcommand{\mA}{{\bf A}}

\newcommand{\mI}{{\bf I}}
\newcommand{\mJ}{{\bf J}}

\newcommand{\mS}{{\bf S}}

\newcommand{\mW}{{\bf W}}
\newcommand{\mX}{{\bf X}}

\newcommand{\EE}{\mathbf{E}}

\newcommand{\Jac}{{ \bf \nabla F}}
\newcommand{\Proj}{{\bf \Pi}}
\newcommand{\norm}[1]{\left \| #1 \right\|}
\newcommand{\ED}[1]{\mathbf{E}_{\cD}\left[#1\right] }
\newcommand{\Tr}[1]{\mbox{Tr}\left( #1\right)}
\newcommand{\ones}{e}

\newcommand{\prox}{\mathop{\mathrm{prox}}\nolimits}
\newcommand{\proxR}{\prox_{\gamma R}}


\usepackage{accents}
\newlength{\dhatheight}

    \usepackage{hyperref}

\usepackage[utf8]{inputenc} 
\usepackage[T1]{fontenc}    
\usepackage{hyperref}       
\usepackage{url}            
\usepackage{booktabs}       
\usepackage{amsfonts}       
\usepackage{nicefrac}       
\usepackage{microtype}      


\title{A Unified Theory of SGD: Variance Reduction, Sampling, Quantization  and Coordinate Descent}

%

\author{%
  Eduard Gorbunov \\
  MIPT, Russia \\
  \texttt{eduard.gorbunov@phystech.edu} \\
  \And
  Filip Hanzely \\
  KAUST, Saudi Arabia\\
  \texttt{filip.hanzely@kaust.edu.sa} \\  
  \And
  Peter Richt\'arik \\
  KAUST, Saudi Arabia and MIPT, Russia\\
  \texttt{peter.richtarik@kaust.edu.sa} \\
}

\begin{document}

\maketitle

\begin{abstract}
In this paper we introduce a unified analysis of a large family of variants of proximal stochastic gradient descent ({\tt SGD}) which so far have required different intuitions, convergence analyses, have different applications, and which have been developed separately in various communities. We show that our framework includes methods with and without the following tricks, and their combinations: variance reduction, importance sampling, mini-batch sampling, quantization, and coordinate sub-sampling.  As a by-product, we obtain the first unified theory of {\tt SGD} and randomized coordinate descent ({\tt RCD}) methods,  the first unified theory of variance reduced and non-variance-reduced {\tt SGD} methods, and the first unified theory of quantized and non-quantized methods. A key to our approach is a parametric assumption on the iterates and stochastic gradients. In a single theorem we establish a linear convergence result under this assumption and strong-quasi convexity of the loss function. Whenever we recover an existing method as a special case, our theorem gives the best known complexity result.
Our approach can be used to motivate the development of new useful methods, and offers pre-proved convergence guarantees. To illustrate the strength of our approach, we develop five new variants of {\tt SGD}, and through numerical experiments demonstrate some of their properties.  


\end{abstract}

\section{Introduction}

In this paper we are interested in  the optimization  problem
    \begin{equation}\label{eq:problem_gen}
        \min_{x\in\R^d} f(x) + R(x),
    \end{equation}
    where $f$ is convex, differentiable with Lipschitz gradient, and $R:\R^d\to \R\cup \{+\infty\}$ is a proximable (proper closed convex) regularizer. In particular, we  focus on situations when  it is prohibitively expensive to  compute the gradient of $f$, while an unbiased estimator of the gradient can be computed efficiently. This is typically the case for stochastic optimization problems, i.e., when \begin{equation} \label{eq:f_exp}
    f(x)=\EE_{\xi \sim \cD} \left[ f_\xi(x)\right] ,\end{equation} where $\xi$ is a random variable, and  $f_\xi:\R^d\to \R$  is smooth for all $\xi$. Stochastic optimization problems are of key importance in statistical supervised learning theory. In this setup, $x$ represents a machine learning model described by $d$ parameters (e.g., logistic regression or a deep neural network), $\cD$  is an unknown distribution of labelled examples,  $f_\xi(x)$ represents the loss of model $x$ on datapoint $\xi$, and $f$ is the generalization error. Problem \eqref{eq:problem_gen} seeks to find the model $x$ minimizing the generalization error. In statistical learning theory one assumes that while $\cD$ is not known, samples $\xi\sim \cD$ are available. In such a case, $\nabla f(x)$ is not computable, while $\nabla f_\xi(x)$, which is  an unbiased estimator of the gradient of $f$ at $x$, is easily computable. 

Another prominent example, one of special interest in this paper, are functions $f$ which arise as averages of a very large number of smooth functions: \begin{equation}\label{eq:f_sum} 
\compactify f(x)=\frac{1}{n}\sum \limits_{i=1}^n f_i(x).\end{equation} This problem often arises by approximation of the stochastic optimization loss function \eqref{eq:f_exp} via Monte Carlo integration, and is in this context known as the empirical risk minimization (ERM) problem.  ERM is currently the dominant paradigm for solving supervised learning problems \cite{shai_book}.  If index $i$ is chosen uniformly at random from $[n]\eqdef \{1,2,\dots,n\}$, $\nabla f_i(x)$ is an unbiased estimator of $\nabla f(x)$. Typically, $\nabla f(x)$ is about $n$ times more expensive to compute than $\nabla f_i(x)$. 

Lastly, in some applications, especially in distributed training of supervised models, one considers problem \eqref{eq:f_sum}, with $n$ being the number of machines, and each $f_i$ also having a finite sum structure, i.e.,
\begin{equation}\label{eq:f_i_sum}
\compactify    f_i(x) = \frac{1}{m}\sum\limits_{j=1}^m  f_{ij}(x),
\end{equation}
where $m$ corresponds to the number of training examples stored on machine $i$.

\section{The Many Faces of Stochastic Gradient Descent} \label{sec:many_faces_of_SGD}

 Stochastic gradient descent ({\tt SGD}) \cite{RobbinsMonro:1951, Nemirovski-Juditsky-Lan-Shapiro-2009, Vaswani2019-overparam} is a state-of-the-art algorithmic paradigm for solving optimization problems \eqref{eq:problem_gen} in situations when $f$ is either of structure 
 \eqref{eq:f_exp} or \eqref{eq:f_sum}. In its generic form, (proximal) {\tt SGD} defines the new iterate by subtracting a multiple of a stochastic gradient from the current iterate, and subsequently applying the proximal operator of $R$:
\begin{equation} \label{eq:SGD} x^{k+1} = \proxR(x^k - \gamma g^k).\end{equation} 
Here, $g^k$ is an unbiased estimator of the gradient (i.e., a stochastic gradient), \begin{equation}\label{eq:stoch_grad}\EE \left[g^k \;|\; x^k\right] = \nabla f(x^k),\end{equation}
and $\proxR(x)\eqdef \argmin_u \{\gamma R(x) + \frac{1}{2}\norm{u-x}^2\}$. However, and this is the starting point of our journey in this paper, there are {\em infinitely many} ways of obtaining a random vector $g^k$ satisfying \eqref{eq:stoch_grad}. On the one hand, this gives algorithm designers the flexibility to {\em construct}  stochastic gradients in various ways in order to target desirable properties such as convergence speed, iteration cost, parallelizability and generalization.  On the other hand, this poses considerable challenges in terms of convergence analysis. Indeed, if one aims to, as one should, obtain the sharpest bounds possible, dedicated analyses are needed to handle each of the particular variants of {\tt SGD}.

\textbf{Vanilla\footnote{In this paper, by {\em vanilla} {\tt SGD} we refer to {\tt SGD} variants with or without importance sampling and mini-batching, but {\em excluding} variance-reduced variants, such as {\tt SAGA} \cite{SAGA} and {\tt SVRG} \cite{SVRG}.} {\tt SGD}.} The flexibility in the design of efficient strategies for constructing $g^k$ has led to a creative renaissance in the optimization and machine learning communities, yielding a large number of immensely powerful new variants of {\tt SGD}, such as those employing  {\em importance sampling} \cite{IProx-SDCA, NeedellWard2015}, and {\em mini-batching} \cite{mS2GD}. These efforts are subsumed by the recently developed and remarkably sharp analysis of {\tt SGD} under  {\em arbitrary sampling} paradigm \cite{SGD_AS}, first introduced in the study of randomized coordinate descent methods by \cite{NSync}. The arbitrary sampling paradigm covers virtually all stationary mini-batch and importance sampling strategies in a unified way, thus making headway towards theoretical unification of two separate strategies for constructing stochastic gradients. For strongly convex $f$, the {\tt SGD} methods analyzed in \cite{SGD_AS} converge linearly to a neighbourhood of the solution $x^* = \arg \min_x f(x)$ for a fixed stepsize $\gamma^k=\gamma$. The size of the neighbourhood is proportional to the second moment of the stochastic gradient at the optimum ($\sigma^2 \eqdef \tfrac{1}{n}\sum_{i=1}^n \norm{\nabla f_i(x^*)}^2$), to the stepsize ($\gamma$), and inversely proportional to the modulus of strong convexity. The effect of various sampling strategies, such as importance sampling and mini-batching, is twofold: i) improvement of the linear convergence rate by enabling larger stepsizes, and ii) modification of $\sigma^2$. However, none of these strategies\footnote{Except for the full batch strategy, which is prohibitively expensive.} is able to completely eliminate the adverse effect of $\sigma^2$. That is,  {\tt SGD} with a fixed stepsize does not reach the optimum, unless one happens to be in the overparameterized case characterized by the identity $\sigma^2=0$.

\textbf{Variance reduced {\tt SGD}.} While sampling strategies such as importance sampling and mini-batching reduce the variance of the stochastic gradient, in the finite-sum case \eqref{eq:f_sum} a new type of {\em variance reduction} strategies has been developed over the last few years \cite{SAG, SAGA, SVRG, SDCA, QUARTZ,nguyen2017sarah, Loopless}. These variance-reduced {\tt SGD} methods differ from the sampling strategies discussed before in a significant way: they can iteratively {\em learn} the stochastic gradients at the optimum,  and in so doing are able to eliminate the adverse effect of the gradient noise $\sigma^2>0$ which, as mentioned above, prevents the iterates of vanilla {\tt SGD} from converging to the optimum. As a result, for strongly convex $f$, these new variance-reduced {\tt SGD} methods converge linearly to $x^*$, with a fixed stepsize. At the moment, these variance-reduced variants require a markedly different convergence theory from the vanilla variants of {\tt SGD}. An exception to this is the situation when $\sigma^2=0$ as then variance reduction is not needed; indeed, vanilla {\tt SGD} already converges to the optimum, and with a fixed stepsize. We end the discussion here by remarking that this {\em hints} at a possible existence of a more unified theory, one that would include both vanilla and variance-reduced {\tt SGD}.

\textbf{Distributed {\tt SGD}, quantization and variance reduction.} When {\tt SGD} is implemented in a distributed fashion,  the problem is often expressed in the form \eqref{eq:f_sum}, where $n$ is the number of workers/nodes, and  $f_i$ corresponds to the loss based on data stored on node $i$. Depending on the number of data points stored on each node, it may or may not be efficient to compute the gradient of $f_i$ in each iteration. In general, {\tt SGD} is implemented in this way: each  node $i$ first computes a stochastic gradient $g_i^k$ of $f_i$ at the current point $x^k$ (maintained individually by each node). These gradients are then aggregated by a master node \cite{DANE, RDME}, in-network by a switch~\cite{switchML}, or a different technique best suited to the architecture used. To alleviate the communication bottleneck, various lossy update compression strategies such as quantization~\cite{1bit, Gupta:2015limited, zipml}, sparsification~\cite{RDME, alistarh2018convergence, tonko} and dithering~\cite{alistarh2017qsgd} were proposed. The basic idea is for each worker to apply a randomized transformation $Q:\R^d\to \R^d$ to $g_i^k$, resulting in a vector which is still an unbiased estimator of the gradient, but one that can be communicated with fewer bits. Mathematically, this amounts to injecting additional noise into the already noisy stochastic gradient $g_i^k$. The field of quantized {\tt SGD} is still young, and even some basic questions remained open until recently. For instance, there was no distributed quantized {\tt SGD} capable of provably solving  \eqref{eq:problem_gen} until the {\tt DIANA} algorithm \cite{mishchenko2019distributed} was introduced. {\tt DIANA} applies quantization to {\em gradient differences}, and in so doing is able to learn the gradients at the optimum, which makes is able to work for any regularizer $R$. {\tt DIANA} has some structural similarities with {\tt SEGA}~\cite{hanzely2018sega}---the first coordinate descent type method which works for non-separable regularizers---but a more precise relationship remains elusive.  When the functions of $f_i$ are of a finite-sum structure as in \eqref{eq:f_i_sum}, one can apply variance reduction to reduce the variance of the stochastic gradients $g_i^k$ together with quantization, resulting in the {\tt VR-DIANA} method~\cite{horvath2019stochastic}. This is the first distributed quantized {\tt SGD} method which provably converges to the solution of \eqref{eq:problem_gen}+\eqref{eq:f_i_sum} with a fixed stepsize.


\textbf{Randomized coordinate descent ({\tt RCD}).} Lastly, in a distinctly separate strain, there are {\tt SGD} methods for the coordinate/subspace descent variety~\cite{RCDM}. While it is possible to see  {\em some} {\tt RCD} methods as special cases of \eqref{eq:SGD}+\eqref{eq:stoch_grad}, most of them do not follow this algorithmic template. First, standard {\tt RCD} methods use different stepsizes for updating different coordinates~\cite{ALPHA}, and this seems to be crucial to their success. Second,  until the recent discovery of the {\tt SEGA} method, {\tt RCD} methods were not able to converge with non-separable regularizers. Third, {\tt RCD} methods are naturally variance-reduced in the $R=0$ case as partial derivatives at the optimum are all zero. As a consequence, attempts at creating variance-reduced {\tt RCD} methods seem to be futile. Lastly, {\tt RCD} methods are typically analyzed using different techniques. While there are deep links between standard {\tt SGD} and {\tt RCD} methods, these are often indirect and rely on duality \cite{SDCA, FACE-OFF, SDA}. 


\section{Contributions}

As outlined in the previous section, the world of {\tt SGD} is vast and beautiful. It is formed by many largely disconnected islands populated by elegant and efficient methods, with their own applications, intuitions, and convergence analysis techniques. While some links already exist (e.g., the unification of importance sampling and mini-batching variants under the arbitrary sampling umbrella), there is no comprehensive general theory. 
It is becoming increasingly difficult for the community to understand the relationships between these variants, both in theory and practice. New variants are yet to be discovered, but it is not clear what tangible principles one should adopt beyond intuition to aid the discovery. This situation is exacerbated by the fact that a number of different assumptions on the stochastic gradient, of various levels of strength, is being used in the literature.


The main contributions of this work include: 

$\bullet$ {\bf  Unified analysis.} In this work we propose a {\em unifying theoretical framework}
 which covers all of the  variants of {\tt SGD} outlined in Section~\ref{sec:many_faces_of_SGD}. As a by-product, we obtain the {\em first unified analysis} of vanilla and variance-reduced {\tt SGD} methods.  For instance, our analysis covers as special cases vanilla {\tt SGD} methods from~\cite{nguyen2018sgd} and~\cite{SGD_AS}, variance-reduced {\tt SGD} methods such as {\tt SAGA}~\cite{SAGA}, {\tt L-SVRG}~\cite{hofmann2015variance, Loopless} and {\tt JacSketch}~\cite{gower2018stochastic}. Another by-product  is {\em the first unified analysis of {\tt SGD} methods which include {\tt RCD}.} For instance, our theory covers the subspace descent method {\tt SEGA}~\cite{hanzely2018sega} as a special case. Lastly, our framework is general enough to capture the phenomenon of {\em quantization}. For instance, we obtain the {\tt DIANA} and {\tt VR-DIANA} methods in special cases. 


$\bullet$ {\bf Generalization of existing methods.} An important yet {\em relatively} minor contribution of our work is that it enables  {\em generalization} of knowns methods.  For instance, some particular methods we consider, such as {\tt L-SVRG} (Alg~\ref{alg:L-SVRG})~\cite{Loopless}, were not analyzed in the proximal ($R\neq 0$) case before. To illustrate how this can be done within our framework, we do it here for {\tt L-SVRG}. Further, all methods we analyze can be extended to the {\em arbitrary sampling} paradigm.

$\bullet$ {\bf Sharp rates.} In all known special cases, the rates obtained from our general theorem (Theorem~\ref{thm:main_gsgm}) are the {\em best known rates} for these methods.

$\bullet$ {\bf New methods.}   Our general analysis provides estimates for a possibly infinite array of new and yet-to-be-developed variants of {\tt SGD}. One only needs to verify that Assumption~\ref{as:general_stoch_gradient} holds, and a complexity estimate is readily furnished by Theorem~\ref{thm:main_gsgm}. Selected existing and new methods that fit our framework are summarized in Table~\ref{tbl:special_cases2}. This list is for illustration only, we believe that future work by us and others will lead to its rapid expansion.

$\bullet$ {\bf Experiments.} We show through extensive experimentation that some of the {\em new} and {\em generalized} methods proposed here and analyzed via our framework have some intriguing practical properties when compared against appropriately selected existing methods.

\section{Main Result \label{sec:main_res}}

We first introduce the key assumption on the stochastic gradients $g^k$ enabling our general analysis (Assumption~\ref{as:general_stoch_gradient}), then state our assumptions on $f$ (Assumption~\ref{as:mu_strongly_quasi_convex}), and finally  state and comment on our  unified convergence result (Theorem~\ref{thm:main_gsgm}). 

{\bf Notation.} We use the following notation. $\langle x, y\rangle \eqdef \sum_i x_i y_i$ is the standard Euclidean inner product, and $\norm{x}\eqdef \langle x, x\rangle ^{1/2} $ is the induced $\ell_2$ norm. For simplicity we assume that \eqref{eq:problem_gen} has a unique minimizer, which we denote $x^*$. Let $D_f(x,y)$ denote the \textit{Bregman divergence} associated with $f$: $D_f(x,y) \eqdef f(x) - f(y) - \<\nabla f(y), x-y>$. We  often write $[n]\eqdef \{1,2,\dots,n\}$.

\subsection{Key assumption}

Our first assumption is of key importance. It is mainly an assumption on the sequence of stochastic gradients $\{g^k\}$ generated by an arbitrary randomized algorithm. Besides unbiasedness (see \eqref{eq:general_stoch_grad_unbias}), we require two recursions to hold for the iterates $x^k$ and the stochastic gradients $g^k$ of a randomized method. We allow for flexibility by casting these inequalities in a parametric manner.

\begin{assumption}\label{as:general_stoch_gradient} Let $\{x^k\}$ be the random iterates produced by proximal {\tt SGD} (Algorithm in Eq~\eqref{eq:SGD}).
We first assume that the stochastic gradients $g^k$ are unbiased
    \begin{equation}\label{eq:general_stoch_grad_unbias}
        \EE\left[ g^k\mid x^k\right] = \nabla f(x^k),
    \end{equation}
    for all $k\geq 0$. Further, we assume that there exist
non-negative constants $A, B, C, D_1, D_2, \rho$ and a (possibly) random sequence $\{\sigma_k^2\}_{k\ge 0}$ such that the following two relations hold\footnote{For convex and $L$-smooth $f$, one can show that $    \norm{\nabla f(x) - \nabla f(y)}^2 \le 2LD_{f}(x,y).$ Hence, $D_f$ can be used as a measure of proximity for the gradients.}
    \begin{equation}\label{eq:general_stoch_grad_second_moment}
        \EE\left[\norm{g^k -\nabla f(x^*)}^2\mid x^k\right] \le 2AD_f(x^k,x^*) + B\sigma_k^2 + D_1,
    \end{equation}
    \begin{equation}\label{eq:gsg_sigma}
        \EE\left[\sigma_{k+1}^2 \, \mid \, \sigma^2_k\right] \le (1-\rho) \sigma_k^2 + 2CD_f(x^k,x^*)  + D_2,
    \end{equation} 
The expectation above is with respect to the randomness of the algorithm. 

\end{assumption}

The unbiasedness assumption \eqref{eq:general_stoch_grad_unbias} is standard.  The key innovation we bring is inequality \eqref{eq:general_stoch_grad_second_moment} coupled with \eqref{eq:gsg_sigma}. We argue, and justify this statement by furnishing many examples in Section~\ref{sec:main-paper-special-cases}, that these inequalities capture the essence of a wide array of existing and some new {\tt SGD} methods, including vanilla, variance reduced, arbitrary sampling, quantized and coordinate descent variants. Note that in the case when $\nabla f(x^*) = 0$ (e.g., when $R=0$),  the inequalities in Assumption~\ref{as:general_stoch_gradient}  reduce to
\begin{equation}\label{eq:general_stoch_grad_second_moment_special}
        \EE\left[\norm{g^k}^2\mid x^k\right] \le 2A(f(x^k) - f(x^*)) + B\sigma_k^2 + D_1,
\end{equation}
\begin{equation}\label{eq:gsg_sigma_special}
    \EE\left[\sigma_{k+1}^2 \, \mid \, \sigma^2_k\right] \le (1-\rho) \sigma_k^2 + 2C(f(x^k) - f(x^*)) + D_2.
\end{equation}
Similar inequalities can be found in the analysis of stochastic first-order methods. However, this is the first time that such inequalities are generalized, equipped with parameters, and elevated to the status of an assumption that can be used on its own, independently from any other details defining the underlying method that generated them.

\subsection{Main theorem}

 For simplicity, we shall assume throughout that $f$ is $\mu$-strongly quasi-convex, which is a generalization of $\mu$-strong convexity. We leave an analysis under different assumptions on $f$ to future work.

\begin{assumption}[$\mu$-strong quasi-convexity]\label{as:mu_strongly_quasi_convex}
There exists $\mu>0$ such that $f:\R^d\to \R$ is \textit{$\mu$-strongly quasi-convex}.  That is, the following inequality holds:
    \begin{equation}\label{eq:mu_strongly_quasi_convex}
    \compactify
        f(x^*) \ge f(x) + \langle \nabla f(x), x^* - x\rangle + \frac{\mu}{2}\norm{x^* - x}^2, \qquad \forall x\in\R^d \,.    
    \end{equation}
\end{assumption}

We are now ready to present our main convergence result.

 \begin{theorem}\label{thm:main_gsgm}
Let Assumptions~\ref{as:general_stoch_gradient}~and~\ref{as:mu_strongly_quasi_convex} be satisfied. Choose constant $M$ such that $M > \frac{B}{\rho}$. Choose a stepsize  satisfying
    \begin{equation}\label{eq:gamma_condition_gsgm}
    \compactify
        0 < \gamma \le \min\left\{\frac{1}{\mu}, \frac{1}{A+CM}\right\}.
    \end{equation}
    Then the iterates $\{x^k\}_{k\geq 0}$ of proximal {\tt SGD} (Algorithm~\eqref{eq:SGD}) satisfy
    \begin{equation}\label{eq:main_result_gsgm}
 \compactify       \EE\left[V^k\right] \le \max\left\{(1-\gamma\mu)^k,\left(1+\frac{B}{M}-\rho\right)^k\right\} V^0 + \frac{(D_1+MD_2)\gamma^2 }{\min\left\{\gamma\mu, \rho - \frac{B}{M}\right\}},
    \end{equation}
    where the Lyapunov function $V^k$ is defined by $V^k \eqdef \norm{x^k - x^*}^2 + M\gamma^2\sigma_k^2$.
\end{theorem}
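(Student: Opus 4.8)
The plan is to set up a one-step recursion for the Lyapunov function $V^k = \norm{x^k-x^*}^2 + M\gamma^2\sigma_k^2$ and then unroll it. The natural starting point is to expand $\norm{x^{k+1}-x^*}^2$ using the SGD update $x^{k+1} = \proxR(x^k-\gamma g^k)$ together with the nonexpansiveness (firm nonexpansiveness) of the proximal operator and the fact that $x^* = \proxR(x^*-\gamma\nabla f(x^*))$ is a fixed point. This should give something like $\norm{x^{k+1}-x^*}^2 \le \norm{x^k-x^* -\gamma(g^k-\nabla f(x^*))}^2$, which upon taking conditional expectation and using unbiasedness \eqref{eq:general_stoch_grad_unbias} splits into $\norm{x^k-x^*}^2 - 2\gamma\langle \nabla f(x^k)-\nabla f(x^*), x^k-x^*\rangle + \gamma^2 \EE[\norm{g^k-\nabla f(x^*)}^2\mid x^k]$.

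\textbf{Bounding the three pieces.} For the inner product term I would use $\mu$-strong quasi-convexity \eqref{eq:mu_strongly_quasi_convex} to get $\langle \nabla f(x^k), x^k-x^*\rangle \ge f(x^k)-f(x^*) + \frac{\mu}{2}\norm{x^k-x^*}^2$, and convexity to handle the $\langle \nabla f(x^*), x^*-x^k\rangle$ part (or absorb it via the Bregman divergence $D_f(x^k,x^*) = f(x^k)-f(x^*) - \langle \nabla f(x^*), x^k-x^*\rangle$), so that the cross term contributes roughly $-\gamma\mu\norm{x^k-x^*}^2 - 2\gamma D_f(x^k,x^*)$. For the quadratic term I plug in Assumption~\ref{as:general_stoch_gradient}, inequality \eqref{eq:general_stoch_grad_second_moment}, giving $\gamma^2(2A D_f(x^k,x^*) + B\sigma_k^2 + D_1)$. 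For the $\sigma$ part of the Lyapunov function I multiply \eqref{eq:gsg_sigma} by $M\gamma^2$, getting $M\gamma^2(1-\rho)\sigma_k^2 + 2CM\gamma^2 D_f(x^k,x^*) + M\gamma^2 D_2$. Adding everything, the $D_f(x^k,x^*)$ terms collect with coefficient $-2\gamma + 2A\gamma^2 + 2CM\gamma^2 = -2\gamma(1 - \gamma(A+CM))$, which is $\le 0$ precisely because of the stepsize condition $\gamma \le \frac{1}{A+CM}$; so those terms are dropped. This leaves $\EE[V^{k+1}\mid \text{past}] \le (1-\gamma\mu)\norm{x^k-x^*}^2 + (B + M(1-\rho))\gamma^2\sigma_k^2 + (D_1 + MD_2)\gamma^2$.

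\textbf{Rewriting as a contraction.} The coefficient of $M\gamma^2\sigma_k^2$ is $\frac{B+M(1-\rho)}{M} = 1 + \frac{B}{M} - \rho$, which is strictly less than $1$ since $M > B/\rho$, and nonnegative for the stated range (or this can just be carried through). Hence with $q \eqdef \max\{1-\gamma\mu,\, 1+\tfrac{B}{M}-\rho\} \in [0,1)$ we obtain $\EE[V^{k+1}\mid \text{past}] \le q V^k + (D_1+MD_2)\gamma^2$. Taking full expectations and iterating the recursion gives $\EE[V^k] \le q^k V^0 + (D_1+MD_2)\gamma^2 \sum_{j=0}^{k-1} q^j \le q^k V^0 + \frac{(D_1+MD_2)\gamma^2}{1-q}$, and noting $1-q = \min\{\gamma\mu,\, \rho - \tfrac{B}{M}\}$ yields exactly \eqref{eq:main_result_gsgm}.

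\textbf{Main obstacle.} The routine-but-delicate part is the very first step: correctly exploiting the proximal operator so that the regularizer $R$ disappears cleanly and the optimality condition $x^*=\proxR(x^*-\gamma\nabla f(x^*))$ is used to cancel the $\nabla f(x^*)$ contributions — one must be careful that firm nonexpansiveness (rather than mere nonexpansiveness) is what lets one avoid an extra stepsize restriction, and that the Bregman-divergence bookkeeping correctly matches the $2A$, $2C$ normalization in Assumption~\ref{as:general_stoch_gradient}. The algebra of collecting the $D_f$ terms and checking the sign under \eqref{eq:gamma_condition_gsgm} is the crux; everything after that is a standard geometric-series unrolling.
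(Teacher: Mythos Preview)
Your proposal is correct and follows essentially the same route as the paper: the paper isolates the one-step recursion as a separate key lemma (combining prox nonexpansiveness, the fixed-point identity $x^*=\proxR(x^*-\gamma\nabla f(x^*))$, strong quasi-convexity, and Assumption~\ref{as:general_stoch_gradient}) and then unrolls it via the geometric series, exactly as you outline. One small simplification: plain nonexpansiveness of $\proxR$ already gives $\norm{x^{k+1}-x^*}^2\le\norm{x^k-x^*-\gamma(g^k-\nabla f(x^*))}^2$, so firm nonexpansiveness is not needed and your worry in the ``Main obstacle'' paragraph is unfounded.
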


This theorem establishes a linear rate for a wide range of proximal {\tt SGD} methods up to a certain oscillation radius, controlled by the additive term in \eqref{eq:main_result_gsgm}, and namely, by parameters $D_1$ and $D_2$. As we shall see  in Section~\ref{sec:special_cases} (refer to Table~\ref{tbl:special_cases-parameters}), the main difference between the vanilla and variance-reduced {\tt SGD} methods is that while the former  satisfy inequality \eqref{eq:gsg_sigma} with $D_1>0$ or $D_2>0$, which in view of \eqref{eq:main_result_gsgm} prevents them from reaching the optimum $x^*$ (using a fixed stepsize), the latter methods satisfy inequality \eqref{eq:gsg_sigma} with $D_1=D_2=0$, which in view of \eqref{eq:main_result_gsgm} enables them to reach the optimum.

\section{The Classic, The Recent and The Brand New} \label{sec:main-paper-special-cases}

In this section we deliver on the promise from the introduction and show how many existing and some new variants of {\tt SGD} fit our general framework (see Table~\ref{tbl:special_cases2}). 

{\bf An overview.} As claimed, our framework is powerful enough to include vanilla methods (\xmark\; in the ``VR'' column) as well as variance-reduced methods (\cmark\; in the ``VR'' column), methods which generalize to arbitrary sampling (\cmark\; in the ``AS'' column), methods supporting gradient quantization (\cmark\; in the ``Quant'' column) and finally, also {\tt RCD} type methods (\cmark\; in the ``RCD'' column). 

\begin{table}[!t]
\begin{center}
\footnotesize
\begin{tabular}{|c|c|c|c|c|c|c|c|c|c|}
\hline
Problem & Method &  Alg \# & Citation &   VR?  & AS? & Quant? &  RCD? & Section  & Result \\
\hline
\eqref{eq:problem_gen}+\eqref{eq:f_exp} & {\tt SGD}  & Alg \ref{alg:sgd_prox} & \cite{nguyen2018sgd}  & \xmark &  \xmark & \xmark &  \xmark & \ref{sec:SGD} & 
Cor~\ref{cor:recover_sgd_rate} \\
\eqref{eq:problem_gen}+\eqref{eq:f_sum}  & {\tt SGD-SR} & Alg \ref{alg:sgdas} & \cite{SGD_AS} & \xmark &  \cmark & \xmark & \xmark & \ref{SGD-AS} &   Cor~\ref{cor:recover_sgd-as_rate} \\
\eqref{eq:problem_gen}+\eqref{eq:f_sum} &  {\tt SGD-MB} & Alg~\ref{alg:SGD-MB} & {\bf NEW} & \xmark & \xmark & \xmark & \xmark & \ref{sec:SGD-MB} & Cor~\ref{cor:mb}   \\
\eqref{eq:problem_gen}+\eqref{eq:f_sum} &  {\tt SGD-star} & Alg \ref{alg:SGD-star} & {\bf NEW} & \cmark & \cmark  & \xmark & \xmark & \ref{sec:SGD-star} & Cor~\ref{cor:SGD-star} \\
\eqref{eq:problem_gen}+\eqref{eq:f_sum}  & {\tt SAGA} & Alg~\ref{alg:SAGA} & \cite{SAGA} & \cmark & \xmark  & \xmark & \xmark & \ref{sec:saga} & Cor~\ref{thm:recover_saga_rate} \\
\eqref{eq:problem_gen}+\eqref{eq:f_sum}  & {\tt N-SAGA} & Alg~\ref{alg:N-SAGA} & {\bf NEW} & \xmark &  \xmark & \xmark & \xmark & \ref{N-SAGA} & Cor~\ref{cor:N-SAGA} \\
\eqref{eq:problem_gen} & {\tt SEGA}  & Alg~\ref{alg:SEGA} &  \cite{hanzely2018sega}  & \cmark &   \xmark & \xmark & \cmark & \ref{sec:sega} & Cor~\ref{cor:sega} \\
\eqref{eq:problem_gen} & {\tt N-SEGA}  & Alg~\ref{alg:N-SEGA} &  {\bf NEW}  & \xmark & \xmark  & \xmark  & \cmark & \ref{N-SEGA} &
Cor~\ref{cor:N-SEGA}\\
\eqref{eq:problem_gen}+\eqref{eq:f_sum}  & {\tt SVRG}${}^{a}$  & Alg~\ref{alg:SVRG} & \cite{SVRG} & \cmark & \xmark & \xmark & \xmark & \ref{sec:svrg} & Cor~\ref{cor:svrg} \\
\eqref{eq:problem_gen}+\eqref{eq:f_sum}  & {\tt L-SVRG} & Alg~\ref{alg:L-SVRG} & \cite{hofmann2015variance, Loopless} & \cmark &  \xmark & \xmark & \xmark & \ref{sec:L-SVRG} & Cor~\ref{cor:recover_l-svrg_rate}  \\
\eqref{eq:problem_gen}+\eqref{eq:f_sum}  & {\tt DIANA} & Alg~\ref{alg:diana} & \cite{mishchenko2019distributed,horvath2019stochastic} & \xmark & \xmark & \cmark &  \xmark & \ref{sec:diana} & Cor~\ref{cor:main_diana}  \\
\eqref{eq:problem_gen}+\eqref{eq:f_sum}  & {\tt DIANA}${}^b$  & Alg~\ref{alg:diana_case}  & \cite{mishchenko2019distributed, horvath2019stochastic} & \cmark &  \xmark & \cmark  &  \xmark &\ref{sec:diana}  & Cor~\ref{cor:main_diana_special_case}\\
\eqref{eq:problem_gen}+\eqref{eq:f_sum}  & {\tt Q-SGD-SR} & Alg \ref{alg:qsgdas} & {\bf NEW} & \xmark &  \cmark & \cmark & \xmark & \ref{Q-SGD-AS} &   Cor~\ref{cor:recover_q-sgd-as_rate} \\ 
\eqref{eq:problem_gen}+\eqref{eq:f_sum}+\eqref{eq:f_i_sum}  & {\tt VR-DIANA} & Alg~\ref{alg:vr-diana} & \cite{horvath2019stochastic}& \cmark & \xmark & \cmark & \xmark & \ref{sec:VR-DIANA} & Cor~\ref{cor:main_vr_diana} \\
\eqref{eq:problem_gen}+\eqref{eq:f_sum}& {\tt JacSketch} & Alg~\ref{alg:jacsketch} & \cite{gower2018stochastic} & \cmark &  \cmark \xmark & \xmark & \xmark & \ref{sec:JacSketch} & Cor~\ref{thm:main_jacsketch}\\
\hline
\end{tabular}
\end{center}
\caption{List of specific existing (in some cases generalized) and new methods which fit our general analysis framework. VR = variance reduced method, AS = arbitrary sampling, Quant = supports gradient quantization, RCD = randomized coordinate descent type method. ${}^{a}$ Special case of {\tt SVRG} with  1 outer loop only;  ${}^{b}$ Special case of {\tt DIANA} with $1$ node and quantization of exact gradient. }
\label{tbl:special_cases2}
\end{table}

For existing methods we provide a citation; new methods developed in this paper are marked accordingly.  Due to space restrictions, all algorithms are described (in detail) in the Appendix; we provide a link to the appropriate section for easy navigation. While these details are important, the main message of this paper, i.e., the generality of our approach, is captured by Table~\ref{tbl:special_cases2}. The ``Result'' column of Table~\ref{tbl:special_cases2} points to a corollary of Theorem~\ref{thm:main_gsgm}; these corollaries state in detail the convergence  statements for the various methods. In all cases where known methods are recovered, these corollaries of Theorem~\ref{thm:main_gsgm} recover the best known rates.

{\bf Parameters.} From the point of view of Assumption~\ref{as:general_stoch_gradient}, the methods listed in Table~\ref{tbl:special_cases2} exhibit certain patterns.  To shed some light on this, in Table~\ref{tbl:special_cases-parameters} we summarize the values of these parameters. 

\begin{table}[h]
\begin{center}
\footnotesize
\begin{tabular}{|c|c|c|c|c|c|c|}
\hline
 Method &   $A$ & $B$ & $\rho$ & $C$ & $D_1$ & $D_2$\\
\hline
 {\tt SGD}   &  $2L $ & $0$ & $1$ & $0$ & $2\sigma^2$ & $0$ \\
 {\tt SGD-SR} &  $2 \cL$ & $0$ & $1$ & $0$ & $2\sigma^2$ & $0$ \\
  {\tt SGD-MB}  &  $\frac{A' + L(\tau-1)}{\tau}$ & 0 & $1$ & $0$ & $ \frac{D'}{\tau}$ & $0$ \\
  {\tt SGD-star}  &  $2 \cL$ & $0$ & $1$ & $0$ & $0$ & $0$ \\
 {\tt SAGA}   &  $2L$ & $2$ & $1/n$ & $L/n$ & $0$ & $0$\\
{\tt N-SAGA}   &  $2L$ & $2$ & $1/n$ & $L/n$ & $2\sigma^2$ & $\frac{\sigma^2}{n}$\\
{\tt SEGA}   &   $2dL$ & $2d$ & $1/d$ & $L/d$ & $0$ & $0$\\
{\tt N-SEGA}  &   $2dL$ & $2d$ & $1/d$ & $L/d$ & $2d\sigma^2$ & $\frac{\sigma^2}{d}$\\
 {\tt SVRG}${}^{a}$   & $2L$  & $2$ & $0$ & $0$& $0$ & $0$ \\
 {\tt L-SVRG}   &  $2 L$ & $2$ & $p$ & $Lp$ & $0$ & $0$ \\
 {\tt DIANA}   & $\left(1+\frac{2\omega}{n}\right)L$ & $\frac{2\omega}{n}$ & $\alpha$ & $L\alpha$ & $\frac{(1+\omega)\sigma^2}{n}$ & $\alpha\sigma^2$  \\
 {\tt DIANA}${}^b$  &  $\left(1+2\omega\right)L$ & $2\omega$ & $\alpha$ & $L\alpha$ & $0$ & $0$ \\
{\tt Q-SGD-SR} &  $2(1+\omega)\cL$ & $0$ & $1$ & $0$ & $2(1+\omega)\sigma^2$ & $0$ \\
 {\tt VR-DIANA} &  $\left(1+\frac{4\omega + 2}{n}\right)L$ & $\frac{2(\omega+1)}{n}$ & $\alpha$ & $\left(\frac{1}{m}+4\alpha\right)L$ & $0$ & $0$ \\
 {\tt JacSketch}   &  $2\cL_1$ & $\frac{2\lambda_{\max}}{n}$ & $\lambda_{\min}$ & $\frac{\cL_2}{n}$ & $0$ & $0$ \\
\hline
\end{tabular}
\end{center}
\caption{The parameters for which the methods from Table~\ref{tbl:special_cases2} satisfy Assumption~\ref{as:general_stoch_gradient}. The meaning of the expressions appearing in the table is defined in detail in the Appendix.}
\label{tbl:special_cases-parameters}
\end{table}

Note, for example, that for all methods the parameter $A$ is non-zero. Typically, this a multiple of an appropriately defined smoothness parameter (e.g., $L$ is the Lipschitz constant of the gradient of $f$, $\cL$ and $\cL_1$ in {\tt SGD-SR}\footnote{{\tt SGD-SR} is first {\tt SGD} method analyzed in the {\em arbitrary sampling} paradigm. It was developed using the  {\em stochastic reformulation} approach (whence the ``SR'') pioneered in \cite{ASDA} in a numerical linear algebra setting, and later extended to develop the {\tt JacSketch} variance-reduction technique for finite-sum optimization  \cite{gower2018stochastic}.}, {\tt SGD-star} and {\tt JacSketch} are {\em expected smoothness} parameters). In the three variants of the {\tt DIANA} method, $\omega$ captures the variance of the  quantization operator $Q$. That is, one assumes that $\Exp{Q(x)}=x$ and $\Exp{\norm{Q(x)-x}^2} \leq \omega \norm{x}^2$ for all $x\in \R^d$. In view of \eqref{eq:gamma_condition_gsgm}, large $A$ means a smaller stepsize, which slows down the rate. Likewise, the variance $\omega$ also affects the parameter $B$, which in view of \eqref{eq:main_result_gsgm} also has an adverse effect on the rate.
Further, as predicted by Theorem~\ref{thm:main_gsgm}, whenever either $D_1>0$ or $D_2>0$, the corresponding method converges to an oscillation region only. These methods are not variance-reduced. All symbols used in Table~\ref{tbl:special_cases-parameters} are defined in the appendix, in the same place where the methods are described and analyzed.

{\bf Five new methods.} To illustrate the usefulness of our general framework, we develop  {\em 5 new variants} of {\tt SGD} never explicitly considered in the literature before (see Table~\ref{tbl:special_cases2}). Here we briefly motivate them; details can be found in the Appendix. 

$\bullet$ {\tt SGD-MB} (Algorithm~\ref{alg:SGD-MB}).
This method is specifically designed for functions of the finite-sum structure  \eqref{eq:f_i_sum}. As we show through experiments, this is a powerful mini-batch {\tt SGD} method, with mini-batches formed with replacement as follows: in each iteration, we repeatedly ($\tau$ times) and independently pick $i\in [n]$ with probability $p_i>0$. Stochastic gradient $g^k$ is then formed by averaging the stochastic gradients $\nabla f_i(x^k)$ for all selected indices $i$ (including each $i$ as many times as this index was selected). 

$\bullet$ {\tt SGD-star} (Algorithm~\ref{alg:SGD-star}). 
This new method forms a bridge between vanilla and variance-reduced {\tt SGD} methods. While  not practical, it sheds light on the role of variance reduction. Again, we consider functions of the finite-sum form~\eqref{eq:f_i_sum}. This methods answers the following question:  assuming that the gradients $\nabla f_i(x^*)$, $i\in [n]$ are {\em known}, can they be used to design a more powerful {\tt SGD} variant? The answer is {\em yes}, and {\tt SGD-star} is the method. In its most basic form, {\tt SGD-star}  constructs the stochastic gradient via $g^k=\nabla f_i(x^k) - \nabla f_i(x^*)$, where $i\in [n]$ is chosen uniformly at random. That is, the standard stochastic gradient $\nabla f_i(x^k)$ is perturbed by the stochastic gradient at the same index $i$ evaluated at the optimal point $x^*$. Inferring from Table~\ref{tbl:special_cases-parameters}, where $D_1=D_2=0$, this method converges to $x^*$, and not merely to some oscillation region.  Variance-reduced methods essentially work by iteratively constructing increasingly more accurate {\em estimates} of $\nabla f_i(x^*)$. Typically, the term $\sigma_k^2$ in the Lyapunov function of variance reduced methods will contain a term of the form $\sum_i \norm{h_i^k - \nabla f_i(x^k)}^2$, with $h_i^k$ being the estimators maintained by the method. Remarkably, {\tt SGD-star} was never explicitly considered in the literature before.

$\bullet$ {\tt N-SAGA} (Algorithm~\ref{alg:N-SAGA}). This is a novel variant of {\tt SAGA}~\cite{SAGA}, one in which one does not have access to the gradients of $f_i$, but instead only has access to {\em noisy} stochastic estimators thereof (with noise $\sigma^2$). Like {\tt SAGA}, {\tt N-SAGA} is able to reduce the variance inherent in the finite sum structure~\eqref{eq:f_i_sum} of the problem. However, it necessarily pays the price of noisy estimates of $\nabla f_i $, and hence, just like vanilla {\tt SGD} methods, is ultimately unable to converge to $x^*$. The oscillation region is governed by the noise level $\sigma^2$ (refer to $D_1$ and $D_2$ in Table~\ref{tbl:special_cases-parameters}). This method will be of practical importance for problems where each $f_i$ is of the form~\eqref{eq:f_exp}, i.e., for problems of the ``average of expectations'' structure. Batch versions of {\tt N-SAGA} would be well suited for distributed optimization, where each $f_i$ is owned by a different worker, as in such a case one wants the workers to work in parallel.

$\bullet$ {\tt N-SEGA} (Algorithm~\ref{alg:N-SEGA}). This is a {\em noisy} extension of the {\tt RCD}-type method {\tt SEGA}, in complete analogy with the relationship between {\tt SAGA} and {\tt N-SAGA}. Here we assume that we only have noisy estimates of partial derivatives (with noise $\sigma^2$). This situation is common in derivative-free optimization, where such a noisy estimate can be obtained by taking (a random) finite difference approximation~\cite{nesterov2017randomDFO}. Unlike {\tt SEGA}, {\tt N-SEGA} only converges to an oscillation region the size of which is governed by $\sigma^2$.

$\bullet$ {\tt Q-SGD-SR} (Algorithm~\ref{alg:qsgdas}). This is a quantized version of {\tt SGD-SR}, which is the first {\tt SGD} method analyzed in the  arbitrary sampling paradigm. As such, {\tt Q-SGD-SR}  is a vast generalization of the celebrated {\tt QSGD} method \cite{alistarh2017qsgd}.

\section{Experiments \label{sec:exp}}

In this section we numerically verify the claims from the paper. We present only a fraction of experiments here, the rest is contained in Appendix~\ref{sec:extra}.

In Section~\ref{sec:SGD-MB}, we describe in detail the {\tt SGD-MB} method already outlined before. The main advantage of {\tt SGD-MB} is that the sampling procedure it employs can be implemented in just $\cO(\tau \log n)$ time. In contrast, even the simplest without-replacement sampling which selects each function into the minibatch with a prescribed probability independently (we will refer to it as independent {\tt SGD}) requires $n$ calls of a uniform random generator.  We demonstrate numerically that {\tt SGD-MB} has essentially identical iteration complexity to  independent {\tt SGD} in practice. We consider logistic regression with Tikhonov regularization. For a fixed  expected sampling size $\tau$, consider two options for the probability of sampling the $i$-th function:
\begin{enumerate}[label=(\roman*)]
\item \label{item:unif} $\frac{\tau}{n}$, or
\item \label{item:imp} $\frac{\norm{a_i}^2+\lambda}{\delta+\norm{a_i}^2+\lambda}$, where $\delta$ is such that\footnote{An {\tt RCD} version of this sampling was proposed in~\cite{AccMbCd}; it was shown to be superior to uniform sampling both in theory and practice.} $\sum_{i=1}^n\frac{\norm{a_i}^2+\lambda}{\delta+\norm{a_i}^2+\lambda}=1$. 
\end{enumerate}
The results can be found in Figure~\ref{fig:SGDMB}, where we also report the choice of stepsize $\gamma$ and the choice of $\tau$ in the legend and title of the plot, respectively.

\begin{figure}[!h]
\centering
\begin{minipage}{0.24\textwidth}
  \centering
\includegraphics[width =  \textwidth ]{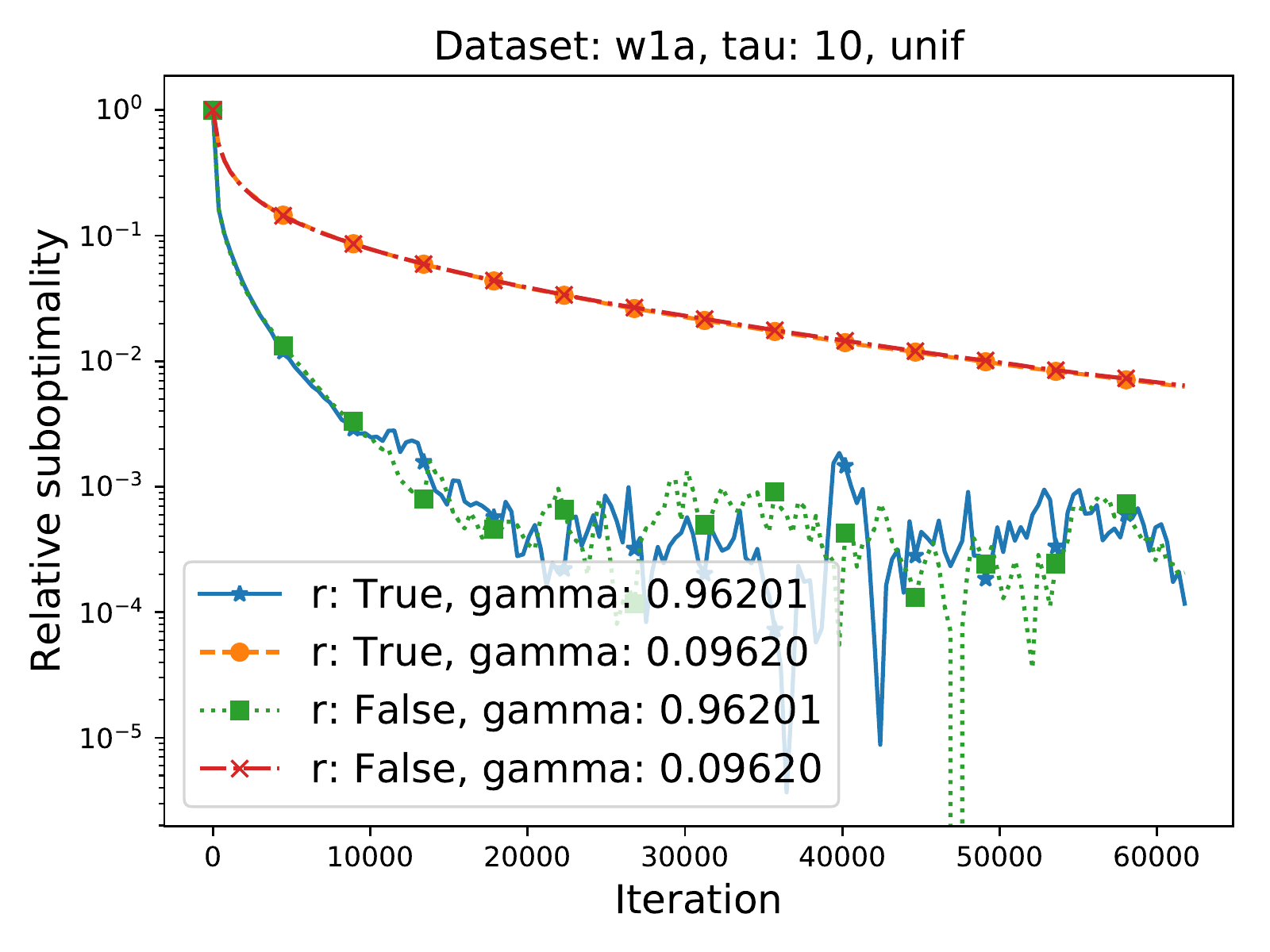}
\end{minipage}%
\begin{minipage}{0.24\textwidth}
  \centering
\includegraphics[width =  \textwidth ]{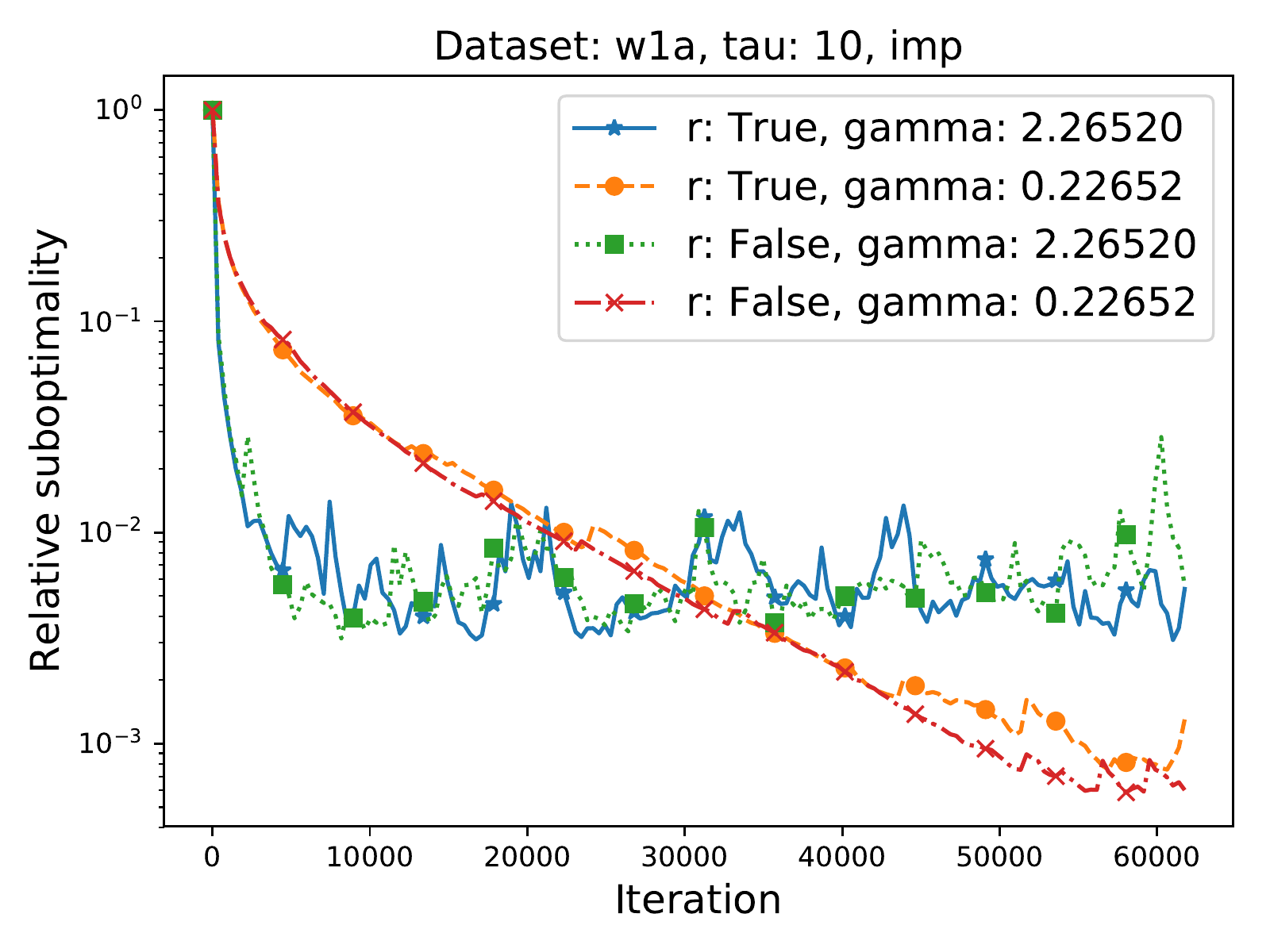}
\end{minipage}%
\begin{minipage}{0.24\textwidth}
  \centering
\includegraphics[width =  \textwidth ]{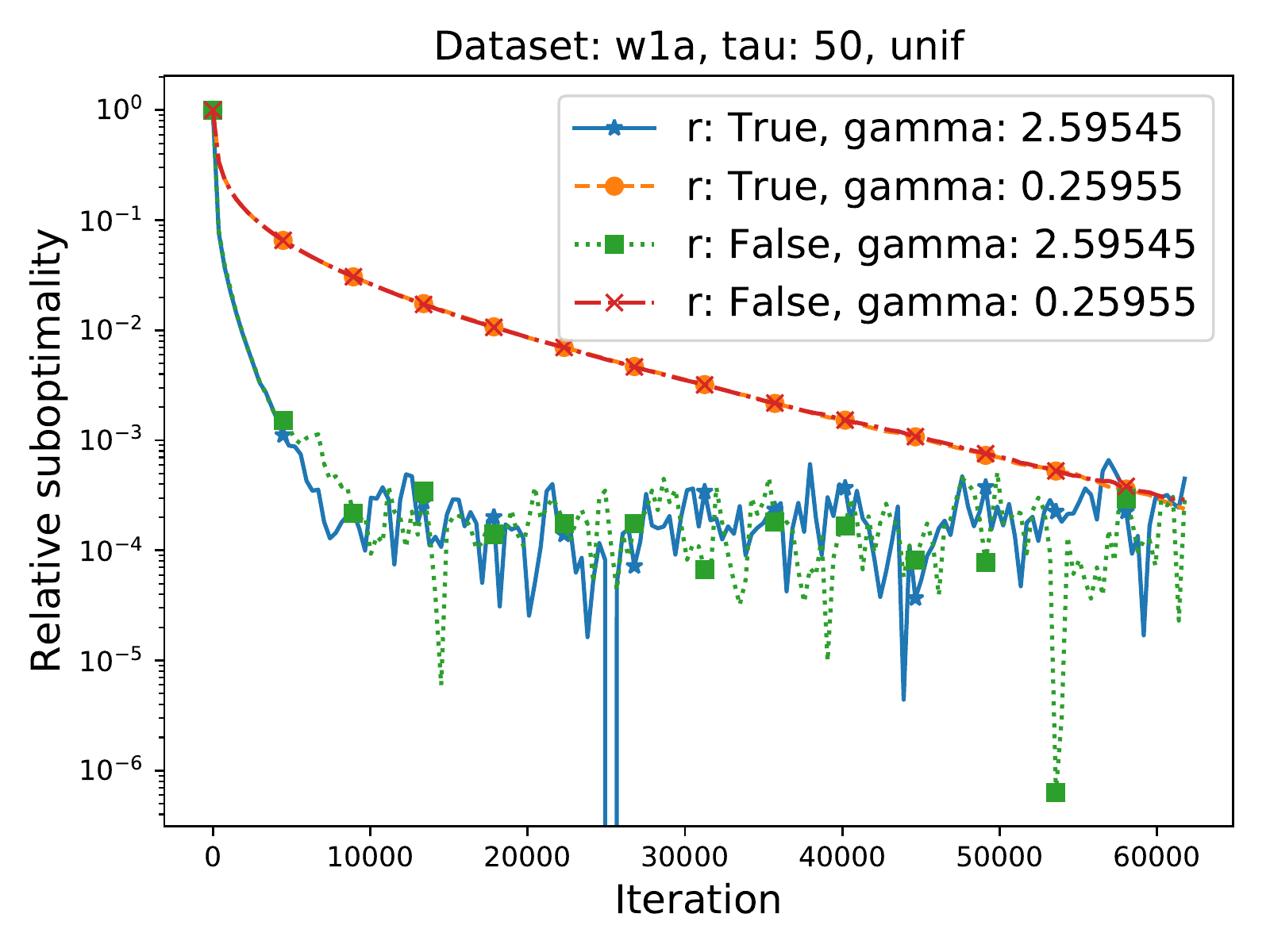}
\end{minipage}%
\begin{minipage}{0.24\textwidth}
  \centering
\includegraphics[width =  \textwidth ]{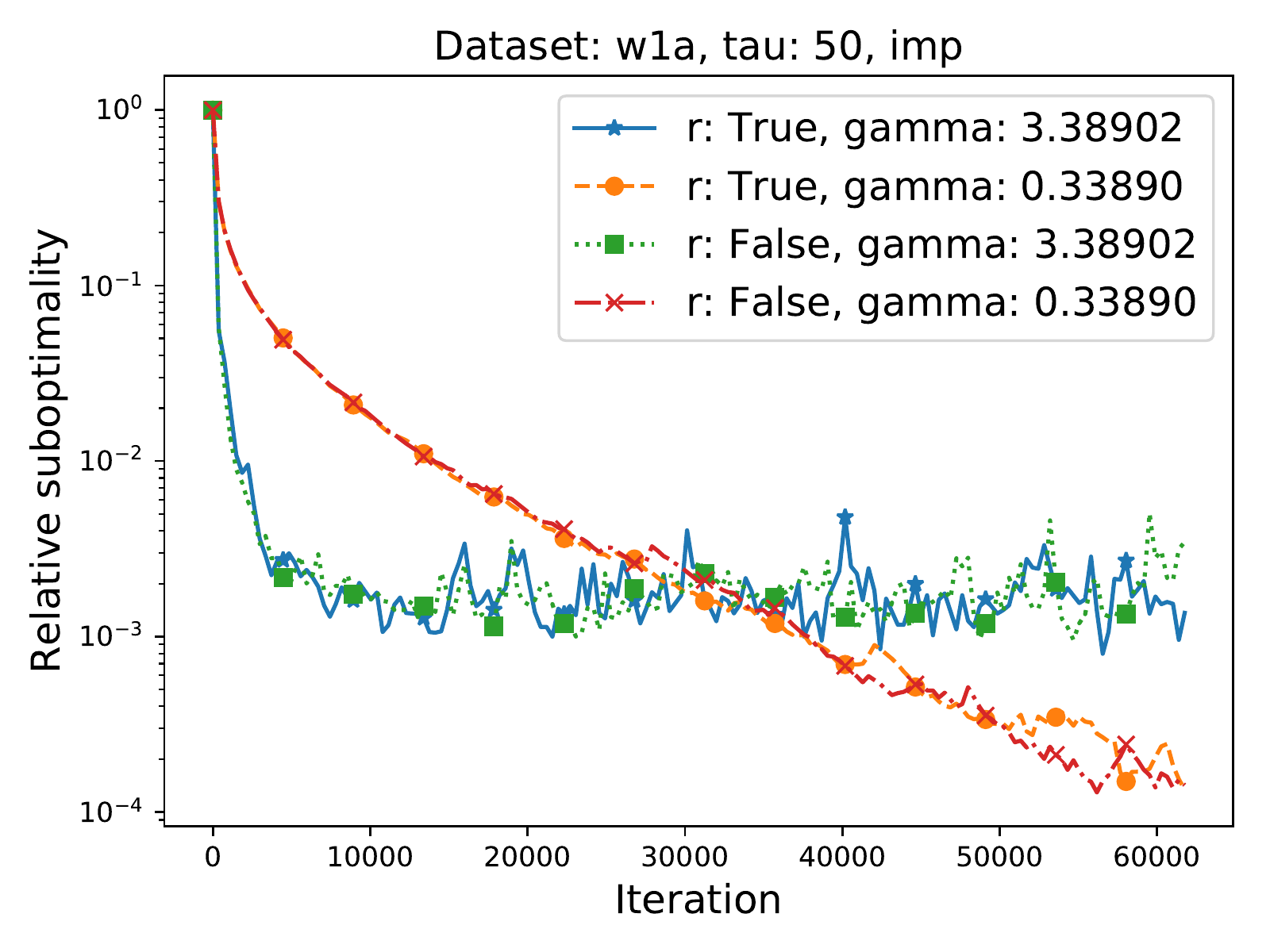}
\end{minipage}%
\\
\begin{minipage}{0.24\textwidth}
  \centering
\includegraphics[width =  \textwidth ]{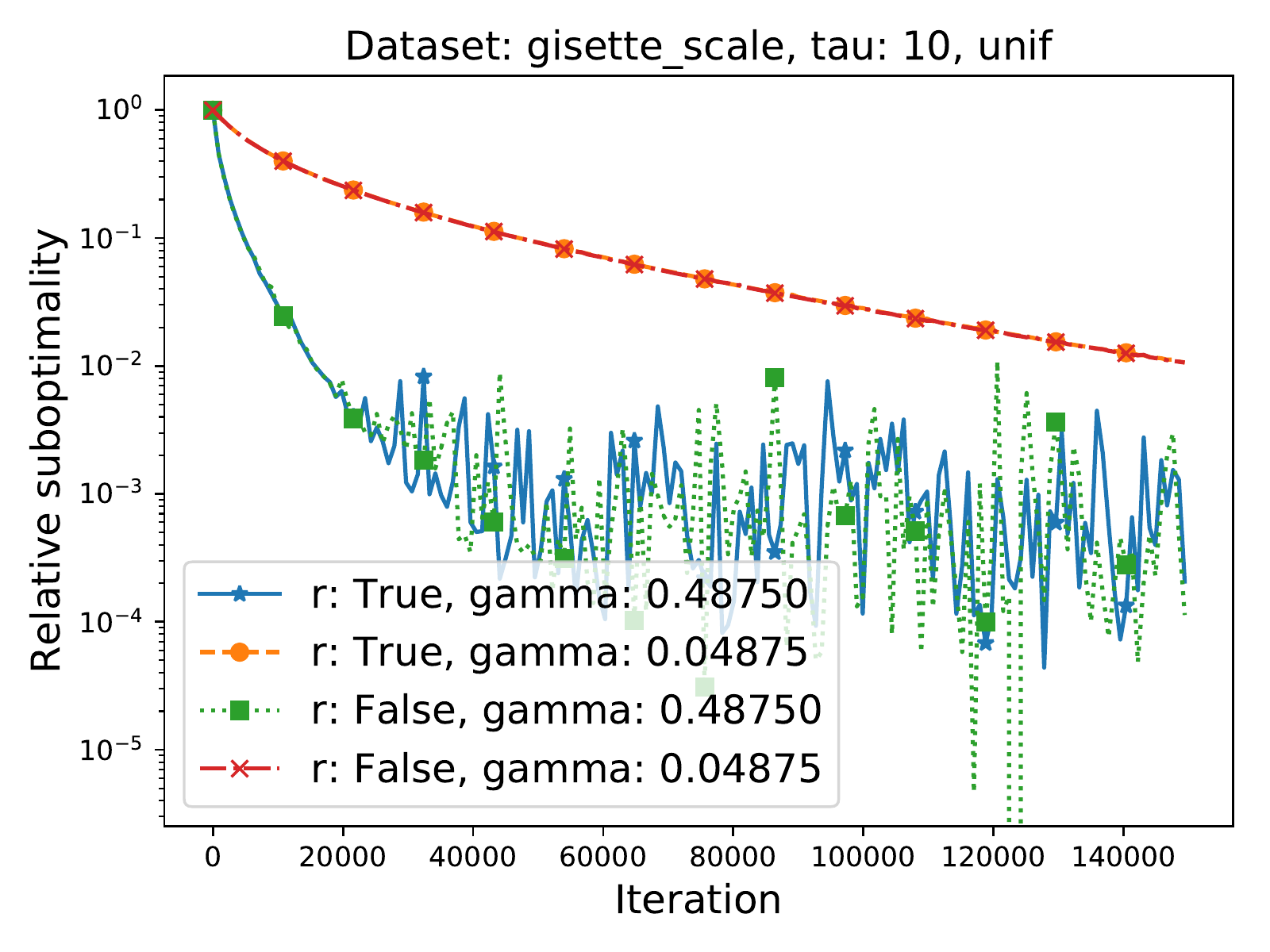}
\end{minipage}%
\begin{minipage}{0.24\textwidth}
  \centering
\includegraphics[width =  \textwidth ]{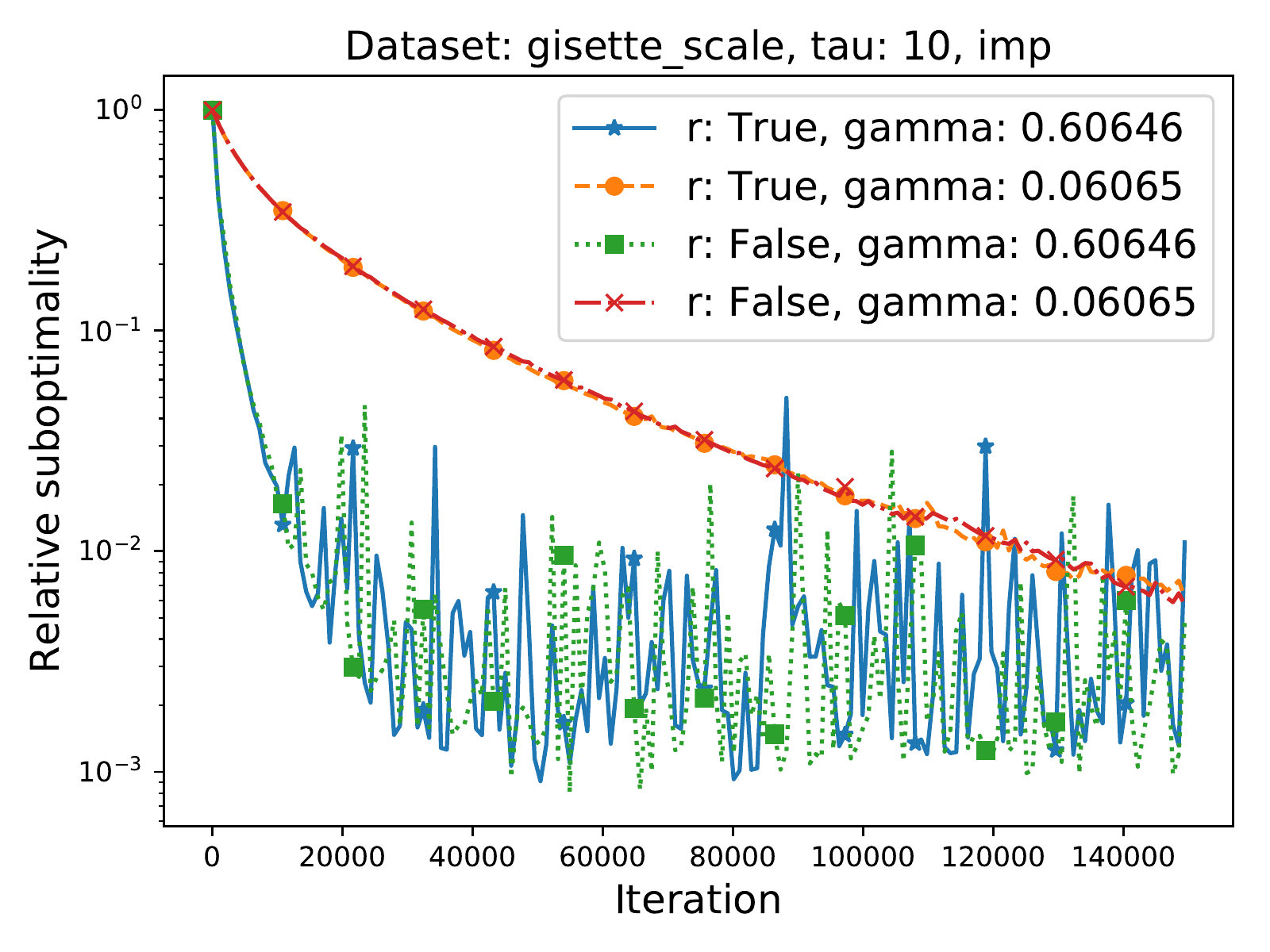}
\end{minipage}%
\begin{minipage}{0.24\textwidth}
  \centering
\includegraphics[width =  \textwidth ]{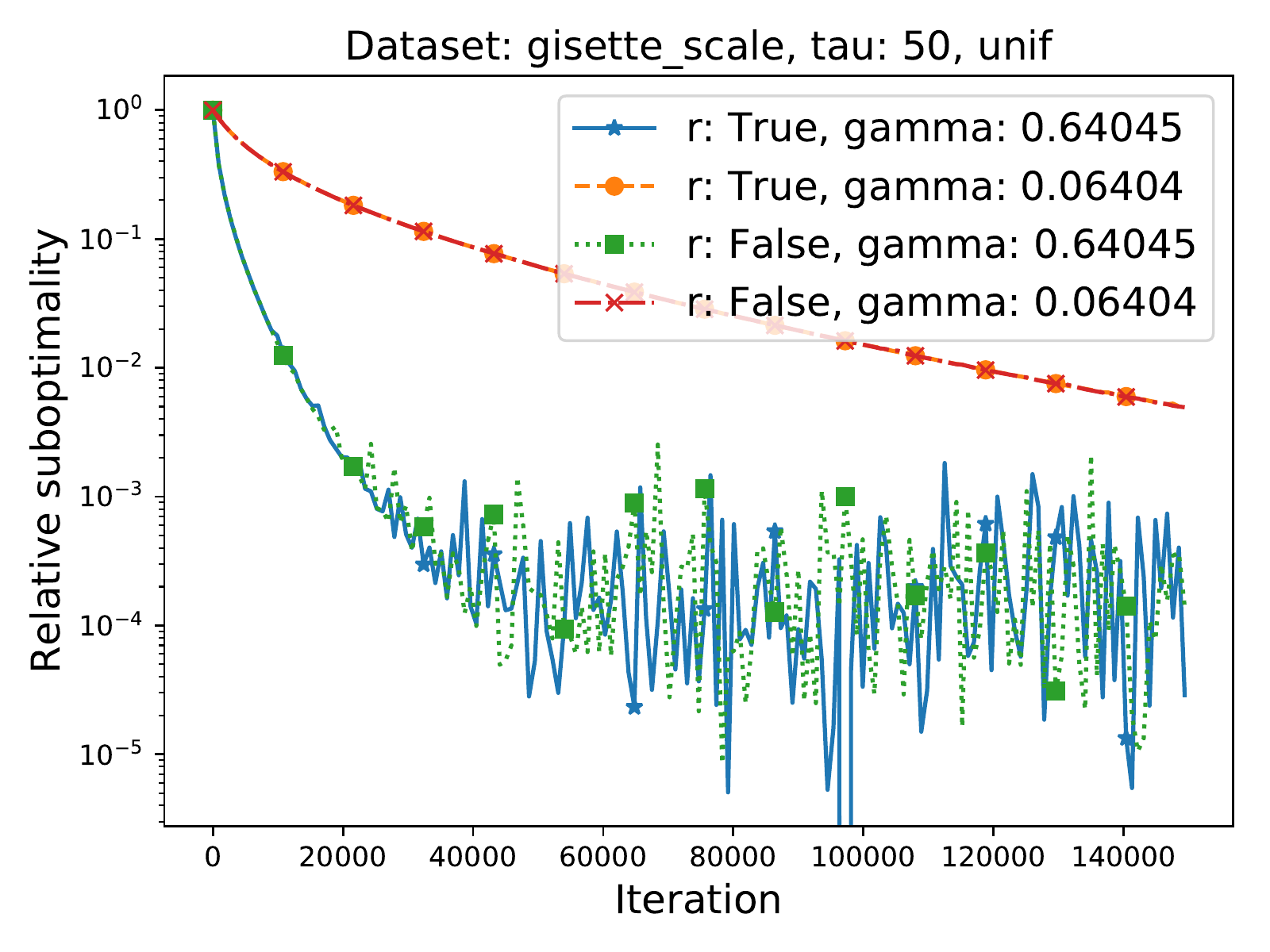}
\end{minipage}%
\begin{minipage}{0.24\textwidth}
  \centering
\includegraphics[width =  \textwidth ]{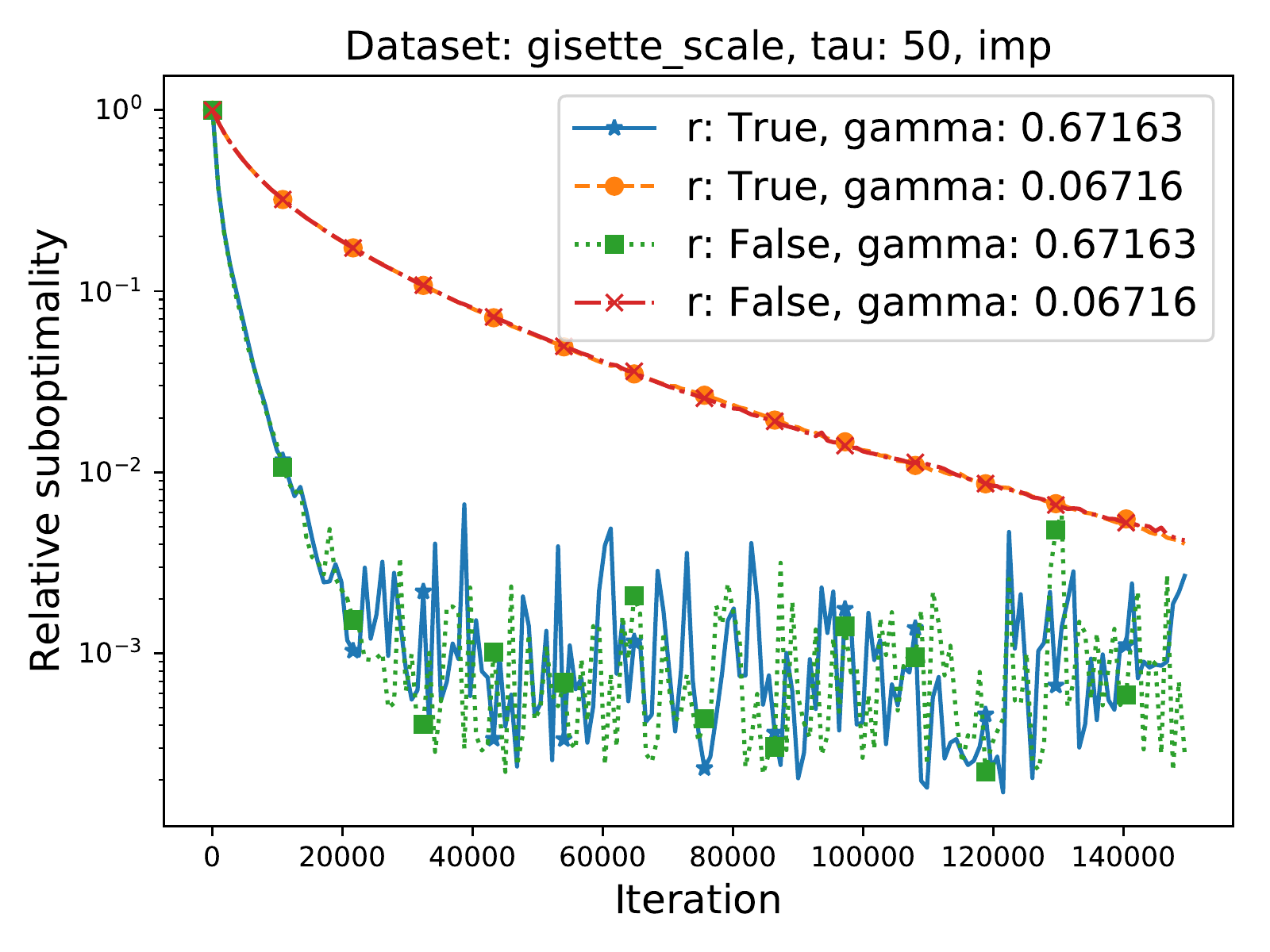}
\end{minipage}%
\caption{{\tt SGD-MB} and independent {\tt SGD} applied on LIBSVM~\cite{chang2011libsvm}. Title label ``unif'' corresponds to probabilities chosen by~\ref{item:unif} while label ``imp'' corresponds to probabilities chosen by~\ref{item:imp}. Lastly, legend label ``r'' corresponds to ``replacement'' with value ``True'' for {\tt SGD-MB} and value ``False'' for independent {\tt SGD}.}
\label{fig:SGDMB}
\end{figure}

Indeed, iteration complexity of {\tt SGD-MB} and independent {\tt SGD} is almost identical. Since the cost of each iteration of {\tt SGD-MB} is cheaper\footnote{The relative difference between iteration costs of {\tt SGD-MB} and independent {\tt SGD} can be arbitrary, especially for the case when cost of evaluating $\nabla f_i(x)$ is cheap, $n$ is huge and $n\gg \tau$. In such case, cost of one iteration of {\tt SGD-MB} is $\tau \text{Cost}(\nabla f_i) +\tau\log(n)$ while the cost of one iteration of independent {\tt SGD} is $\tau \text{Cost}(\nabla f_i) + n$.}, we conclude superiority of {\tt SGD-MB} to independent {\tt SGD}.
\section{Limitations and Extensions}

Although our approach is rather general, we still see several possible directions for  future extensions, including: 

$\bullet$ We believe our results can be extended to {\em weakly convex} functions. However, producing a comparable result in the {\em nonconvex} case remains a major open problem. 

$\bullet$ It would be further interesting to unify our theory with {\em biased} gradient estimators. If this was possible, one could recover methods as {\tt SAG}~\cite{SAG} in special cases, or obtain rates for the zero-order optimization. We have some preliminary results in this direction already.

$\bullet$ Although our theory allows for non-uniform stochasticity, it does not recover the best known rates for {\tt RCD} type methods with {\em importance sampling}. It would be thus interesting to provide a more refined  analysis capable of capturing importance sampling phenomena more accurately.

$\bullet$ An extension of Assumption~\ref{as:general_stoch_gradient} to {\em iteration dependent} parameters $A,B,C,D_1, D_2, \rho$ would enable an array of new methods, such as {\tt SGD} with decreasing stepsizes. 

$\bullet$ It would be interesting to provide a unified analysis of stochastic methods with {\em acceleration} and {\em momentum}. In fact,~\cite{kulunchakov2019estimate} provide (separately) a unification of some methods with and without variance reduction. Hence, an attempt to combine our insights with their approach seems to be a promising starting point in these efforts.

\bibliographystyle{plain} 
\bibliography{sigma_k}

\newpage
\appendix
\part*{Appendix  \\ \Large A Unified Theory of SGD: Variance Reduction, Sampling, Quantization  and Coordinate Descent}

\tableofcontents

\clearpage

\section{Special Cases}\label{sec:special_cases}

\subsection{Proximal {\tt SGD} for stochastic optimization}\label{sec:SGD}

\begin{algorithm}[h]
    \caption{{\tt SGD}}
    \label{alg:sgd_prox}
    \begin{algorithmic}
        \Require learning rate $\gamma>0$, starting point $x^0\in\R^d$, distribution $\cD$ over $\xi $
        \For{ $k=0,1,2,\ldots$ }
        \State{Sample $\xi \sim \cD$}
        \State{$g^k = \nabla f_\xi (x^k)$}
        \State{$x^{k+1} = \proxR(x^k - \gamma g^k)$}
        \EndFor
    \end{algorithmic}
\end{algorithm}

We start with stating the problem, the assumptions on the objective and on the stochastic gradients for {\tt SGD} \cite{nguyen2018sgd}. Consider the expectation minimization problem
\begin{equation}\label{eq:main_sgd}
    \min_{x\in\R^d} f(x) + R(x),\quad f(x) \eqdef \ED{f_\xi(x)}
\end{equation}
where $\xi \sim \cD$, $f_\xi(x)$ is differentiable and $L$-smooth almost surely in $\xi$.

 Lemma~\ref{lem:lemma1_sgd} shows that the stochastic gradient $g^k = \nabla f_\xi(x^k)$ satisfies Assumption~\ref{as:general_stoch_gradient}. The corresponding choice of parameters can be found in Table~\ref{tbl:special_cases-parameters}. 

\begin{lemma}[Generalization of Lemmas 1,2 from~\cite{nguyen2018sgd}]\label{lem:lemma1_sgd}
    Assume that $f_\xi(x)$ is convex in $x$ for every $\xi$. Then for every $x\in\R^d$
    \begin{equation}\label{eq:lemma1_sgd}
        \ED{\norm{\nabla f_\xi(x)- \nabla f(x^*)}^2} \le 4L(D_f(x,x^*)) + 2\sigma^2,
    \end{equation}
    where $\sigma^2\eqdef \EE_\xi\left[\norm{\nabla f_\xi(x^*)}^2\right]$. If further $f(x)$ is $\mu$-strongly convex with possibly non-convex $f_\xi$, then for every $x\in\R^d$
    \begin{equation}\label{eq:lemma2_sgd}
        \ED{\norm{\nabla f_\xi(x) - \nabla f(x^*)}^2} \le 4L\kappa(D_f(x,x^*)) + 2\sigma^2,
    \end{equation}
    where $\kappa = \frac{L}{\mu}$.
\end{lemma}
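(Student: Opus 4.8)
\textbf{Proof plan for Lemma~\ref{lem:lemma1_sgd}.}

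The plan is to prove the two inequalities \eqref{eq:lemma1_sgd} and \eqref{eq:lemma2_sgd} by the standard ``variance bound via smoothness'' argument, controlling $\norm{\nabla f_\xi(x) - \nabla f(x^*)}^2$ by first splitting off the fluctuation at the optimum. For the first inequality, I would begin by using the elementary bound $\norm{a+b}^2 \le 2\norm{a}^2 + 2\norm{b}^2$ with $a = \nabla f_\xi(x) - \nabla f_\xi(x^*)$ and $b = \nabla f_\xi(x^*) - \nabla f(x^*)$, giving
\begin{equation*}
\norm{\nabla f_\xi(x) - \nabla f(x^*)}^2 \le 2\norm{\nabla f_\xi(x) - \nabla f_\xi(x^*)}^2 + 2\norm{\nabla f_\xi(x^*) - \nabla f(x^*)}^2.
\end{equation*}
Taking $\ED{\cdot}$ of the second term and using $\EE_\xi[\nabla f_\xi(x^*)] = \nabla f(x^*)$, the variance is bounded by $\EE_\xi\norm{\nabla f_\xi(x^*)}^2 = \sigma^2$ (since $\mathrm{Var} \le$ second moment). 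For the first term, since $f_\xi$ is convex and $L$-smooth almost surely, I would invoke the standard inequality $\norm{\nabla f_\xi(x) - \nabla f_\xi(x^*)}^2 \le 2L\, D_{f_\xi}(x,x^*)$ (this is exactly the footnoted fact in the excerpt applied to $f_\xi$), and then take expectation over $\xi$: $\EE_\xi[D_{f_\xi}(x,x^*)] = D_f(x,x^*)$ because the Bregman divergence is linear in its generating function and $\EE_\xi[\nabla f_\xi(x^*)] = \nabla f(x^*)$. Combining the pieces yields $\ED{\norm{\nabla f_\xi(x) - \nabla f(x^*)}^2} \le 4L\, D_f(x,x^*) + 2\sigma^2$, which is \eqref{eq:lemma1_sgd}.

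For the second inequality \eqref{eq:lemma2_sgd}, the individual $f_\xi$ need not be convex, so the bound $\norm{\nabla f_\xi(x) - \nabla f_\xi(x^*)}^2 \le 2L\, D_{f_\xi}(x,x^*)$ is no longer available per-$\xi$. Instead I would use $L$-smoothness of $f_\xi$ in the cruder form $\norm{\nabla f_\xi(x) - \nabla f_\xi(x^*)}^2 \le L^2\norm{x-x^*}^2$, take expectation to get $\ED{\norm{\nabla f_\xi(x) - \nabla f_\xi(x^*)}^2} \le L^2 \norm{x-x^*}^2$, and then convert $\norm{x-x^*}^2$ into a Bregman divergence using $\mu$-strong convexity of $f$: from $\mu$-strong convexity one has $D_f(x,x^*) \ge \tfrac{\mu}{2}\norm{x-x^*}^2$, hence $\norm{x-x^*}^2 \le \tfrac{2}{\mu} D_f(x,x^*)$. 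This gives $\ED{\norm{\nabla f_\xi(x) - \nabla f_\xi(x^*)}^2} \le \tfrac{2L^2}{\mu} D_f(x,x^*) = 2L\kappa\, D_f(x,x^*)$, and combined with the same $2\sigma^2$ variance term as before and the factor-$2$ splitting, we obtain $\ED{\norm{\nabla f_\xi(x) - \nabla f(x^*)}^2} \le 4L\kappa\, D_f(x,x^*) + 2\sigma^2$, which is \eqref{eq:lemma2_sgd}.

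The main obstacle — really the only subtle point — is the handling of the cross/fluctuation term and making sure the variance of $\nabla f_\xi(x^*)$ is bounded by its second moment $\sigma^2$ rather than picking up an extra $\norm{\nabla f(x^*)}^2$ term; this is clean because $\mathrm{Var}(\nabla f_\xi(x^*)) = \EE_\xi\norm{\nabla f_\xi(x^*)}^2 - \norm{\nabla f(x^*)}^2 \le \sigma^2$. A secondary point worth stating carefully is the identity $\EE_\xi[D_{f_\xi}(x,x^*)] = D_f(x,x^*)$, which relies on $x^*$ being the point at which the linearization is taken together with $\EE_\xi[\nabla f_\xi(x^*)] = \nabla f(x^*)$; if one linearized at a generic point this would fail. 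Everything else is a routine application of Young's inequality and the standard smoothness/strong-convexity inequalities, so no further grinding is needed.
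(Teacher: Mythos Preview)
Your proposal is correct and follows essentially the same argument as the paper's proof: the paper uses the equivalent form $\tfrac12\norm{a}^2-\norm{b}^2\le\norm{a+b}^2$ (which is just a rearrangement of the Young inequality $\norm{a+b}^2\le 2\norm{a}^2+2\norm{b}^2$ you apply), then bounds $\ED{\norm{\nabla f_\xi(x)-\nabla f_\xi(x^*)}^2}$ by $2LD_f(x,x^*)$ in the convex case and by $L^2\norm{x-x^*}^2\le \tfrac{2L^2}{\mu}D_f(x,x^*)$ in the strongly convex / possibly non-convex $f_\xi$ case, exactly as you do. Your explicit remarks that $\Var(\nabla f_\xi(x^*))\le\sigma^2$ and $\EE_\xi[D_{f_\xi}(x,x^*)]=D_f(x,x^*)$ are the right justifications for the two steps the paper leaves implicit.
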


\begin{corollary}\label{cor:recover_sgd_rate}
    Assume that $f_\xi(x)$ is convex in $x$ for every $\xi$ and $f$ is $\mu$-strongly quasi-convex. Then {\tt SGD} with $\gamma \le \frac{1}{2L}$ satisfies
    \begin{equation}\label{eq:recover_sgd_rate}
        \EE\left[\norm{x^k - x^*}^2\right] \le (1-\gamma\mu)^k\norm{x^0-x^*}^2 + \frac{2\gamma \sigma^2}{\mu}.
    \end{equation}
    If we further assume that $f(x)$ is $\mu$-strongly convex with possibly non-convex $f_\xi(x)$,   {\tt SGD} with $\gamma \le \frac{1}{2L\kappa}$ satisfies~\eqref{eq:recover_sgd_rate} as well. 
\end{corollary}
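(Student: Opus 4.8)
The plan is to derive Corollary~\ref{cor:recover_sgd_rate} as a direct instantiation of Theorem~\ref{thm:main_gsgm}. The strategy is to identify the parameters $A, B, C, D_1, D_2, \rho$ for {\tt SGD} and then simply read off the resulting guarantee.

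First I would verify that Assumption~\ref{as:general_stoch_gradient} holds with the parameters listed in Table~\ref{tbl:special_cases-parameters} for {\tt SGD}, namely $A = 2L$, $B = 0$, $C = 0$, $D_1 = 2\sigma^2$, $D_2 = 0$, $\rho = 1$. Unbiasedness \eqref{eq:general_stoch_grad_unbias} is immediate since $g^k = \nabla f_\xi(x^k)$ and $f(x) = \ED{f_\xi(x)}$. The second-moment bound \eqref{eq:general_stoch_grad_second_moment} is exactly inequality \eqref{eq:lemma1_sgd} from Lemma~\ref{lem:lemma1_sgd}, rewritten: $\ED{\norm{\nabla f_\xi(x^k) - \nabla f(x^*)}^2} \le 4L D_f(x^k, x^*) + 2\sigma^2 = 2 A D_f(x^k,x^*) + B\sigma_k^2 + D_1$ with $A = 2L$, $B = 0$, $D_1 = 2\sigma^2$ (and $\sigma_k^2$ irrelevant since $B=0$). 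For the recursion \eqref{eq:gsg_sigma}, since the method carries no auxiliary sequence we may take $\sigma_k^2 \equiv 0$; then \eqref{eq:gsg_sigma} reads $0 \le (1-\rho)\cdot 0 + 2C D_f(x^k,x^*) + D_2$, which holds trivially with $C = 0$, $D_2 = 0$, and any $\rho$, in particular $\rho = 1$. In the $\mu$-strongly convex (but possibly nonconvex $f_\xi$) case, I would instead use \eqref{eq:lemma2_sgd}, giving $A = 2L\kappa$ and the same remaining parameters.

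Next I would apply Theorem~\ref{thm:main_gsgm}. Since $B = 0$, the condition $M > B/\rho = 0$ is satisfied by any $M > 0$; the additive error term $(D_1 + MD_2)\gamma^2 / \min\{\gamma\mu, \rho - B/M\} = 2\sigma^2\gamma^2 / \min\{\gamma\mu, 1\}$, and taking $M \to 0^+$ (or noting the bound is independent of $M$ here since $D_2 = 0$) we simplify. The stepsize condition \eqref{eq:gamma_condition_gsgm} becomes $0 < \gamma \le \min\{1/\mu, 1/(A + CM)\} = \min\{1/\mu, 1/(2L)\}$; since {\tt SGD} is stated with $\gamma \le 1/(2L)$ and in the strongly quasi-convex regime $1/(2L) \le 1/\mu$ is not automatic, I would note that $\gamma \le 1/(2L)$ together with the step $\gamma \le 1/\mu$ needed for the rate is what the corollary implicitly assumes (or invoke $L \ge \mu$). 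The Lyapunov function $V^k = \norm{x^k - x^*}^2 + M\gamma^2\sigma_k^2$ reduces to $\norm{x^k - x^*}^2$ since $\sigma_k^2 \equiv 0$. The contraction factor $\max\{(1-\gamma\mu)^k, (1 + B/M - \rho)^k\} = \max\{(1-\gamma\mu)^k, 0\} = (1-\gamma\mu)^k$ for $k \ge 1$. Finally, with $\gamma\mu \le 1$ one has $\min\{\gamma\mu, 1\} = \gamma\mu$, so the additive term is $2\sigma^2\gamma^2/(\gamma\mu) = 2\gamma\sigma^2/\mu$, yielding exactly \eqref{eq:recover_sgd_rate}.

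The only mildly delicate point — hardly an obstacle — is the bookkeeping around the degenerate choices $B = 0$, $\rho = 1$, $\sigma_k^2 \equiv 0$, and $M \to 0$: one must check that Theorem~\ref{thm:main_gsgm} remains valid (or take a limit) when $M$ is driven to zero, and that $1 + B/M - \rho = 0$ is correctly interpreted so the second branch of the max vanishes. I would handle this by observing that for any fixed admissible $M > 0$ the bound reads $\EE[\norm{x^k-x^*}^2] \le (1-\gamma\mu)^k\norm{x^0-x^*}^2 + 2\gamma\sigma^2/\mu$ verbatim (the $M$ dependence cancels because $D_2 = 0$ and $B = 0$), so no limiting argument is actually needed. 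The strongly convex case with nonconvex $f_\xi$ is identical after replacing $L$ by $L\kappa$ throughout, which changes the stepsize restriction to $\gamma \le 1/(2L\kappa)$ as claimed.
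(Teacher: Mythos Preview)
Your proposal is correct and follows essentially the same approach as the paper: the paper's proof is literally one line, ``It suffices to plug parameters from Table~\ref{tbl:special_cases-parameters} into Theorem~\ref{thm:main_gsgm},'' and you have simply fleshed out that plugging-in with the explicit bookkeeping for $A=2L$, $B=0$, $\rho=1$, $C=0$, $D_1=2\sigma^2$, $D_2=0$, $\sigma_k^2\equiv 0$. Your observation that the $M$-dependence cancels (so no limit $M\to 0$ is actually required) is the right way to handle the degenerate case, and your treatment of the nonconvex-$f_\xi$ variant by replacing $A=2L$ with $A=2L\kappa$ via \eqref{eq:lemma2_sgd} matches the intended argument.
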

\begin{proof}
    It suffices to plug parameters from Table~\ref{tbl:special_cases-parameters} into Theorem~\ref{thm:main_gsgm}. 
\end{proof}

\subsubsection*{Proof of Lemma~\ref{lem:lemma1_sgd}}
The proof is a direct generalization to the one from~\cite{nguyen2018sgd}.
Note that
\begin{eqnarray*}
&& \frac12 \ED{\norm{ \nabla f_\xi (x) - \nabla f(x^*)}^2} - \ED{\norm{\nabla f_\xi (x^*) - \nabla f(x^*)}^2} \\
&& \qquad \qquad=
\frac12 \ED{\norm{ \nabla f_\xi (x) - \nabla f(x^*)}^2 - \norm{\nabla f_\xi (x^*) - \nabla f(x^*)}^2} \\
&& \qquad \qquad\overset{\eqref{eq:1/2a_minus_b}}{\le}
 \ED{\norm{ \nabla f_\xi (x) - \nabla f_{\xi}(x^*)}^2}  \\
&& \qquad \qquad\leq
2LD_f(x,x^*).
\end{eqnarray*}
It remains to rearrange the above to get~\eqref{eq:lemma1_sgd}. To obtain~\eqref{eq:lemma2_sgd}, we shall proceed similarly:
\begin{eqnarray*}
&& \frac12 \ED{\norm{ \nabla f_\xi (x) - \nabla f(x^*)}^2} - \ED{\norm{ \nabla f_\xi (x^*) - \nabla f(x^*)}^2} \\
&& \qquad \qquad=
\frac12 \ED{\norm{ \nabla f_\xi (x) - \nabla f(x^*)}^2 - \norm{ \nabla f_\xi (x^*) - \nabla f(x^*)}^2} \\
&& \qquad \qquad\overset{\eqref{eq:1/2a_minus_b}}{\le}
 \ED{\norm{\nabla f_\xi (x) - \nabla f_{\xi}(x^*)}^2}  \\
&& \qquad \qquad\leq
L^2\norm{ x - x^*}^2 \\ 
&& \qquad \qquad\leq
2\frac{L^2}{\mu}D_f(x,x^*).
\end{eqnarray*}
Again, it remains to rearrange the terms.

\subsection{{\tt SGD-SR}}\label{SGD-AS}

In this section, we recover convergence result of {\tt SGD} under expected smoothness property from~\cite{SGD_AS}. This setup allows obtaining tight convergence rates of {\tt SGD} under arbitrary stochastic reformulation of finite sum minimization\footnote{For technical details on how to exploit expected smoothness for specific reformulations, see~\cite{SGD_AS}}. 

The stochastic reformulation is a special instance of~\eqref{eq:main_sgd}:
\begin{equation}\label{eq:problem_sgd-as}
    \min\limits_{x\in\R^d}f(x) + R(x),\quad f(x) =\ED{f_\xi(x)}, \quad  f_\xi(x) \eqdef \frac1n \sum_{i=1}^n \xi_i f_i(x) 
\end{equation}
where $\xi$ is a random vector from distribution $\cD$ such that for all $i$: $\ED{\xi_i}=1$ and $f_i$ (for all $i$) is smooth, possibly non-convex function. We next state the expextes smoothness assumption. A specific instances of this assumption allows to get tight convergence rates of SGD, which we recover in this section.

\begin{algorithm}[h]
    \caption{{\tt SGD-SR}}
    \label{alg:sgdas}
    \begin{algorithmic}
        \Require learning rate $\gamma>0$, starting point $x^0\in\R^d$, distribution $\cD$ over $\xi \in\R^n$ such that $\ED{\xi}$ is vector of ones
        \For{ $k=0,1,2,\ldots$ }
        \State{Sample $\xi \sim \cD$}
        \State{$g^k = \nabla f_\xi (x^k)$}
        \State{$x^{k+1} = \proxR(x^k - \gamma g^k)$}
        \EndFor
    \end{algorithmic}
\end{algorithm}

\begin{assumption}[Expected smoothness]\label{as:exp_smoothness_sgd-as}
    We say that $f$ is $\cL$-smooth in expectation with respect to distribution $\cD$ if there exists $\cL = \cL(f,\cD) > 0$ such that
    \begin{equation}\label{eq:exp_smoothness_sgd-as}
        \ED{\norm{\nabla f_\xi(x) - \nabla f_\xi(x^*)}^2} \le 2\cL D_f(x,x^*),
    \end{equation}
    for all $x\in\R^d$. For simplicity, we will write $(f,\cD) \sim ES(\cL)$ to say that \eqref{eq:exp_smoothness_sgd-as} holds.
\end{assumption}

Next, we present Lemma~\ref{lem:exp_smoothness_grad_up_bound_sgd-as} which shows that choice of constants for Assumption~\ref{as:general_stoch_gradient} from Table~\ref{tbl:special_cases-parameters} is valid. 

\begin{lemma}[Generalization of Lemma~2.4, \cite{SGD_AS}]\label{lem:exp_smoothness_grad_up_bound_sgd-as}
    If $(f,\cD)\sim ES(\cL)$, then
    \begin{equation}\label{eq:exp_smoothness_grad_up_bound_sgd-as}
        \ED{\norm{\nabla f_\xi(x) - \nabla f(x^*)}^2} \le 4\cL D_f(x,x^*) + 2\sigma^2.
    \end{equation}
    where $\sigma^2 \eqdef \ED{\norm{\nabla f_\xi(x^*)}^2}$.
\end{lemma}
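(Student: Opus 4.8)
\textbf{Proof proposal for Lemma~\ref{lem:exp_smoothness_grad_up_bound_sgd-as}.}

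The plan is to mimic exactly the short argument used in the proof of Lemma~\ref{lem:lemma1_sgd} above: decompose $\nabla f_\xi(x) - \nabla f(x^*)$ into a "shifted" stochastic gradient plus the stochastic gradient at the optimum, apply the elementary inequality $\tfrac12\norm{a}^2 - \norm{b}^2 \le \norm{a-b}^2$ (the one cited there as \eqref{eq:1/2a_minus_b}, with $a = \nabla f_\xi(x) - \nabla f(x^*)$ and $b = \nabla f_\xi(x^*) - \nabla f(x^*)$, so that $a - b = \nabla f_\xi(x) - \nabla f_\xi(x^*)$), and then invoke the expected smoothness assumption \eqref{eq:exp_smoothness_sgd-as} directly on the resulting term. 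Concretely, I would write
\begin{eqnarray*}
\frac12 \ED{\norm{\nabla f_\xi(x) - \nabla f(x^*)}^2} - \ED{\norm{\nabla f_\xi(x^*) - \nabla f(x^*)}^2}
&=& \frac12 \ED{\norm{\nabla f_\xi(x) - \nabla f(x^*)}^2 - \norm{\nabla f_\xi(x^*) - \nabla f(x^*)}^2} \\
&\le& \ED{\norm{\nabla f_\xi(x) - \nabla f_\xi(x^*)}^2} \\
&\le& 2\cL D_f(x, x^*),
\end{eqnarray*}
where the last step is precisely $(f,\cD)\sim ES(\cL)$.

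It then remains to rearrange. Multiplying by $2$ gives $\ED{\norm{\nabla f_\xi(x) - \nabla f(x^*)}^2} \le 4\cL D_f(x,x^*) + 2\ED{\norm{\nabla f_\xi(x^*) - \nabla f(x^*)}^2}$. The only remaining point is to identify $\ED{\norm{\nabla f_\xi(x^*) - \nabla f(x^*)}^2}$ with $\sigma^2 = \ED{\norm{\nabla f_\xi(x^*)}^2}$; this is immediate if $\nabla f(x^*) = 0$, but in the general proximal case one should instead simply bound $\ED{\norm{\nabla f_\xi(x^*) - \nabla f(x^*)}^2} \le \ED{\norm{\nabla f_\xi(x^*)}^2} = \sigma^2$ using that variance is dominated by the second moment (since $\nabla f(x^*) = \ED{\nabla f_\xi(x^*)}$ by unbiasedness, which follows from $\ED{\xi_i} = 1$ for all $i$ together with $f(x) = \ED{f_\xi(x)}$). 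That substitution yields \eqref{eq:exp_smoothness_grad_up_bound_sgd-as}.

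I do not anticipate a genuine obstacle here — the lemma is a direct transcription of the {\tt SGD} argument with $2L$ replaced by the expected smoothness constant $\cL$. The only mild subtlety worth stating carefully is the definition of $\sigma^2$ versus the variance term that naturally appears: one wants the cleaner bound in terms of the second moment, so I would be explicit that we are using $\Var \le$ second moment rather than an equality. Everything else (the choice of $a,b$ in the elementary inequality, unbiasedness of $\nabla f_\xi$) is routine and already modeled by the preceding proof in the excerpt.
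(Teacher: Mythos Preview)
Your proposal is correct and essentially matches the paper's own proof. The paper uses the equivalent form of the same elementary inequality, writing $\norm{\nabla f_\xi(x)-\nabla f(x^*)}^2 \le 2\norm{\nabla f_\xi(x)-\nabla f_\xi(x^*)}^2 + 2\norm{\nabla f_\xi(x^*)-\nabla f(x^*)}^2$ via \eqref{eq:a_b_norm_squared} and then invoking expected smoothness together with the (implicit) bound $\ED{\norm{\nabla f_\xi(x^*)-\nabla f(x^*)}^2}\le \sigma^2$; this is exactly your argument after rearranging \eqref{eq:1/2a_minus_b}, and you are more explicit than the paper about the variance $\le$ second moment step.
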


A direct consequence of Theorem~\ref{thm:main_gsgm} in this setup is Corollary~\ref{cor:recover_sgd-as_rate}.

\begin{corollary}\label{cor:recover_sgd-as_rate}
    Assume that $f(x)$ is $\mu$-strongly quasi-convex and $(f,\cD)\sim ES(\cL)$. Then {\tt SGD-SR} with $\gamma^k \equiv\gamma \le \frac{1}{2\cL}$ satisfies
    \begin{equation}\label{eq:recover_sgd-as_rate}
        \EE\left[\norm{x^k - x^*}^2\right] \le (1-\gamma\mu)^k\norm{x^0-x^*}^2 + \frac{2\gamma\sigma^2}{\mu}.
    \end{equation}
\end{corollary}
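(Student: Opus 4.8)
The plan is to treat Corollary~\ref{cor:recover_sgd-as_rate} as a pure specialization of the master Theorem~\ref{thm:main_gsgm}: I only need to identify the six parameters $(A,B,C,\rho,D_1,D_2)$ and the auxiliary sequence $\{\sigma_k^2\}$ for which the {\tt SGD-SR} iterates satisfy Assumption~\ref{as:general_stoch_gradient}, and then substitute them into \eqref{eq:main_result_gsgm}. Reading off the {\tt SGD-SR} row of Table~\ref{tbl:special_cases-parameters}, the target values are $A = 2\cL$, $B = 0$, $\rho = 1$, $C = 0$, $D_1 = 2\sigma^2$, and $D_2 = 0$. The analytic content of the verification is already supplied by Lemma~\ref{lem:exp_smoothness_grad_up_bound_sgd-as}, so the work here is organizational.

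First I would check unbiasedness \eqref{eq:general_stoch_grad_unbias}: since $g^k = \nabla f_\xi(x^k)$ and $\ED{\xi}$ is the vector of ones, linearity gives $\ED{\nabla f_\xi(x^k)} = \frac1n\sum_{i=1}^n \ED{\xi_i}\nabla f_i(x^k) = \nabla f(x^k)$. Next, the second-moment inequality \eqref{eq:general_stoch_grad_second_moment} with $B=0$ reads $\ED{\norm{g^k - \nabla f(x^*)}^2} \le 4\cL D_f(x^k,x^*) + 2\sigma^2$, which is exactly \eqref{eq:exp_smoothness_grad_up_bound_sgd-as} under $(f,\cD)\sim ES(\cL)$; this pins down $A = 2\cL$ and $D_1 = 2\sigma^2$. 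Finally, because plain {\tt SGD-SR} maintains no learned auxiliary quantity, I take $\sigma_k^2 \equiv 0$, so that \eqref{eq:gsg_sigma} holds trivially with $\rho = 1$, $C = 0$, $D_2 = 0$. Since $B = 0$, the requirement $M > B/\rho$ is met by any $M > 0$, and $M$ will drop out of the final bound.

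It then remains to substitute into Theorem~\ref{thm:main_gsgm} and collapse the degenerate terms, which is the only place where a little care is needed. The stepsize constraint \eqref{eq:gamma_condition_gsgm} becomes $\gamma \le \min\{1/\mu,\, 1/(2\cL)\}$; using the standing relation $\mu \le 2\cL$ between the strong-quasi-convexity and expected-smoothness constants, this minimum equals $1/(2\cL)$, matching the hypothesis. In the rate \eqref{eq:main_result_gsgm} the second branch of the maximum vanishes, $1 + B/M - \rho = 1 + 0 - 1 = 0$, leaving $\max\{(1-\gamma\mu)^k,\, 0\} = (1-\gamma\mu)^k$ for $k \ge 1$, while the denominator simplifies to $\min\{\gamma\mu,\, \rho - B/M\} = \min\{\gamma\mu,\, 1\} = \gamma\mu$ since $\gamma\mu \le 1$. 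As $\sigma_k^2 \equiv 0$ the Lyapunov function degenerates to $V^k = \norm{x^k - x^*}^2$, and the additive numerator is $(D_1 + MD_2)\gamma^2 = 2\sigma^2\gamma^2$, so the bound reads $\EE\left[\norm{x^k-x^*}^2\right] \le (1-\gamma\mu)^k\norm{x^0-x^*}^2 + 2\sigma^2\gamma^2/(\gamma\mu)$; cancelling one factor of $\gamma$ gives precisely \eqref{eq:recover_sgd-as_rate}. The main obstacle, such as it is, is purely bookkeeping: confirming that the vanishing parameters $B=C=D_2=0$ together with $\rho=1$ collapse the general two-term Lyapunov recursion to the single-term contraction claimed, and that the stated one-sided stepsize bound indeed implies the theorem's two-sided condition \eqref{eq:gamma_condition_gsgm}.
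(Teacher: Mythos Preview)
Your proposal is correct and matches the paper's approach exactly: the paper simply states that Corollary~\ref{cor:recover_sgd-as_rate} is ``a direct consequence of Theorem~\ref{thm:main_gsgm} in this setup,'' which is precisely the plug-in-the-parameters argument you have written out in full. Your handling of the degenerate choices $B=C=D_2=0$, $\rho=1$, $\sigma_k^2\equiv 0$ and the collapse of the Lyapunov function to $\|x^k-x^*\|^2$ is accurate, and the one implicit step you flag (that $\gamma\le 1/(2\cL)$ already forces $\gamma\le 1/\mu$) is also left implicit in the paper.
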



\subsubsection*{Proof of Lemma~\ref{lem:exp_smoothness_grad_up_bound_sgd-as}}
Here we present the generalization of the proof of Lemma~2.4 from \cite{SGD_AS} for the case when $\nabla f(x^*) \neq 0$. In this proof all expectations are conditioned on $x^k$.
\begin{eqnarray*}
    \EE\left[\norm{\nabla f_{\xi} (x) - \nabla f(x^*)}^2\right] &=& \EE\left[ \norm{ \nabla f_\xi(x) - \nabla f_{\xi}(x^*) + \nabla f_{\xi}(x^*) - \nabla f(x^*) }^2\right] \\
    &\overset{\eqref{eq:a_b_norm_squared}}{\le}&  2 \EE\left[\norm{ \nabla f_{\xi}(x) - \nabla f_{\xi}(x^*)}^2\right] 
 + 2  \EE\left[\norm{\nabla f_{\xi}(x^*) - \nabla f(x^*)}^2\right] \notag\\ 
    &\overset{\eqref{eq:exp_smoothness_sgd-as}}{\le}& 4 \cL D_f(x,x^*) + 2 \sigma^2.
\end{eqnarray*}

\subsection{{\tt SGD-MB}}\label{sec:SGD-MB}
In this section, we present a specific practical formulation of~\eqref{eq:problem_sgd-as} which was not considered in~\cite{SGD_AS}. The resulting algorithm (Algorithm~\ref{alg:SGD-MB}) is novel; it was not considered in~\cite{SGD_AS} as a specific instance of {\tt SGD-SR}. The key idea behind {\tt SGD-MB} is constructing unbiased gradient estimate via with-replacement sampling.

Consider random variable $\nu \sim \cD$ such that 
\begin{equation}\label{eq:nb87fg87f}
 \Prob(\nu = i) =p_i;  \qquad \sum_{i=1}^np_i=1.
\end{equation} Notice that if we define \begin{equation}
\label{eq:reform_function}\psi_i(x)\eqdef \frac{1}{n p_i} f_i(x), \qquad i=1,2,\dots,n,\end{equation}
then 
\begin{equation} \label{eq:reform_problem}f(x) = \frac{1}{n} \sum_{i=1}^n f_i(x)  \overset{\eqref{eq:reform_function}}{=} \sum_{i=1}^n p_i \psi_i(x) \overset{\eqref{eq:nb87fg87f}}{=} \ED{\psi_\nu (x)}.\end{equation}
So, we have rewritten the finite sum problem \eqref{eq:f_sum} into the {\em equivalent stochastic optimization problem}  
\begin{equation}\label{eq:reform_opt} \min_{x\in \R^d} \ED{\psi_\nu (x)}.\end{equation}

We are now ready to describe our method. At each iteration $k$ we sample $\nu^k_i,\dots,\nu^k_{\tau}\sim \cD$ independently ($1\leq \tau \leq n$), and define $g^k\eqdef  \frac{1}{\tau}\sum_{i=1}^\tau \nabla \psi_{\nu^k_i}(x^k) $. Further, we use $g^k$ as a stochastic gradient, resulting in Algorithm~\ref{alg:SGD-MB}.

\begin{algorithm}[h]
    \caption{{\tt SGD-MB}}
    \label{alg:SGD-MB}
    \begin{algorithmic}
        \Require learning rate $\gamma>0$, starting point $x^0\in\R^d$, distribution $\cD$ over $\nu$ such that~\eqref{eq:nb87fg87f} holds.
        \For{ $k=0,1,2,\ldots$ }
        \State{Sample $\nu^k_i,\dots,\nu^k_{\tau}\sim \cD$ independently}
        \State{$g^k =  \frac{1}{\tau}\sum_{i=1}^\tau \nabla \psi_{\nu^k_i}(x^k) $}
        \State{$x^{k+1} = x^k - \gamma g^k$}
        \EndFor
    \end{algorithmic}
\end{algorithm}

To remain in full generality, consider the following Assumption.

\begin{assumption}\label{ass:main_mb} There exists constants $A'>0$ and $D'\geq 0$ such that \begin{equation}\label{eq:ABassumpt} \ED{ \norm{\nabla \psi_{\nu}(x)}^2 }\leq 2 A' (f(x) - f(x^*)) + D'\end{equation} for all $x\in \R^d$.  
\end{assumption}

Note that it is sufficient to have convex and smooth $f_i$ in order to satisfy Assumption~\ref{ass:main_mb}, as Lemma~\ref{lem:b98h0f} states. 

\begin{lemma}\label{lem:b98h0f}  Let $\sigma^2\eqdef \ED{\norm{\nabla \psi_\nu (x^*)}^2}$. If $f_i$ are convex and $L_i$-smooth, then Assumption~\ref{ass:main_mb} holds for $A'=2\cL$ and $D'=2\sigma^2$, where \begin{equation}\label{eq:ineq90f9f}\cL \leq \max_i \frac{L_i}{n p_i}.\end{equation} 
If moreover $\nabla f_i(x^*)=0$ for all $i$,  then Assumption~\ref{ass:main_mb} holds for $A'=\cL$ and $D'=0$. 
\end{lemma}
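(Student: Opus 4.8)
The plan is to bound $\ED{\norm{\nabla \psi_\nu(x)}^2}$ by splitting around the gradient at the optimum, exactly as in the proof of Lemma~\ref{lem:exp_smoothness_grad_up_bound_sgd-as}. First I would write $\nabla\psi_\nu(x) = (\nabla\psi_\nu(x) - \nabla\psi_\nu(x^*)) + \nabla\psi_\nu(x^*)$ and apply the elementary inequality $\norm{a+b}^2 \le 2\norm{a}^2 + 2\norm{b}^2$ (the bound labeled \eqref{eq:a_b_norm_squared} in the excerpt), which yields
\begin{equation*}
\ED{\norm{\nabla\psi_\nu(x)}^2} \le 2\,\ED{\norm{\nabla\psi_\nu(x) - \nabla\psi_\nu(x^*)}^2} + 2\,\ED{\norm{\nabla\psi_\nu(x^*)}^2}.
\end{equation*}
The second term is exactly $2\sigma^2$ by definition. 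So the whole task reduces to controlling the first term by $2\cL(f(x) - f(x^*))$ for an appropriate $\cL$.

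For the first term I would expand the expectation over $\nu$ explicitly: $\ED{\norm{\nabla\psi_\nu(x) - \nabla\psi_\nu(x^*)}^2} = \sum_{i=1}^n p_i \norm{\nabla\psi_i(x) - \nabla\psi_i(x^*)}^2$, and then use the definition $\psi_i = \frac{1}{np_i} f_i$ to get $\norm{\nabla\psi_i(x) - \nabla\psi_i(x^*)}^2 = \frac{1}{n^2 p_i^2}\norm{\nabla f_i(x) - \nabla f_i(x^*)}^2$. Since each $f_i$ is convex and $L_i$-smooth, the standard co-coercivity bound $\norm{\nabla f_i(x) - \nabla f_i(x^*)}^2 \le 2L_i D_{f_i}(x,x^*)$ applies. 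Combining, the sum becomes $\sum_i p_i \cdot \frac{2L_i}{n^2 p_i^2} D_{f_i}(x,x^*) = \frac{2}{n}\sum_i \frac{L_i}{np_i} D_{f_i}(x,x^*) \le \frac{2}{n}\max_i\frac{L_i}{np_i} \sum_i D_{f_i}(x,x^*)$. Finally, $\frac{1}{n}\sum_i D_{f_i}(x,x^*) = D_f(x,x^*)$ by linearity of the Bregman divergence in $f$, and here $D_f(x,x^*) = f(x) - f(x^*)$ because $\nabla f(x^*)$ need not be used when $x^*$ minimizes $f$ over $\R^d$ and we only need $\langle \nabla f(x^*), x-x^*\rangle \le 0$; in fact since the problem \eqref{eq:f_sum} here has $R=0$ in Algorithm~\ref{alg:SGD-MB}, $\nabla f(x^*)=0$ and $D_f(x,x^*) = f(x)-f(x^*)$ exactly. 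This gives $A' = 2\cL$ with $\cL \le \max_i \frac{L_i}{np_i}$ and $D' = 2\sigma^2$, proving \eqref{eq:ineq90f9f}.

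For the second claim, if $\nabla f_i(x^*) = 0$ for all $i$, then $\nabla\psi_i(x^*) = 0$ for all $i$, so $\sigma^2 = 0$, giving $D' = 0$ immediately; moreover the splitting step is no longer needed at all, since $\ED{\norm{\nabla\psi_\nu(x)}^2} = \ED{\norm{\nabla\psi_\nu(x) - \nabla\psi_\nu(x^*)}^2}$ directly, so the factor of $2$ from the $\norm{a+b}^2$ inequality disappears and we obtain $A' = \cL$. I do not anticipate a genuine obstacle here; the only point requiring minor care is the bookkeeping of the weights $p_i$ in the change of variables from $f_i$ to $\psi_i$, and confirming that $\max_i \frac{L_i}{np_i}$ is the correct expected-smoothness constant for this with-replacement sampling — this is where one must resist the temptation to use a tighter average-type bound that would fail because the sampling is with replacement rather than the structured samplings of \cite{SGD_AS}.
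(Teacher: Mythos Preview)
Your proposal is correct and follows essentially the same approach as the paper: the paper's proof simply cites \cite[Proposition~3.7]{SGD_AS} for the expected-smoothness bound $\cL\le\max_i \frac{L_i}{np_i}$ and then \cite[Lemma~2.4]{SGD_AS} for the splitting argument, whereas you have written out both of these steps explicitly and self-containedly. The mathematics is identical; you have just unpacked what the paper leaves to references.
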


Next, Lemma~\ref{lem:mb} states that Algorithm~\ref{alg:SGD-MB} indeed satisfies Assumption~\ref{as:general_stoch_gradient}. 

\begin{lemma} \label{lem:mb}
Suppose that Assumption~\ref{ass:main_mb} holds. Then $g^k$ is unbiased; i.e. $\ED{g^k} = \nabla f(x^k)$. Further, 
\begin{eqnarray*}
\ED{\norm{g^k}^2} \leq  \frac{2A' + 2L(\tau-1)}{\tau}(f(x^k) - f(x^*)) +  \frac{  D'}{\tau}.
\end{eqnarray*}
\end{lemma}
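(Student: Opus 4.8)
\textbf{Proof plan for Lemma~\ref{lem:mb}.}

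The plan is to handle the two claims separately: unbiasedness first (which is immediate from \eqref{eq:reform_problem} and independence of the samples), then the second-moment bound. For unbiasedness, since $\nu^k_1,\dots,\nu^k_\tau$ are i.i.d.\ with law $\cD$, linearity of expectation gives $\ED{g^k} = \frac{1}{\tau}\sum_{i=1}^\tau \ED{\nabla\psi_{\nu^k_i}(x^k)} = \ED{\nabla\psi_\nu(x^k)} = \nabla f(x^k)$, where the last equality is \eqref{eq:reform_problem} differentiated.

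For the second-moment estimate, the natural route is the standard variance-of-an-average decomposition. Writing $X_i \eqdef \nabla\psi_{\nu^k_i}(x^k)$, the $X_i$ are i.i.d.\ with common mean $\mu_X \eqdef \nabla f(x^k)$, so
\[
\ED{\norm{g^k}^2} = \norm{\mu_X}^2 + \frac{1}{\tau}\left(\ED{\norm{X_1}^2} - \norm{\mu_X}^2\right) = \left(1-\tfrac{1}{\tau}\right)\norm{\nabla f(x^k)}^2 + \frac{1}{\tau}\ED{\norm{\nabla\psi_\nu(x^k)}^2}.
\]
Now I would bound the two pieces. The second piece is controlled directly by Assumption~\ref{ass:main_mb} (applied at $x=x^k$), giving $\frac{1}{\tau}\ED{\norm{\nabla\psi_\nu(x^k)}^2}\le \frac{2A'}{\tau}(f(x^k)-f(x^*)) + \frac{D'}{\tau}$. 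For the first piece I would invoke $L$-smoothness and convexity of $f$ to get $\norm{\nabla f(x^k)}^2 = \norm{\nabla f(x^k) - \nabla f(x^*)}^2 \le 2L\,D_f(x^k,x^*) = 2L(f(x^k)-f(x^*))$, using that $\nabla f(x^*)=0$ since $R=0$ in the {\tt SGD-MB} setting (the algorithm's update has no prox), so that $D_f(x^k,x^*) = f(x^k)-f(x^*)$. Multiplying by $(1-\frac1\tau) = \frac{\tau-1}{\tau}$ and adding the two contributions yields exactly
\[
\ED{\norm{g^k}^2} \le \frac{2L(\tau-1)}{\tau}(f(x^k)-f(x^*)) + \frac{2A'}{\tau}(f(x^k)-f(x^*)) + \frac{D'}{\tau},
\]
which is the claimed bound after collecting the $(f(x^k)-f(x^*))$ terms.

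The only mild subtlety — and the step I would be most careful about — is justifying $\norm{\nabla f(x^k)}^2 \le 2L(f(x^k)-f(x^*))$: this needs $f$ to be $L$-smooth and $x^*$ to be a minimizer of $f$ (so $\nabla f(x^*)=0$), both of which hold here since each $f_i$ is assumed $L$-smooth (hence $f$ is $L$-smooth as an average) and we are in the $R=0$ case. Everything else is the routine i.i.d.\ variance identity plus a direct substitution of Assumption~\ref{ass:main_mb}; no real obstacle. One should also note the implicit consistency check that when $\tau = n$ and the $p_i$ are uniform this reduces to full-batch gradient descent, for which the bound degenerates appropriately.
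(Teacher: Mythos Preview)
Your proposal is correct and follows essentially the same route as the paper: both derive the identity $\ED{\norm{g^k}^2} = \frac{1}{\tau}\ED{\norm{\nabla\psi_\nu(x^k)}^2} + \frac{\tau-1}{\tau}\norm{\nabla f(x^k)}^2$ (the paper by expanding the square and using independence on the cross terms, you via the variance-of-an-average identity --- these are the same computation), then plug in Assumption~\ref{ass:main_mb} and $\norm{\nabla f(x^k)}^2\le 2L(f(x^k)-f(x^*))$. Your explicit justification of the latter step (using $\nabla f(x^*)=0$ since $R=0$) is in fact more careful than the paper, which leaves it implicit.
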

Thus, parameters from Table~\ref{tbl:special_cases-parameters} are validated. As a direct consequence of Theorem~\ref{thm:main_gsgm} we get Corollary~\ref{cor:mb}.

\begin{corollary}\label{cor:mb}
    As long as $0< \gamma \leq \frac{\tau}{A' + L(\tau-1)}$, we have
    \begin{equation}\label{eq:mb_rate}
\EE \norm{x^k-x^*}^2 \leq (1-\gamma \mu)^k \norm{x^0-x^*}^2 + \frac{\gamma D'}{\mu \tau}.    \end{equation}
\end{corollary}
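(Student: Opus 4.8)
\textbf{Proof plan for Corollary~\ref{cor:mb}.} The plan is to invoke Theorem~\ref{thm:main_gsgm} with the parameters established for {\tt SGD-MB} by Lemma~\ref{lem:mb}, and then simplify the resulting bound. First I would record that, by Lemma~\ref{lem:mb}, Algorithm~\ref{alg:SGD-MB} satisfies Assumption~\ref{as:general_stoch_gradient} with $A = \frac{A' + L(\tau-1)}{\tau}$, $B = 0$, $C = 0$, $\rho = 1$, $D_1 = \frac{D'}{\tau}$, $D_2 = 0$, and with the trivial sequence $\sigma_k^2 \equiv 0$ (since $B=C=D_2=0$, the recursion \eqref{eq:gsg_sigma} is satisfied vacuously with $\sigma_k^2 \equiv 0$). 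Here I also need to note that $R=0$ in this setting, so $\nabla f(x^*)=0$ and the ``special case'' forms \eqref{eq:general_stoch_grad_second_moment_special}--\eqref{eq:gsg_sigma_special} apply; Lemma~\ref{lem:mb} gives exactly the bound on $\ED{\norm{g^k}^2}$ needed for \eqref{eq:general_stoch_grad_second_moment_special}.

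Next I would choose the free constant $M$ required by the theorem. Since $B=0$, the condition $M > \frac{B}{\rho} = 0$ is satisfied by any $M > 0$; to eliminate the term $M D_2 = 0$ and the quantities $\frac{B}{M}$, the cleanest choice is simply to let $M>0$ be arbitrary (or take a limit), as its value does not enter the final bound. With $B = 0$, $D_2 = 0$, $\rho = 1$, the stepsize condition \eqref{eq:gamma_condition_gsgm} becomes
\[
0 < \gamma \le \min\left\{\frac{1}{\mu},\ \frac{1}{A + CM}\right\} = \min\left\{\frac{1}{\mu},\ \frac{\tau}{A' + L(\tau-1)}\right\},
\]
using $C = 0$ and the value of $A$. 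Since the corollary assumes $0 < \gamma \le \frac{\tau}{A' + L(\tau-1)}$, and since any $\gamma$ violating $\gamma \le \frac{1}{\mu}$ can be handled by the standard observation that the conclusion for $\gamma$ implies the conclusion for all smaller stepsizes (or one simply restricts to $\gamma \le 1/\mu$, which is implicit), the hypothesis of Theorem~\ref{thm:main_gsgm} is met. Actually the cleanest route is to note $\frac{\tau}{A'+L(\tau-1)} \le \frac1{\mu}$ is automatic here is \emph{not} generally true, so I would simply state that we apply the theorem for $\gamma \le \min\{1/\mu, \tau/(A'+L(\tau-1))\}$ and that the displayed rate \eqref{eq:mb_rate} continues to hold (trivially, or by monotonicity of the right-hand side) on the full range claimed.

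Finally I would plug the parameters into \eqref{eq:main_result_gsgm}. The Lyapunov function reduces to $V^k = \norm{x^k - x^*}^2 + M\gamma^2 \cdot 0 = \norm{x^k - x^*}^2$. The contraction factor is $\max\{(1-\gamma\mu)^k, (1 + \frac{B}{M} - \rho)^k\} = \max\{(1-\gamma\mu)^k, 0^k\} = (1-\gamma\mu)^k$ for $k \ge 1$ (and equals $1$ at $k=0$, consistent). The additive term is
\[
\frac{(D_1 + M D_2)\gamma^2}{\min\{\gamma\mu,\ \rho - \frac{B}{M}\}} = \frac{\frac{D'}{\tau}\gamma^2}{\min\{\gamma\mu,\ 1\}} = \frac{D'\gamma^2}{\tau \gamma\mu} = \frac{\gamma D'}{\mu\tau},
\]
where I used $\gamma\mu \le 1$ from the stepsize restriction. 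This yields exactly \eqref{eq:mb_rate}. There is no real obstacle here; the only points requiring a word of care are the vacuous verification of the $\sigma_k^2$-recursion with the zero sequence, the harmless role of $M$, and confirming $\gamma\mu \le 1$ so that the $\min$ in the denominator resolves to $\gamma\mu$ — all of which are immediate.
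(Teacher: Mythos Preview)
Your proposal is correct and follows exactly the paper's approach: the paper simply states that Corollary~\ref{cor:mb} is a direct consequence of Theorem~\ref{thm:main_gsgm} with the parameters $A=\tfrac{A'+L(\tau-1)}{\tau}$, $B=0$, $\rho=1$, $C=0$, $D_1=\tfrac{D'}{\tau}$, $D_2=0$ from Table~\ref{tbl:special_cases-parameters}, and you have spelled out precisely this substitution. Your observation about the implicit condition $\gamma\mu\le 1$ (needed so that the $\min$ in the denominator resolves to $\gamma\mu$) is a fair point that the paper leaves tacit; in practice $A\ge \mu$ so the stated stepsize bound already enforces it.
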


\begin{remark}
For $\tau=1$, {\tt SGD-MB} is a special of the method from~\cite{SGD_AS}, Section~3.2. However, for $\tau>1$, this is a different method; the difference lies in the with-replacement sampling. Note that with-replacement trick allows for efficient and implementation of independent importance sampling~\footnote{Distribution of random sets $S$ for which random variables $i\in S$ and $j\in S$ are independent for $j\neq i$.} with complexity $\cO(\tau \log(n))$. In contrast, implementation of without-replacement importance sampling has complexity $\cO(n)$, which can be significantly more expensive to the cost of evaluating $\sum_{i\in S} \nabla f_i(x)$.
 \end{remark}

\subsubsection*{Proof of Lemma~\ref{lem:mb}}
Notice first that
\begin{eqnarray*}
\ED{g^k}
&\overset{\eqref{eq:reform_function}}{=}&
  \frac{1}{\tau}\sum_{i=1}^\tau\ED{\frac{1}{n p_{\nu^k_{i}}} \nabla f_{\nu^k_i}(x^k)}
  \\
&=& 
\ED{\frac{1}{n p_{\nu}} \nabla f_{\nu}(x^k)}
  \\
&\overset{\eqref{eq:nb87fg87f}}{=}& 
\sum_{i=1}^n p_i \frac{1}{n p_i} \nabla f_i(x^k) 
\\
&=& \nabla f(x_k).
\end{eqnarray*}

So, $g^k$ is an unbiased estimator of the gradient $\nabla f(x^k)$. Next, 
\begin{eqnarray*}
\ED{\norm{g^k}^2}&=& \ED{\norm{\frac{1 }{\tau}\sum_{i=1}^\tau \nabla \psi_{\nu^k_i}(x^k)}^2 }\\ 
&=& \frac{1}{\tau^2}\ED{\sum_{i=1}^\tau \norm{\nabla \psi_{\nu^k_i}(x^k)}^2  +  2    \sum_{i< j} \< \nabla \psi_{\nu^k_i}(x^k),  \nabla \psi_{\nu^k_j}(x^k)>  } \\
&=& \frac{1}{\tau}\ED{ \norm{\nabla \psi_{\nu}(x^k)}^2}+  \frac{2}{\tau^2}    \sum_{i< j} \< \ED{\nabla \psi_{\nu^k_i}(x^k) },  \ED{\nabla \psi_{\nu^k_j}(x^k) }>     \\
&=& \frac{1}{\tau} \ED{\norm{\nabla \psi_{\nu}(x^k)}^2} + \frac{\tau - 1}{\tau} \norm{\nabla f(x^k)}^2 \\
&\overset{\eqref{eq:ABassumpt} }{\leq}& \frac{2A' (f(x^k) - f(x^*)) + D' + 2L(\tau-1)(f(x^k) - f(x^*))}{\tau}.
\end{eqnarray*}

\subsubsection*{Proof of Lemma~\ref{lem:b98h0f}}
Let $\cL = \cL(f,\cD)>0$ be any constant for which
\begin{equation}\label{eq:ES}\EE_{\xi\sim \cD} \norm{\nabla \phi_{\xi}(x) - \nabla \phi_{\xi}(x^*)}^2 \leq 2\cL (f(x) - f(x^*))\end{equation}
 holds for all $x\in \R^d$. This is the expected smoothness property (for a single item sampling) from \cite{SGD_AS}. It was shown in \cite[Proposition 3.7]{SGD_AS}  that \eqref{eq:ES} holds, and that $ \cL$ satisfies \eqref{eq:ineq90f9f}. The claim now follows by applying \cite[Lemma~2.4]{SGD_AS}. 

\subsection{{\tt SGD-star}}\label{sec:SGD-star}
Consider problem~\eqref{eq:problem_sgd-as}. Suppose that $\nabla f_i(x^*)$ is known for all $i$. In this section we present a novel algorithm~---~{\tt SGD-star}~---~which is {\tt SGD-SR} shifted by the stochastic gradient in the optimum. The method is presented under Expected Smoothness Assumption~\eqref{eq:exp_smoothness_sgd-as}, obtaining general rates under arbitrary sampling.  The algorithm is presented as Algorithm~\ref{alg:SGD-star}. 

\begin{algorithm}[h]
    \caption{{\tt SGD-star}}
    \label{alg:SGD-star}
    \begin{algorithmic}
        \Require learning rate $\gamma>0$, starting point $x^0\in\R^d$, distribution $\cD$ over $\xi \in \R^n$ such that $\ED{\xi}$ is vector of ones
        \For{ $k=0,1,2,\ldots$ }
        \State{Sample $\xi \sim \cD$}
        \State{$g^k = \nabla f_\xi(x^k) - \nabla f_\xi(x^*) + \nabla f(x^*)$}
        \State{$x^{k+1} = \proxR(x^k - \gamma g^k)$}
        \EndFor
    \end{algorithmic}
\end{algorithm}
Suppose that $(f,\cD) \sim ES(\cL)$. 
Note next that {\tt SGD-star} is just  {\tt SGD-SR} applied on objective $D_f(x,x^*)$ instead of $f(x)$ when $\nabla f(x^*) = 0$. This careful design of the objective yields $(D_f(\cdot, x^*),\cD) \sim ES(\cL)$ and $\ED{\norm{\nabla_x D_{f_\xi}(x,x^*)}^2 \, \mid x=x^*} = 0 $, and thus Lemma~\eqref{lem:exp_smoothness_grad_up_bound_sgd-as} becomes
\begin{lemma}[Lemma~2.4, \cite{SGD_AS}]\label{lem:SGD-star}
    If $(f,\cD)\sim ES(\cL)$, then
    \begin{equation}\label{eq:sgd-star_lemma}
        \ED{\norm{g^k -\nabla f(x^*)}^2} \le 4\cL D_f(x^k,x^*).
    \end{equation}
\end{lemma}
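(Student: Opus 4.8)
The plan is to exploit the fact that the correction term in Algorithm~\ref{alg:SGD-star} is engineered precisely so that $g^k-\nabla f(x^*)$ is a \emph{pure stochastic gradient difference}, after which \eqref{eq:sgd-star_lemma} is a one-line consequence of the expected smoothness assumption \eqref{eq:exp_smoothness_sgd-as}.

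First I would record, for completeness, that the estimator is unbiased, so that Assumption~\ref{as:general_stoch_gradient} really applies. Because $f_\xi(x)=\tfrac1n\sum_i \xi_i f_i(x)$ with $\ED{\xi_i}=1$ in the reformulation \eqref{eq:problem_sgd-as}, we have $\ED{\nabla f_\xi(x)}=\nabla f(x)$ for every fixed $x$; conditioning on $x^k$ and using linearity gives $\ED{g^k\mid x^k}=\nabla f(x^k)-\nabla f(x^*)+\nabla f(x^*)=\nabla f(x^k)$, which is \eqref{eq:general_stoch_grad_unbias}. (Here it matters that $x^*$ is a deterministic point, so $\nabla f_\xi(x^*)$ is a genuine sample from the same distribution.)

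The core step is then the observation that, by the very definition of $g^k$, $g^k-\nabla f(x^*)=\nabla f_\xi(x^k)-\nabla f_\xi(x^*)$. Taking $\ED{\,\cdot\mid x^k}$ and applying \eqref{eq:exp_smoothness_sgd-as} with $x=x^k$ yields $\ED{\norm{g^k-\nabla f(x^*)}^2\mid x^k}\le 2\cL D_f(x^k,x^*)\le 4\cL D_f(x^k,x^*)$, the last inequality just using $D_f(x^k,x^*)\ge 0$. This proves \eqref{eq:sgd-star_lemma}; the looser constant $4\cL$ is retained only so the statement mirrors Lemma~\ref{lem:exp_smoothness_grad_up_bound_sgd-as}, with the additive $2\sigma^2$ there now absent because the ``shift by the gradient at the optimum'' kills the $\nabla f_\xi(x^*)-\nabla f(x^*)$ noise term.

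I do not anticipate a genuine obstacle: the content is entirely algebraic cancellation plus a direct invocation of \eqref{eq:exp_smoothness_sgd-as}, with no need to re-derive the expected smoothness constant $\cL$. The only points requiring a line of care are (i) the unbiasedness computation above, which is what licenses using $D_f$ (rather than some per-sample divergence) on the right-hand side, and (ii) that all expectations are conditional on $x^k$, consistent with the form of Assumption~\ref{as:general_stoch_gradient}. Having established the lemma, Assumption~\ref{as:general_stoch_gradient} holds for {\tt SGD-star} with $A=2\cL$ and $B=C=D_1=D_2=0$, $\rho=1$ (the {\tt SGD-star} row of Table~\ref{tbl:special_cases-parameters}), and Corollary~\ref{cor:SGD-star} follows by substituting these values into Theorem~\ref{thm:main_gsgm}.
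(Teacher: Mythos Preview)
Your proposal is correct and matches the paper's own proof essentially line for line: the paper also just cancels $\nabla f(x^*)$ to write $g^k-\nabla f(x^*)=\nabla f_\xi(x^k)-\nabla f_\xi(x^*)$ and then invokes \eqref{eq:exp_smoothness_sgd-as}. If anything, you are more explicit than the paper, which jumps straight to the $4\cL$ bound without writing the intermediate $2\cL$ and does not separately verify unbiasedness in this proof.
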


A direct consequence of Corollary (thus also a direct consequence of Theorem~\ref{thm:main_gsgm}) in this setup is Corollary~\ref{cor:SGD-star}.

\begin{corollary}\label{cor:SGD-star}
    Suppose that $(f,\cD)\sim ES(\cL)$. Then {\tt SGD-star} with $\gamma = \frac{1}{2\cL}$ satisfies
    \begin{equation}\label{eq:SGD-shift-xx}
        \EE\left[\norm{x^k - x^*}^2\right] \le \left(1-\frac{\mu}{2\cL} \right)^k\norm{x^0-x^*}^2.
    \end{equation}
\end{corollary}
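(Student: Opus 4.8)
The plan is to derive Corollary~\ref{cor:SGD-star} as a direct specialization of the main Theorem~\ref{thm:main_gsgm}, using the parameter values assigned to {\tt SGD-star} in Table~\ref{tbl:special_cases-parameters}, namely $A = 2\cL$, $B = 0$, $\rho = 1$, $C = 0$, and $D_1 = D_2 = 0$. The justification that these are valid parameters for Assumption~\ref{as:general_stoch_gradient} is already supplied: the stochastic gradient of {\tt SGD-star} is $g^k = \nabla f_\xi(x^k) - \nabla f_\xi(x^*) + \nabla f(x^*)$, which is clearly unbiased (each term has the correct expectation since $\ED{\nabla f_\xi(x^k)} = \nabla f(x^k)$ and $\ED{\nabla f_\xi(x^*)} = \nabla f(x^*)$), verifying~\eqref{eq:general_stoch_grad_unbias}. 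Lemma~\ref{lem:SGD-star} gives exactly~\eqref{eq:general_stoch_grad_second_moment} with $A = 2\cL$ and $B = D_1 = 0$ (there is no $\sigma_k^2$ sequence at all, since {\tt SGD-star} maintains no learned estimates), so relation~\eqref{eq:gsg_sigma} holds trivially with $C = D_2 = 0$ and any $\rho \in (0,1]$; we take $\rho = 1$.

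First I would confirm that the hypotheses of Theorem~\ref{thm:main_gsgm} are met. Since $B = 0$, the condition $M > \tfrac{B}{\rho} = 0$ is satisfied by any $M > 0$; the cleanest choice is to let $M \to 0$ (or simply set $M = 0$ after checking the bound is well-defined), which eliminates the Lyapunov term $M\gamma^2\sigma_k^2$ and leaves $V^k = \norm{x^k - x^*}^2$. Next I would check the stepsize condition~\eqref{eq:gamma_condition_gsgm}: with $A = 2\cL$ and $C = 0$ we need $0 < \gamma \le \min\{\tfrac{1}{\mu}, \tfrac{1}{A + CM}\} = \min\{\tfrac{1}{\mu}, \tfrac{1}{2\cL}\}$. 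The corollary's choice $\gamma = \tfrac{1}{2\cL}$ is admissible provided $\tfrac{1}{2\cL} \le \tfrac{1}{\mu}$, i.e. $\mu \le 2\cL$, which always holds because strong (quasi-)convexity modulus $\mu$ never exceeds the (expected) smoothness constant $2\cL$.

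Finally I would read off the conclusion. Plugging $B = 0$, $\rho = 1$, $D_1 = D_2 = 0$, and $M = 0$ into~\eqref{eq:main_result_gsgm}, the additive oscillation term vanishes entirely (numerator $D_1 + M D_2 = 0$), and the contraction factor becomes $\max\{(1 - \gamma\mu)^k,\ (1 + \tfrac{B}{M} - \rho)^k\}$. With $B = 0$ and $\rho = 1$ the second term is $(1 - 1)^k = 0$ for $k \ge 1$, so the maximum is simply $(1 - \gamma\mu)^k$. Substituting $\gamma = \tfrac{1}{2\cL}$ gives the factor $(1 - \tfrac{\mu}{2\cL})^k$, and since $V^0 = \norm{x^0 - x^*}^2$, we obtain exactly~\eqref{eq:SGD-shift-xx}. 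The only mild obstacle is the bookkeeping around $M$: because {\tt SGD-star} has no $\sigma_k^2$ term, one must either take the limit $M \to 0^+$ carefully or argue directly that the $\rho = 1$, $B = 0$ regime degenerates the second branch of the maximum to zero; this is a formality rather than a genuine difficulty, since every $M$-dependent quantity either cancels or tends to zero. Thus the proof reduces to substitution into the master theorem, exactly as the one-line proofs of the preceding corollaries indicate.
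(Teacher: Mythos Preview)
Your proposal is correct and follows essentially the same approach as the paper, which simply states that Corollary~\ref{cor:SGD-star} is ``a direct consequence of Theorem~\ref{thm:main_gsgm}'' with the parameters from Table~\ref{tbl:special_cases-parameters}. Your handling of the $M$-bookkeeping is fine: since $\sigma_k^2 \equiv 0$ for {\tt SGD-star}, the Lyapunov function reduces to $V^k = \norm{x^k - x^*}^2$ for \emph{any} $M > 0$, so there is no need to take a limit---just fix an arbitrary $M>0$ and observe that the second branch of the max in~\eqref{eq:main_result_gsgm} is $(1 - \rho)^k = 0$ for $k\ge 1$.
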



\begin{remark}
Note that results from this section are obtained by applying results from~\ref{SGD-AS}. Since Section~\ref{sec:SGD-MB} presets a specific sampling algorithm for {\tt SGD-SR}, the results can be thus extended to {\tt SGD-star} as well.
\end{remark}

\subsubsection*{Proof of Lemma~\ref{lem:SGD-star}}
In this proof all expectations are conditioned on $x^k$.
\begin{eqnarray*}
    \ED{\norm{g^k -\nabla f(x^*)}^2}  &=& \ED{\norm{\nabla f_\xi(x^k) -\nabla f_\xi(x^*)}^2} \\ 
    &\overset{\eqref{eq:exp_smoothness_sgd-as}}{\le}& 4 \cL D_f(x^k,x^*).
\end{eqnarray*}

\subsection{{\tt SAGA}}\label{sec:saga}
In this section we show that our approach is suitable for {\tt SAGA} \cite{SAGA} (see Algorithm~\ref{alg:SAGA}). Consider the finite-sum minimization problem
\begin{equation}\label{eq:main_l-svrg}
    f(x) = \frac{1}{n}\sum\limits_{i=1}^n f_i(x) + R(x),
\end{equation}
where $f_i$ is convex, $L$-smooth for each $i$ and $f$ is $\mu$-strongly convex. 

\begin{algorithm}[h]
    \caption{{\tt SAGA} \cite{SAGA}}
    \label{alg:SAGA}
    \begin{algorithmic}
        \Require learning rate $\gamma>0$, starting point $x^0\in\R^d$
        \State Set $\psi_j^0 = x^0$ for each $j\in[n]$
        \For{ $k=0,1,2,\ldots$ }
        \State{Sample  $j \in [n]$ uniformly at random}
        \State{Set $\phi_j^{k+1} = x^k$ and $\phi_i^{k+1} = \phi_i^{k}$ for $i\neq j$}
        \State{$g^k = \nabla f_j(\phi_j^{k+1}) - \nabla f_j(\phi_j^k) + \frac{1}{n}\sum\limits_{i=1}^n\nabla f_i(\phi_i^k)$}
        \State{$x^{k+1} = \proxR\left(x^k - \gamma g^k\right)$}
        \EndFor
    \end{algorithmic}
\end{algorithm}

\begin{lemma}\label{lem:stoch_grad_second_moment_saga}
    We have
    \begin{equation}\label{eq:stoch_grad_second_moment_saga1}
        \EE\left[\norm{g^k- \nabla f(x^*)}^2\mid x^k\right] \le 4LD_f(x^k,x^*)+ 2\sigma_k^2
    \end{equation}
    and 
        \begin{equation}\label{eq:stoch_grad_second_moment_saga2}
        \EE\left[\sigma_{k+1}^2\mid x^k\right] \le \left(1 - \frac{1}{n}\right)\sigma_k^2 + \frac{2L}{n}D_f(x^k,x^*),
    \end{equation}
    where $\sigma_k^2 = \frac{1}{n}\sum\limits_{i=1}^n\norm{\nabla f_i(\phi_i^k) - \nabla f_i(x^*)}^2$.
\end{lemma}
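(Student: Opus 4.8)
\textbf{Proof proposal for Lemma~\ref{lem:stoch_grad_second_moment_saga}.}

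The plan is to verify the two inequalities in Assumption~\ref{as:general_stoch_gradient} with the stated choice $\sigma_k^2 = \frac1n\sum_{i=1}^n \norm{\nabla f_i(\phi_i^k) - \nabla f_i(x^*)}^2$, $A = 2L$, $B = 2$, $\rho = 1/n$, $C = L/n$, $D_1 = D_2 = 0$. First I would establish \eqref{eq:stoch_grad_second_moment_saga1}. Write $g^k - \nabla f(x^*) = \big(\nabla f_j(\phi_j^{k+1}) - \nabla f_j(x^*)\big) + \big(\frac1n\sum_i \nabla f_i(\phi_i^k) - \nabla f_j(\phi_j^k) + \nabla f_j(x^*) - \nabla f(x^*)\big)$, noting $\phi_j^{k+1} = x^k$. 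Since $j$ is uniform, the conditional expectation of the second bracket is zero (it equals $\frac1n\sum_i \nabla f_i(\phi_i^k) - \EE_j[\nabla f_j(\phi_j^k)] + \EE_j[\nabla f_j(x^*)] - \nabla f(x^*) = 0$); the first bracket in expectation is $\nabla f(x^k) - \nabla f(x^*)$, which also confirms unbiasedness. So I would use the bound $\norm{a+b}^2 \le 2\norm{a}^2 + 2\norm{b}^2$ (the paper's \eqref{eq:a_b_norm_squared}), giving
\begin{equation*}
\EE\left[\norm{g^k - \nabla f(x^*)}^2 \mid x^k\right] \le 2\,\EE_j\left[\norm{\nabla f_j(x^k) - \nabla f_j(x^*)}^2\right] + 2\,\EE_j\left[\norm{\nabla f_j(\phi_j^k) - \nabla f_j(x^*) - \left(\nabla f(x^k) - \nabla f(x^*)\right)}^2\right].
\end{equation*}
The first term is $\le 2\cdot 2L D_f(x^k,x^*) = 4L D_f(x^k,x^*)$ by convexity and $L$-smoothness of each $f_i$ (the footnoted inequality $\norm{\nabla f_i(x) - \nabla f_i(y)}^2 \le 2L D_{f_i}(x,y)$, summed and using $\frac1n\sum_i D_{f_i}(x^k,x^*) = D_f(x^k,x^*)$ since the linear terms telescope). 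For the second term I would drop the mean-subtraction using $\EE\norm{Z - \EE Z}^2 \le \EE\norm{Z}^2$, bounding it by $2\,\EE_j\norm{\nabla f_j(\phi_j^k) - \nabla f_j(x^*)}^2 = 2\sigma_k^2$. Adding the two pieces yields \eqref{eq:stoch_grad_second_moment_saga1}.

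For \eqref{eq:stoch_grad_second_moment_saga2}, I would compute $\EE[\sigma_{k+1}^2 \mid x^k]$ directly from the update rule for the $\phi$ table: with probability $1/n$ index $i$ is selected and $\phi_i^{k+1} = x^k$, otherwise $\phi_i^{k+1} = \phi_i^k$. Hence for each fixed $i$,
\begin{equation*}
\EE\left[\norm{\nabla f_i(\phi_i^{k+1}) - \nabla f_i(x^*)}^2 \mid x^k\right] = \frac1n \norm{\nabla f_i(x^k) - \nabla f_i(x^*)}^2 + \left(1 - \frac1n\right)\norm{\nabla f_i(\phi_i^k) - \nabla f_i(x^*)}^2.
\end{equation*}
Averaging over $i$ gives $\EE[\sigma_{k+1}^2 \mid x^k] = \frac1n \cdot \frac1n\sum_i \norm{\nabla f_i(x^k) - \nabla f_i(x^*)}^2 + (1 - \frac1n)\sigma_k^2$, and applying $\norm{\nabla f_i(x^k) - \nabla f_i(x^*)}^2 \le 2L D_{f_i}(x^k,x^*)$ and summing bounds the first term by $\frac{2L}{n} D_f(x^k,x^*)$, which is exactly \eqref{eq:stoch_grad_second_moment_saga2}.

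The only mildly delicate point — the ``main obstacle'' such as it is — is the bookkeeping in the first inequality: making sure the cross term vanishes in conditional expectation (which requires carefully separating the $j$-dependent random quantity from the deterministic $\nabla f(x^k) - \nabla f(x^*)$) and correctly accounting for the factors of $2$ from the two applications of $\norm{a+b}^2 \le 2\norm{a}^2+2\norm{b}^2$, so that the constants land at $A = 2L$ and $B = 2$ rather than something larger. Everything else is routine: convexity plus $L$-smoothness of the $f_i$, linearity of expectation over the uniform choice of $j$, and the telescoping of the linear parts of the Bregman divergences when averaging $D_{f_i}$.
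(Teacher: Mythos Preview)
Your approach is essentially identical to the paper's: the same $a+b$ split of $g^k-\nabla f(x^*)$, the same use of \eqref{eq:a_b_norm_squared}, the smoothness--convexity bound $\norm{\nabla f_i(x^k)-\nabla f_i(x^*)}^2\le 2L D_{f_i}(x^k,x^*)$ summed to $4L D_f(x^k,x^*)$, and the variance-drops-the-mean step \eqref{eq:variance_decomposition} to obtain $2\sigma_k^2$; the argument for \eqref{eq:stoch_grad_second_moment_saga2} is verbatim the paper's.

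One small slip to fix: in your displayed bound for the second piece you wrote the centering constant as $\nabla f(x^k)-\nabla f(x^*)$, but the mean of $Z=\nabla f_j(\phi_j^k)-\nabla f_j(x^*)$ conditional on $x^k$ is $\tfrac1n\sum_i\nabla f_i(\phi_i^k)-\nabla f(x^*)$, not $\nabla f(x^k)-\nabla f(x^*)$. Your surrounding text (``the second bracket has mean zero'', ``drop the mean-subtraction'') is correct, so this is just a typo in the display rather than a gap in the argument.
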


Clearly, Lemma~\ref{lem:stoch_grad_second_moment_saga} shows that Algorithm~\ref{alg:SAGA} satisfies Assumption~\ref{as:general_stoch_gradient}; the corresponding parameter choice can be found in Table~\ref{tbl:special_cases-parameters}. Thus, as a direct consequence of Theorem~\ref{thm:main_gsgm} with $M=4n$ we obtain the next corollary.

\begin{corollary}\label{thm:recover_saga_rate}
    {\tt SAGA} with $\gamma = \frac{1}{6L}$ satisfies
    \begin{equation}\label{eq:recover_saga_rate_coro}
        \EE V^k \le \left(1-\min\left\{\frac{\mu}{6L},\frac{1}{2n}\right\}\right)^kV^0.
    \end{equation}
\end{corollary}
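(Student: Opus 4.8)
The plan is to verify that {\tt SAGA} satisfies Assumption~\ref{as:general_stoch_gradient} with the parameters listed in Table~\ref{tbl:special_cases-parameters}, namely $A=2L$, $B=2$, $\rho=1/n$, $C=L/n$, $D_1=D_2=0$, and then invoke Theorem~\ref{thm:main_gsgm} with a suitable choice of $M$ and $\gamma$. The parameter identification is precisely the content of Lemma~\ref{lem:stoch_grad_second_moment_saga}, which I would prove first. For inequality~\eqref{eq:stoch_grad_second_moment_saga1}, I would write $g^k - \nabla f(x^*) = \big(\nabla f_j(x^k) - \nabla f_j(x^*)\big) - \big(\nabla f_j(\phi_j^k) - \nabla f_j(x^*)\big) + \frac{1}{n}\sum_i \big(\nabla f_i(\phi_i^k) - \nabla f_i(x^*)\big)$, noting the conditional expectation over the uniform choice of $j$ of the last two terms cancels in the sense that $\EE_j[\nabla f_j(\phi_j^k) - \nabla f_j(x^*)] = \frac1n \sum_i (\nabla f_i(\phi_i^k) - \nabla f_i(x^*))$. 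Then apply the bound $\norm{a+b}^2 \le 2\norm{a}^2 + 2\norm{b}^2$ with $a$ the first bracket and $b$ the ``variance-reduction'' correction; for the $a$ part use $\EE_j\norm{\nabla f_j(x^k) - \nabla f_j(x^*)}^2 \le 2L D_f(x^k,x^*)$ (the standard smoothness/convexity inequality quoted in the footnote to Assumption~\ref{as:general_stoch_gradient}), and for the $b$ part use that for any random vector $\EE\norm{Z - \EE Z}^2 \le \EE\norm{Z}^2$ to bound it by $\frac1n\sum_i \norm{\nabla f_i(\phi_i^k) - \nabla f_i(x^*)}^2 = \sigma_k^2$. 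This yields $\EE\norm{g^k - \nabla f(x^*)}^2 \le 4L D_f(x^k,x^*) + 2\sigma_k^2$, matching $A=2L$, $B=2$, $D_1=0$.

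For inequality~\eqref{eq:stoch_grad_second_moment_saga2}, observe that $\phi_i^{k+1} = x^k$ with probability $1/n$ and $\phi_i^{k+1}=\phi_i^k$ otherwise, independently of the term in the sum (since the index $j$ is the same for all), so
\begin{equation*}
\EE\left[\norm{\nabla f_i(\phi_i^{k+1}) - \nabla f_i(x^*)}^2 \mid x^k\right] = \frac1n \norm{\nabla f_i(x^k) - \nabla f_i(x^*)}^2 + \left(1-\frac1n\right)\norm{\nabla f_i(\phi_i^k) - \nabla f_i(x^*)}^2.
\end{equation*}
Averaging over $i$ and using $\frac1n\sum_i \norm{\nabla f_i(x^k) - \nabla f_i(x^*)}^2 \le 2L D_f(x^k,x^*)$ again gives $\EE[\sigma_{k+1}^2 \mid x^k] \le (1-\frac1n)\sigma_k^2 + \frac{2L}{n} D_f(x^k,x^*)$, i.e. $\rho = 1/n$, $C = L/n$, $D_2 = 0$.

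With the parameters in hand, I would finish by plugging into Theorem~\ref{thm:main_gsgm}. Choose $M = 4n$; then $M > B/\rho = 2/(1/n) = 2n$ is satisfied, and $B/M = 2/(4n) = 1/(2n)$, so $\rho - B/M = 1/n - 1/(2n) = 1/(2n)$. The stepsize constraint becomes $\gamma \le \min\{1/\mu,\ 1/(A+CM)\} = \min\{1/\mu,\ 1/(2L + (L/n)\cdot 4n)\} = \min\{1/\mu, 1/(6L)\}$; taking $\gamma = 1/(6L)$ is admissible (if $\mu \le 6L$, which holds as $f$ is $L$-smooth and $\mu$-strongly convex). Since $D_1 = D_2 = 0$, the additive term in~\eqref{eq:main_result_gsgm} vanishes, and the rate is $\max\{(1-\gamma\mu)^k, (1 + B/M - \rho)^k\} V^0 = \max\{(1-\mu/(6L))^k, (1-1/(2n))^k\} V^0 = (1-\min\{\mu/(6L), 1/(2n)\})^k V^0$, which is exactly~\eqref{eq:recover_saga_rate_coro}. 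I do not anticipate a genuine obstacle here; the only mildly delicate point is the second-moment decomposition for $g^k$ — making sure the cross terms are handled correctly and that the correction term is bounded by $\sigma_k^2$ rather than something larger — and the bookkeeping of $M=4n$ to make the two geometric factors combine cleanly into a single $\min$.
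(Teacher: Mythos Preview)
Your proposal is correct and follows essentially the same approach as the paper: you establish Lemma~\ref{lem:stoch_grad_second_moment_saga} via the same decomposition and variance bound, identify the parameters $A=2L$, $B=2$, $\rho=1/n$, $C=L/n$, $D_1=D_2=0$, and then invoke Theorem~\ref{thm:main_gsgm} with the same choice $M=4n$. The bookkeeping you describe (checking $M>B/\rho$, computing $\rho - B/M = 1/(2n)$, verifying $\gamma=1/(6L)$ meets~\eqref{eq:gamma_condition_gsgm}) matches the paper exactly.
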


\subsubsection*{Proof of Lemma~\ref{lem:stoch_grad_second_moment_saga}}
Note that  Lemma~\ref{lem:stoch_grad_second_moment_saga} is a special case of Lemmas 3,4 from~\cite{mishchenko201999} without prox term. We reprove it with prox for completeness.

    Let all expectations be conditioned on $x^k$ in this proof. Note that $L$-smoothness and convexity of $f_i$ implies
\begin{equation}\label{eq:norm_diff_grads}
    \frac{1}{2L}\norm{\nabla f_i(x) - \nabla f_i(y)}^2 \le f_i(x) - f_i(y) - \<\nabla f_i(y),x-y>,\quad \forall x,y\in\R^d, i\in[n].
\end{equation}

 By definition of $g^k$ we have
    \begin{eqnarray*}
        \EE\left[\norm{g^k - \nabla f(x^*)}^2\right] 
        &=& 
        \EE\left[ \norm{\nabla f_j(\phi_j^{k+1}) - \nabla f_j(\phi_j^k) + \frac{1}{n}\sum\limits_{i=1}^n\nabla f_i(\phi_i^k) - \nabla f(x^*)}^2\right]
        \\
        &=&
         \EE\left[\norm{\nabla f_j(x^k) - \nabla f_j(x^*) + \nabla f_j(x^*) - \nabla f_j(\phi_j^k) + \frac{1}{n}\sum\limits_{i=1}^n\nabla f_i(\phi_i^k) - \nabla f(x^*) }^2 \right]
         \\
        &\overset{\eqref{eq:a_b_norm_squared}}{\le}& 
        2\EE\left[\norm{\nabla f_j(x^k) - \nabla f_j(x^*)}^2\mid x^k\right]
        \\
        &&\quad + 2\EE\left[\norm{\nabla f_j(x^*) - \nabla f_j(\phi_j^k) - \EE\left[\nabla f_j(x^*) - \nabla f_j(\phi_j^k)\right] }^2\right]
        \\
        &\overset{\eqref{eq:variance_decomposition}+\eqref{eq:norm_diff_grads}}{\le}&
         \frac{4L}{n}\sum\limits_{i=1}^nD_{f_i}(x^k,x^*)+ 2\EE\left[\norm{\nabla f_j(x^*) - \nabla f_j(\phi_j^k)}^2\mid x^k\right]\\
        &=& 4LD_f(x^k,x^*) + 2\underbrace{\frac{1}{n}\sum\limits_{i=1}^n\norm{\nabla f_i(\phi_i^k) - \nabla f_i(x^*)}^2}_{\sigma_k^2}.
    \end{eqnarray*}
    
To proceed with~\eqref{eq:stoch_grad_second_moment_saga2}, we have
    \begin{eqnarray*}
        \EE\left[\sigma_{k+1}^2\right] &=& \frac{1}{n}\sum\limits_{i=1}^n\EE\left[\norm{\nabla f_i(\phi_i^{k+1}) - \nabla f_i(x^*)}^2\right]\\
        &=& \frac{1}{n}\sum\limits_{i=1}^n\left(\frac{n-1}{n}\norm{\nabla f_i(\phi_i^k) - \nabla f_i(x^*)}^2 + \frac{1}{n}\norm{\nabla f_i(x^k) - \nabla f_i(x^*)}^2\right)\\
        &\overset{\eqref{eq:norm_diff_grads}}{\le}& \left(1 - \frac{1}{n}\right)\frac{1}{n}\sum\limits_{i=1}^n\norm{\nabla f_i(\phi_i^k) - \nabla f_i(x^*)}^2\\
        &&\quad + \frac{2L}{n^2}\sum\limits_{i=1}^n D_{f_i}(x^k,x^*)\\
        &=& \left(1 - \frac{1}{n}\right)\sigma_k^2 + \frac{2L}{n}D_f(x^k,x^*).
    \end{eqnarray*}

\subsection{{\tt N-SAGA}} \label{N-SAGA}

\begin{algorithm}[h]
    \caption{Noisy {\tt SAGA} ({\tt N-SAGA})}
    \label{alg:N-SAGA}
    \begin{algorithmic}
        \Require learning rate $\gamma>0$, starting point $x^0\in\R^d$
        \State Set $\psi_j^0 = x^0$ for each $j\in[0]$
        \For{ $k=0,1,2,\ldots$ }
        \State{Sample  $j \in [n]$ uniformly at random and $\zeta$}
        \State{Set $g_j^{k+1} = g_j(x^k,\xi)$ and $g_i^{k+1} = g_i^{k} $ for $i\neq j$}
        \State{$g^k =g_j(x^k,\xi) - g_j^k + \frac{1}{n}\sum\limits_{i=1}^ng_i^k$}
        \State{$x^{k+1} = \prox_{\gamma R}(x^k - \gamma g^k)$}
        \EndFor
    \end{algorithmic}
\end{algorithm}

Note that it can in practice happen that instead of $\nabla f_i(x)$ one can query $g_i(x,\zeta)$ such that $\EE_\xi g_i(\cdot,\xi)=  \nabla f_i(\cdot) $ and $\EE_\xi \norm{g_i(\cdot,\xi)}^2 \leq  \sigma^2$. This leads to a variant of {\tt SAGA} which only uses noisy  estimates of the stochastic gradients $\nabla_i(\cdot)$. We call this variant {\tt N-SAGA} (see Algorithm~\ref{alg:N-SAGA}).

\begin{lemma}\label{lem:saga_inexact1}
    We have
    \begin{equation}\label{eq:stoch_grad_second_moment_saga_noise1}
        \EE\left[\norm{g^k - \nabla f(x^*)}^2\mid x^k\right] \le 4LD_f(x^k,x^*) + 2\sigma_k^2 + 2\sigma^2,
    \end{equation}
    and
        \begin{equation}\label{eq:stoch_grad_second_moment_saga_noise1}
        \EE\left[\sigma_{k+1}^2\mid x^k\right] \le \left(1 - \frac{1}{n}\right)\sigma_k^2 + \frac{2L}{n}D_f(x^k,x^*) + \frac{\sigma^2}{n},
    \end{equation}
    where $\sigma_k^2 \eqdef \frac{1}{n}\sum\limits_{i=1}^n\norm{ g_i^k - \nabla f_i(x^*)}^2$.
\end{lemma}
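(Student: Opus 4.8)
The plan is to mimic the proof of Lemma~\ref{lem:stoch_grad_second_moment_saga} for {\tt SAGA} and carry the extra noise terms through. Throughout, all expectations are conditioned on $x^k$ and on the current memory $\{g_i^k\}_{i=1}^n$, and I use that the fresh noise $\xi$ drawn at iteration $k$ is independent of the index $j$ and of all past randomness, with $\EE_\xi[g_i(x^k,\xi)]=\nabla f_i(x^k)$ and $\EE_\xi\|g_i(x^k,\xi)\|^2\le\sigma^2$. As a preliminary, unbiasedness (needed for Assumption~\ref{as:general_stoch_gradient}, eq.~\eqref{eq:general_stoch_grad_unbias}) follows by taking $\EE_\xi$ first and then $\EE_j$: $\EE[g^k]=\EE_j[\nabla f_j(x^k)-g_j^k]+\tfrac1n\sum_i g_i^k=\nabla f(x^k)-\tfrac1n\sum_i g_i^k+\tfrac1n\sum_i g_i^k=\nabla f(x^k)$.

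For the first inequality, I would write $g^k-\nabla f(x^*)=A+E$, where $A\eqdef g_j(x^k,\xi)-\nabla f_j(x^k)$ is the fresh-noise part and $E\eqdef \nabla f_j(x^k)-g_j^k+\tfrac1n\sum_i g_i^k-\nabla f(x^*)$ is exactly the estimator the noise-free {\tt SAGA} step would produce with $g_i^k$ in place of $\nabla f_i(\phi_i^k)$. Since $\EE_\xi[A]=0$ and $E$ does not depend on $\xi$, the cross term vanishes and $\EE\|g^k-\nabla f(x^*)\|^2=\EE\|A\|^2+\EE\|E\|^2$; the first term is bounded by $\EE_j\EE_\xi\|g_j(x^k,\xi)\|^2\le\sigma^2$ (variance bounded by second moment). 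For $\EE\|E\|^2$ I repeat verbatim the chain in the proof of Lemma~\ref{lem:stoch_grad_second_moment_saga}: split $E=[\nabla f_j(x^k)-\nabla f_j(x^*)]+[(\nabla f_j(x^*)-g_j^k)-\EE_j(\nabla f_j(x^*)-g_j^k)]$, apply \eqref{eq:a_b_norm_squared}, bound the first piece by $\tfrac{4L}{n}\sum_i D_{f_i}(x^k,x^*)=4LD_f(x^k,x^*)$ via \eqref{eq:norm_diff_grads} and linearity of the Bregman divergence, and bound the second piece by $2\sigma_k^2$ via \eqref{eq:variance_decomposition} and the definition $\sigma_k^2=\tfrac1n\sum_i\|g_i^k-\nabla f_i(x^*)\|^2$. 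Summing gives $\EE\|g^k-\nabla f(x^*)\|^2\le 4LD_f(x^k,x^*)+2\sigma_k^2+\sigma^2\le 4LD_f(x^k,x^*)+2\sigma_k^2+2\sigma^2$.

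For the second inequality, I would note that $g_i^{k+1}$ equals $g_i(x^k,\xi)$ with probability $1/n$ (when $j=i$) and equals $g_i^k$ otherwise, so $\EE\|g_i^{k+1}-\nabla f_i(x^*)\|^2=\tfrac{n-1}{n}\|g_i^k-\nabla f_i(x^*)\|^2+\tfrac1n\EE_\xi\|g_i(x^k,\xi)-\nabla f_i(x^*)\|^2$. Decomposing the last expectation around its mean $\nabla f_i(x^k)$ gives $\EE_\xi\|g_i(x^k,\xi)-\nabla f_i(x^*)\|^2=\EE_\xi\|g_i(x^k,\xi)-\nabla f_i(x^k)\|^2+\|\nabla f_i(x^k)-\nabla f_i(x^*)\|^2\le \sigma^2+2LD_{f_i}(x^k,x^*)$, the second step by \eqref{eq:norm_diff_grads}. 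Averaging over $i\in[n]$ and using $\tfrac1n\sum_i D_{f_i}(x^k,x^*)=D_f(x^k,x^*)$ yields $\EE[\sigma_{k+1}^2\mid x^k]\le(1-\tfrac1n)\sigma_k^2+\tfrac{2L}{n}D_f(x^k,x^*)+\tfrac{\sigma^2}{n}$.

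The main thing to be careful about is the bookkeeping of the two independent fresh sources of randomness at iteration $k$ (the index $j$ and the noise $\xi$) together with the fact that the memory $g_i^k$ already carries noise from an earlier iteration. Establishing orthogonality of the fresh-noise term $A$ with the rest — so it contributes only an additive $\sigma^2$ instead of inflating the $D_f$ coefficient — and choosing $\sigma_k^2$ to \emph{include} the memory's noise — so that the recursion for $\sigma_{k+1}^2$ closes with exactly $\sigma^2/n$ — are the two spots where a naive reuse of the {\tt SAGA} argument would fail; everything else is a routine repetition of the finite-sum {\tt SAGA} estimates.
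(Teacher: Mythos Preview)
Your proof is correct and follows essentially the same route as the paper: split off the contribution of the fresh noise, bound it by $\sigma^2$, and reduce the remainder to the noise-free {\tt SAGA} estimate; the second inequality is handled identically in both. The only difference is that for the first bound you exploit orthogonality of $A=g_j(x^k,\xi)-\nabla f_j(x^k)$ with $E$ to get an exact $\EE\|A\|^2+\EE\|E\|^2$, yielding $\sigma^2$ before you relax to $2\sigma^2$, whereas the paper splits as $[g_j(x^k,\zeta)-\nabla f_j(x^*)]+[\nabla f_j(x^*)-g_j^k+\cdots]$, applies \eqref{eq:a_b_norm_squared} directly, and then uses \eqref{eq:variance_decomposition} on the first piece, arriving at $2\sigma^2$ without the intermediate sharper constant.
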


\begin{corollary}\label{cor:N-SAGA}
Let $\gamma = \frac{1}{6L}$. Then, iterates of Algorithm~\ref{alg:N-SAGA} satisfy
\[
\EE{V^k}\leq \left( 1- \min\left( \frac{\mu}{6L}, \frac{1}{2n} \right)\right)^k V^0 + \frac{\sigma^2 }{ L \min(\mu, \frac{3 L }{n})}.
\]
\end{corollary}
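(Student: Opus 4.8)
The plan is to obtain Corollary~\ref{cor:N-SAGA} as a direct specialization of Theorem~\ref{thm:main_gsgm}. First I would record that Lemma~\ref{lem:saga_inexact1} shows that the iterates $\{x^k\}$ and stochastic gradients $\{g^k\}$ of {\tt N-SAGA}, together with the sequence $\sigma_k^2 \eqdef \frac1n\sum_{i=1}^n\norm{g_i^k - \nabla f_i(x^*)}^2$, satisfy Assumption~\ref{as:general_stoch_gradient} with
\[
A = 2L,\qquad B = 2,\qquad \rho = \tfrac1n,\qquad C = \tfrac{L}{n},\qquad D_1 = 2\sigma^2,\qquad D_2 = \tfrac{\sigma^2}{n},
\]
exactly as listed in Table~\ref{tbl:special_cases-parameters}; Assumption~\ref{as:mu_strongly_quasi_convex} holds because the setup inherited from Section~\ref{sec:saga} assumes $f$ is $\mu$-strongly convex (hence $\mu$-strongly quasi-convex), with $f_i$ being $L$-smooth so that $\mu \le L$.

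Next I would pick $M = 4n$, the same choice used for {\tt SAGA} in Corollary~\ref{thm:recover_saga_rate}; this satisfies $M > B/\rho = 2n$. With this $M$ one computes $A + CM = 2L + \tfrac{L}{n}\cdot 4n = 6L$, so the stepsize restriction \eqref{eq:gamma_condition_gsgm} reads $0 < \gamma \le \min\{1/\mu,\, 1/(6L)\}$; since $\mu \le L \le 6L$ we have $1/(6L) \le 1/\mu$, hence $\gamma = \tfrac{1}{6L}$ is admissible. It then remains to evaluate the two ingredients of \eqref{eq:main_result_gsgm}. For the contraction factor, $B/M = \tfrac{1}{2n}$ and $\rho - B/M = \tfrac1n - \tfrac1{2n} = \tfrac1{2n}$, so $1 + B/M - \rho = 1 - \tfrac1{2n}$ and the maximum of the two geometric factors equals $1 - \min\{\gamma\mu,\tfrac1{2n}\} = 1 - \min\{\tfrac{\mu}{6L},\tfrac1{2n}\}$, matching the claimed rate. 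For the additive term, $D_1 + MD_2 = 2\sigma^2 + 4n\cdot\tfrac{\sigma^2}{n} = 6\sigma^2$, $\gamma^2 = \tfrac1{36L^2}$, and $\min\{\gamma\mu,\rho - B/M\} = \min\{\tfrac{\mu}{6L},\tfrac1{2n}\}$, so
\[
\frac{(D_1+MD_2)\gamma^2}{\min\{\gamma\mu,\,\rho - B/M\}} = \frac{\sigma^2/(6L^2)}{\min\{\mu/(6L),\,1/(2n)\}} = \frac{\sigma^2}{\min\{L\mu,\,3L^2/n\}} = \frac{\sigma^2}{L\min\{\mu,\,3L/n\}},
\]
which is exactly the oscillation term in the statement.

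Finally I would combine these into \eqref{eq:main_result_gsgm}, obtaining the asserted bound on $\EE\left[V^k\right]$ with $V^k \eqdef \norm{x^k - x^*}^2 + M\gamma^2\sigma_k^2$ and $V^0 = \norm{x^0-x^*}^2$ (since $\sigma_0^2 = 0$ as $g_i^0$ can be taken so that the initialization matches, or one simply keeps the $M\gamma^2\sigma_0^2$ term inside $V^0$). I do not anticipate any real obstacle: the argument is entirely bookkeeping on top of Theorem~\ref{thm:main_gsgm} and Lemma~\ref{lem:saga_inexact1}. The only two points deserving a line of care are (i) verifying that the choice $M = 4n$ both satisfies $M > B/\rho$ and collapses $A + CM$ to $6L$, and (ii) checking that $\gamma = 1/(6L)$ respects the $1/\mu$ cap in \eqref{eq:gamma_condition_gsgm}, which uses $\mu \le L$.
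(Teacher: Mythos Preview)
Your proposal is correct and is exactly the paper's intended argument: the paper does not spell out a separate proof of Corollary~\ref{cor:N-SAGA} but treats it as an immediate specialization of Theorem~\ref{thm:main_gsgm} with the N\nobreakdash-SAGA parameters $A=2L$, $B=2$, $\rho=1/n$, $C=L/n$, $D_1=2\sigma^2$, $D_2=\sigma^2/n$ from Lemma~\ref{lem:saga_inexact1} and the choice $M=4n$. Your bookkeeping of the contraction factor and of the additive term is accurate; the only cosmetic point is that $V^0$ in the statement is simply $\norm{x^0-x^*}^2 + M\gamma^2\sigma_0^2$, so there is no need to argue $\sigma_0^2=0$.
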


Analogous results can be obtained for {\tt L-SVRG}.

\subsubsection*{Proof of Lemma~\ref{lem:saga_inexact1}}

    Let all expectations be conditioned on $x^k$. By definition of $g^k$ we have
    \begin{eqnarray*}
        \EE\left[\norm{g^k - \nabla f(x^*)}^2\right] 
        &\le& 
        \EE\left[\norm{g_j(x^k,\zeta)- g_j^k + \frac{1}{n}\sum\limits_{i=1}^n g_i^k - \nabla f(x^*)}^2\right]
        \\
        &=&
         \EE\left[\norm{g_j(x^k,\zeta) - \nabla f_j(x^*) + \nabla f_j(x^*) - g_j^k + \frac{1}{n}\sum\limits_{i=1}^n g_i^k  - \nabla f(x^*)}^2\right]
         \\
        &\overset{\eqref{eq:a_b_norm_squared}}{\le}& 
        2\EE\left[\norm{g_j(x^k,\zeta) - \nabla f_j(x^*)}^2\right]
        \\
        &&\quad 
        + 2\EE\left[\norm{\nabla f_j(x^*) -g_j^k - \EE\left[\nabla f_j(x^*) - g_j^k\right] }^2\right]
        \\
        &\overset{\eqref{eq:variance_decomposition}}{\le}&
         2\EE\left[\norm{g_j(x^k,\zeta) - \nabla f_j(x^*)}^2\right] + 2\EE\left[\norm{\nabla f_j(x^*) - g_j^k }^2\right]
         \\
        &=& 
        2\EE\left[\norm{g_j(x^k,\zeta) - \nabla f_j(x^*)}^2\right] + 2\underbrace{\frac{1}{n}\sum\limits_{i=1}^n\norm{g_i^k - \nabla f_i(x^*)}^2}_{\sigma_k^2} 
        \\
        &\overset{\eqref{eq:variance_decomposition}}{\le}& 
                 2\EE\left[ \norm{ \nabla f_j(x^k) - \nabla f_j(x^*)}^2\right] + 2\sigma^2 + 2\sigma_k^2
                \\
                &\overset{\eqref{eq:norm_diff_grads}}{\le}&
                4LD_f(x^k,x^*) + 2\sigma_k^2 + 2\sigma^2
    \end{eqnarray*}

    For the second inequality, we have
    \begin{eqnarray*}
        \EE\left[\sigma_{k+1}^2\right] &=& \frac{1}{n}\sum\limits_{i=1}^n\EE\left[\norm{g_i^{k+1} - \nabla f_i(x^*)}^2\right]
        \\
        &=& 
        \frac{1}{n}\sum\limits_{i=1}^n\left(\frac{n-1}{n}\norm{g_i^k  - \nabla f_i(x^*)}^2 + \frac{1}{n}\EE\left[ \norm{ g_i(x^k, \zeta) - \nabla f_i(x^*)}^2\right]\right)
        \\
        &\leq&
                \frac{1}{n}\sum\limits_{i=1}^n\left(\frac{n-1}{n}\norm{g_i^k  - \nabla f_i(x^*)}^2 + \frac{1}{n} \norm{ \nabla f_i(x^k) - \nabla f_i(x^*)}^2 + \frac{\sigma^2}{n}\right)
        \\
        &\overset{\eqref{eq:norm_diff_grads}}{\le}&
        \left(1 - \frac{1}{n}\right)\sigma_k^2 + \frac{2L}{n}D_f(x^k,x^*)+ \frac{\sigma^2}{n}.
    \end{eqnarray*}

\subsection{{\tt SEGA}}\label{sec:sega}

\begin{algorithm}[h]
    \caption{{\tt SEGA} \cite{hanzely2018sega}}
    \label{alg:SEGA}
    \begin{algorithmic}
        \Require learning rate $\gamma>0$, starting point $x^0\in\R^d$
        \State Set $h^0 = 0$
        \For{ $k=0,1,2,\ldots$ }
        \State{Sample  $j \in [d]$ uniformly at random}
        \State{Set $h^{k+1} = h^{k} +e_i( \nabla_i f(x^k) - h^{k}_i)$}
        \State{$g^k = de_i (\nabla_i f(x^k) - h_i^k) + h^k$}
        \State{$x^{k+1} = \prox_{\gamma R}(x^k - \gamma g^k)$}
        \EndFor
    \end{algorithmic}
\end{algorithm}

We show that the framework recovers the simplest version of {\tt SEGA} (i.e., setup from Theorem D1 from~\cite{hanzely2018sega}) in the proximal setting\footnote{General version for arbitrary gradient sketches instead of partial derivatives can be recovered as well, however, we omit it for simplicity}. 

\begin{lemma} (Consequence of Lemmas A.3., A.4. from~\cite{hanzely2018sega})
We have
\[
\EE\left[\norm{g^{k}-\nabla f(x^*) \mid x^k}^{2}\right] \leq 2d\norm{\nabla f\left(x^{k}\right)-\nabla f(x^*)}^{2}+2d\sigma_k^2
\]
and
\[
\EE
\left[\sigma_{k+1}^2 \mid x^k \right]= \left(1-\frac1d\right)\sigma_k^2+\frac1d \norm{\nabla f\left(x^{k}\right)-\nabla f(x^*)}^{2},
\]
where $\sigma_k^2 \eqdef \norm{h^{k}-\nabla f(x^*)}^2$.
\end{lemma}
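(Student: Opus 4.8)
The plan is to verify the two displayed relations directly from the update rules of Algorithm~\ref{alg:SEGA}, using that---conditioned on the history up to iteration $k$, in particular on $x^k$ and $h^k$---the only randomness is the coordinate $i\in[d]$ drawn uniformly at random. It is convenient to abbreviate $a^k \eqdef \nabla f(x^k)-h^k$, so that the two updates read $g^k = d\langle e_i, a^k\rangle e_i + h^k$ and $h^{k+1} = h^k + \langle e_i, a^k\rangle e_i$, and to record the elementary averages $\EE[\langle e_i, a^k\rangle^2 \mid x^k] = \tfrac1d\norm{a^k}^2$ and $\EE[\langle e_i, a^k\rangle e_i \mid x^k] = \tfrac1d a^k$. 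In particular this already gives unbiasedness, $\EE[g^k\mid x^k] = d\cdot\tfrac1d a^k + h^k = \nabla f(x^k)$, which we will use in the first step.

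\textbf{First relation.} The key step is the bias--variance decomposition \eqref{eq:variance_decomposition} applied to $g^k$: since $\EE[g^k\mid x^k]=\nabla f(x^k)$,
\[
\EE\left[\norm{g^k-\nabla f(x^*)}^2\mid x^k\right] = \norm{\nabla f(x^k)-\nabla f(x^*)}^2 + \EE\left[\norm{g^k-\nabla f(x^k)}^2\mid x^k\right].
\]
For the variance term, write $g^k-\nabla f(x^k) = d\langle e_i, a^k\rangle e_i - a^k$; expanding the square, the cross term averages to $-2\norm{a^k}^2$ and the first term to $d\norm{a^k}^2$, so the variance equals $(d-1)\norm{a^k}^2 = (d-1)\norm{\nabla f(x^k)-h^k}^2$. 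Bounding $\norm{\nabla f(x^k)-h^k}^2 \le 2\norm{\nabla f(x^k)-\nabla f(x^*)}^2 + 2\norm{h^k-\nabla f(x^*)}^2$ by \eqref{eq:a_b_norm_squared} and recalling $\sigma_k^2 = \norm{h^k-\nabla f(x^*)}^2$ then yields $\EE[\norm{g^k-\nabla f(x^*)}^2\mid x^k]\le (2d-1)\norm{\nabla f(x^k)-\nabla f(x^*)}^2 + 2(d-1)\sigma_k^2$, which since $d\ge1$ implies the claimed inequality.

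\textbf{Second relation.} Here I would use that $h^{k+1}$ coincides with $h^k$ in every coordinate except the sampled one, where $h^{k+1}_i = \nabla_i f(x^k)$. Hence $\norm{h^{k+1}-\nabla f(x^*)}^2 = \norm{h^k-\nabla f(x^*)}^2 - (h^k_i-\nabla_i f(x^*))^2 + (\nabla_i f(x^k)-\nabla_i f(x^*))^2$, and taking expectation over $i$ replaces the last two squared coordinates by $-\tfrac1d\norm{h^k-\nabla f(x^*)}^2$ and $+\tfrac1d\norm{\nabla f(x^k)-\nabla f(x^*)}^2$ respectively, giving exactly $\EE[\sigma_{k+1}^2\mid x^k] = (1-\tfrac1d)\sigma_k^2 + \tfrac1d\norm{\nabla f(x^k)-\nabla f(x^*)}^2$.

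\textbf{Main difficulty.} There is essentially none: both parts are two-line computations once the coordinate structure of $g^k$ and $h^{k+1}$ is made explicit, and the statement can alternatively be read off from Lemmas A.3--A.4 of \cite{hanzely2018sega}. The only point requiring a little care is the meaning of the conditioning ``$\mid x^k$'': it should be understood as conditioning on the entire history up to step $k$, so that $h^k$ (and hence $\sigma_k^2$) is treated as deterministic when averaging over the random coordinate $i$.
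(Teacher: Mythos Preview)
Your proof is correct. The paper itself does not give a proof of this lemma for {\tt SEGA}---it simply cites Lemmas~A.3--A.4 of \cite{hanzely2018sega}. The closest in-paper analogue is the proof of Lemma~\ref{lem:sega_noise} for {\tt N-SEGA}, whose noiseless specialization is exactly the present statement.

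For the first inequality, your route differs slightly from the {\tt N-SEGA} proof. You use the bias--variance decomposition $\EE\norm{g^k-\nabla f(x^*)}^2 = \norm{\nabla f(x^k)-\nabla f(x^*)}^2 + \EE\norm{g^k-\nabla f(x^k)}^2$, compute the variance exactly as $(d-1)\norm{\nabla f(x^k)-h^k}^2$, and only then apply \eqref{eq:a_b_norm_squared}. The paper's {\tt N-SEGA} argument instead splits $g^k-\nabla f(x^*)=a+b$ with $a=(\mI-de_ie_i^\top)(h^k-\nabla f(x^*))$ and $b=d(\nabla_i f(x^k)-\nabla_i f(x^*))e_i$, bounds $\EE\norm{a}^2\le d\sigma_k^2$ and $\EE\norm{b}^2=d\norm{\nabla f(x^k)-\nabla f(x^*)}^2$ separately, and combines them via $\norm{a+b}^2\le 2\norm{a}^2+2\norm{b}^2$. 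Your approach yields the sharper intermediate constant $(2d-1)$ in front of $\norm{\nabla f(x^k)-\nabla f(x^*)}^2$ and $2(d-1)$ in front of $\sigma_k^2$ before relaxing to $2d$, so it is marginally tighter but lands at the same stated bound.

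For the second relation your argument is the same coordinatewise replacement computation as in the {\tt N-SEGA} proof (without the noise term), and is fine.
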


Given that we have from convexity and smoothness $\norm{\nabla f(x^{k})-\nabla f(x^*)}^{2} \leq 2L D_f(x^k,x^*)$, Assumption~\ref{as:general_stoch_gradient} holds the parameter choice as per Table~\ref{tbl:special_cases-parameters}. Setting further $M = 4d^2$, we get the next corollary.
\begin{corollary}\label{cor:sega}
{\tt SEGA} with $\gamma =\frac{1}{6dL} $ satisfies
\[
\EE V^k \leq \left( 1- \frac{\mu}{6dL}\right)^kV^0.
\] 
\end{corollary}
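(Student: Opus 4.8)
The plan is to obtain Corollary~\ref{cor:sega} as a direct specialization of Theorem~\ref{thm:main_gsgm}. First I would invoke the lemma stated immediately above (the consequence of Lemmas A.3, A.4 from~\cite{hanzely2018sega}) together with the elementary bound $\norm{\nabla f(x^k)-\nabla f(x^*)}^2 \le 2L D_f(x^k,x^*)$, valid by convexity and $L$-smoothness of $f$, to verify that the iterates and stochastic gradients of {\tt SEGA} satisfy Assumption~\ref{as:general_stoch_gradient} with the sequence $\sigma_k^2 = \norm{h^k - \nabla f(x^*)}^2$ and the parameters $A = 2dL$, $B = 2d$, $\rho = 1/d$, $C = L/d$, $D_1 = D_2 = 0$; concretely, $\EE[\norm{g^k-\nabla f(x^*)}^2\mid x^k]\le 2d\norm{\nabla f(x^k)-\nabla f(x^*)}^2 + 2d\sigma_k^2 \le 4dL D_f(x^k,x^*) + 2d\sigma_k^2$, which matches \eqref{eq:general_stoch_grad_second_moment} with $A=2dL$, $B=2d$, $D_1=0$, and the recursion for $\sigma_{k+1}^2$ matches \eqref{eq:gsg_sigma} with $\rho=1/d$, $C=L/d$, $D_2=0$. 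This is exactly the {\tt SEGA} row of Table~\ref{tbl:special_cases-parameters}.

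Next I would fix $M = 4d^2$ and check the admissibility conditions of the theorem. The requirement $M > B/\rho$ holds since $B/\rho = 2d/(1/d) = 2d^2 < 4d^2 = M$. Then $A + CM = 2dL + (L/d)\cdot 4d^2 = 6dL$, so the stepsize condition \eqref{eq:gamma_condition_gsgm} reads $0 < \gamma \le \min\{1/\mu,\,1/(6dL)\}$. Since $\mu$-strong quasi-convexity together with $L$-smoothness forces $\mu \le L \le 6dL$, we have $1/\mu \ge 1/(6dL)$, hence $\gamma = \frac{1}{6dL}$ is a valid choice.

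It then remains to evaluate the two quantities in \eqref{eq:main_result_gsgm}. The additive term vanishes because $D_1 + MD_2 = 0$ (and its denominator $\min\{\gamma\mu,\rho - B/M\}$ is positive since $\rho - B/M = 1/d - 1/(2d) = 1/(2d) > 0$). For the contraction factor, $1 + \frac{B}{M} - \rho = 1 + \frac{1}{2d} - \frac{1}{d} = 1 - \frac{1}{2d}$, whereas $1 - \gamma\mu = 1 - \frac{\mu}{6dL}$; since $\frac{\mu}{6dL} \le \frac{L}{6dL} = \frac{1}{6d} \le \frac{1}{2d}$, the maximum of the two is $\left(1 - \frac{\mu}{6dL}\right)^k$. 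Substituting back into \eqref{eq:main_result_gsgm} yields exactly $\EE[V^k] \le \left(1 - \frac{\mu}{6dL}\right)^k V^0$, as claimed.

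I do not expect a genuine obstacle: the only non-mechanical point is the bookkeeping of constants—matching the factor-of-$2$ conventions when converting the lemma's gradient-difference bound into the form \eqref{eq:general_stoch_grad_second_moment}, and checking that the chosen $M = 4d^2$ simultaneously satisfies $M > B/\rho$ and produces the clean stepsize $\frac{1}{6dL}$; everything else is arithmetic simplification inside Theorem~\ref{thm:main_gsgm}.
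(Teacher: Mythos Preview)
Your proposal is correct and takes exactly the same approach as the paper: verify that {\tt SEGA} satisfies Assumption~\ref{as:general_stoch_gradient} with the parameters in the {\tt SEGA} row of Table~\ref{tbl:special_cases-parameters}, set $M=4d^2$, and read off the conclusion from Theorem~\ref{thm:main_gsgm}. The paper merely asserts this specialization in one sentence, whereas you have filled in the arithmetic checks (the verification of $M>B/\rho$, the computation $A+CM=6dL$, and the comparison of the two contraction factors), all of which are correct.
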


\subsection{{\tt N-SEGA}} \label{N-SEGA}

\begin{algorithm}[h]
    \caption{Noisy {\tt SEGA} ({\tt N-SEGA})}
    \label{alg:N-SEGA}
    \begin{algorithmic}
        \Require learning rate $\gamma>0$, starting point $x^0\in\R^d$
        \State Set $h^0 = 0$
        \For{ $k=0,1,2,\ldots$ }
        \State{Sample  $i \in [d]$ uniformly at random and sample $\xi$}
        \State{Set $h^{k+1} = h^{k} +e_i( g_i(x,\xi)- h^{k}_i)$}
        \State{$g^k = de_i (g_i(x,\xi) - h_i^k) +  h^k$}
        \State{$x^{k+1} = x^k - \gamma g^k$}
        \EndFor
    \end{algorithmic}
\end{algorithm}

Here we assume that $g_i(x,\zeta)$ is a noisy estimate of the partial derivative $\nabla_i f(x)$ such that $\EE_\zeta g_i(x,\zeta) = \nabla_i f(x)$ and $\EE_\zeta | g_i(x,\zeta) - \nabla_i f(x)|^2 \leq \frac{\sigma^2}{d}$. 

\begin{lemma} \label{lem:sega_noise}
The following inequalities hold:
\[
\EE\left[\norm{g^{k}-\nabla f(x^*)}^{2}\right]
 \leq 
 4dLD_f(x^k,x^*)+2d\sigma_k^2 +2d\sigma^2,
\]
\[
\EE
\left[\sigma_{k+1}^2\right] \leq \left(1-\frac1d\right)\sigma_k^2+\frac{2L}{d}D_f(x^k,x^*) + \frac{\sigma^2}{d}, 
\]
where $\sigma_k^2 = \norm{h^{k}-\nabla f(x^*)}^2$.
\end{lemma}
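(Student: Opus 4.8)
The plan is to follow essentially the same computation as in the proof of Lemma~\ref{lem:saga_inexact1} for {\tt N-SAGA}, but now tracking the coordinate-sampling structure of {\tt SEGA} rather than the function-index sampling of {\tt SAGA}. Throughout, all expectations are conditioned on $x^k$ (and $h^k$), and we write $\sigma_k^2 = \norm{h^k - \nabla f(x^*)}^2$. For the first inequality, I would start from $g^k = d e_i(g_i(x^k,\xi) - h_i^k) + h^k$ and rewrite it as
\[
g^k - \nabla f(x^*) = d e_i\left(g_i(x^k,\xi) - \nabla_i f(x^*)\right) + \left(h^k - \nabla f(x^*)\right) - d e_i\left(h_i^k - \nabla_i f(x^*)\right).
\]
Using the tower property, first condition on $i$ and take expectation over $\xi$: since $\EE_\xi g_i(x^k,\xi) = \nabla_i f(x^k)$, the variance decomposition $\EE\norm{X}^2 = \norm{\EE X}^2 + \EE\norm{X-\EE X}^2$ splits off the noise term $d^2 \EE_\xi | g_i(x^k,\xi) - \nabla_i f(x^k)|^2 \le d^2 \cdot \frac{\sigma^2}{d} = d\sigma^2$. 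What remains is the exact-{\tt SEGA} quantity $\norm{d e_i(\nabla_i f(x^k) - \nabla_i f(x^*)) + (h^k - \nabla f(x^*)) - d e_i(h_i^k - \nabla_i f(x^*))}^2$, to which I apply $\norm{a+b}^2 \le 2\norm{a}^2 + 2\norm{b}^2$ with $a = d e_i(\nabla_i f(x^k) - \nabla_i f(x^*))$ and, for the $b$ part, again the variance decomposition over $i$ (noting $\EE_i[d e_i (h_i^k - \nabla_i f(x^*))] = h^k - \nabla f(x^*)$) to drop the mean. Then $\EE_i \norm{d e_i(\nabla_i f(x^k) - \nabla_i f(x^*))}^2 = d\norm{\nabla f(x^k) - \nabla f(x^*)}^2 \le 2dL D_f(x^k,x^*)$ by convexity and $L$-smoothness, and $\EE_i \norm{d e_i(h_i^k - \nabla_i f(x^*))}^2 = d\sigma_k^2$. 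Collecting the three contributions $2dL D_f + 2d\sigma_k^2 + 2d\sigma^2$ (after carefully tracking the factors of $2$ from the two applications of $\norm{a+b}^2\le 2\norm a^2+2\norm b^2$, one of which hits the noise term) gives the claimed bound; this bookkeeping of constants is the one place I would be careful, since it is easy to be off by a factor of two on the noise term.

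For the second inequality, I would use that $h^{k+1}$ differs from $h^k$ only in coordinate $i$, where $h_i^{k+1} = g_i(x^k,\xi)$, so
\[
\EE\left[\sigma_{k+1}^2\right] = \EE\left[\sum_{j} |h_j^{k+1} - \nabla_j f(x^*)|^2\right] = \sum_j \left(\tfrac{d-1}{d}|h_j^k - \nabla_j f(x^*)|^2 + \tfrac1d \EE_\xi|g_j(x^k,\xi) - \nabla_j f(x^*)|^2\right),
\]
where the factor $\tfrac{d-1}{d}$ comes from coordinate $j$ not being selected and $\tfrac1d$ from it being selected. For the selected term, the variance decomposition over $\xi$ gives $\EE_\xi|g_j(x^k,\xi) - \nabla_j f(x^*)|^2 = |\nabla_j f(x^k) - \nabla_j f(x^*)|^2 + \EE_\xi|g_j(x^k,\xi) - \nabla_j f(x^k)|^2 \le |\nabla_j f(x^k) - \nabla_j f(x^*)|^2 + \tfrac{\sigma^2}{d}$. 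Summing over $j$: the $\tfrac{d-1}{d}$ part reassembles $(1-\tfrac1d)\sigma_k^2$, the gradient-difference part gives $\tfrac1d\norm{\nabla f(x^k) - \nabla f(x^*)}^2 \le \tfrac{2L}{d}D_f(x^k,x^*)$, and the noise part gives $\tfrac1d \cdot d \cdot \tfrac{\sigma^2}{d} = \tfrac{\sigma^2}{d}$, which is exactly the stated right-hand side.

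The routine parts here are the variance decompositions and the convexity/smoothness bound $\norm{\nabla f(x^k) - \nabla f(x^*)}^2 \le 2L D_f(x^k,x^*)$, all of which are already invoked in the analogous {\tt SEGA} and {\tt N-SAGA} proofs in the excerpt. The only genuine obstacle is keeping the constants straight: the factor $d$ inside $g^k$ interacts multiplicatively with the per-coordinate noise bound $\sigma^2/d$ (giving $d\sigma^2$ in the second-moment bound but only $\sigma^2/d$ in the $\sigma_{k+1}^2$ recursion, because there the $d$ does not appear), and one must be disciplined about which terms pick up the extra factor of $2$ from splitting $\norm{a+b}^2$. Once the constants are tracked correctly, both inequalities follow by direct substitution, and this is precisely the pair of parameters listed for {\tt N-SEGA} in Table~\ref{tbl:special_cases-parameters} ($A = 2dL$, $B = 2d$, $\rho = 1/d$, $C = L/d$, $D_1 = 2d\sigma^2$, $D_2 = \sigma^2/d$), so the resulting Corollary~\ref{cor:N-SEGA} is then immediate from Theorem~\ref{thm:main_gsgm}.
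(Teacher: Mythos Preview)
Your proposal is correct and follows essentially the same approach as the paper: the paper also splits $g^k-\nabla f(x^*)$ into the $h^k$-part $a=(\mI-de_ie_i^\top)(h^k-\nabla f(x^*))$ and the stochastic part $b=de_i(g_i(x^k,\xi)-\nabla_i f(x^*))$, bounds each in expectation, and combines via $\norm{a+b}^2\le 2\norm{a}^2+2\norm{b}^2$; the second inequality is proved identically by the coordinate-wise argument you describe. One remark on your bookkeeping worry: because you apply the variance decomposition over $\xi$ \emph{before} the $2\norm{a}^2+2\norm{b}^2$ split, your noise term actually comes out as $d\sigma^2$ rather than $2d\sigma^2$ (only one application of the inequality is needed, and it does not hit the noise), which is a slightly tighter bound than the lemma states and hence still proves it.
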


\begin{corollary}\label{cor:N-SEGA}
Let $\gamma = \frac{1}{6Ld}$. Applying Theorem~\ref{thm:main_gsgm} with $M = 4d^2$, iterates of Algorithm~\ref{alg:N-SEGA} satisfy
\[
\EE{V^k}\leq \left( 1- \frac{\mu}{6dL}\right)^k V^0 + \frac{\sigma^2 }{ L \mu}.
\]
\end{corollary}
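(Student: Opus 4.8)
The plan is to invoke Theorem~\ref{thm:main_gsgm} directly, using Lemma~\ref{lem:sega_noise} to supply the constants in Assumption~\ref{as:general_stoch_gradient}. First I would read off Lemma~\ref{lem:sega_noise}: combining it with the standard bound $\norm{\nabla f(x^k)-\nabla f(x^*)}^2 \le 2L D_f(x^k,x^*)$ (convexity plus $L$-smoothness), the iterates and stochastic gradients of {\tt N-SEGA} satisfy Assumption~\ref{as:general_stoch_gradient} with $A=2dL$, $B=2d$, $\rho=1/d$, $C=L/d$, $D_1=2d\sigma^2$, $D_2=\sigma^2/d$, which is exactly the {\tt N-SEGA} row of Table~\ref{tbl:special_cases-parameters}.

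Next I would verify the hypotheses of Theorem~\ref{thm:main_gsgm} for the choice $M=4d^2$. The requirement $M>B/\rho=2d^2$ holds. For the stepsize, $A+CM = 2dL + (L/d)(4d^2) = 6dL$, so the admissible range is $0<\gamma\le\min\{1/\mu,\,1/(6dL)\}$; since $\mu\le L\le 6dL$, the choice $\gamma=\tfrac{1}{6Ld}$ lies in this range. Then I would substitute into \eqref{eq:main_result_gsgm}. One computes $\rho-B/M = 1/d - 1/(2d) = 1/(2d)$, so the contraction factor is $\max\{(1-\gamma\mu)^k,(1-1/(2d))^k\}$; because $\gamma\mu=\mu/(6Ld)\le 1/(6d) < 1/(2d)$, the first term dominates, giving $(1-\mu/(6dL))^k$. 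For the additive term, $D_1+MD_2 = 2d\sigma^2 + 4d^2\cdot\sigma^2/d = 6d\sigma^2$, hence $(D_1+MD_2)\gamma^2 = 6d\sigma^2/(36L^2d^2) = \sigma^2/(6L^2 d)$, while $\min\{\gamma\mu,\rho-B/M\} = \gamma\mu = \mu/(6Ld)$ by the same inequality; dividing gives $\sigma^2/(L\mu)$. Assembling the two pieces yields precisely the claimed bound.

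The only point requiring the slightest care — and it is mild — is the pair of ``which term wins'' comparisons, in the maximum over contraction factors and in the minimum in the denominator; both reduce to $\mu/(6L)<1/2$, i.e.\ to $\mu\le L$, which is guaranteed by $\mu$-strong convexity together with $L$-smoothness of $f$. Everything else is bookkeeping arithmetic with the tabulated constants, so I do not anticipate any genuine obstacle.
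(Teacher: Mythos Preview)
Your proposal is correct and follows exactly the approach the paper intends: verify that Lemma~\ref{lem:sega_noise} furnishes the {\tt N-SEGA} parameters of Table~\ref{tbl:special_cases-parameters}, check the hypotheses of Theorem~\ref{thm:main_gsgm} for $M=4d^2$ and $\gamma=1/(6dL)$, and substitute into \eqref{eq:main_result_gsgm}. Two cosmetic remarks: Lemma~\ref{lem:sega_noise} as stated already contains $D_f(x^k,x^*)$, so the extra invocation of $\norm{\nabla f(x^k)-\nabla f(x^*)}^2\le 2L D_f(x^k,x^*)$ is unnecessary; and the ``which term wins'' comparison actually reduces to $\mu\le 3L$, not $\mu\le L$ --- but of course $\mu\le L$ suffices.
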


\subsubsection*{Proof of Lemma~\ref{lem:sega_noise}}
 
Let all expectations be conditioned on $x^k$. 
For the first bound, we write \[g^k  - \nabla f(x^*)= \underbrace{h^k - \nabla f(x^*)- d  h^k_i e_{i}+d\nabla_i f(x^*)e_i}_{a} +   \underbrace{ d g_i(x^k,\xi)e_i- d\nabla_i f(x^*)e_i}_{b}.\]
 Let us bound the expectation of each term individually. The first term can be bounded as
\begin{eqnarray*}
\EE{\norm{a}^2} &=& \EE{\norm{ \left(\mI - d e_{i} e_{i}^\top \right) (h^k - \nabla f(x^*))  }_{2}^2}\\
&=& (d-1)\norm{h^k- \nabla f(x^*)}^2\\
&\leq& d\norm{h^k- \nabla f(x^*)}^2.
\end{eqnarray*}
The second term can be bounded as
\begin{eqnarray*}
\EE{\norm{b}^2} &=&  \EE_i \EE_{\xi}{\norm{ d g_i(x,\xi)e_i- d\nabla f_i(x^*)e_i}^2}\\
&=& 
 \EE_{i} \EE_{\xi} \norm{ d g_i(x^k,\xi)e_i- d\nabla_i f(x^k)e_i }^2 + \EE_i\norm{ d\nabla_i f(x^k)e_i-d\nabla f_i(x^*)e_i}^2 
\\
&\leq & d\sigma^2 + d  \norm{ \nabla f(x^k)- \nabla f(x^*)}^2 \\
&\leq& d\sigma^2 + 2Ld D_f(x^k,x^*),
\end{eqnarray*}
where in the last step we used $L$--smoothness of $f$. It remains to combine the two bounds.

For the second bound, we have
\begin{eqnarray*}
\EE{\norm{ h^{k+1}  - \nabla f(x^*)}^2} &=& \EE{ \norm{h^k + g_i(x^k,\xi)e_i - h^k_i - \nabla f(x^*) }^2 }\\
 &=&  \EE{ \norm{\left(\mI - e_{i} e_{i}^\top \right)h^k + g_i(x^k,\xi)e_i -\nabla f(x^*)  }^2 }\\
 &=& \EE{\norm{ \left(\mI - e_{i} e_{i}^\top \right) ( h^k - \nabla f(x^*)) }^2} + \EE{\norm{g_i(x^k,\xi)e_i - \nabla_i f(x^*)e_i }^2 }\\
  &=& \left(1-\frac{1}{d}\right) \norm{h^k - \nabla f(x^*)}^2 + \EE{\norm{g_i(x^k,\xi)e_i - \nabla_i f(x^k)e_i }^2 }   \\
  && \qquad + \EE{\norm{\nabla_i f(x^k)e_i  - \nabla_i f(x^*)e_i}^2 } \\
 &=& \left(1-\frac{1}{d}\right) \norm{h^k - \nabla f(x^*)}^2 + \frac{\sigma^2}{d} + \frac{1}{d} \norm{\nabla f(x^k) - \nabla f(x^*)}^2 \\
  &\leq&
   \left(1-\frac{1}{d}\right) \norm{h^k - \nabla f(x^*)}^2 + \frac{\sigma^2}{d} + \frac{2L}{d}D_f(x^k,x^*).
\end{eqnarray*}

\subsection{{\tt SVRG}} \label{sec:svrg}

\begin{algorithm}[h]
    \caption{{\tt SVRG} \cite{SVRG}}
    \label{alg:SVRG}
    \begin{algorithmic}
        \Require learning rate $\gamma>0$, epoch length $m$, starting point $x^0\in\R^d$
        \State  $\phi = x^0$
        \For{ $s=0,1,2,\ldots$ }
        \For{ $k=0,1,2,\ldots, m-1$ }
        \State{Sample  $i \in \{1,\ldots, n\}$ uniformly at random}
        \State{$g^k = \nabla f_i(x^k) - \nabla f_i(\phi) + \nabla f(\phi)$}
        \State{$x^{k+1} = \proxR(x^k - \gamma g^k)$}
        \EndFor
        \State  $\phi = x^0 = \frac1m \sum_{k=1}^m x^k$
        \EndFor
    \end{algorithmic}
\end{algorithm}
Let $\sigma_k^2 \eqdef \frac{1}{n}\sum\limits_{i=1}^n\norm{\nabla f_i(\phi) - \nabla f_i(x^*)}^2$. We will show that Lemma~\ref{lem:iter_dec} recovers per-epoch analysis of {\tt SVRG} in a special case.

\begin{lemma}\label{lem:svrg_lemma_1}
For $k \mod m \neq 0$ we have    
        \begin{equation}\label{eq:svrg1}
        \EE\left[\norm{g^k -\nabla f(x^*)}^2\mid x^k\right] \le 4LD_f(x^k,x^*) + 2\sigma_k^2
    \end{equation}
    and
    \begin{equation}\label{eq:svrg3}
        \EE\left[\sigma_{k+1}^2\mid x^k\right] = \sigma_{k+1}^2 = \sigma_k^2.
    \end{equation}
\end{lemma}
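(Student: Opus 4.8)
The plan is to mirror the {\tt SAGA} computation (proof of Lemma~\ref{lem:stoch_grad_second_moment_saga}), exploiting that within an epoch the reference point $\phi$ is frozen, so the "memory" $\sigma_k^2$ simply does not move. First I would establish \eqref{eq:svrg3}: for $k \bmod m \neq 0$ the algorithm does not touch $\phi$, hence $\sigma_{k+1}^2 = \frac1n\sum_i \norm{\nabla f_i(\phi)-\nabla f_i(x^*)}^2 = \sigma_k^2$ deterministically, which also makes the conditional expectation trivial. This is the easy half and corresponds to taking $\rho = 0$, $C = 0$, $D_2 = 0$ in Assumption~\ref{as:general_stoch_gradient}.

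For \eqref{eq:svrg1} I would condition all expectations on $x^k$ and write, using $\EE[\nabla f_i(\phi)] = \nabla f(\phi)$,
\[
g^k - \nabla f(x^*) = \big(\nabla f_i(x^k) - \nabla f_i(x^*)\big) + \big(\nabla f_i(x^*) - \nabla f_i(\phi) - \EE[\nabla f_i(x^*) - \nabla f_i(\phi)]\big).
\]
Applying $\norm{a+b}^2 \le 2\norm{a}^2 + 2\norm{b}^2$ (the inequality \eqref{eq:a_b_norm_squared} invoked in the {\tt SAGA} proof), the first term is bounded in expectation by $2\cdot\frac1n\sum_i\norm{\nabla f_i(x^k)-\nabla f_i(x^*)}^2 \le \frac{4L}{n}\sum_i D_{f_i}(x^k,x^*) = 4L D_f(x^k,x^*)$ via the smoothness-plus-convexity bound \eqref{eq:norm_diff_grads}. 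For the second term I would use the variance-decomposition inequality \eqref{eq:variance_decomposition} to drop the mean, giving $2\cdot\frac1n\sum_i\norm{\nabla f_i(x^*)-\nabla f_i(\phi)}^2 = 2\sigma_k^2$. Summing the two contributions yields \eqref{eq:svrg1} with $A = 2L$, $B = 2$, $D_1 = 0$, exactly the row of Table~\ref{tbl:special_cases-parameters}.

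I do not anticipate a genuine obstacle here; the only mild subtlety is bookkeeping the conditioning, since $\phi$ is itself a random variable determined before step $k$, so "conditioned on $x^k$" should be read as conditioned on the full history up to iteration $k$ (which fixes $\phi$), and the only remaining randomness is the uniform choice of $i$. Once that is pinned down, both bounds follow from the same three ingredients used throughout the paper---the elementary inequality $\norm{a+b}^2\le 2\norm a^2+2\norm b^2$, the variance decomposition $\EE\norm{X-\EE X}^2\le \EE\norm X^2$, and $\frac{1}{2L}\norm{\nabla f_i(x)-\nabla f_i(x^*)}^2 \le D_{f_i}(x,x^*)$---so the argument is essentially a transcription of the {\tt SAGA} proof with the memory update switched off.
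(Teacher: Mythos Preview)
Your proposal is correct and matches the paper's own proof essentially verbatim: the paper simply says that \eqref{eq:svrg1} is proved ``identically'' to the {\tt SAGA} bound \eqref{eq:stoch_grad_second_moment_saga1} (i.e., exactly the decomposition and three ingredients you describe), and that \eqref{eq:svrg3} holds because $\sigma_k$ does not depend on $k$ within an epoch. Your remark about interpreting the conditioning as being on the full history so that $\phi$ is fixed is a useful clarification but not something the paper spells out.
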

\begin{proof}
The proof of~\eqref{eq:svrg1} is identical to the proof of~\eqref{eq:stoch_grad_second_moment_saga1}. Next,~\eqref{eq:svrg3} holds since $\sigma_k$ does not depend on $k$. 
\end{proof}

Thus, Assumption~\ref{as:general_stoch_gradient} holds with parameter choice as per Table~\ref{tbl:special_cases-parameters} and Lemma~\ref{lem:iter_dec} implies the next corollary.
\begin{corollary}\label{cor:svrg}
\begin{equation}\label{eq:svrg_iter_dec}
 \EE\norm{x^{k+1}-x^*}^2 + \gamma (1-2\gamma L)\EE D_{f}(x^k,x^*) \leq 
        (1-\gamma\mu)\EE\norm{x^k - x^*}^2 +2\gamma^2\EE\sigma_k^2.
\end{equation}
\end{corollary}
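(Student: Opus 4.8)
The plan is to recognize that Corollary~\ref{cor:svrg} is not an instance of the final statement of Theorem~\ref{thm:main_gsgm} itself, but rather of the per-iteration descent inequality (referred to as Lemma~\ref{lem:iter_dec} in the text) that is used as the engine of that theorem's proof. So the first step would be to invoke Lemma~\ref{lem:svrg_lemma_1}, which establishes that for $k \bmod m \neq 0$ the {\tt SVRG} iterates satisfy Assumption~\ref{as:general_stoch_gradient} with the parameter values recorded in Table~\ref{tbl:special_cases-parameters}: namely $A = 2L$, $B = 2$, $C = 0$, $\rho = 0$, and $D_1 = D_2 = 0$. The vanishing of $C$ and $\rho$ is the distinctive feature here — it reflects the fact that within a single epoch the reference point $\phi$ (and hence $\sigma_k^2$) does not move, so there is no contraction of the $\sigma_k^2$ term and also no feedback from $D_f(x^k,x^*)$ into it.

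Next I would write down the one-step inequality that underlies Theorem~\ref{thm:main_gsgm}. The standard computation is: expand $\norm{x^{k+1}-x^*}^2$ using the prox-inequality (nonexpansiveness of $\proxR$ together with optimality of $x^*$), giving
\begin{equation*}
\EE\norm{x^{k+1}-x^*}^2 \le \norm{x^k-x^*}^2 - 2\gamma \ve{\nabla f(x^k)}{x^k-x^*} + \gamma^2 \EE\norm{g^k - \nabla f(x^*)}^2,
\end{equation*}
then apply $\mu$-strong quasi-convexity (Assumption~\ref{as:mu_strongly_quasi_convex}) to bound $-2\gamma\ve{\nabla f(x^k)}{x^k-x^*} \le -\gamma\mu\norm{x^k-x^*}^2 - 2\gamma D_f(x^k,x^*)$ (using $\ve{\nabla f(x^k)}{x^k - x^*} \ge D_f(x^k,x^*) + \tfrac{\mu}{2}\norm{x^k-x^*}^2$), and finally substitute the second-moment bound \eqref{eq:general_stoch_grad_second_moment} with the {\tt SVRG} parameters. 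This yields
\begin{equation*}
\EE\norm{x^{k+1}-x^*}^2 \le (1-\gamma\mu)\norm{x^k-x^*}^2 - 2\gamma D_f(x^k,x^*) + \gamma^2\big(4L D_f(x^k,x^*) + 2\sigma_k^2\big),
\end{equation*}
and collecting the $D_f$ terms as $-2\gamma(1 - 2\gamma L) D_f(x^k,x^*)$ gives exactly \eqref{eq:svrg_iter_dec} after taking full expectations.

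The main obstacle — really the only subtlety — is making sure the prox/quasi-convexity bookkeeping is done with $D_f(x^k,x^*)$ retained rather than discarded. In the full proof of Theorem~\ref{thm:main_gsgm} the $D_f$ term is absorbed into the stepsize condition $\gamma \le 1/(A+CM)$ and disappears; here, since we want the refined statement \eqref{eq:svrg_iter_dec} that explicitly keeps $\gamma(1-2\gamma L)D_f(x^k,x^*)$ on the left-hand side (this is what is later summed over an epoch and telescoped against $\phi = \frac1m\sum x^k$ via Jensen/convexity of $D_f$ in its first argument), one must stop the estimation one step earlier and not throw the term away. Everything else is the routine expansion above, so I would simply cite Lemma~\ref{lem:iter_dec} (the per-step lemma behind Theorem~\ref{thm:main_gsgm}) instantiated at $A = 2L$, $B = 2$, $C = \rho = D_1 = D_2 = 0$, and read off \eqref{eq:svrg_iter_dec} directly.
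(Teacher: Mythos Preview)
Your proposal is correct and takes exactly the same route as the paper, which obtains Corollary~\ref{cor:svrg} simply by invoking Lemma~\ref{lem:iter_dec} with the {\tt SVRG} parameters $A=2L$, $B=2$, $C=\rho=D_1=D_2=0$ (and effectively $M=0$, or equivalently using $\sigma_{k+1}^2=\sigma_k^2$ to cancel the $M$-terms). One minor point: your computation---and Lemma~\ref{lem:iter_dec}---actually produces the coefficient $2\gamma(1-2\gamma L)$ in front of $\EE D_f(x^k,x^*)$, not $\gamma(1-2\gamma L)$ as printed in \eqref{eq:svrg_iter_dec}, so ``exactly'' is a slight overstatement; the stated corollary is a harmless weakening (since $D_f\ge 0$ and the subsequent epoch argument requires $1-2\gamma L>0$ anyway).
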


\subsubsection*{Recovering SVRG rate}
Summing~\eqref{eq:svrg_iter_dec} for $k=0, \dots, m-1$ using $\sigma_k = \sigma_0$ we arrive at
\begin{eqnarray*}
\EE\norm{x^{m}-x^*}^2+ \sum_{k=1}^m \gamma (1-2\gamma L)\EE D_{f}(x^k,x^*)
&\leq& (1-\gamma\mu)\EE\norm{x^0 - x^*}^2 + 2m\gamma^2\EE\sigma_0^2 \\
&\leq &2 \left( \mu^{-1} +  2m\gamma^2   L\right) D_f(x^0,x^*) \;.
\end{eqnarray*}

Since $D_f$ is convex in the first argument, we have 
\[
m \gamma (1-2\gamma L)D_{f}\left( \frac1m \sum_{k=1}^m x^k,x^*\right) \leq \norm{x^{m}-x^*}^2+ \sum_{k=1}^m \gamma (1-2\gamma L)D_{f}(x^k,x^*)  
\]
and thus
\[
D_{f}\left( \frac1m \sum_{k=1}^m x^k,x^*\right) \leq \frac{ 2 \left( \mu^{-1} +  2m\gamma^2   L\right)}
{m \gamma (1-2\gamma L)} D_f(x^0,x^*) ,
 \]
which recovers rate from Theorem 1 in~\cite{SVRG}.

\subsection{{\tt L-SVRG}}\label{sec:L-SVRG}

In this section we show that our approach also covers {\tt L-SVRG} analysis from \cite{hofmann2015variance, Loopless} (see Algorithm~\ref{alg:L-SVRG}). Consider the finite-sum minimization problem
\begin{equation}\label{eq:main_l-svrg}
    f(x) = \frac{1}{n}\sum\limits_{i=1}^n f_i(x) + R(x),
\end{equation}
where $f_i$ is $L$-smooth for each $i$ and $f$ is $\mu$-strongly convex.

\begin{algorithm}[h]
    \caption{{\tt L-SVRG} \cite{hofmann2015variance, Loopless}}
    \label{alg:L-SVRG}
    \begin{algorithmic}
        \Require learning rate $\gamma>0$, probability $p\in (0,1]$, starting point $x^0\in\R^d$
        \State $w^0 = x^0$
        \For{ $k=0,1,2,\ldots$ }
        \State{Sample  $i \in \{1,\ldots, n\}$ uniformly at random}
        \State{$g^k = \nabla f_i(x^k) - \nabla f_i(w^k) + \nabla f(w^k)$}
        \State{$x^{k+1} = x^k - \gamma g^k$}
        \State{$w^{k+1} = \begin{cases}
            x^{k}& \text{with probability } p\\
            w^k& \text{with probability } 1-p
            \end{cases}$
        }
        \EndFor
    \end{algorithmic}
\end{algorithm}
Note that the gradient estimator is again unbiased, i.e. $\EE\left[g^k\mid x^k\right] = \nabla f(x^k)$. Next, Lemma~\ref{lem:l-svrg} provides with the remaining constants for Assumption~\ref{as:general_stoch_gradient}. The corresponding choice is stated in Table~\ref{tbl:special_cases-parameters}.

\begin{lemma}[Lemma~4.2 and Lemma~4.3 from~\cite{Loopless} extended to prox setup]\label{lem:l-svrg}
    We have
    \begin{equation}\label{eq:lemma4.2_l-svrg}
        \EE\left[\norm{g^k - \nabla f(x^*)}^2\mid x^k\right] \le 4LD_f(x^k,x^*) + 2\sigma_k^2
    \end{equation}
    and 
        \begin{equation}\label{eq:lemma4.3_l-svrg}
        \EE\left[\sigma_{k+1}^2\mid x^k\right] \le (1-p)\sigma_k^2 + 2LpD_f(x^k, x^*),
    \end{equation}
    where $\sigma_k^2 \eqdef \frac{1}{n}\sum\limits_{i=1}^n\norm{\nabla f_i(w^k) - \nabla f_i(x^*)}^2$.
\end{lemma}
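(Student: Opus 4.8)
The plan is to mirror the {\tt SAGA} computation behind bound \eqref{eq:stoch_grad_second_moment_saga1}, with the single snapshot point $w^k$ playing the role of the per-index memory $\phi_i^k$. Throughout I would condition on all randomness up to the start of iteration $k$ (which fixes $x^k$ and $w^k$); the fresh randomness is then the uniform index $i\in[n]$ used to build $g^k$ together with the independent Bernoulli$(p)$ coin that sets $w^{k+1}$. Unbiasedness $\EE[g^k\mid x^k]=\nabla f(x^k)$ is immediate since $\EE_i[\nabla f_i(x^k)]=\nabla f(x^k)$ and $\EE_i[\nabla f_i(w^k)]=\nabla f(w^k)$.

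For \eqref{eq:lemma4.2_l-svrg} I would write
\[
g^k - \nabla f(x^*) = \big(\nabla f_i(x^k) - \nabla f_i(x^*)\big) + \big(\nabla f_i(x^*) - \nabla f_i(w^k) + \nabla f(w^k) - \nabla f(x^*)\big),
\]
and observe that the second bracket equals $Z - \EE_i[Z]$ for $Z\eqdef \nabla f_i(x^*) - \nabla f_i(w^k)$, hence has zero mean over $i$. Applying $\norm{a+b}^2\le 2\norm{a}^2 + 2\norm{b}^2$ (inequality~\eqref{eq:a_b_norm_squared}) and then $\EE\norm{Z-\EE Z}^2\le \EE\norm{Z}^2$ (inequality~\eqref{eq:variance_decomposition}) gives
\[
\EE\big[\norm{g^k - \nabla f(x^*)}^2\mid x^k\big] \le 2\EE\norm{\nabla f_i(x^k) - \nabla f_i(x^*)}^2 + 2\EE\norm{\nabla f_i(x^*) - \nabla f_i(w^k)}^2 .
\]
The last term is exactly $2\sigma_k^2$ by the definition of $\sigma_k^2$, and the first term I would bound using convexity and $L$-smoothness of each $f_i$ in the form $\norm{\nabla f_i(x)-\nabla f_i(x^*)}^2\le 2LD_{f_i}(x,x^*)$ (inequality~\eqref{eq:norm_diff_grads}), averaging over $i$ to obtain $\tfrac{4L}{n}\sum_i D_{f_i}(x^k,x^*) = 4LD_f(x^k,x^*)$. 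This is the same chain of steps as in the proof of \eqref{eq:stoch_grad_second_moment_saga1}.

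For \eqref{eq:lemma4.3_l-svrg} I would take expectation of $\sigma_{k+1}^2 = \tfrac1n\sum_i\norm{\nabla f_i(w^{k+1}) - \nabla f_i(x^*)}^2$ over the coin flip only (it is independent of the index $i$): with probability $p$, $w^{k+1}=x^k$, and with probability $1-p$, $w^{k+1}=w^k$, so
\[
\EE\big[\sigma_{k+1}^2\mid x^k\big] = p\cdot\frac1n\sum_{i=1}^n\norm{\nabla f_i(x^k) - \nabla f_i(x^*)}^2 + (1-p)\sigma_k^2 .
\]
Bounding $\tfrac1n\sum_i\norm{\nabla f_i(x^k)-\nabla f_i(x^*)}^2\le 2LD_f(x^k,x^*)$ once more via \eqref{eq:norm_diff_grads} yields the claim. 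I do not expect a genuine obstacle here, as the argument is the {\tt SAGA}/{\tt SVRG} computation with the snapshot variable substituted in; the only points deserving care are verifying that the control-variate bracket is truly mean-zero so that \eqref{eq:variance_decomposition} may be applied, and keeping the conditioning/independence structure explicit, namely that $w^{k+1}$ depends only on the Bernoulli coin and not on the sampled index $i$, so the coin flip is the sole source of randomness in $\sigma_{k+1}^2$ given iteration $k$.
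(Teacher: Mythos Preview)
Your proposal is correct and follows essentially the same route as the paper: the same add-and-subtract decomposition of $g^k-\nabla f(x^*)$, the same use of \eqref{eq:a_b_norm_squared} followed by \eqref{eq:variance_decomposition} on the mean-zero control-variate bracket, the same appeal to \eqref{eq:norm_diff_grads} to turn $\tfrac1n\sum_i\norm{\nabla f_i(x^k)-\nabla f_i(x^*)}^2$ into $2LD_f(x^k,x^*)$, and the same Bernoulli expansion for $\sigma_{k+1}^2$. Your explicit remark that the coin flip is independent of the sampled index $i$ is a nice clarification the paper leaves implicit.
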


Next, applying Theorem~\ref{thm:main_gsgm} on Algorithm~\ref{alg:L-SVRG} with $M=\frac{4}{p}$ we get Corollary~\ref{cor:recover_l-svrg_rate}. 

\begin{corollary}\label{cor:recover_l-svrg_rate}
    {\tt L-SVRG} with $\gamma = \frac{1}{6L}$ satisfies
    \begin{equation}\label{eq:recover_l-svrg_rate}
        \EE V^k \le \left(1-\min\left\{\frac{\mu}{6L}, \frac{p}{2}\right\}\right)^kV^0.
    \end{equation}
\end{corollary}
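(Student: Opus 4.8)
The plan is to apply Theorem~\ref{thm:main_gsgm} directly with the parameter choice $(A,B,\rho,C,D_1,D_2)=(2L,2,p,Lp,0,0)$ recorded for {\tt L-SVRG} in Table~\ref{tbl:special_cases-parameters}, so the only real content is to check that Assumption~\ref{as:general_stoch_gradient} is satisfied with exactly these constants. Unbiasedness $\EE[g^k\mid x^k]=\nabla f(x^k)$ is noted just before Lemma~\ref{lem:l-svrg}. Inequality~\eqref{eq:lemma4.2_l-svrg} is precisely \eqref{eq:general_stoch_grad_second_moment} with $A=2L$, $B=2$, $D_1=0$ and $\sigma_k^2=\tfrac1n\sum_i\norm{\nabla f_i(w^k)-\nabla f_i(x^*)}^2$; inequality~\eqref{eq:lemma4.3_l-svrg} is \eqref{eq:gsg_sigma} with $1-\rho=1-p$, $C=Lp$, $D_2=0$. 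Assumption~\ref{as:mu_strongly_quasi_convex} holds since $\mu$-strong convexity of $f$ implies $\mu$-strong quasi-convexity.

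Next I would fix the free constant $M=\tfrac{4}{p}$ and verify the hypotheses of Theorem~\ref{thm:main_gsgm}. First, $M=\tfrac{4}{p}>\tfrac{2}{p}=\tfrac{B}{\rho}$. Second, $A+CM=2L+Lp\cdot\tfrac{4}{p}=6L$, so the stepsize condition \eqref{eq:gamma_condition_gsgm} reads $0<\gamma\le\min\{\tfrac1\mu,\tfrac1{6L}\}$; since a function that is both $\mu$-strongly convex and $L$-smooth has $\mu\le L\le 6L$, the choice $\gamma=\tfrac1{6L}$ is admissible.

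Finally, substitute into \eqref{eq:main_result_gsgm}. Because $D_1=D_2=0$, the additive oscillation term is zero, leaving $\EE[V^k]\le\max\{(1-\gamma\mu)^k,(1+\tfrac{B}{M}-\rho)^k\}V^0$ with $V^k=\norm{x^k-x^*}^2+\tfrac{4\gamma^2}{p}\sigma_k^2$. Now $1+\tfrac{B}{M}-\rho=1+\tfrac{2}{4/p}-p=1-\tfrac{p}{2}$ and $1-\gamma\mu=1-\tfrac{\mu}{6L}$, and using $\max\{a^k,b^k\}=(\max\{a,b\})^k$ for $a,b\in[0,1)$ together with $\max\{1-x,1-y\}=1-\min\{x,y\}$ yields the stated bound $\EE V^k\le(1-\min\{\tfrac{\mu}{6L},\tfrac{p}{2}\})^k V^0$.

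There is essentially no hard step: the argument is pure bookkeeping on top of Theorem~\ref{thm:main_gsgm}. The one point that needs care is the admissibility of $\gamma=\tfrac1{6L}$ in \eqref{eq:gamma_condition_gsgm}, namely that $\tfrac1{6L}\le\tfrac1\mu$, which is where one invokes $\mu\le L$; everything else is substitution and elementary algebra.
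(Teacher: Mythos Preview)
Your proposal is correct and follows exactly the approach taken in the paper: apply Theorem~\ref{thm:main_gsgm} with the parameters from Table~\ref{tbl:special_cases-parameters} and the choice $M=\tfrac{4}{p}$. The paper's own justification is a single sentence to this effect; you have simply (and correctly) filled in the arithmetic verifying the stepsize condition \eqref{eq:gamma_condition_gsgm} and the contraction factors.
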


\subsubsection*{Proof of Lemma~\ref{lem:l-svrg}}\label{sec:proofs_l-svrg}

    Let all expectations be conditioned on $x^k$. Using definition of $g^k$
    \begin{eqnarray*}
        \EE\left[\norm{g^k - \nabla f(x^*)}^2\right] &\overset{\text{Alg.}~\ref{alg:L-SVRG}}{=}&
        \EE\left[\norm{
            \nabla f_i(x^k) - \nabla f_i(x^*) + \nabla f_i(x^*) - \nabla f_i(w^k) + \nabla f(w^k)
        - \nabla f(x^*)}^2\right]
        \\
        &\overset{\eqref{eq:a_b_norm_squared}}{\leq}&
        2\EE\left[\norm{\nabla f_i(x^k) - \nabla f_i(x^*)}^2\right]\\
        &&\quad+ 
        2\EE\left[\norm{\nabla f_i(x^*) - \nabla f_i(w^k) - \EE\left[\nabla f_i(x^*) - \nabla f_i(w^k)\mid x^k\right]}^2\right]\\
        &\overset{\eqref{eq:norm_diff_grads},\eqref{eq:variance_decomposition}}{\leq}&
        4LD_f(x^k,x^*) + 2 \EE\left[\norm{\nabla f_i(w^k) - \nabla f_i(x^*)}^2\right] \\
        &=&
        4LD_f(x^k,x^*) + 2 \sigma_k^2.
    \end{eqnarray*}
    For the second bound, we shall have
        \begin{eqnarray*}
        \EE\left[\sigma_{k+1}^2\right] &\overset{\text{Alg.}~\ref{alg:L-SVRG}}{=}& (1-p) \sigma_k^2 +  \frac{p}{n} \sum\limits_{i=1}^{n} \norm{\nabla f(x^k) - \nabla f(x^*)}^2\\
        &\overset{\eqref{eq:norm_diff_grads}}{\leq}& (1-p) \sigma_k^2 + 2Lp D_f(x^k,x^*).
    \end{eqnarray*}

\subsection{{\tt DIANA}}\label{sec:diana}
In this section we consider a distributed setup where each function $f_i$ from~\eqref{eq:f_sum} is owned by $i$-th machine (thus, we have all together $n$ machines). 

We show that our approach covers the analysis of {\tt DIANA} from \cite{mishchenko2019distributed, horvath2019stochastic}. {\tt DIANA} is a specific algorithm for distributed optimization with { \em quantization} -- lossy compression of gradient updates, which reduces the communication between the server and workers\footnote{It is a well-known problem in distributed optimization that the communication between machines often takes more time than actual computation.}. 

In particular, {\tt DIANA} quantizes gradient differences instead of the actual gradients. This trick allows for the linear convergence to the optimum once the full gradients are evaluated on each machine, unlike other popular quantization methods such as {\tt QSGD} \cite{alistarh2017qsgd} or {\tt TernGrad} \cite{wen2017terngrad}. In this case, {\tt DIANA} behaves as variance reduced method -- it reduces a variance that was injected due to the quantization. However, {\tt DIANA} also allows for evaluation of stochastic gradients on each machine, as we shall further see.

First of all, we introduce the notion of quantization operator.

\begin{definition}[Quantization]\label{def:quantization}
    We say that $\hat \Delta$ is a \textit{quantization} of vector $\Delta\in\R^d$ and write $\hat \Delta \sim {\rm Q}(\Delta)$ if
    \begin{equation}\label{eq:quantization}
        \EE\hat\Delta = \Delta, \qquad \EE\norm{\hat \Delta - \Delta}^2 \le \omega \norm{\Delta}^2
    \end{equation}
    for some $\omega > 0$.
\end{definition}

\begin{algorithm}[t]
   \caption{{\tt DIANA} \cite{mishchenko2019distributed, horvath2019stochastic}}
   \label{alg:diana}
\begin{algorithmic}[1]
   \Require learning rates $\alpha>0$ and $\gamma>0$, initial vectors $x^0, h_1^0,\dotsc, h_n^0 \in \R^d$ and $h^0 = \frac{1}{n}\sum_{i=1}^n h_i^0$
   \For{$k=0,1,\dotsc$}
       \State Broadcast $x^{k}$ to all workers
        \For{$i=1,\dotsc,n$ in parallel}
            \State Sample $g^{k}_i$ such that $\EE [g^k_i \;|\; x^k]  =\nabla f_i(x^k)$ 
            \State $\Delta^k_i = g^k_i - h^k_i$
            \State Sample $\hat \Delta^k_i \sim {\rm Q}(\Delta^k_i)$
            \State $h_i^{k+1} = h_i^k + \alpha \hat \Delta_i^k$
            \State $\hat g_i^k = h_i^k + \hat \Delta_i^k$
        \EndFor
       \State $\hat \Delta^k = \frac{1}{n}\sum_{i=1}^n \hat \Delta_i^k$
       \State $ g^k = \frac{1}{n}\sum_{i=1}^n \hat g_i^k = h^k + \hat \Delta^k $
        \State $x^{k+1} = \proxR\left(x^k - \gamma  g^k \right)$
        \State $h^{k+1}  = \tfrac{1}{n}\sum_{i=1}^n h_i^{k+1} = h^k + \alpha \hat \Delta^k$
   \EndFor
\end{algorithmic}
\end{algorithm}

The aforementioned method is applied to solve problem \eqref{eq:problem_gen}+\eqref{eq:f_sum} where each $f_i$ is convex and $L$-smooth and $f$ is $\mu$-strongly convex. 

\begin{lemma}[Lemma 1 and consequence of Lemma 2 from \cite{horvath2019stochastic}]\label{lem:lemma1_diana}
    Suppose that $\alpha \le \frac{1}{1+\omega}$. For all iterations $k\ge 0$ of Algorithm~\ref{alg:diana} it holds
    \begin{eqnarray}
        \EE\left[ g^k\mid x^k\right] &=& \nabla f(x^k), \label{eq:unbiased_diana}\\
        \EE\left[\norm{g^k - \nabla f(x^*)}^2\mid x^k\right] &\le& \left(1+\frac{2\omega}{n}\right)\frac{1}{n}\sum\limits_{i=1}^n\norm{\nabla f_i(x^k) - \nabla f_i(x^*)}^2\notag\\
        &&\quad + \frac{2\omega\sigma_k^2}{n} + \frac{(1+\omega)\sigma^2}{n},\label{eq:second_moment_diana}
        \\
    \EE\left[\sigma_{k+1}^2 \mid x^k\right] &\le& (1-\alpha)\sigma_k^2 + \frac{\alpha}{n}\sum\limits_{i=1}^n\norm{\nabla f_i(x^k) - \nabla f_i(x^*)}^2 + \alpha \sigma^2.\label{eq:h_i_k_sec_moment_diana}
    \end{eqnarray}
    where $\sigma_k^2  = \frac{1}{n}\sum\limits_{i=1}^n\norm{h_i^k -  \nabla f_i(x^*)}^2$ and $\sigma^2$ is such that $\frac{1}{n}\sum\limits_{i=1}^n\EE\left[\norm{g_i^k - \nabla f_i(x^k)}^2\mid x^k\right]\le \sigma^2$.
\end{lemma}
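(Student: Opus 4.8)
The plan is to verify the three displayed relations in turn, keeping careful track of the two independent sources of randomness introduced at iteration $k$: the sampling of the local stochastic gradients $g_1^k,\dots,g_n^k$ (independent across workers, each with $\EE[g_i^k\mid x^k]=\nabla f_i(x^k)$ and average variance $\tfrac1n\sum_i\EE[\norm{g_i^k-\nabla f_i(x^k)}^2\mid x^k]\le\sigma^2$), and the quantizations $\hat\Delta_1^k,\dots,\hat\Delta_n^k$ (independent across workers, with $\EE_{\mathrm Q}\hat\Delta_i^k=\Delta_i^k$ and $\EE_{\mathrm Q}\norm{\hat\Delta_i^k-\Delta_i^k}^2\le\omega\norm{\Delta_i^k}^2$, where $\EE_{\mathrm Q}$ denotes expectation over the quantizations conditioned on $x^k$ and $g_1^k,\dots,g_n^k$). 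The workhorse throughout is the variance decomposition $\EE\norm{X}^2=\norm{\EE X}^2+\EE\norm{X-\EE X}^2$, applied once for the quantization noise and once for the gradient-sampling noise. Relation~\eqref{eq:unbiased_diana} is then immediate: $\EE_{\mathrm Q}\hat g_i^k=h_i^k+\Delta_i^k=g_i^k$, so $\EE[\hat g_i^k\mid x^k]=\nabla f_i(x^k)$, and averaging over $i$ gives $\EE[g^k\mid x^k]=\nabla f(x^k)$.

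For~\eqref{eq:second_moment_diana}, write $g^k-\nabla f(x^*)=\tfrac1n\sum_i(\hat g_i^k-\nabla f_i(x^*))$ with $\hat g_i^k=g_i^k+(\hat g_i^k-g_i^k)$. Since the quantization errors are zero-mean and independent across $i$ given $\{g_j^k\}$, the variance decomposition gives $\EE_{\mathrm Q}\norm{g^k-\nabla f(x^*)}^2\le\norm{\tfrac1n\sum_i(g_i^k-\nabla f_i(x^*))}^2+\tfrac{\omega}{n^2}\sum_i\norm{g_i^k-h_i^k}^2$. Now take the expectation over $\{g_i^k\}$: a second variance decomposition (independence and unbiasedness of the $g_i^k$) turns the first term into $\norm{\nabla f(x^k)-\nabla f(x^*)}^2+\tfrac1{n^2}\sum_i\EE\norm{g_i^k-\nabla f_i(x^k)}^2\le\norm{\nabla f(x^k)-\nabla f(x^*)}^2+\tfrac{\sigma^2}{n}$; for the second term I would use $\EE\norm{g_i^k-h_i^k}^2=\norm{\nabla f_i(x^k)-h_i^k}^2+\EE\norm{g_i^k-\nabla f_i(x^k)}^2$ followed by $\norm{\nabla f_i(x^k)-h_i^k}^2\le2\norm{\nabla f_i(x^k)-\nabla f_i(x^*)}^2+2\norm{h_i^k-\nabla f_i(x^*)}^2$, which produces the $\tfrac{2\omega}{n}\cdot\tfrac1n\sum_i\norm{\nabla f_i(x^k)-\nabla f_i(x^*)}^2+\tfrac{2\omega}{n}\sigma_k^2+\tfrac{\omega\sigma^2}{n}$ contribution. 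Finally, bounding $\norm{\nabla f(x^k)-\nabla f(x^*)}^2\le\tfrac1n\sum_i\norm{\nabla f_i(x^k)-\nabla f_i(x^*)}^2$ by convexity of $\norm{\cdot}^2$ and collecting the noise terms as $\tfrac{\sigma^2}{n}+\tfrac{\omega\sigma^2}{n}=\tfrac{(1+\omega)\sigma^2}{n}$ yields~\eqref{eq:second_moment_diana}.

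For~\eqref{eq:h_i_k_sec_moment_diana}, fix $i$ and put $u=h_i^k-\nabla f_i(x^*)$, $v=g_i^k-h_i^k$, so that $h_i^{k+1}-\nabla f_i(x^*)=u+\alpha\hat\Delta_i^k$. The variance decomposition over the quantization gives $\EE_{\mathrm Q}\norm{h_i^{k+1}-\nabla f_i(x^*)}^2\le\norm{u+\alpha v}^2+\alpha^2\omega\norm{v}^2$, and expanding with the identity $2\langle u,v\rangle=\norm{u+v}^2-\norm{u}^2-\norm{v}^2$ this equals $(1-\alpha)\norm{u}^2+\alpha\norm{u+v}^2-\alpha\bigl(1-\alpha(1+\omega)\bigr)\norm{v}^2$. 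This is the one point where the hypothesis $\alpha\le\tfrac1{1+\omega}$ enters: it makes the last term nonpositive, so it can be dropped. Since $u+v=g_i^k-\nabla f_i(x^*)$, averaging over $\{g_i^k\}$ and splitting $\EE\norm{g_i^k-\nabla f_i(x^*)}^2=\norm{\nabla f_i(x^k)-\nabla f_i(x^*)}^2+\EE\norm{g_i^k-\nabla f_i(x^k)}^2$ gives $\EE[\norm{h_i^{k+1}-\nabla f_i(x^*)}^2\mid x^k]\le(1-\alpha)\norm{h_i^k-\nabla f_i(x^*)}^2+\alpha\norm{\nabla f_i(x^k)-\nabla f_i(x^*)}^2+\alpha\EE\norm{g_i^k-\nabla f_i(x^k)}^2$. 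Averaging over $i$ and invoking $\tfrac1n\sum_i\EE\norm{g_i^k-\nabla f_i(x^k)}^2\le\sigma^2$ yields~\eqref{eq:h_i_k_sec_moment_diana}.

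I expect the main obstacle to be purely organizational: keeping the nested conditioning consistent (quantization inside, gradient sampling outside), and, in the $h$-recursion, resisting the temptation to bound the cross term $\langle u,v\rangle$ by Young's inequality — the clean coefficients in~\eqref{eq:h_i_k_sec_moment_diana} (with no $\omega$ appearing) come precisely from handling it via the exact algebraic identity together with the stepsize restriction $\alpha\le\tfrac1{1+\omega}$.
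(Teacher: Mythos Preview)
Your argument is correct in all three parts. The paper itself does not supply a proof of this lemma; it simply cites it as ``Lemma~1 and consequence of Lemma~2'' from \cite{horvath2019stochastic}, so there is no in-paper proof to compare against. Your approach---variance decomposition first over the quantization noise and then over the gradient sampling, the Jensen bound $\norm{\nabla f(x^k)-\nabla f(x^*)}^2\le\tfrac1n\sum_i\norm{\nabla f_i(x^k)-\nabla f_i(x^*)}^2$ for the second moment, and the exact algebraic identity $\norm{u+\alpha v}^2=(1-\alpha)\norm{u}^2+\alpha\norm{u+v}^2-\alpha(1-\alpha)\norm{v}^2$ for the $h$-recursion---is precisely the standard argument from that reference, and your observation that $\alpha\le\tfrac1{1+\omega}$ is needed only to drop the $-\alpha(1-\alpha(1+\omega))\norm{v}^2$ term is the right place to invoke the hypothesis.
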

Bounding further $\frac1n \sum_{i=1}^n\norm{\nabla f_i(x^k) - \nabla f_i(x^*)}^2 \leq 2L D_{f}(x^k,x^*)$ in the above Lemma, we see that Assumption~\ref{as:general_stoch_gradient} as per Table~\ref{tbl:special_cases-parameters} is valid. Thus, as a special case of Theorem~\ref{thm:main_gsgm}, we obtain the following corollary.

\begin{corollary}\label{cor:main_diana}
    Assume that $f_i$ is convex and $L$-smooth for all $i\in[n]$ and $f$ is $\mu$ strongly convex, $\alpha \le \frac{1}{\omega+1}$, $\gamma \le \frac{1}{\left(1+\frac{2\omega}{n}\right)L + ML\alpha}$ where $M > \frac{2\omega}{n\alpha}$. Then the iterates of {\tt DIANA} satisfy
    \begin{equation}\label{eq:convergence_diana}
        \EE\left[V^k\right] \le \max\left\{(1-\gamma\mu)^k, \left(1 + \frac{2\omega}{nM} - \alpha\right)^k\right\}V^0 + \frac{\left(\frac{1+\omega}{n} + M\alpha\right)\sigma^2\gamma^2}{\min\left\{\gamma\mu, \alpha - \frac{2\omega}{nM}\right\}},
    \end{equation}
    where the Lyapunov function $V^k$ is defined by $V^k \eqdef \norm{x^k - x^*}^2 + M\gamma^2\sigma_k^2$. For the particular choice $\alpha = \frac{1}{\omega+1}$, $M = \frac{4\omega(\omega+1)}{n}$, $\gamma = \frac{1}{\left(1 + \frac{6\omega}{n}\right)L}$, then {\tt DIANA} converges to a solution neighborhood and the leading iteration complexity term is
    \begin{equation}\label{eq:diana_leading_term}
        \max\left\{\frac{1}{\gamma\mu}, \frac{1}{\alpha - \frac{2\omega}{nM}}\right\} = \max\left\{\kappa + \kappa \frac{6\omega}{n}, 2(\omega+1)\right\},
    \end{equation}
    where $\kappa = \frac{L}{\mu}$.
\end{corollary}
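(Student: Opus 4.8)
The plan is to derive Corollary~\ref{cor:main_diana} as a pure specialization of the master Theorem~\ref{thm:main_gsgm}; all the analytic work has already been done in Lemma~\ref{lem:lemma1_diana} and in the theorem itself, so the proof reduces to (i) reading off the six parameters $(A,B,C,D_1,D_2,\rho)$, (ii) matching the hypotheses, and (iii) evaluating the bound at the prescribed constants. First I would verify that Assumptions~\ref{as:general_stoch_gradient} and~\ref{as:mu_strongly_quasi_convex} hold. Since $f$ is $\mu$-strongly convex, plugging $y=x^*$ into the strong-convexity inequality gives exactly \eqref{eq:mu_strongly_quasi_convex}, so Assumption~\ref{as:mu_strongly_quasi_convex} is satisfied. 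Lemma~\ref{lem:lemma1_diana} already supplies unbiasedness \eqref{eq:unbiased_diana} and the two recursions \eqref{eq:second_moment_diana}--\eqref{eq:h_i_k_sec_moment_diana}, but expressed via $\tfrac1n\sum_i\norm{\nabla f_i(x^k)-\nabla f_i(x^*)}^2$. Because each $f_i$ is convex and $L$-smooth we have $\norm{\nabla f_i(x)-\nabla f_i(x^*)}^2\le 2LD_{f_i}(x,x^*)$, and averaging together with $D_f=\tfrac1n\sum_i D_{f_i}$ yields
\[
\frac1n\sum_{i=1}^n\norm{\nabla f_i(x^k)-\nabla f_i(x^*)}^2 \le 2LD_f(x^k,x^*).
\]
Substituting this into \eqref{eq:second_moment_diana} and \eqref{eq:h_i_k_sec_moment_diana} casts them exactly in the forms \eqref{eq:general_stoch_grad_second_moment}--\eqref{eq:gsg_sigma}, and comparing coefficients identifies $A=(1+\tfrac{2\omega}{n})L$, $B=\tfrac{2\omega}{n}$, $C=L\alpha$, $\rho=\alpha$, $D_1=\tfrac{(1+\omega)\sigma^2}{n}$, $D_2=\alpha\sigma^2$, matching Table~\ref{tbl:special_cases-parameters}.

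Next I would check the hypotheses of Theorem~\ref{thm:main_gsgm} and substitute. The requirement $M>\tfrac{B}{\rho}$ becomes $M>\tfrac{2\omega}{n\alpha}$, and $A+CM=(1+\tfrac{2\omega}{n})L+ML\alpha$, so the stepsize window $\gamma\le\min\{\tfrac1\mu,\tfrac{1}{A+CM}\}$ reduces to the stated condition (the $\tfrac1\mu$ clause being implicit). Inserting $B,M,\rho,D_1,D_2$ into the conclusion \eqref{eq:main_result_gsgm} — using $\tfrac{B}{M}=\tfrac{2\omega}{nM}$, $D_1+MD_2=(\tfrac{1+\omega}{n}+M\alpha)\sigma^2$, and $\rho-\tfrac{B}{M}=\alpha-\tfrac{2\omega}{nM}$ — reproduces \eqref{eq:convergence_diana} verbatim, with the same Lyapunov function $V^k=\norm{x^k-x^*}^2+M\gamma^2\sigma_k^2$.

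Finally, for the specific choice $\alpha=\tfrac{1}{\omega+1}$, $M=\tfrac{4\omega(\omega+1)}{n}$, $\gamma=\tfrac{1}{(1+6\omega/n)L}$, I would confirm the constraints and evaluate the two terms of the outer maximum in \eqref{eq:diana_leading_term}. A short computation gives $ML\alpha=\tfrac{4\omega L}{n}$, hence $A+CM=(1+\tfrac{6\omega}{n})L$, so $\gamma$ exactly saturates $\tfrac{1}{A+CM}$; moreover $\tfrac{2\omega}{n\alpha}=\tfrac{2\omega(\omega+1)}{n}<M$, and $\gamma\le\tfrac1L\le\tfrac1\mu$ since $L\ge\mu$, so every hypothesis holds. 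Then $\tfrac{1}{\gamma\mu}=\kappa(1+\tfrac{6\omega}{n})=\kappa+\kappa\tfrac{6\omega}{n}$, while $\tfrac{2\omega}{nM}=\tfrac{1}{2(\omega+1)}$ gives $\alpha-\tfrac{2\omega}{nM}=\tfrac{1}{2(\omega+1)}$ and hence $\tfrac{1}{\alpha-2\omega/(nM)}=2(\omega+1)$, producing \eqref{eq:diana_leading_term}.

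There is no genuine obstacle here, since the heavy lifting resides in Lemma~\ref{lem:lemma1_diana} and Theorem~\ref{thm:main_gsgm}; the only real care-point is bookkeeping. One must apply the smoothness bound consistently to \emph{both} recursions, and one must check that the prescribed $M$, $\gamma$, and $\alpha$ \emph{saturate} (hold with equality) the theorem's stepsize and $M$-constraints — this saturation is precisely what makes $A+CM$ collapse to $(1+\tfrac{6\omega}{n})L$ and $\alpha-\tfrac{2\omega}{nM}$ collapse to $\tfrac{1}{2(\omega+1)}$, yielding the clean leading complexity term.
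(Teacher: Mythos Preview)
Your proposal is correct and follows essentially the same approach as the paper: the paper simply states that bounding $\tfrac{1}{n}\sum_i\norm{\nabla f_i(x^k)-\nabla f_i(x^*)}^2\le 2LD_f(x^k,x^*)$ in Lemma~\ref{lem:lemma1_diana} validates Assumption~\ref{as:general_stoch_gradient} with the parameters of Table~\ref{tbl:special_cases-parameters}, and then invokes Theorem~\ref{thm:main_gsgm}. You have spelled out the parameter identification, the hypothesis checks, and the arithmetic for the specific choice of $\alpha,M,\gamma$ in more detail than the paper does, but the route is identical.
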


As mentioned, once the full (deterministic) gradients are evaluated on each machine, {\tt DIANA} converges linearly to the exact optimum. In particular, in such case we have $\sigma^2 = 0$. Corollary~\ref{cor:main_diana_special_case} states the result in the case when $n=1$, i.e. there is only a single node~\footnote{node = machine}. For completeness, we present the mentioned simple case of {\tt DIANA} as Algorithm~\ref{alg:diana_case}.

\begin{algorithm}[t]
   \caption{{\tt DIANA}: 1 node $\&$ exact gradients \cite{mishchenko2019distributed, horvath2019stochastic}}
   \label{alg:diana_case}
\begin{algorithmic}[1]
   \Require learning rates $\alpha>0$ and $\gamma>0$, initial vectors $x^0, h^0 \in \R^d$
   \For{$k=0,1,\dotsc$}
       \State $\Delta^k = \nabla f(x^k) - h^k$
       \State Sample $\hat \Delta^k \sim {\rm Q}(\Delta^k)$
       \State $h^{k+1} = h^k + \alpha \hat \Delta^k$
       \State $g^k = h^k + \hat \Delta^k$
       \State $x^{k+1} = \proxR\left(x^k - \gamma  g^k \right)$
   \EndFor
\end{algorithmic}
\end{algorithm}

\begin{corollary}\label{cor:main_diana_special_case}
    Assume that $f_i$ is $\mu$-strongly convex and $L$-smooth for all $i\in[n]$, $\alpha \le \frac{1}{\omega+1}$, $\gamma \le \frac{1}{\left(1+2\omega\right)L + ML\alpha}$ where $M > \frac{2\omega}{\alpha}$. Then the stochastic gradient $\hat g^k$ and the objective function $f$ satisfy Assumption~\ref{as:general_stoch_gradient} with $A = \left(1+2\omega\right)L, B = 2\omega, \sigma_k^2 = \norm{h^k - h^*}^2, \rho = \alpha, C = L\alpha, D_1 = 0, D_2 = 0$ and 
    \begin{equation}\label{eq:convergence_diana_special_case}
        \EE\left[V^k\right] \le \max\left\{(1-\gamma\mu)^k, \left(1 + \frac{2\omega}{M} - \alpha\right)^k\right\}V^0,
    \end{equation}
    where the Lyapunov function $V^k$ is defined by $V^k \eqdef \norm{x^k - x^*}^2 + M\gamma^2\sigma_k^2$. For the particular choice $\alpha = \frac{1}{\omega+1}$, $M = 4\omega(\omega+1)$, $\gamma = \frac{1}{\left(1 + 6\omega\right)L}$ the leading term in the iteration complexity bound is
    \begin{equation}\label{eq:diana_leading_term_special_case}
        \max\left\{\frac{1}{\gamma\mu}, \frac{1}{\alpha - \frac{2\omega}{M}}\right\} = \max\left\{\kappa + 6\kappa\omega, 2(\omega+1)\right\},
    \end{equation}
    where $\kappa = \frac{L}{\mu}$.
\end{corollary}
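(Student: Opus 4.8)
The plan is to obtain Corollary~\ref{cor:main_diana_special_case} as a direct specialization of the general {\tt DIANA} analysis to $n=1$ with exact local gradients, followed by an application of Theorem~\ref{thm:main_gsgm}. First I would observe that Algorithm~\ref{alg:diana_case} is exactly Algorithm~\ref{alg:diana} with a single worker and $g_1^k = \nabla f_1(x^k) = \nabla f(x^k)$; in particular the quantity $\sigma^2$ in Lemma~\ref{lem:lemma1_diana}, defined through $\frac{1}{n}\sum_i\EE[\norm{g_i^k-\nabla f_i(x^k)}^2\mid x^k]\le\sigma^2$, may be taken to be $0$ since the local gradient has zero variance. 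Setting $n=1$ and $\sigma^2=0$ in the three displays of Lemma~\ref{lem:lemma1_diana} (applicable since $\alpha\le\frac{1}{1+\omega}$) and writing $\sigma_k^2 = \norm{h^k-\nabla f(x^*)}^2$ gives unbiasedness of $g^k$ together with
\begin{align*}
\EE[\norm{g^k-\nabla f(x^*)}^2\mid x^k] &\le (1+2\omega)\norm{\nabla f(x^k)-\nabla f(x^*)}^2 + 2\omega\sigma_k^2,\\
\EE[\sigma_{k+1}^2\mid x^k] &\le (1-\alpha)\sigma_k^2 + \alpha\norm{\nabla f(x^k)-\nabla f(x^*)}^2.
\end{align*}

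Next I would invoke $\norm{\nabla f(x^k)-\nabla f(x^*)}^2\le 2LD_f(x^k,x^*)$, valid because $f$ is convex and $L$-smooth. Substituting this into the two inequalities above puts them in the form \eqref{eq:general_stoch_grad_second_moment}--\eqref{eq:gsg_sigma} with $A=(1+2\omega)L$, $B=2\omega$, $\rho=\alpha$, $C=L\alpha$ and $D_1=D_2=0$, which is exactly the parameter choice asserted in the statement (and matches Table~\ref{tbl:special_cases-parameters}). I would then check the hypotheses of Theorem~\ref{thm:main_gsgm}: $M>B/\rho=2\omega/\alpha$ is assumed; the stepsize bound $\gamma\le\frac{1}{(1+2\omega)L+ML\alpha}=\frac{1}{A+CM}$ is assumed, and since $(1+2\omega)L+ML\alpha\ge L\ge\mu$ (strong convexity together with smoothness forces $\mu\le L$, and $\omega>0$), this also gives $\gamma\le 1/\mu$, so \eqref{eq:gamma_condition_gsgm} holds. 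Because $D_1=D_2=0$, the additive term in \eqref{eq:main_result_gsgm} vanishes, and Theorem~\ref{thm:main_gsgm} yields precisely \eqref{eq:convergence_diana_special_case}.

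Finally, for the concrete choice $\alpha=\frac{1}{\omega+1}$, $M=4\omega(\omega+1)$, $\gamma=\frac{1}{(1+6\omega)L}$, I would verify the arithmetic: $M=4\omega(\omega+1)>2\omega(\omega+1)=2\omega/\alpha$; $ML\alpha=4\omega L$, so $(1+2\omega)L+ML\alpha=(1+6\omega)L$ and the stepsize condition is met with equality; $\gamma\mu=\frac{\mu}{(1+6\omega)L}=\frac{1}{(1+6\omega)\kappa}$, hence $1/(\gamma\mu)=\kappa+6\kappa\omega$; and $\alpha-\frac{2\omega}{M}=\frac{1}{\omega+1}-\frac{1}{2(\omega+1)}=\frac{1}{2(\omega+1)}$, hence $1/(\alpha-2\omega/M)=2(\omega+1)$. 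Taking the maximum of the two exponential rates gives \eqref{eq:diana_leading_term_special_case}.

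\textbf{Main obstacle.} There is no genuine obstacle here — the statement is a clean corollary and the work is bookkeeping. The one point that deserves a careful sentence is the legitimacy of taking $\sigma^2=0$ in Lemma~\ref{lem:lemma1_diana}: one must note that ``exact gradient on the single node'' is compatible with that lemma's setup, which only requires $\EE[g_i^k\mid x^k]=\nabla f_i(x^k)$ and some valid bound $\sigma^2$ on the conditional variance, and $g_1^k\equiv\nabla f(x^k)$ trivially satisfies this with $\sigma^2=0$.
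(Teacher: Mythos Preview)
Your proposal is correct and follows precisely the approach the paper intends: specialize Lemma~\ref{lem:lemma1_diana} to $n=1$ with $\sigma^2=0$, use the smoothness-convexity bound $\norm{\nabla f(x^k)-\nabla f(x^*)}^2\le 2LD_f(x^k,x^*)$ to read off the parameters of Assumption~\ref{as:general_stoch_gradient}, and then invoke Theorem~\ref{thm:main_gsgm}. The paper does not spell out a separate proof for this corollary, but your derivation is exactly the specialization of the argument preceding Corollary~\ref{cor:main_diana}.
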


\subsection{{\tt Q-SGD-SR}}\label{Q-SGD-AS}

In this section, we consider a quantized version of {\tt SGD-SR}. 

\begin{algorithm}[h]
    \caption{{\tt Q-SGD-SR}}
    \label{alg:qsgdas}
    \begin{algorithmic}
        \Require learning rate $\gamma>0$, starting point $x^0\in\R^d$, distribution $\cD$ over $\xi \in\R^n$ such that $\ED{\xi}$ is vector of ones
        \For{ $k=0,1,2,\ldots$ }
        \State{Sample $\xi \sim \cD$}
        \State{$g^k \sim {\rm Q}(\nabla f_\xi (x^k))$}
        \State{$x^{k+1} = \proxR(x^k - \gamma g^k)$}
        \EndFor
    \end{algorithmic}
\end{algorithm}

\begin{lemma}[Generalization of Lemma~2.4, \cite{SGD_AS}]\label{lem:exp_smoothness_grad_up_bound_q-sgd-as}
    If $(f,\cD)\sim ES(\cL)$, then
    \begin{equation}\label{eq:exp_smoothness_grad_up_bound_sgd-as}
        \ED{\norm{g^k - \nabla f(x^*)}^2} \le 4\cL(1+\omega)D_f(x^k,x^*) + 2\sigma^2(1+\omega).
    \end{equation}
    where $\sigma^2 \eqdef \ED{\norm{\nabla f_\xi(x^*)}^2}$.
\end{lemma}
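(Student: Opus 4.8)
The plan is to mirror the proof of Lemma~\ref{lem:exp_smoothness_grad_up_bound_sgd-as}, inserting one extra layer that accounts for the quantization noise. Since $g^k \sim {\rm Q}(\nabla f_\xi(x^k))$, I will first take expectation with respect to the quantization $Q$ conditioned on the sampled $\xi$ and on $x^k$, and only afterwards take expectation over $\xi\sim\cD$. This ordering is the key organizational point: the quantization is applied \emph{after} $\xi$ is drawn, so the tower rule gives $\ED{\norm{g^k-\nabla f(x^*)}^2} = \ED{\EE_Q[\norm{g^k-\nabla f(x^*)}^2\mid \xi, x^k]}$.

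First I would fix $\xi$ and $x^k$, write $v \eqdef \nabla f_\xi(x^k)$, and decompose $g^k - \nabla f(x^*) = (g^k - v) + (v - \nabla f(x^*))$. Since $\EE_Q[g^k\mid\xi,x^k] = v$ by the unbiasedness in \eqref{eq:quantization}, the cross term vanishes under $\EE_Q$, giving
\[
\EE_Q\left[\norm{g^k - \nabla f(x^*)}^2\mid\xi,x^k\right] = \EE_Q\left[\norm{g^k - v}^2\mid\xi,x^k\right] + \norm{v - \nabla f(x^*)}^2 \le \omega\norm{v}^2 + \norm{v - \nabla f(x^*)}^2,
\]
where the inequality uses the second part of \eqref{eq:quantization}. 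Next I would bound $\omega\norm{v}^2$ by expanding $v = (v - \nabla f(x^*)) + \nabla f(x^*)$; however it is cleaner to note $\norm{v}^2 \le (1+\omega')\norm{v-\nabla f(x^*)}^2 + (1+1/\omega')\norm{\nabla f(x^*)}^2$ for any $\omega'>0$, but in fact the simplest route that yields exactly the claimed constants is to instead write $\omega\norm{v}^2 + \norm{v-\nabla f(x^*)}^2$ and observe that after taking $\ED{\cdot}$ the term $\ED{\norm{v}^2} = \ED{\norm{\nabla f_\xi(x^k)}^2}$, which one bounds by $2(1+\omega)$-type arithmetic — so I would rather keep the combination $(1+\omega)\norm{v - \nabla f_\xi(x^*)}^2$ in mind. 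Concretely: $\omega\norm{v}^2 + \norm{v-\nabla f(x^*)}^2 \le (1+\omega)\norm{v - \nabla f(x^*)}^2 + \omega\norm{\nabla f(x^*)}^2 \cdot(\text{via }2\text{-inequality})$; after taking $\ED{\cdot}$ this gives $(1+\omega)\left(2\ED{\norm{\nabla f_\xi(x^k)-\nabla f_\xi(x^*)}^2} + 2\ED{\norm{\nabla f_\xi(x^*)}^2}\right)$, and applying the expected smoothness bound \eqref{eq:exp_smoothness_sgd-as} together with $\sigma^2 = \ED{\norm{\nabla f_\xi(x^*)}^2}$ yields $4\cL(1+\omega)D_f(x^k,x^*) + 2(1+\omega)\sigma^2$, which is exactly \eqref{eq:exp_smoothness_grad_up_bound_sgd-as}.

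The main obstacle is purely bookkeeping: getting the constants to come out as $4\cL(1+\omega)$ and $2(1+\omega)\sigma^2$ rather than something slightly weaker requires choosing the Young/triangle-inequality split carefully so that the $(1+\omega)$ factor multiplies \emph{both} the $D_f$ term and the $\sigma^2$ term uniformly. The right way to see this is to recognize that $\EE_Q[\norm{g^k - \nabla f(x^*)}^2\mid\xi,x^k] \le (1+\omega)\norm{\nabla f_\xi(x^k) - \nabla f(x^*)}^2 + \omega\left(\norm{\nabla f_\xi(x^k)}^2 - \norm{\nabla f_\xi(x^k) - \nabla f(x^*)}^2\right)$ is not quite tight; instead one uses $\EE_Q\norm{g^k-w}^2 = \EE_Q\norm{g^k - v}^2 + \norm{v-w}^2 \le \omega\norm v^2 + \norm{v-w}^2$ and then $\omega\norm v^2 + \norm{v-w}^2 \le (1+\omega)\norm{v-w}^2 + \omega\norm w^2 + 2\omega\ve{v-w}{w}$, which is messy. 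Cleanest: bound $\EE_Q[\norm{g^k-\nabla f(x^*)}^2] \le \omega\norm{\nabla f_\xi(x^k)}^2 + \norm{\nabla f_\xi(x^k)-\nabla f(x^*)}^2$, then use $\norm{\nabla f_\xi(x^k)}^2 \le 2\norm{\nabla f_\xi(x^k)-\nabla f_\xi(x^*)}^2 + 2\norm{\nabla f_\xi(x^*)}^2$ and the already-proven bound \eqref{eq:exp_smoothness_grad_up_bound_sgd-as} of Lemma~\ref{lem:exp_smoothness_grad_up_bound_sgd-as} for the term $\ED{\norm{\nabla f_\xi(x^k)-\nabla f(x^*)}^2}$; collecting terms and using $\ED{\norm{\nabla f_\xi(x^k)-\nabla f_\xi(x^*)}^2}\le 2\cL D_f(x^k,x^*)$ delivers the stated bound after routine arithmetic. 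No genuine difficulty beyond this constant-chasing is expected.
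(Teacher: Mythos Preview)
Your proposal is correct and, once you arrive at the ``cleanest'' route at the end, it is essentially identical to the paper's own proof: split $\EE\norm{g^k-\nabla f(x^*)}^2$ via the tower rule and the variance decomposition into $\EE\norm{g^k-\nabla f_\xi(x^k)}^2 + \EE\norm{\nabla f_\xi(x^k)-\nabla f(x^*)}^2$, bound the first term by $\omega\EE\norm{\nabla f_\xi(x^k)}^2 \le 2\omega\EE\norm{\nabla f_\xi(x^k)-\nabla f_\xi(x^*)}^2 + 2\omega\sigma^2 \le 4\omega\cL D_f(x^k,x^*)+2\omega\sigma^2$, and bound the second term by the already-proved Lemma~\ref{lem:exp_smoothness_grad_up_bound_sgd-as}. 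The exploratory detours in your write-up (Young's inequality with parameter $\omega'$, the expansion with the cross term $2\omega\langle v-w,w\rangle$) are unnecessary and can be dropped.
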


A direct consequence of Theorem~\ref{thm:main_gsgm} in this setup is Corollary~\ref{cor:recover_q-sgd-as_rate}. 

\begin{corollary}\label{cor:recover_q-sgd-as_rate}
    Assume that $f(x)$ is $\mu$-strongly quasi-convex and $(f,\cD)\sim ES(\cL)$. Then {\tt Q-SGD-SR} with $\gamma^k \equiv\gamma \le \frac{1}{2(1+\omega)\cL}$ satisfies
    \begin{equation}\label{eq:recover_q-sgd-as_rate}
        \EE\left[\norm{x^k - x^*}^2\right] \le (1-\gamma\mu)^k\norm{x^0-x^*}^2 + \frac{2\gamma(1+\omega)\sigma^2}{\mu}.
    \end{equation}
\end{corollary}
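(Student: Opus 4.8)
The plan is to obtain Corollary~\ref{cor:recover_q-sgd-as_rate} as a direct specialization of Theorem~\ref{thm:main_gsgm}, so the real work is to (i) exhibit a parameter tuple for which the iterates of {\tt Q-SGD-SR} satisfy Assumption~\ref{as:general_stoch_gradient}, and (ii) check that the general bound \eqref{eq:main_result_gsgm} degenerates to the claimed inequality once the variance-reduction parameters vanish.

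First I would verify the unbiasedness requirement \eqref{eq:general_stoch_grad_unbias}. Conditioning on $x^k$ and applying the tower rule over the quantization randomness and then over $\xi\sim\cD$ gives $\EE[g^k\mid x^k] = \EE_\xi\bigl[\EE_{{\rm Q}}[g^k\mid x^k,\xi]\bigr] = \EE_\xi[\nabla f_\xi(x^k)] = \nabla f(x^k)$, where the middle step uses the defining property \eqref{eq:quantization} of ${\rm Q}$ and the last uses that $\ED{\xi}$ is the all-ones vector, so $\ED{\nabla f_\xi} = \nabla f$. Next I would invoke Lemma~\ref{lem:exp_smoothness_grad_up_bound_q-sgd-as}: its conclusion is precisely \eqref{eq:general_stoch_grad_second_moment} with $A = 2(1+\omega)\cL$, $B = 0$, $D_1 = 2(1+\omega)\sigma^2$, together with the dummy sequence $\sigma_k^2\equiv 0$. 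Because $\sigma_k^2\equiv 0$, the second recursion \eqref{eq:gsg_sigma} holds trivially with $\rho = 1$, $C = 0$, $D_2 = 0$. This reproduces the {\tt Q-SGD-SR} row of Table~\ref{tbl:special_cases-parameters}.

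Then I would feed these constants into Theorem~\ref{thm:main_gsgm}. Since $B = 0$, the requirement $M > B/\rho$ is met by any $M > 0$, and since $\sigma_k^2\equiv 0$ the Lyapunov function is just $V^k = \norm{x^k-x^*}^2$, so every $M$-dependent quantity disappears. The stepsize condition \eqref{eq:gamma_condition_gsgm} reads $0 < \gamma \le \min\{1/\mu,\ 1/(2(1+\omega)\cL)\}$, and since $2(1+\omega)\cL \ge \mu$ the hypothesis $\gamma \le \tfrac{1}{2(1+\omega)\cL}$ implies both parts. Substituting $B = 0$, $D_2 = 0$, $\rho = 1$ into \eqref{eq:main_result_gsgm}: the second base is $(1 + B/M - \rho)^k = 0^k$, which for $k\ge 1$ is dominated by $(1-\gamma\mu)^k$ (the $k=0$ case is trivial); the denominator equals $\min\{\gamma\mu,\ \rho - B/M\} = \min\{\gamma\mu, 1\} = \gamma\mu$ since $\gamma\mu\le 1$; and the additive term becomes $(D_1 + MD_2)\gamma^2/(\gamma\mu) = 2(1+\omega)\sigma^2\gamma/\mu$. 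This is exactly \eqref{eq:recover_q-sgd-as_rate}.

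The only step that is not pure bookkeeping is Lemma~\ref{lem:exp_smoothness_grad_up_bound_q-sgd-as}, which I would establish by conditioning on $\xi$ and writing $\EE\norm{g^k-\nabla f(x^*)}^2 = \EE\norm{g^k-\nabla f_\xi(x^k)}^2 + \EE\norm{\nabla f_\xi(x^k)-\nabla f(x^*)}^2$ (the cross term vanishes because $\EE_{{\rm Q}}[g^k\mid x^k,\xi] = \nabla f_\xi(x^k)$), then bounding the first summand by $\omega\,\EE\norm{\nabla f_\xi(x^k)}^2 \le \omega\bigl(4\cL D_f(x^k,x^*) + 2\sigma^2\bigr)$ using \eqref{eq:quantization}, the elementary inequality $\norm{a+b}^2\le 2\norm{a}^2+2\norm{b}^2$ and the expected-smoothness bound \eqref{eq:exp_smoothness_sgd-as}, and bounding the second summand by $4\cL D_f(x^k,x^*) + 2\sigma^2$ via Lemma~\ref{lem:exp_smoothness_grad_up_bound_sgd-as}; adding the two yields the $(1+\omega)$ factors. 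I do not expect a genuine obstacle; if anything, the point that needs care is making the degenerate cases ($B=0$, the $0^k$ term, the free choice of $M$) precise rather than waving them through.
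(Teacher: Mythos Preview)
Your proposal is correct and follows essentially the same approach as the paper: the paper states that Corollary~\ref{cor:recover_q-sgd-as_rate} is a direct consequence of Theorem~\ref{thm:main_gsgm} once Lemma~\ref{lem:exp_smoothness_grad_up_bound_q-sgd-as} supplies the parameters $A=2(1+\omega)\cL$, $B=0$, $D_1=2(1+\omega)\sigma^2$, $\rho=1$, $C=D_2=0$, and your verification of that lemma (variance decomposition over the quantization, then the $\norm{a+b}^2\le 2\norm{a}^2+2\norm{b}^2$ split combined with expected smoothness and Lemma~\ref{lem:exp_smoothness_grad_up_bound_sgd-as}) mirrors the paper's proof line by line. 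The only caveat is that your claim ``since $2(1+\omega)\cL\ge\mu$'' is asserted rather than justified; the paper makes the same tacit assumption in the analogous corollaries (e.g., Corollary~\ref{cor:recover_sgd-as_rate}), so this is not a discrepancy with the paper's argument.
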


\subsubsection*{Proof of Lemma~\ref{lem:exp_smoothness_grad_up_bound_q-sgd-as}}
In this proof all expectations are conditioned on $x^k$. First of all, from Lemma~\ref{lem:exp_smoothness_grad_up_bound_sgd-as} we have
\begin{eqnarray*}
    \ED{\norm{\nabla f_\xi(x^k) - \nabla f(x^*)}^2} \le 4\cL D_f(x^k,x^*) + 2\sigma^2.
\end{eqnarray*}
The remaining step is to understand how quantization of $\nabla f_\xi(x^k)$ changes the above inequality if we put $g^k\sim {\rm Q}(\nabla f_\xi(x^k))$ instead of $\nabla f_\xi(x^k)$. Let us denote mathematical expectation with respect randomness coming from quantization by $\EE_Q\left[\cdot\right]$. Using tower property of mathematical expectation we get
\begin{eqnarray*}
    \EE\left[\|g^k - \nabla f(x^*)\|^2\right] &=& \EE_{\cD}\left[\EE_Q\|g^k - \nabla f(x^*)\|^2\right]\\
    &\overset{\eqref{eq:variance_decomposition}}{=}& \EE\left[\|g^k - \nabla f_\xi(x^k)\|^2\right] + \EE\left[\|\nabla f_\xi(x^k) - \nabla f(x^*)\|^2\right]\\
    &\overset{\eqref{eq:exp_smoothness_grad_up_bound_sgd-as}}{\le}&  \EE\left[\|g^k - \nabla f_\xi(x^k)\|^2\right] + 4\cL D_f(x^k,x^*) + 2\sigma^2.
\end{eqnarray*} 
Next, we estimate the first term in the last row of the previous inequality
\begin{eqnarray*}
    \EE\left[\|g^k - \nabla f_\xi(x^k)\|^2\right] &\overset{\eqref{eq:quantization}}{\le}& \omega\EE\left[\|\nabla f_\xi(x^k)\|^2\right]\\
    &\overset{\eqref{eq:a_b_norm_squared}}{\le}& 2\omega\EE\left[\|\nabla f_\xi(x^k) - \nabla f_\xi(x^*)\|^2\right] + 2\omega\EE\left[\|\nabla f_\xi(x^*)\|^2\right]\\
    &\le& 4\omega\cL D_f(x^k,x^*) + 2\omega\sigma^2.
\end{eqnarray*}
Putting all together we get the result.

\subsection{{\tt VR-DIANA}} \label{sec:VR-DIANA}
Corollary~\ref{cor:main_diana} shows that once each machine evaluates a stochastic gradient instead of the full gradient, {\tt DIANA} converges linearly only to a certain neighborhood. In contrast, {\tt VR-DIANA}~\cite{horvath2019stochastic} uses a variance reduction trick within each machine, which enables linear convergence to the exact solution. In this section, we show that our approach recovers {\tt VR-DIANA} as well. 

\begin{algorithm}[t]
   \caption{{\tt VR-DIANA} based on L-SVRG (Variant 1), SAGA (Variant 2), \cite{horvath2019stochastic}}
   \label{alg:vr-diana}
\begin{algorithmic}[1]
        \Require{learning rates $\alpha > 0$ and $\gamma > 0$, initial vectors $x^0, h_{1}^0, \dots, h_{n}^0$, $h^0 = \frac{1}{n}\sum_{i=1}^n h_i^0$}
        \For{$k = 0,1,\ldots$}
        \State Sample random 
            $
                u^k = \begin{cases}
                    1,& \text{with probability } \frac{1}{m}\\
                    0,& \text{with probability } 1 - \frac{1}{m}\\
                \end{cases}
            $ \Comment{only for Variant 1}
        \State Broadcast $x^k$, $u^k$ to all workers\;
            \For{$i = 1, \ldots, n$ in parallel} \Comment{Worker side}
            \State Pick random $j_i^k \sim_{\rm u.a.r.} [m]$\;
            \State $\mu_i^k = \frac{1}{m} \sum\limits_{j=1}^{m} \nabla f_{ij}(w_{ij}^k)$\label{ln:mu} \;
            \State $g_i^k = \nabla f_{ij_i^k}(x^k) - \nabla f_{ij_i^k}(w_{ij_i^k}^k) + \mu_i^k$\;
            \State $\hat{\Delta}_i^k = Q(g_i^k - h_i^k)$\;
            \State $h_i^{k+1} = h_i^k + \alpha \hat{\Delta}_i^k$\;
                \For{$j = 1, \ldots, m$}
                    \State
                    $
                    w_{ij}^{k+1} =
                    \begin{cases}
                        x^k, & \text{if } u^k = 1 \\
                        w_{ij}^k, &\text{if } u^k = 0\\
                    \end{cases}
                    $ \Comment{Variant 1 (L-SVRG): update epoch gradient if $u^k = 1$}
                    \State
                    $
                    w_{ij}^{k+1} =
                    \begin{cases}
                    x^k, & j = j_i^k\\
                    w_{ij}^k, & j \neq j_i^k\\
                    \end{cases}
                    $ \Comment{Variant 2 (SAGA): update gradient table}
                \EndFor
            \EndFor
            \State $h^{k+1} \! = \! h^k \!+\! \frac{\alpha}{n} \displaystyle \sum_{i=1}^n \hat{\Delta}_i^k$ \Comment{Gather quantized updates} 
            \State $g^k = \frac{1}{n}\sum\limits_{i=1}^{n} (\hat{\Delta}_i^k + h_i^k)$\;
            \State $x^{k+1} = x^k - \gamma g^k$\;
        \EndFor

\end{algorithmic}  
\end{algorithm}

The aforementioned method is applied to solve problem \eqref{eq:problem_gen}+\eqref{eq:f_sum} where each $f_i$ is also of a finite sum structure, as in \eqref{eq:f_i_sum}, with  each $f_{ij}(x)$ being convex and $L$-smooth, and $f_i(x)$ being $\mu$-strongly convex. Note that $\nabla f(x^*) = 0$ and, in particular, $D_f(x,x^*) = f(x) - f(x^*)$ since the problem is considered without regularization.

\begin{lemma}[Lemmas 3, 5, 6 and 7 from \cite{horvath2019stochastic}]\label{lemmas_vr_diana}
    Let $\alpha \le \frac{1}{\omega+1}$. Then for all iterates $k\ge 0$ of Algorithm~\ref{alg:vr-diana} the following inequalities hold:
    \begin{eqnarray}
        \EE\left[g^k\mid x^k\right] &=& \nabla f(x^k),\label{eq:unbiased_g_k_vr_diana}\\
        \EE\left[H^{k+1}\mid x^k\right] &\le& \left(1-\alpha\right)H^k + \frac{2\alpha}{m}D^k + 8\alpha Ln\left(f(x^k) - f(x^*)\right),\label{eq:H_k+1_bound_vr_diana}\\
        \EE\left[D^{k+1}\mid x^k\right] &\le& \left(1 - \frac{1}{m}\right)D^k + 2Ln\left(f(x^k) - f(x^*)\right),\label{eq:D_k+1_bound_vr_diana}\\
        \EE\left[\norm{g^k}^2\mid x^k\right] &\le& 2L\left(1+\frac{4\omega + 2}{n}\right)\left(f(x^k)-f(x^*)\right) + \frac{2\omega}{n^2}\frac{D^k}{m} + \frac{2(\omega+1)}{n^2}H^k,\label{eq:second_moment_g_k_vr_diana}
    \end{eqnarray}
    where $H^k = \sum\limits_{i=1}^n\norm{h_i^k - \nabla f_i(x^*)}^2$ and $D^k = \sum\limits_{i=1}^n\sum\limits_{j=1}^m\norm{\nabla f_{ij}(w_{ij}^k) - \nabla f_{ij}(x^*)}^2$.
\end{lemma}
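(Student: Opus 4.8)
The plan is to verify the four relations in order, reusing the building blocks already assembled for {\tt DIANA} (Lemma~\ref{lem:lemma1_diana}), {\tt SAGA}, and {\tt L-SVRG}. All expectations are conditional on $x^k$. I will use repeatedly: the variance decomposition $\EE\norm{X}^2 = \norm{\EE X}^2 + \EE\norm{X-\EE X}^2$, the inequality $\norm{a+b}^2 \le 2\norm{a}^2 + 2\norm{b}^2$, the bound $\norm{\nabla f_{ij}(x) - \nabla f_{ij}(x^*)}^2 \le 2L D_{f_{ij}}(x,x^*)$ coming from $L$-smoothness and convexity of each $f_{ij}$, and the aggregation identities $\frac1m\sum_j D_{f_{ij}}(\cdot,x^*) = D_{f_i}(\cdot,x^*)$ and $\frac1n\sum_i D_{f_i}(\cdot,x^*) = D_f(\cdot,x^*) = f(\cdot) - f(x^*)$ (the last equality because $\nabla f(x^*)=0$).

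\emph{Easy parts: \eqref{eq:unbiased_g_k_vr_diana} and \eqref{eq:D_k+1_bound_vr_diana}.} For unbiasedness, conditioning on the sampled indices the quantization is unbiased, so $\EE[g^k\mid x^k] = \frac1n\sum_i\EE[g_i^k\mid x^k]$; and $g_i^k$ is the standard {\tt SVRG}/{\tt SAGA} control-variate estimator of $\nabla f_i(x^k)$ — the term $\nabla f_{ij_i^k}(w_{ij_i^k}^k)$ has expectation $\mu_i^k$ — so $\EE[g_i^k\mid x^k] = \nabla f_i(x^k)$ and the claim follows. For \eqref{eq:D_k+1_bound_vr_diana}, the update rule for $w_{ij}$ replaces $w_{ij}^k$ by $x^k$ with probability $\tfrac1m$ in both variants (for Variant~1 all coordinates are refreshed by one common coin, for Variant~2 coordinate $j$ is refreshed iff $j=j_i^k$); taking expectations term by term and bounding $\sum_i\tfrac1m\sum_j\norm{\nabla f_{ij}(x^k)-\nabla f_{ij}(x^*)}^2 \le 2Ln(f(x^k)-f(x^*))$ yields \eqref{eq:D_k+1_bound_vr_diana}.

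\emph{The $H^{k+1}$ recursion \eqref{eq:H_k+1_bound_vr_diana}.} The core is the per-worker {\tt DIANA} inequality, valid for $\alpha\le\tfrac{1}{1+\omega}$:
\[
\EE\norm{h_i^{k+1}-\nabla f_i(x^*)}^2 \le (1-\alpha)\norm{h_i^k-\nabla f_i(x^*)}^2 + \alpha\,\EE\norm{g_i^k-\nabla f_i(x^*)}^2 ,
\]
obtained by expanding $\norm{(h_i^k-\nabla f_i(x^*)) + \alpha\hat\Delta_i^k}^2$, substituting $\EE_Q\hat\Delta_i^k = g_i^k-h_i^k$ and $\EE_Q\norm{\hat\Delta_i^k}^2 \le (1+\omega)\norm{g_i^k-h_i^k}^2$ (Definition~\ref{def:quantization}), replacing the cross term via the identity $2\langle g_i^k-h_i^k,\, h_i^k-\nabla f_i(x^*)\rangle = \norm{g_i^k-\nabla f_i(x^*)}^2 - \norm{h_i^k-\nabla f_i(x^*)}^2 - \norm{g_i^k-h_i^k}^2$, and discarding the nonpositive residual $(\alpha^2(1+\omega)-\alpha)\norm{g_i^k-h_i^k}^2$. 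Summing over $i$ reduces the task to bounding $\sum_i\EE\norm{g_i^k-\nabla f_i(x^*)}^2$; decomposing this into the fresh-gradient part and the centred control-variate part and applying $\norm{a+b}^2\le 2\norm a^2 + 2\norm b^2$ together with ``variance $\le$ second moment'' gives a bound of the shape $O(1)\cdot Ln(f(x^k)-f(x^*)) + \tfrac{2}{m}D^k$, which plugged back yields \eqref{eq:H_k+1_bound_vr_diana}.

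\emph{The second-moment bound \eqref{eq:second_moment_g_k_vr_diana}, and the main obstacle.} Write $g^k = \frac1n\sum_i g_i^k + \frac1n\sum_i(\hat\Delta_i^k - \Delta_i^k)$ with $\Delta_i^k = g_i^k - h_i^k$; the second summand has zero conditional mean, so $\EE\norm{g^k}^2$ splits into $\EE\norm{\frac1n\sum_i g_i^k}^2$ plus $\frac1{n^2}\sum_i\EE\norm{\hat\Delta_i^k-\Delta_i^k}^2$, using that distinct workers quantize independently. The quantization term is $\le \tfrac{\omega}{n^2}\sum_i\norm{g_i^k-h_i^k}^2$, and $\norm{g_i^k-h_i^k}^2$ is controlled through $\norm{g_i^k-\nabla f_i(x^*)}^2$ and $\norm{h_i^k-\nabla f_i(x^*)}^2$, producing the $D^k$, $H^k$ and $f(x^k)-f(x^*)$ pieces. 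The first term equals $\norm{\nabla f(x^k)}^2 + \frac1{n^2}\sum_i\EE\norm{g_i^k-\nabla f_i(x^k)}^2$ by independence across workers, with $\norm{\nabla f(x^k)}^2 \le 2L(f(x^k)-f(x^*))$ and each inner variance bounded by $4L D_{f_i}(x^k,x^*)$ plus the per-worker mass of $\tfrac{2}{m}D^k$. The delicate point — where I expect to spend the most effort — is the constant bookkeeping in this last step: to land exactly on the coefficients $2L(1+\tfrac{4\omega+2}{n})$, $\tfrac{2\omega}{n^2m}$ and $\tfrac{2(\omega+1)}{n^2}$ rather than merely up to absolute constants, one must avoid a lossy Young split of $\norm{g_i^k-h_i^k}^2$ — instead using the exact identity $\norm{g_i^k-h_i^k}^2 = \norm{g_i^k-\nabla f_i(x^*)}^2 + \norm{h_i^k-\nabla f_i(x^*)}^2 - 2\langle\,\cdot\,,\cdot\,\rangle$ combined with conditional unbiasedness of $g_i^k$ — and carefully account for the overlap of the $g_i^k$-variance appearing in both summands; everything else is routine.
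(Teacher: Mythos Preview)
Your sketch is sound and follows the natural route; note, however, that the paper does not prove this lemma at all --- it is quoted verbatim from \cite{horvath2019stochastic} (Lemmas~3, 5, 6, 7 there), so there is no in-paper argument to compare against. Your outline for \eqref{eq:unbiased_g_k_vr_diana}, \eqref{eq:D_k+1_bound_vr_diana}, and \eqref{eq:H_k+1_bound_vr_diana} is correct and complete; in fact your split for $\EE\norm{g_i^k-\nabla f_i(x^*)}^2$ yields $4\alpha Ln$ rather than $8\alpha Ln$, which is a strictly stronger statement.

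One caveat on \eqref{eq:second_moment_g_k_vr_diana}: your proposed fix for the exact constants --- the identity $\norm{g_i^k-h_i^k}^2 = \norm{g_i^k-\nabla f_i(x^*)}^2 + \norm{h_i^k-\nabla f_i(x^*)}^2 - 2\langle\cdot,\cdot\rangle$ together with unbiasedness --- does not make the cross term vanish, because $\EE[g_i^k\mid x^k]=\nabla f_i(x^k)$, not $\nabla f_i(x^*)$. If instead you split around $\nabla f_i(x^k)$ (where the cross term does vanish), the computation goes through but delivers $\tfrac{2(\omega+1)}{n^2 m}D^k + \tfrac{2\omega}{n^2}H^k$ rather than the stated $\tfrac{2\omega}{n^2 m}D^k + \tfrac{2(\omega+1)}{n^2}H^k$. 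This swap is immaterial for the paper's purposes: in Corollary~\ref{cor:vr_diana_meets_assumption} both expressions are immediately upper-bounded by $\tfrac{2(\omega+1)}{n}\bigl(\tfrac{D^k}{nm}+\tfrac{H^k}{n}\bigr)$, so either version feeds into Assumption~\ref{as:general_stoch_gradient} with the same $B=\tfrac{2(\omega+1)}{n}$. To reproduce the constants exactly as quoted you would need to consult the specific decomposition in \cite{horvath2019stochastic}.
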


\begin{corollary}\label{cor:vr_diana_meets_assumption}
    Let $\alpha \le \min\left\{\frac{1}{3m},\frac{1}{\omega+1}\right\}$. Then stochastic gradient $\hat g^k$ (Algorithm~\ref{alg:vr-diana}) and the objective function $f$ satisfy Assumption~\ref{as:general_stoch_gradient} with $A = \left(1+\frac{4\omega + 2}{n}\right)L, B = \frac{2(\omega+1)}{n}, \rho = \alpha, C = L\left(\frac{1}{m}+4\alpha\right), D_1 = 0, D_2 = 0$ and
    \[
        \sigma_k^2 = \frac{H^k}{n} + \frac{D^k}{nm} = \frac{1}{n}\sum\limits_{i=1}^n\norm{h_i^k - \nabla f_i(x^*)}^2 + \frac{1}{nm}\sum\limits_{i=1}^n\sum\limits_{j=1}^m\norm{\nabla f_{ij}(w_{ij}^k) - \nabla f_{ij}(x^*)}^2.
    \]
\end{corollary}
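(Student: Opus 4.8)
The plan is to treat the four relations assembled in Lemma~\ref{lemmas_vr_diana} as given and merely re-express them in the form demanded by Assumption~\ref{as:general_stoch_gradient}. Two simplifications are available throughout: since the problem is unregularized we have $\nabla f(x^*)=0$ and $D_f(x^k,x^*)=f(x^k)-f(x^*)$; and the candidate potential is $\sigma_k^2 = \tfrac{H^k}{n}+\tfrac{D^k}{nm}$. With these, \eqref{eq:general_stoch_grad_unbias} is literally \eqref{eq:unbiased_g_k_vr_diana}. For \eqref{eq:general_stoch_grad_second_moment}, the left-hand side equals $\EE[\norm{g^k}^2\mid x^k]$, which \eqref{eq:second_moment_g_k_vr_diana} bounds by $2L(1+\tfrac{4\omega+2}{n})(f(x^k)-f(x^*)) + \tfrac{2\omega}{n^2}\tfrac{D^k}{m} + \tfrac{2(\omega+1)}{n^2}H^k$. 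Since $\tfrac{2\omega}{n^2}\le\tfrac{2(\omega+1)}{n^2}=\tfrac1n\cdot\tfrac{2(\omega+1)}{n}$, the last two terms are at most $\tfrac{2(\omega+1)}{n}\sigma_k^2$, and reading off the first term gives $A=(1+\tfrac{4\omega+2}{n})L$, $B=\tfrac{2(\omega+1)}{n}$, $D_1=0$.

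The one step with real content is the recursion \eqref{eq:gsg_sigma}. I would compute $\EE[\sigma_{k+1}^2\mid x^k]=\tfrac1n\EE[H^{k+1}\mid x^k]+\tfrac1{nm}\EE[D^{k+1}\mid x^k]$, substitute \eqref{eq:H_k+1_bound_vr_diana} and \eqref{eq:D_k+1_bound_vr_diana}, and collect the coefficients of $H^k$, $D^k$ and $f(x^k)-f(x^*)$ separately. The $H^k$-coefficient comes out exactly $\tfrac{1-\alpha}{n}$; the coefficient of $f(x^k)-f(x^*)$ is $8\alpha L+\tfrac{2L}{m}=2L(\tfrac1m+4\alpha)$, matching $2C$ with $C=L(\tfrac1m+4\alpha)$; and the $D^k$-coefficient (over $nm$) is $2\alpha+1-\tfrac1m$, which is $\le 1-\alpha$ iff $3\alpha\le\tfrac1m$ --- precisely the hypothesis $\alpha\le\tfrac1{3m}$ (the other hypothesis $\alpha\le\tfrac1{\omega+1}$ being needed only so that Lemma~\ref{lemmas_vr_diana} applies). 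Hence the right-hand side is at most $(1-\alpha)\sigma_k^2+2C(f(x^k)-f(x^*))$, giving $\rho=\alpha$ and $D_2=0$.

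\textbf{Main obstacle.} There is no genuine difficulty here; the corollary is essentially a change of variables, so I expect the whole argument to be a few lines of bookkeeping. The one place to be attentive is that the cross term $\tfrac{2\alpha}{m}D^k$ appearing in the $H^{k+1}$-bound, once divided by $n$, lands in the $\tfrac{D^k}{nm}$ slot with weight $2\alpha$, so the pooled $\tfrac{D^k}{nm}$-coefficient is $2\alpha+(1-\tfrac1m)$ and not something smaller; it is exactly this contribution that forces the threshold $\alpha\le\tfrac1{3m}$. Once the coefficients are matched, the constants $A,B,\rho,C,D_1,D_2$ and the stated $\sigma_k^2$ are read off directly, which is all the corollary asks for.
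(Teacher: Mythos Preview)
Your proposal is correct and follows essentially the same approach as the paper's own proof: both verify \eqref{eq:general_stoch_grad_unbias} directly from \eqref{eq:unbiased_g_k_vr_diana}, bound the last two terms of \eqref{eq:second_moment_g_k_vr_diana} by $\tfrac{2(\omega+1)}{n}\sigma_k^2$ to read off $A,B,D_1$, and then combine $\tfrac1n$\eqref{eq:H_k+1_bound_vr_diana}$+\tfrac1{nm}$\eqref{eq:D_k+1_bound_vr_diana} to obtain the $\sigma_k^2$-recursion, with the $D^k/(nm)$ coefficient $1+2\alpha-\tfrac1m\le 1-\alpha$ being exactly where $\alpha\le\tfrac1{3m}$ is used. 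Your remark that $\alpha\le\tfrac1{\omega+1}$ is needed only to invoke Lemma~\ref{lemmas_vr_diana} is also the paper's view.
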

\begin{proof}
    Indeed, \eqref{eq:general_stoch_grad_unbias} holds due to \eqref{eq:unbiased_g_k_vr_diana}. Inequality \eqref{eq:general_stoch_grad_second_moment} follows from \eqref{eq:second_moment_g_k_vr_diana} with $A = \left(1+\frac{4\omega + 2}{n}\right)L, B = \frac{2(\omega+1)}{n}, D_1 = 0, \sigma_k^2 = \frac{H^k}{n} + \frac{D^k}{nm}$ if we take into account that $\frac{2\omega}{n^2}\frac{D^k}{m} + \frac{2(\omega+1)}{n^2}H^k \le \frac{2(\omega+1)}{n}\left(\frac{D^k}{nm} + \frac{H^k}{n}\right)$. Finally, summing inequalities \eqref{eq:H_k+1_bound_vr_diana} and \eqref{eq:D_k+1_bound_vr_diana} and using $\alpha\le\frac{1}{3m}$
    \begin{eqnarray*}
        \EE\left[\sigma_k^2\mid x^k\right] &=& \frac{1}{n}\EE\left[H^{k+1}\mid x^k\right] + \frac{1}{nm}\EE\left[D^{k+1}\mid x^k\right]\\
        &\overset{\eqref{eq:H_k+1_bound_vr_diana}+\eqref{eq:D_k+1_bound_vr_diana}}{\le}& \left(1-\alpha\right)\frac{H^k}{n} + \left(1+2\alpha-\frac{1}{m}\right)\frac{D^k}{nm} + 2L\left(\frac{1}{m}+4\alpha\right)\left(f(x^k)-f(x^*)\right)\\
        &\le& \left(1-\alpha\right)\sigma_k^2 + 2L\left(\frac{1}{m}+4\alpha\right)\left(f(x^k)-f(x^*)\right)
    \end{eqnarray*}
    we get \eqref{eq:gsg_sigma} with $\rho = \alpha, C = L\left(\frac{1}{m}+4\alpha\right), D_2 = 0$.
\end{proof}

\begin{corollary}\label{cor:main_vr_diana}
    Assume that $f_i$ is $\mu$-strongly convex and $f_{ij}$ is convex and $L$-smooth for all $i\in[n], j\in[m]$, $\alpha \le \min\left\{\frac{1}{3m},\frac{1}{\omega+1}\right\}$, $\gamma \le \frac{1}{\left(1+\frac{4\omega + 2}{n}\right)L + ML\left(\frac{1}{m}+4\alpha\right)}$ where $M > \frac{2(\omega+1)}{n\alpha}$. Then the iterates of {\tt VR-DIANA} satisfy
    \begin{equation}\label{eq:convergence_vr_diana}
        \EE\left[V^k\right] \le \max\left\{(1-\gamma\mu)^k, \left(1 + \frac{2(\omega+1)}{nM} - \alpha\right)^k\right\}V^0,
    \end{equation}
    where the Lyapunov function $V^k$ is defined by $V^k \eqdef \norm{x^k - x^*}^2 + M\gamma^2\sigma_k^2$. Further, if we set  $\alpha = \min\left\{\frac{1}{3m},\frac{1}{\omega+1}\right\}$, $M = \frac{4(\omega+1)}{n\alpha}$, $\gamma = \frac{1}{\left(1 + \frac{20\omega+18}{n} + \frac{4\omega+4}{n\alpha m}\right)L}$, then to achieve precision $\EE\left[\norm{x^k-x^*}^2\right] \le \varepsilon V^0$ {\tt VR-DIANA} needs $\cO\left(\max\left\{\kappa+\kappa\frac{\omega+1}{n}+\kappa\frac{(\omega+1)\max\left\{m,\omega+1\right\}}{nm},m,\omega+1\right\}\log\frac{1}{\varepsilon}\right)$ iterations, where $\kappa = \frac{L}{\mu}$.
\end{corollary}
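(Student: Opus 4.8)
The plan is to combine Corollary~\ref{cor:vr_diana_meets_assumption}, which already certifies that {\tt VR-DIANA} satisfies Assumption~\ref{as:general_stoch_gradient} with the explicit parameters $A = (1+\tfrac{4\omega+2}{n})L$, $B = \tfrac{2(\omega+1)}{n}$, $\rho = \alpha$, $C = L(\tfrac{1}{m}+4\alpha)$ and $D_1 = D_2 = 0$, with the master Theorem~\ref{thm:main_gsgm}. First I would check that the stated choice of $M$ and $\gamma$ meets the hypotheses of that theorem: the requirement $M > \tfrac{B}{\rho} = \tfrac{2(\omega+1)}{n\alpha}$ is exactly what is assumed in the corollary, and the stepsize bound $\gamma \le \tfrac{1}{A+CM} = \tfrac{1}{(1+\frac{4\omega+2}{n})L + ML(\frac{1}{m}+4\alpha)}$ is the one in the statement; the leftover condition $\gamma \le \tfrac{1}{\mu}$ is automatic since $A + CM \ge A \ge L \ge \mu$. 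Plugging these parameters into \eqref{eq:main_result_gsgm} and using $D_1 = D_2 = 0$ annihilates the additive oscillation term, leaving precisely \eqref{eq:convergence_vr_diana} with $V^k = \norm{x^k-x^*}^2 + M\gamma^2\sigma_k^2$ and $\sigma_k^2$ as in Corollary~\ref{cor:vr_diana_meets_assumption}.

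For the complexity claim I would substitute the concrete choices $\alpha = \min\{\tfrac{1}{3m},\tfrac{1}{\omega+1}\}$ and $M = \tfrac{4(\omega+1)}{n\alpha}$. Then $\tfrac{B}{M} = \tfrac{2(\omega+1)}{n}\cdot\tfrac{n\alpha}{4(\omega+1)} = \tfrac{\alpha}{2}$, so the second contraction factor becomes $1 + \tfrac{2(\omega+1)}{nM} - \alpha = 1 - \tfrac{\alpha}{2}$, and since $M = 2\tfrac{B}{\rho} > \tfrac{B}{\rho}$ the hypotheses still hold. Next I would expand $A + CM$: with $CM = \tfrac{4(\omega+1)}{n\alpha}L(\tfrac{1}{m}+4\alpha) = \tfrac{4(\omega+1)}{n\alpha m}L + \tfrac{16(\omega+1)}{n}L$, one obtains $A + CM = L(1 + \tfrac{20\omega+18}{n} + \tfrac{4(\omega+1)}{n\alpha m})$, which identifies the $\gamma$ in the statement as the largest admissible stepsize, and hence $\tfrac{1}{\gamma\mu} = \kappa(1 + \tfrac{20\omega+18}{n} + \tfrac{4(\omega+1)}{n\alpha m})$ with $\kappa = L/\mu$.

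Finally, since $\norm{x^k-x^*}^2 \le V^k$, to reach $\EE[\norm{x^k-x^*}^2] \le \varepsilon V^0$ it suffices that $\max\{(1-\gamma\mu)^k,(1-\tfrac{\alpha}{2})^k\} \le \varepsilon$, i.e.\ $k \ge \max\{\tfrac{1}{\gamma\mu},\tfrac{2}{\alpha}\}\log\tfrac{1}{\varepsilon}$, using $\log\tfrac{1}{1-t}\ge t$. It then remains to simplify this bound using $\tfrac{1}{\alpha} = \max\{3m,\omega+1\}$: one gets $\tfrac{2}{\alpha} = \cO(\max\{m,\omega+1\})$ and $\tfrac{4(\omega+1)}{n\alpha m} = \cO\!\left(\tfrac{(\omega+1)\max\{m,\omega+1\}}{nm}\right)$, so $\tfrac{1}{\gamma\mu} = \cO\!\left(\kappa + \kappa\tfrac{\omega+1}{n} + \kappa\tfrac{(\omega+1)\max\{m,\omega+1\}}{nm}\right)$, and the advertised $\cO(\cdot\log\tfrac1\varepsilon)$ complexity follows by taking the maximum of the two terms. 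The only genuinely delicate point is this last bookkeeping — making sure the maximum of the two contraction exponents collapses to exactly the stated expression and that no term is dropped; everything else is a direct specialization of Theorem~\ref{thm:main_gsgm}.
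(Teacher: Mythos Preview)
Your proposal is correct and follows exactly the approach of the paper: invoke Corollary~\ref{cor:vr_diana_meets_assumption} to verify Assumption~\ref{as:general_stoch_gradient} with the stated parameters, then apply Theorem~\ref{thm:main_gsgm} with $D_1=D_2=0$, and finally specialize $\alpha$, $M$, $\gamma$ to compute the iteration complexity. In fact you spell out considerably more detail (the verification of the stepsize conditions, the arithmetic for $A+CM$, and the $\cO(\cdot)$ simplification via $1/\alpha=\max\{3m,\omega+1\}$) than the paper's one-line proof does.
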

\begin{proof}
    Using Corollary~\ref{cor:vr_diana_meets_assumption} we apply Theorem~\ref{thm:main_gsgm} and get the result.
\end{proof}

\begin{remark}
{\tt VR-DIANA} can be easily extended to the proximal setup in our framework.
\end{remark}

\subsection{{\tt JacSketch}} \label{sec:JacSketch}

In this section, we show that our approach covers the analysis of {\tt JacSketch} from \cite{gower2018stochastic}. {\tt JacSketch} is a generalization of {\tt SAGA} in the following manner. {\tt SAGA} observes every iteration $\nabla f_i(x)$ for random index $i$ and uses it to build both stochastic gradient as well as the control variates on the stochastic gradient in order to progressively decrease variance. In contrast, {\tt JacSketch}  observes every iteration the random sketch of the Jacobian, which is again used to build both stochastic gradient as well as the control variates on the stochastic gradient.

For simplicity, we do not consider proximal setup, since~\cite{gower2018stochastic} does not either.

We first introduce the necessary notation (same as in \cite{gower2018stochastic}). Denote first the Jacobian the objective \begin{equation}\label{eq:jac_def}\Jac(x) \eqdef [\nabla f_1(x), \ldots, \nabla f_n(x)] \in \R^{d\times n}.\end{equation} 
Every iteration of the method, a random sketch of Jacobian $\nabla F(x^k)\mS$ (where $\mS\sim \cD$) is observed. Then, the method builds a variable $\mJ^k$, which is the current Jacobian estimate, updated using so-called sketch and project iteration~\cite{gower2015randomized}:
\[
\mJ^{k+1}  = \mJ^k(\mI - \Proj_{\mS_k}) + \Jac(x^k)\Proj_{\mS_k},
\]

where $\Proj_\mS$ is a projection under $\mW$ norm\footnote{Weighted Frobenius norm of matrix $\mX\in\R^{n\times n}$ with a positive definite weight matrix $\mW\in \R^{n\times n}$ is defined as 
$\norm{\mX}_{\mW^{-1}} \eqdef \sqrt{\Tr{ \mX \mW^{-1} \mX^\top}}.$
} ($\mW\in \R^{n\times n}$ is some positive definite weight matrix) defined as
$\Proj_\mS \eqdef  \mS (\mS^\top \mW \mS)^{\dagger} \mS^\top \mW$\footnote{Symbol $\dagger$ stands for Moore-Penrose pseudoinverse.}.

Further, in order to construct unbiased stochastic gradient, an access to the random scalar $\theta_{\mS}$ such that
\begin{equation}\label{eq:unbiased}
\ED{\theta_{\mS} \Proj_\mS} \ones  =  \ones,
\end{equation}
where $e$ is the vector of all ones.

Next, the simplest option for the choice of the stochastic gradient is $\nabla f_{\mS}(x)$ -- an unbiased estimate of $\nabla f$ directly constructed using $\mS,\theta_{\mS} $:
\begin{equation}
\label{eq:stochgradplain}
\nabla f_{\mS}(x)   = \frac{\theta_{\mS}}{n}\Jac(x) \Proj_\mS \ones.
\end{equation}

However, one can build a smarter estimate $ \nabla f_{\mS,\mJ}(x) $ via control variates constructed from $\mJ$:
\begin{equation}\label{eq:controlgradJ} 
 \nabla f_{\mS,\mJ}(x) = \frac{\theta_{\mS}}{n} (\Jac(x)-\mJ)   \Proj_\mS\ones  + \frac{1}{n} \mJ \ones.
\end{equation}
The resulting algorithm is stated as Algorithm~\ref{alg:jacsketch}.

\begin{algorithm}
    \begin{algorithmic}[1]
        \Require $\left(\cD, \mW, \theta_{\mS} \right)$, $x^0\in \R^d$, Jacobian estimate $\mJ^0 \in \R^{d \times n}$, stepsize $\gamma>0$         
        
        \For {$k =  0, 1, 2, \dots$}
        \State Sample a fresh copy $\mS_k\sim \cD$

        \State $ \mJ^{k+1}  = \mJ^k(\mI - \Proj_{\mS_k}) + \Jac(x^k)\Proj_{\mS_k}$
         \label{ln:jacupdate}        
        \State $g^{k} =  \nabla f_{\mS_k, \mJ^k}(x^k)$       \label{ln:gradupdate}    
                    
        \State $x^{k+1} = x^k - \gamma g^{k}$ \label{ln:xupdate}    
         
        \EndFor
    \end{algorithmic}
    \caption{{\tt JacSketch} \cite{gower2018stochastic}}
    \label{alg:jacsketch}
\end{algorithm}

Next we present Lemma~\ref{lem:lemmas39_310_jacsketch} which directly justifies the parameter choice from Table~\ref{tbl:special_cases2}. 

\begin{lemma}[Lemmas 2.5, 3.9 and 3.10 from \cite{gower2018stochastic}]\label{lem:lemmas39_310_jacsketch}
    Suppose that there are constants $\cL_1, \cL_2>0$ such that 
    \begin{eqnarray*}
      \ED{ \norm{ \nabla f_{\mS}(x) - \nabla f_{\mS}(x^*)}_2^2 } &\leq& 2  \cL_1 (f(x)-f(x^*)), \qquad \forall x\in \R^d \\
      \ED{\norm{(\Jac(x)-\Jac(x^*)) \Proj_{\mS}  }_{\mW^{-1}}^2 }& \leq & 2\cL_2 (f(x) -f(x^*)), \qquad \forall x\in \R^d, \label{eq:ES2}
\end{eqnarray*}    
    
     Then
    \begin{equation}\label{eq:jacs_contraction}
     \ED{\norm{\mJ^{k+1} -\Jac(x^*)}_{\mW^{-1}}^2} \le (1-\lambda_{\min}) \norm{\mJ^{k}-\Jac(x^*)}_{\mW^{-1}}^2 +  2\cL_2(f(x^k) -f(x^*)),
     \end{equation}
     \begin{equation}\label{eq:gradbndsubdeltaXX}
    \ED{\norm{g^k}_2^2 } \le 4 \cL_1  (f(x^k) -f(x^*)) +  2 \frac{\lambda_{\max}}{n^2} \norm{\mJ^{k} -\Jac(x^*)}_{\mW^{-1}}^2,
    \end{equation}
    where $\lambda_{\min} = \lambda_{\min}\left(\ED{\Proj_{\mS}}\right)$ and $\lambda_{\max} = \lambda_{\max}\left( \mW^{1/2}\left( \ED{\theta_{\mS}^2 \Proj_{\mS} \ones \ones^\top \Proj_{\mS}} -\ones \ones^\top\right) \mW^{1/2}\right)$. Further,     $
    \ED{ \nabla f_{\mS,\mJ}(x)} = \nabla f(x)$.
\end{lemma}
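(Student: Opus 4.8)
The statement bundles together the unbiasedness of the gradient estimate $\nabla f_{\mS,\mJ}$, the contraction of the Jacobian estimate $\mJ^k$, and a bound on the second moment of $g^k$; the plan is to prove these in turn (they correspond to Lemmas~2.5, 3.9 and 3.10 of \cite{gower2018stochastic}). Throughout, write $\mH^k \eqdef \mJ^k - \Jac(x^*)$ for the Jacobian residual and recall that $\Proj_\mS = \mS(\mS^\top\mW\mS)^\dagger\mS^\top\mW$ is the $\mW$-orthogonal projector onto the range of $\mS$, so $\Proj_\mS^2 = \Proj_\mS$, $\mW\Proj_\mS = \Proj_\mS^\top\mW$, and $(\mI-\Proj_\mS)\Proj_\mS = 0$. \emph{Unbiasedness:} taking expectation in \eqref{eq:controlgradJ} and pulling the $\mS$-independent matrix $\Jac(x)-\mJ$ out gives $\ED{\nabla f_{\mS,\mJ}(x)} = \tfrac1n(\Jac(x)-\mJ)\ED{\theta_\mS\Proj_\mS}\ones + \tfrac1n\mJ\ones$; by the bias-correction identity \eqref{eq:unbiased}, $\ED{\theta_\mS\Proj_\mS}\ones = \ones$, so the two $\mJ$-terms cancel and the right-hand side is $\tfrac1n\Jac(x)\ones = \tfrac1n\sum_i\nabla f_i(x) = \nabla f(x)$.

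\emph{Contraction \eqref{eq:jacs_contraction}.} Subtracting $\Jac(x^*) = \Jac(x^*)(\mI-\Proj_{\mS_k}) + \Jac(x^*)\Proj_{\mS_k}$ from the update $\mJ^{k+1} = \mJ^k(\mI-\Proj_{\mS_k}) + \Jac(x^k)\Proj_{\mS_k}$ gives $\mH^{k+1} = \mH^k(\mI-\Proj_{\mS_k}) + (\Jac(x^k)-\Jac(x^*))\Proj_{\mS_k}$. The projector identities yield the Pythagorean split $\norm{\mX(\mI-\Proj_\mS) + \mY\Proj_\mS}_{\mW^{-1}}^2 = \norm{\mX(\mI-\Proj_\mS)}_{\mW^{-1}}^2 + \norm{\mY\Proj_\mS}_{\mW^{-1}}^2$ (the cross term vanishes since $(\mI-\Proj_\mS)\mW^{-1}\Proj_\mS^\top = (\mI-\Proj_\mS)\Proj_\mS\mW^{-1} = 0$), and also $\norm{\mX(\mI-\Proj_\mS)}_{\mW^{-1}}^2 = \Tr{\mX(\mI-\Proj_\mS)\mW^{-1}\mX^\top}$. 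Applying the split with $\mX=\mH^k$, $\mY = \Jac(x^k)-\Jac(x^*)$ and taking expectation over $\mS_k$ conditioned on $x^k$, the first term equals $\Tr{(\mH^k\mW^{-1/2})(\mI - \mW^{1/2}\ED{\Proj_\mS}\mW^{-1/2})(\mH^k\mW^{-1/2})^\top}$; since $\mW^{1/2}\ED{\Proj_\mS}\mW^{-1/2}$ is symmetric positive semidefinite (similar to $\ED{\Proj_\mS}$) with smallest eigenvalue $\lambda_{\min}(\ED{\Proj_\mS})$, this is at most $(1-\lambda_{\min})\norm{\mH^k}_{\mW^{-1}}^2$, while the second term is at most $2\cL_2(f(x^k)-f(x^*))$ by hypothesis. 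Summing the two proves \eqref{eq:jacs_contraction}.

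\emph{Gradient second moment \eqref{eq:gradbndsubdeltaXX}.} Insert $\Jac(x^*)$ into $g^k = \nabla f_{\mS_k,\mJ^k}(x^k)$: using $\Jac(x^k)-\mJ^k = (\Jac(x^k)-\Jac(x^*)) - \mH^k$, $\mJ^k = \mH^k + \Jac(x^*)$ and $\tfrac1n\Jac(x^*)\ones = \nabla f(x^*) = 0$, one obtains $g^k = \big(\nabla f_{\mS_k}(x^k) - \nabla f_{\mS_k}(x^*)\big) - v_k$, where $\nabla f_\mS(x) = \tfrac{\theta_\mS}{n}\Jac(x)\Proj_\mS\ones$ is the plain estimate \eqref{eq:stochgradplain} and $v_k \eqdef \tfrac1n\mH^k(\theta_{\mS_k}\Proj_{\mS_k}-\mI)\ones$, which satisfies $\ED{v_k}=0$ by \eqref{eq:unbiased}. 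Young's inequality together with the first hypothesis gives $\ED{\norm{g^k}_2^2} \le 2\ED{\norm{\nabla f_{\mS_k}(x^k)-\nabla f_{\mS_k}(x^*)}_2^2} + 2\ED{\norm{v_k}_2^2} \le 4\cL_1(f(x^k)-f(x^*)) + 2\ED{\norm{v_k}_2^2}$. A short computation, again using \eqref{eq:unbiased} to cancel cross terms, shows $n^2\ED{\norm{v_k}_2^2} = \Tr{\mH^k\,\mM\,(\mH^k)^\top}$, where $\mM$ is the symmetric matrix appearing inside $\lambda_{\max}(\cdot)$; the change of variables $\mX\mapsto\mX\mW^{-1/2}$ then bounds this by $\lambda_{\max}(\mW^{1/2}\mM\mW^{1/2})\norm{\mH^k}_{\mW^{-1}}^2 = \lambda_{\max}\norm{\mJ^k-\Jac(x^*)}_{\mW^{-1}}^2$, which yields \eqref{eq:gradbndsubdeltaXX}.

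\emph{Where the work is.} The projector bookkeeping (idempotence, $\mW$-self-adjointness, the Pythagorean split of the weighted Frobenius norm) is routine once stated cleanly. The genuinely delicate step is the control-variate bound: isolating the mean-zero residual $v_k$, identifying its second moment with the quadratic form in $\mM$, and matching the weighted Frobenius norm with $\lambda_{\max}(\mW^{1/2}\mM\mW^{1/2})$ so that the constant is \emph{exactly} $\lambda_{\max}$ rather than a loose multiple of it — this sharpness is the content of Lemma~3.10 of \cite{gower2018stochastic} and is what keeps the downstream rate in Table~\ref{tbl:special_cases-parameters} tight.
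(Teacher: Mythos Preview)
Your proposal is correct. The paper does not prove this lemma at all: it is stated as a direct quotation of Lemmas~2.5, 3.9 and 3.10 of \cite{gower2018stochastic} and is used without reproof, so there is no ``paper's own proof'' to compare against. What you have written is a faithful reconstruction of the original JacSketch arguments --- the $\mW$-self-adjointness identities for $\Proj_\mS$, the Pythagorean split of $\norm{\cdot}_{\mW^{-1}}$, the variance decomposition of the control-variate residual $v_k$, and the passage from $\Tr{\mH^k\mM(\mH^k)^\top}$ to $\lambda_{\max}(\mW^{1/2}\mM\mW^{1/2})\norm{\mH^k}_{\mW^{-1}}^2$ via the substitution $\mH^k\mapsto\mH^k\mW^{-1/2}$ --- and each step is sound. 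One very minor point: the covariance computation naturally produces $\ED{\theta_\mS^2\Proj_\mS\ones\ones^\top\Proj_\mS^\top}$ (with a transpose on the right factor), whereas the lemma as quoted writes $\Proj_\mS$ on both sides; this is a harmless notational discrepancy inherited from the citation, and after sandwiching by $\mW^{1/2}$ the resulting matrix is symmetric either way, so your eigenvalue bound goes through unchanged.
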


Thus, as a direct consequence of Theorem~\ref{thm:main_gsgm}, we obtain the next corollary.

\begin{corollary}\label{thm:main_jacsketch}
Consider the setup from Lemma~\ref{lem:lemmas39_310_jacsketch}. Suppose that $f$ is $\mu$-strongly convex and choose $\gamma \le \min\left\{\frac{1}{\mu},\frac{1}{2\cL_1 + M\frac{\cL_2}{n}}\right\}$ where $M > \frac{2\lambda_{\max}}{n\lambda_{\min}}$. Then the iterates of {\tt JacSketch} satisfy
    \begin{equation}\label{eq:convergence_jacsketch}
        \EE\left[V^k\right] \le \max\left\{(1-\gamma\mu)^k, \left(1 + \frac{2\lambda_{\max}}{nM} - \lambda_{\min}\right)^k\right\}V^0.
    \end{equation}

\end{corollary}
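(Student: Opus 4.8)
The plan is to check that {\tt JacSketch} (Algorithm~\ref{alg:jacsketch}) satisfies Assumption~\ref{as:general_stoch_gradient} with the parameters from Table~\ref{tbl:special_cases-parameters}, and then apply Theorem~\ref{thm:main_gsgm} verbatim. Since we are in the non-proximal case $R=0$ and $f$ is $\mu$-strongly convex (hence $\mu$-strongly quasi-convex, so Assumption~\ref{as:mu_strongly_quasi_convex} holds), the minimizer satisfies $\nabla f(x^*)=0$, whence $D_f(x,x^*)=f(x)-f(x^*)$ and the relevant forms of the key assumption are the special ones~\eqref{eq:general_stoch_grad_second_moment_special}--\eqref{eq:gsg_sigma_special}.

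First I would set $\sigma_k^2 \eqdef \tfrac1n \norm{\mJ^k-\Jac(x^*)}_{\mW^{-1}}^2$. Unbiasedness~\eqref{eq:general_stoch_grad_unbias} is the final claim of Lemma~\ref{lem:lemmas39_310_jacsketch}, $\ED{\nabla f_{\mS,\mJ}(x)}=\nabla f(x)$. Dividing the Jacobian-contraction bound~\eqref{eq:jacs_contraction} by $n$ gives
\[
\ED{\sigma_{k+1}^2\mid x^k}\le (1-\lambda_{\min})\sigma_k^2 + \tfrac{2\cL_2}{n}\left(f(x^k)-f(x^*)\right),
\]
which is~\eqref{eq:gsg_sigma_special} with $\rho=\lambda_{\min}$, $C=\tfrac{\cL_2}{n}$, $D_2=0$. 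Likewise, rewriting~\eqref{eq:gradbndsubdeltaXX} (using $\norm{g^k}_2^2=\norm{g^k-\nabla f(x^*)}^2$) as
\[
\ED{\norm{g^k}^2\mid x^k}\le 2(2\cL_1)\left(f(x^k)-f(x^*)\right) + \tfrac{2\lambda_{\max}}{n}\,\sigma_k^2
\]
matches~\eqref{eq:general_stoch_grad_second_moment_special} with $A=2\cL_1$, $B=\tfrac{2\lambda_{\max}}{n}$, $D_1=0$. So Assumption~\ref{as:general_stoch_gradient} holds with the tabulated constants.

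Second, I would verify that the stated $M$ and $\gamma$ meet the hypotheses of Theorem~\ref{thm:main_gsgm}: the condition $M>\tfrac{B}{\rho}$ is precisely $M>\tfrac{2\lambda_{\max}}{n\lambda_{\min}}$, and $\gamma\le\min\{\tfrac1\mu,\tfrac{1}{A+CM}\}=\min\{\tfrac1\mu,\tfrac{1}{2\cL_1+M\cL_2/n}\}$ is exactly the stepsize range in the corollary. Substituting $D_1=D_2=0$ into~\eqref{eq:main_result_gsgm} annihilates the additive oscillation term, and $1+\tfrac BM-\rho=1+\tfrac{2\lambda_{\max}}{nM}-\lambda_{\min}$, so Theorem~\ref{thm:main_gsgm} yields exactly~\eqref{eq:convergence_jacsketch} with $V^k=\norm{x^k-x^*}^2+M\gamma^2\sigma_k^2$. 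There is no real analytic obstacle: all the work is already in Lemma~\ref{lem:lemmas39_310_jacsketch} (imported from~\cite{gower2018stochastic}) and in Theorem~\ref{thm:main_gsgm}. The only point needing care is the normalization—carrying the factor $\tfrac1n$ in the definition of $\sigma_k^2$ so that $B$ and $C$ emerge as in Table~\ref{tbl:special_cases-parameters} and the two displays above respect the $2A$, $B$, $2C$ coefficient conventions of~\eqref{eq:general_stoch_grad_second_moment}--\eqref{eq:gsg_sigma}. After that bookkeeping the corollary is immediate.
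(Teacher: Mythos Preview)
Your proposal is correct and follows exactly the route the paper takes: the paper simply states that the corollary is ``a direct consequence of Theorem~\ref{thm:main_gsgm}'' after Lemma~\ref{lem:lemmas39_310_jacsketch} identifies the parameters, and you have faithfully filled in the bookkeeping (the $\tfrac1n$ normalization of $\sigma_k^2$, the matching of $A,B,C,\rho,D_1,D_2$, and the verification of the hypotheses on $M$ and $\gamma$).
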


\subsection{Interpolation between methods ~\label{sec:interpol}}

Given that a set of stochastic gradients satisfy Assumption~\ref{as:general_stoch_gradient}, we show that an any convex combination of the mentioned stochastic gradients satisfy Assumption~\ref{as:general_stoch_gradient} as well.

\begin{lemma}\label{lem:convex_comb}
    Assume that sequences of stochastic gradients $\{g_1^k\}_{k\ge 0}, \ldots, \{g_m^k\}_{k\ge 0}$ at the common iterates $\{x^k\}_{k\ge 0}$ satisfy the Assumption~\ref{as:general_stoch_gradient} with parameters $A(j),B(j),\{\sigma_k^2(j)\}_{k\ge 0}, C(j),\rho(j),D_1(j),D_2(j)$, $j\in[m]$ respectively. Then for any vector $\tau = (\tau_1,\ldots,\tau_m)^\top$ such as $\sum\limits_{j=1}^m\tau_j = 1$ and $\tau_j \ge 0, j\in[m]$ stochastic gradient $g_\tau^k = \sum\limits_{j=1}^m\tau_j g_j^k$ satisfies the Assumption~\ref{as:general_stoch_gradient} with parameters:
    \begin{eqnarray}
        A_\tau = \sum\limits_{j=1}^m\tau_j A(j),\quad B_\tau = 1,\quad \sigma_{\tau,k}^2 = \sum\limits_{j=1}^m B(j)\tau_j \sigma_k^2(j),\quad \rho_\tau = \min\limits_{j\in[m]}\rho(j),\notag\\
        C_\tau = \sum\limits_{j=1}^m\tau_j C(j) B(j),\quad D_{\tau,1} = \sum\limits_{j=1}^m\tau_j D_1(j),\quad D_{\tau,2} = \sum\limits_{j=1}^m\tau_j D_2(j) B(j).\label{eq:conv_comb_params}
    \end{eqnarray}
    Furthermore, if stochastic gradients $g_1^k, \dots, g_m^k$ are independent for all $k$, Assumption~\ref{as:general_stoch_gradient} is satisfied with parameters
        \begin{eqnarray}
        A_\tau = L+\sum\limits_{j=1}^m\tau_j^2 A(j),\quad B_\tau = 1,\quad \sigma_{\tau,k}^2 = \sum\limits_{j=1}^m B(j)\tau_j^2 \sigma_k^2(j),\quad \rho_\tau = \min\limits_{j\in[m]}\rho(j),\notag\\
        C_\tau = \sum\limits_{j=1}^m\tau_j^2 C(j) B(j),\quad D_{\tau,1} = \sum\limits_{j=1}^m\tau_j^2 D_1(j),\quad D_{\tau,2} = \sum\limits_{j=1}^m\tau_j^2 D_2(j) B(j).\label{eq:conv_comb_params_indep}
    \end{eqnarray}
\end{lemma}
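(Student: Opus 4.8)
The plan is to verify the three requirements of Assumption~\ref{as:general_stoch_gradient} for $g_\tau^k = \sum_{j=1}^m \tau_j g_j^k$ by reducing everything, inequality by inequality, to the per-index hypotheses. Unbiasedness \eqref{eq:general_stoch_grad_unbias} is immediate: by linearity of conditional expectation and $\sum_j \tau_j = 1$ we get $\EE[g_\tau^k \mid x^k] = \sum_j \tau_j \EE[g_j^k \mid x^k] = \sum_j \tau_j \nabla f(x^k) = \nabla f(x^k)$. So only \eqref{eq:general_stoch_grad_second_moment} and \eqref{eq:gsg_sigma} need work.

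For the dependent case, the key move is that, since $\sum_j \tau_j = 1$, we may write $g_\tau^k - \nabla f(x^*) = \sum_j \tau_j\bigl(g_j^k - \nabla f(x^*)\bigr)$, and because $(\tau_1,\dots,\tau_m)$ is a probability vector, Jensen's inequality applied to $\norm{\cdot}^2$ gives $\norm{g_\tau^k - \nabla f(x^*)}^2 \le \sum_j \tau_j \norm{g_j^k - \nabla f(x^*)}^2$. Taking $\EE[\cdot\mid x^k]$ and plugging in \eqref{eq:general_stoch_grad_second_moment} for each $j$ produces $2\bigl(\sum_j\tau_j A(j)\bigr)D_f(x^k,x^*) + \sum_j \tau_j B(j)\sigma_k^2(j) + \sum_j \tau_j D_1(j)$, which is exactly $2A_\tau D_f(x^k,x^*) + B_\tau \sigma_{\tau,k}^2 + D_{\tau,1}$ once we fold the coefficients into the definition $\sigma_{\tau,k}^2 = \sum_j B(j)\tau_j\sigma_k^2(j)$ and set $B_\tau = 1$. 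For \eqref{eq:gsg_sigma}, note $\sigma_{\tau,k}^2$ is by construction a nonnegative combination of the $\sigma_k^2(j)$; applying \eqref{eq:gsg_sigma} index by index, using $1-\rho(j) \le 1-\rho_\tau$ termwise (valid since each $B(j)\tau_j\sigma_k^2(j)\ge 0$), and collecting terms gives $\EE[\sigma_{\tau,k+1}^2\mid\cdot] \le (1-\rho_\tau)\sigma_{\tau,k}^2 + 2C_\tau D_f(x^k,x^*) + D_{\tau,2}$ with $C_\tau = \sum_j \tau_j C(j)B(j)$ and $D_{\tau,2} = \sum_j \tau_j D_2(j)B(j)$, matching \eqref{eq:conv_comb_params}.

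For the independent case, Jensen is replaced by a bias--variance decomposition. Using $\sum_j\tau_j=1$ again, write $g_\tau^k - \nabla f(x^*) = \sum_j \tau_j\bigl(g_j^k - \nabla f(x^k)\bigr) + \bigl(\nabla f(x^k) - \nabla f(x^*)\bigr)$; the first summand is conditionally mean-zero, so $\EE[\norm{g_\tau^k - \nabla f(x^*)}^2\mid x^k] = \EE\bigl[\norm{\sum_j \tau_j(g_j^k-\nabla f(x^k))}^2\mid x^k\bigr] + \norm{\nabla f(x^k)-\nabla f(x^*)}^2$. Conditional independence of the $g_j^k$ kills the cross terms, leaving $\sum_j \tau_j^2\,\EE[\norm{g_j^k-\nabla f(x^k)}^2\mid x^k]$, and each term there is $\le \EE[\norm{g_j^k-\nabla f(x^*)}^2\mid x^k]$ (drop a nonnegative bias term), to which \eqref{eq:general_stoch_grad_second_moment} applies. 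Finally $\norm{\nabla f(x^k)-\nabla f(x^*)}^2 \le 2LD_f(x^k,x^*)$ by the smoothness inequality recorded in the footnote to Assumption~\ref{as:general_stoch_gradient}; this leftover term is exactly what adds the extra $L$ in $A_\tau = L + \sum_j \tau_j^2 A(j)$. The recursion for $\sigma_{\tau,k}^2 = \sum_j B(j)\tau_j^2\sigma_k^2(j)$ then goes through verbatim as in the dependent case, now with $\tau_j^2$ weights, yielding \eqref{eq:conv_comb_params_indep}.

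The main obstacle is not analytic but a matter of bookkeeping: one must ensure the auxiliary sequences $\{\sigma_k^2(j)\}$ are all adapted to a common filtration (the one generated by $x^0,\dots,x^k$ together with the algorithmic randomness through step $k$), so that forming linear combinations of the recursions \eqref{eq:gsg_sigma} and then conditioning is legitimate. Once that is granted, every step is a one-line inequality. A secondary point to watch is that each monotonicity step -- Jensen, $1-\rho(j)\le 1-\rho_\tau$, discarding a bias term -- is applied to genuinely nonnegative quantities, which holds because $A(j),B(j),C(j),D_1(j),D_2(j),\rho(j),\sigma_k^2(j)$ are all nonnegative by hypothesis, and that $\sum_j\tau_j=1$ is invoked exactly twice: once to distribute the shift by $\nabla f(x^*)$, and, in the independent case, once more to distribute the shift by $\nabla f(x^k)$.
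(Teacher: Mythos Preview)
Your proof is correct and follows essentially the same route as the paper: unbiasedness by linearity, the dependent bound via Jensen/convexity of $\norm{\cdot}^2$, and the $\sigma$-recursion by taking the $B(j)\tau_j$-weighted sum of \eqref{eq:gsg_sigma} together with $1-\rho(j)\le 1-\min_j\rho(j)$. For the independent case your bias--variance split is algebraically equivalent to the paper's direct expansion of the square (the paper writes the cross terms explicitly and evaluates them as $\norm{\nabla f(x^k)-\nabla f(x^*)}^2$ via independence, then bounds $2\sum_{i<j}\tau_i\tau_j\le(\sum_j\tau_j)^2=1$); dropping the bias term in your version loses exactly the same $\sum_j\tau_j^2\norm{\nabla f(x^k)-\nabla f(x^*)}^2$ that the paper discards, so the two derivations coincide line by line after unwinding the variance identity.
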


What is more, instead of taking convex combination one can choose stochastic gradient at random. Lemma~\ref{lem:flipping_a_coin} provides the result. 
\begin{lemma}\label{lem:flipping_a_coin}
    Assume that sequences of stochastic gradients $\{g_1^k\}_{k\ge 0}, \ldots, \{g_m^k\}_{k\ge 0}$ at the common iterates $\{x^k\}_{k\ge 0}$ satisfy the Assumption~\ref{as:general_stoch_gradient} with parameters $A(j),B(j),\{\sigma_k^2(j)\}_{k\ge 0}, C(j),\rho(j),D_1(j),D_2(j)$, $j\in[m]$ respectively. Then for any vector $\tau = (\tau_1,\ldots,\tau_m)^\top$ such as $\sum\limits_{j=1}^m\tau_j = 1$ and $\tau_j \ge 0, j\in[m]$ stochastic gradient $g_\tau^k$ which equals $g_j^k$ with probability $\tau_j$ satisfies the Assumption~\ref{as:general_stoch_gradient} with parameters:
    \begin{eqnarray}
        A_\tau = \sum\limits_{j=1}^m\tau_j A(j),\quad B_\tau =1,\quad \sigma_{\tau,k}^2 = \sum\limits_{j=1}^m\tau_j B(j) \sigma_k^2(j),\quad \rho_\tau = \min\limits_{j\in[m]}\rho(j),\notag\\
        C_\tau = \sum\limits_{j=1}^m\tau_j  B(j)C(j),\quad D_{\tau,1} = \sum\limits_{j=1}^m\tau_j D_1(j),\quad D_{\tau,2} = \sum\limits_{j=1}^mB(j)\tau_j D_2(j).\label{eq:flipping_a_coin}
    \end{eqnarray}
    Furthermore, if stochastic gradients $g_1^k, \dots, g_m^k$ are independent for all $k$, Assumption~\ref{as:general_stoch_gradient} is satisfied with parameters
        \begin{eqnarray}
        A_\tau = L+\sum\limits_{j=1}^m\tau_j^2 A(j),\quad B_\tau = 1,\quad \sigma_{\tau,k}^2 = \sum\limits_{j=1}^m B(j)\tau_j^2 \sigma_k^2(j),\quad \rho_\tau = \min\limits_{j\in[m]}\rho(j),\notag\\
        C_\tau = \sum\limits_{j=1}^m\tau_j^2 C(j) B(j),\quad D_{\tau,1} = \sum\limits_{j=1}^m\tau_j^2 D_1(j),\quad D_{\tau,2} = \sum\limits_{j=1}^m\tau_j^2 D_2(j) B(j).\label{eq:flipping_a_coin_indep}
    \end{eqnarray}
\end{lemma}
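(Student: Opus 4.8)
The plan is to reduce everything to the individual relations guaranteed for $\{g_j^k\}_{j\in[m]}$ by conditioning on the selector $J\in[m]$, drawn fresh each iteration with $\Prob(J=j)=\tau_j$ and independent of $x^k$ and of all the $g_j^k$, so that $g_\tau^k=g_J^k$. Under this conditioning each of the three defining relations of Assumption~\ref{as:general_stoch_gradient} for $g_\tau^k$ becomes a $\tau$-weighted average of the corresponding relation for the individual methods, and the rest is bookkeeping. Throughout I set $\sigma_{\tau,k}^2\eqdef\sum_{j=1}^m\tau_jB(j)\sigma_k^2(j)$, in agreement with \eqref{eq:flipping_a_coin}, and first check unbiasedness \eqref{eq:general_stoch_grad_unbias}: by the tower rule and independence of $J$,
\[
\EE\left[g_\tau^k\mid x^k\right]=\sum_{j=1}^m\tau_j\,\EE\left[g_j^k\mid x^k\right]=\nabla f(x^k).
\]

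Next I would verify the second-moment inequality \eqref{eq:general_stoch_grad_second_moment}. Conditioning on $J$ and then invoking \eqref{eq:general_stoch_grad_second_moment} for each $g_j^k$ gives
\[
\EE\left[\norm{g_\tau^k-\nabla f(x^*)}^2\mid x^k\right]=\sum_{j=1}^m\tau_j\,\EE\left[\norm{g_j^k-\nabla f(x^*)}^2\mid x^k\right]\le\sum_{j=1}^m\tau_j\Big(2A(j)D_f(x^k,x^*)+B(j)\sigma_k^2(j)+D_1(j)\Big),
\]
and regrouping the three sums produces exactly $2A_\tau D_f(x^k,x^*)+B_\tau\sigma_{\tau,k}^2+D_{\tau,1}$ with $A_\tau=\sum_j\tau_jA(j)$, $B_\tau=1$ and $D_{\tau,1}=\sum_j\tau_jD_1(j)$. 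Taking $B_\tau=1$ is precisely what lets the individual coefficients $B(j)$ be folded into the definition of $\sigma_{\tau,k}^2$.

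Then I would treat the recursion \eqref{eq:gsg_sigma}. Taking the conditional expectation of $\sigma_{\tau,k+1}^2=\sum_j\tau_jB(j)\sigma_{k+1}^2(j)$ and applying \eqref{eq:gsg_sigma} for each $j$,
\[
\EE\left[\sigma_{\tau,k+1}^2\right]\le\sum_{j=1}^m\tau_jB(j)\Big((1-\rho(j))\sigma_k^2(j)+2C(j)D_f(x^k,x^*)+D_2(j)\Big).
\]
The only step that is not pure linearity is the contraction factor: since $\rho_\tau=\min_{j\in[m]}\rho(j)$ we have $1-\rho(j)\le1-\rho_\tau$ for every $j$, hence $\sum_j\tau_jB(j)(1-\rho(j))\sigma_k^2(j)\le(1-\rho_\tau)\sigma_{\tau,k}^2$; collecting the remaining two sums gives $2C_\tau D_f(x^k,x^*)+D_{\tau,2}$ with $C_\tau=\sum_j\tau_jB(j)C(j)$ and $D_{\tau,2}=\sum_jB(j)\tau_jD_2(j)$, which is \eqref{eq:flipping_a_coin}.

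Finally, for the mutually-independent case \eqref{eq:flipping_a_coin_indep}, I would argue exactly as in the corresponding part of Lemma~\ref{lem:convex_comb}: independence lets one sharpen the second-moment estimate via a bias--variance decomposition around $\nabla f(x^k)$, trading the weights $\tau_j$ for $\tau_j^2$ in the variance part at the cost of the bias term $\norm{\nabla f(x^k)-\nabla f(x^*)}^2\le 2LD_f(x^k,x^*)$ (the footnote bound on Assumption~\ref{as:general_stoch_gradient}), whence the extra $L$ in $A_\tau=L+\sum_j\tau_j^2A(j)$; the squared weights then propagate through $\sigma_{\tau,k}^2$, $C_\tau$, $D_{\tau,1}$ and $D_{\tau,2}$ in the same way, and the $\min$-over-$\rho$ step is unchanged. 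I do not expect a genuine obstacle here: the whole argument is accounting. The one place that needs a thought is the choice $\rho_\tau=\min_j\rho(j)$, which is forced so that a single geometric rate dominates all $m$ of the individual contractions simultaneously; everything else is linearity of conditional expectation together with independence of the coin $J$ from the iterates and gradients.
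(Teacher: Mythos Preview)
Your treatment of the first part (parameters \eqref{eq:flipping_a_coin}) is correct and is exactly the paper's argument: condition on the selector $J$, use the tower property so that $\EE[\norm{g_\tau^k-\nabla f(x^*)}^2\mid x^k]=\sum_j\tau_j\,\EE[\norm{g_j^k-\nabla f(x^*)}^2\mid x^k]$, apply the per-method bounds, and for the $\sigma$-recursion multiply each instance of \eqref{eq:gsg_sigma} by $\tau_jB(j)$, sum, and replace every $1-\rho(j)$ by $1-\min_j\rho(j)$.

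For the second part (parameters \eqref{eq:flipping_a_coin_indep}) there is a genuine gap, and it is worth flagging even though the paper's own one-line proof has the same issue. The ``trick'' \eqref{eq:indp_bounding} from Lemma~\ref{lem:convex_comb} is an algebraic expansion of $\norm{\sum_j\tau_j g_j^k-\nabla f(x^*)}^2$ into diagonal terms (which carry $\tau_j^2$) and cross terms (where independence turns inner products of centered vectors into $\norm{\nabla f(x^k)-\nabla f(x^*)}^2$). That expansion is available only because $g_\tau^k$ is a \emph{sum}. Under random selection you have $g_\tau^k=g_J^k$, and the tower property gives the exact identity
\[
\EE\!\left[\norm{g_\tau^k-\nabla f(x^*)}^2\mid x^k\right]=\sum_{j=1}^m\tau_j\,\EE\!\left[\norm{g_j^k-\nabla f(x^*)}^2\mid x^k\right],
\]
\emph{regardless} of whether the $g_j^k$ are independent. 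Independence of the underlying $g_j^k$ simply does not enter this quantity, so no bias--variance decomposition of $g_\tau^k$ can trade $\tau_j$ for $\tau_j^2$. Concretely, the inequality you would need,
\[
\sum_j\tau_j\big(2A(j)D_f+B(j)\sigma_k^2(j)+D_1(j)\big)\;\le\;2LD_f+\sum_j\tau_j^2\big(2A(j)D_f+B(j)\sigma_k^2(j)+D_1(j)\big),
\]
fails whenever the $\sigma_k^2(j)$ or $D_1(j)$ are large relative to $D_f$. So your sketch ``argue exactly as in the corresponding part of Lemma~\ref{lem:convex_comb}'' does not go through for random selection; the paper's pointer to \eqref{eq:indp_bounding} has the same problem.
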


\begin{example}[{\tt $\tau$-L-SVRG}]
    Consider the following method~--- {\tt $\tau$-L-SVRG}~--- which interpolates between vanilla {\tt SGD} and {\tt L-SVRG}.
    \begin{algorithm}[h]
    \caption{{\tt $\tau$-L-SVRG}}
    \label{alg:tau-L-SVRG}
    \begin{algorithmic}
        \Require learning rate $\gamma>0$, probability $p\in (0,1]$, starting point $x^0\in\R^d$, convex combination parameter $\tau\in[0,1]$
        \State $w^0 = x^0$
        \For{ $k=0,1,2,\ldots$ }
        \State{Sample  $i \in \{1,\ldots, n\}$ uniformly at random}
        \State{$g^k_{L-SVRG} = \nabla f_i(x^k) - \nabla f_i(w^k) + \nabla f(w^k)$}
        \State{Sample  $j \in \{1,\ldots, n\}$ uniformly at random}
        \State{$g^k_{SGD} = \nabla f_j(x^k)$}
        \State{$g^k = \tau g^k_{SGD} + (1-\tau)g^k_{L-SVRG}$}        
        \State{$x^{k+1} = x^k - \gamma g^k$}
        \State{$w^{k+1} = \begin{cases}
            x^{k}& \text{with probability } p\\
            w^k& \text{with probability } 1-p
            \end{cases}$
        }
        \EndFor
    \end{algorithmic}
    \end{algorithm}
    When $\tau = 0$ the Algorithm~\ref{alg:tau-L-SVRG} becomes {\tt L-SVRG} and when $\tau = 1$ it is just {\tt SGD} with uniform sampling.    Notice that Lemmas~\ref{lem:l-svrg} and~\ref{lem:exp_smoothness_grad_up_bound_sgd-as} still hold as they does not depend on the update rule for $x^{k+1}$.
    
    Thus, sequences $\{g_{SGD}^k\}_{k\ge 0}$ and $\{g_{L-SVRG}^k\}_{k\ge 0}$ satisfy the conditions of Lemma~\ref{lem:convex_comb} and, as a consequence, stochastic gradient $g^k$ from {\tt $\tau$-L-SVRG} meets the Assumption~\ref{as:general_stoch_gradient} with the following parameters:
    \begin{eqnarray*}
        A_\tau =  L + 2\tau^2\cL + 2(1-\tau)^2L,\quad B_\tau = 1,\quad \sigma_{\tau,k}^2 = 2\frac{(1-\tau)^2}{n}\sum\limits_{i=1}^n\norm{\nabla f_i(w^k) - \nabla f_i(x^*)}^2,\notag\\
         \rho_\tau = p,\quad C_\tau = 2(1-\tau)^2Lp,\quad D_{\tau,1} = 2\tau^2\sigma^2,\quad D_{\tau,2} = 0.
    \end{eqnarray*}
\end{example}

\begin{remark}
Similar interpolation with the analogous analysis can be considered between {\tt SGD} and {\tt SAGA}, or {\tt SGD} and {\tt SVRG}. 
\end{remark}

\subsubsection*{Proof of Lemma~\ref{lem:convex_comb}}

    Indeed, \eqref{eq:general_stoch_grad_unbias} holds due to linearity of mathematical expectation. Next, summing inequalities \eqref{eq:general_stoch_grad_second_moment} for $g_1^k,\ldots,g_m^k$ and using convexity of $\norm{\cdot}^2$ we get
    \begin{eqnarray*}
        \EE\left[\norm{g_\tau^k -\nabla f(x^*)}^2\mid x^k\right] &\le& \sum\limits_{j=1}^m\tau_j\EE\left[\norm{g_j^k-\nabla f(x^*)}^2\mid x^k\right]
        \\
        & \overset{\eqref{eq:general_stoch_grad_second_moment}}{\le}& 2\sum\limits_{j=1}^m\tau_j A(j) D_f(x^k,x^*) + \sum\limits_{j=1}^mB(j)\tau_j\sigma_k^2(j) +     \sum\limits_{j=1}^m\tau_j D_1(j),
    \end{eqnarray*}    
    which implies \eqref{eq:general_stoch_grad_second_moment} for $g_\tau^k$ with $A_\tau = \sum\limits_{j=1}^m\tau_j A(j), B_\tau =1, \sigma_{\tau,k}^2 = \sum\limits_{j=1}^m\tau_j B(j)\sigma_k^2(j), D_{\tau,1} = \sum\limits_{j=1}^m\tau_j D_1(j)$.
    Finally, summing \eqref{eq:gsg_sigma} for $g_1^k,\ldots,g_m^k$ gives us
    \begin{equation*}
        \EE\left[\sigma_{\tau,k+1}^2\mid\sigma_{\tau,k}^2\right] \overset{\eqref{eq:gsg_sigma}}{\le} \left(1-\min\limits_{j\in[m]}\rho(j)\right)\sigma_{\tau,k}^2 + 2\sum\limits_{j=1}^m\tau_j B(j)C(j)D_f(x^k,x^*) + \sum\limits_{j=1}^m\tau_j B(j)D_2(j),
    \end{equation*}
    which is exactly \eqref{eq:gsg_sigma} for $\sigma_{\tau,k}^2$ with $\rho =\min\limits_{j\in[m]}\rho(j), C_\tau = \sum\limits_{j=1}^m\tau_j C(j), D_{\tau,2} = \sum\limits_{j=1}^m\tau_j D_2(j)$.

Next, for independent gradients we have
\begin{eqnarray}
\EE\left[\norm{g_\tau^k -\nabla f(x^*)}^2\mid x^k\right]   &=&
 \sum_{j=1}^m \tau_j^2 \EE \left[\norm{g_j^k -\nabla f(x^*)}^2\mid x^k\right] + 2\sum_{i< j} \tau_i \tau_j\EE\<g_j^k -\nabla f(x^*),g_i^k -\nabla f(x^*) >
 \nonumber
 \\
  &=&
 \sum_{j=1}^m \tau_j^2 \EE \left[\norm{g_j^k -\nabla f(x^*)}^2\mid x^k\right] + 2\sum_{i< j}\tau_i \tau_j \norm{\nabla f(x^k) -\nabla f(x^*)}^2
  \nonumber
 \\
  &\leq&
 \sum_{j=1}^m \tau_j^2 \EE \left[\norm{g_j^k -\nabla f(x^*)}^2\mid x^k\right] + \left(\sum_{ j=1}^m \tau_j\right)^2 \norm{\nabla f(x^k) -\nabla f(x^*)}^2
  \nonumber
  \\
  &=&
 \sum_{j=1}^m \tau_j^2 \EE \left[\norm{g_j^k -\nabla f(x^*)}^2\mid x^k\right] +  \norm{\nabla f(x^k) -\nabla f(x^*)}^2
  \nonumber
   \\
  &\leq&
 \sum_{j=1}^m \tau_j^2 \EE \left[\norm{g_j^k -\nabla f(x^*)}^2\mid x^k\right] +  2LD_f(x^k,x^*).
 \label{eq:indp_bounding}
\end{eqnarray}
and further the bounds follow. 

\subsubsection*{Proof of Lemma~\ref{lem:flipping_a_coin}}

    Indeed, \eqref{eq:general_stoch_grad_unbias} holds due to linearity and tower property of mathematical expectation. Next, using tower property of mathematical expectation and inequalities \eqref{eq:general_stoch_grad_second_moment} for $g_1^k,\ldots,g_m^k$ we get
    \begin{eqnarray*}
        \EE\left[\norm{g_\tau^k-\nabla f(x^*)}^2\mid x^k\right] &=& \EE\left[\EE_\tau\left[\norm{g_\tau^k-\nabla f(x^*)}^2\right]\mid x^k\right] = \sum\limits_{j=1}^m\tau_j\EE\left[\norm{g_j^k-\nabla f(x^*)}^2\mid x^k\right]\\ &\overset{\eqref{eq:general_stoch_grad_second_moment}}{\le}& 2\sum\limits_{j=1}^m\tau_j A(j) D_f(x^k,x^*) + \sum\limits_{j=1}^m B(j)\tau_j\sigma_k^2(j) +     \sum\limits_{j=1}^m\tau_j D_1(j),
    \end{eqnarray*}
    which implies \eqref{eq:general_stoch_grad_second_moment} for $g_\tau^k$ with $A_\tau = \sum\limits_{j=1}^m\tau_j A(j), B_\tau = 1, \sigma_{\tau,k}^2 = \sum\limits_{j=1}^m\tau_j B(j) \sigma_k^2(j), D_{\tau,1} = \sum\limits_{j=1}^m\tau_j D_1(j)$.
    Finally, summing \eqref{eq:gsg_sigma} for $g_1^k,\ldots,g_m^k$ gives us
    \begin{equation*}
        \EE\left[\sigma_{\tau,k+1}^2\mid\sigma_{\tau,k}^2\right] \overset{\eqref{eq:gsg_sigma}}{\le} \left(1-\min\limits_{j\in[m]}\rho(j)\right)\sigma_{\tau,k}^2 + 2\sum\limits_{j=1}^m\tau_j B(j) C(j)D_f(x^k,x^*) + \sum\limits_{j=1}^m\tau_jB(j)D_2(j),
    \end{equation*}
    which is exactly \eqref{eq:gsg_sigma} for $\sigma_{\tau,k}^2$ with $\rho =\min\limits_{j\in[m]}\rho(j), C_\tau = \sum\limits_{j=1}^m\tau_j B(j)C(j), D_{\tau,2} = \sum\limits_{j=1}^m\tau_j B(j)D_2(j)$.
To show~\eqref{eq:flipping_a_coin_indep}, it suffices to combine above bounds with the trick~\eqref{eq:indp_bounding}.

\begin{remark}
Recently, \cite{tran2019hybrid} demonstrated in that the convex combination of {\tt SGD} and {\tt SARAH}~\cite{nguyen2017sarah} performs very well on non-convex problems. 
\end{remark}

\clearpage
\section{Extra Experiments \label{sec:extra}}
\subsection{{\tt SGD-MB}: remaining experiments and exact problem setup.}

As already described in Section~\ref{sec:exp}, we demonstrate that {\tt SGD-MB} have indistinguishable iteration complexity to independent {\tt SGD}. The considered problem is logistic regression with Tikhonov regularization of order $\lambda$:
\begin{equation}\label{eq:logreg}
\frac1n \sum_{i=1}^n   \log \left(1+\exp\left(a_i^\top x\cdot  b_i\right) \right)+\frac{\lambda}{2} \norm{ x}^2,
\end{equation}
where $a_i\in \R^{n}$, $b_i \in \{-1,1\}$ is $i$-th data-label pair is a vector of labels and $\lambda\geq 0$ is the regularization parameter. The data and labels were obtained from LibSVM datasets {\tt a1a}, {\tt a9a}, w1a, {\tt w8a}, {\tt gisette}, {\tt madelon}, {\tt phishing} and {\tt mushrooms}. Further, the data were rescaled by a random variable $cu_i^2$ where $u_i$ is random integer from $1,2,\dots, 1000$ and $c$ is such that the mean norm of $a_i$ is $1$. 

Note that we have now an infinite array of possibilities on how to write~\eqref{eq:logreg} as~\eqref{eq:f_sum}. For simplicity, distribute $l2$ term evenly among the finite sum.  

The full results can be found in Figure~\ref{fig:SGDMB_full}.

\begin{figure}[!h]
\centering
\begin{minipage}{0.24\textwidth}
  \centering
\includegraphics[width =  \textwidth ]{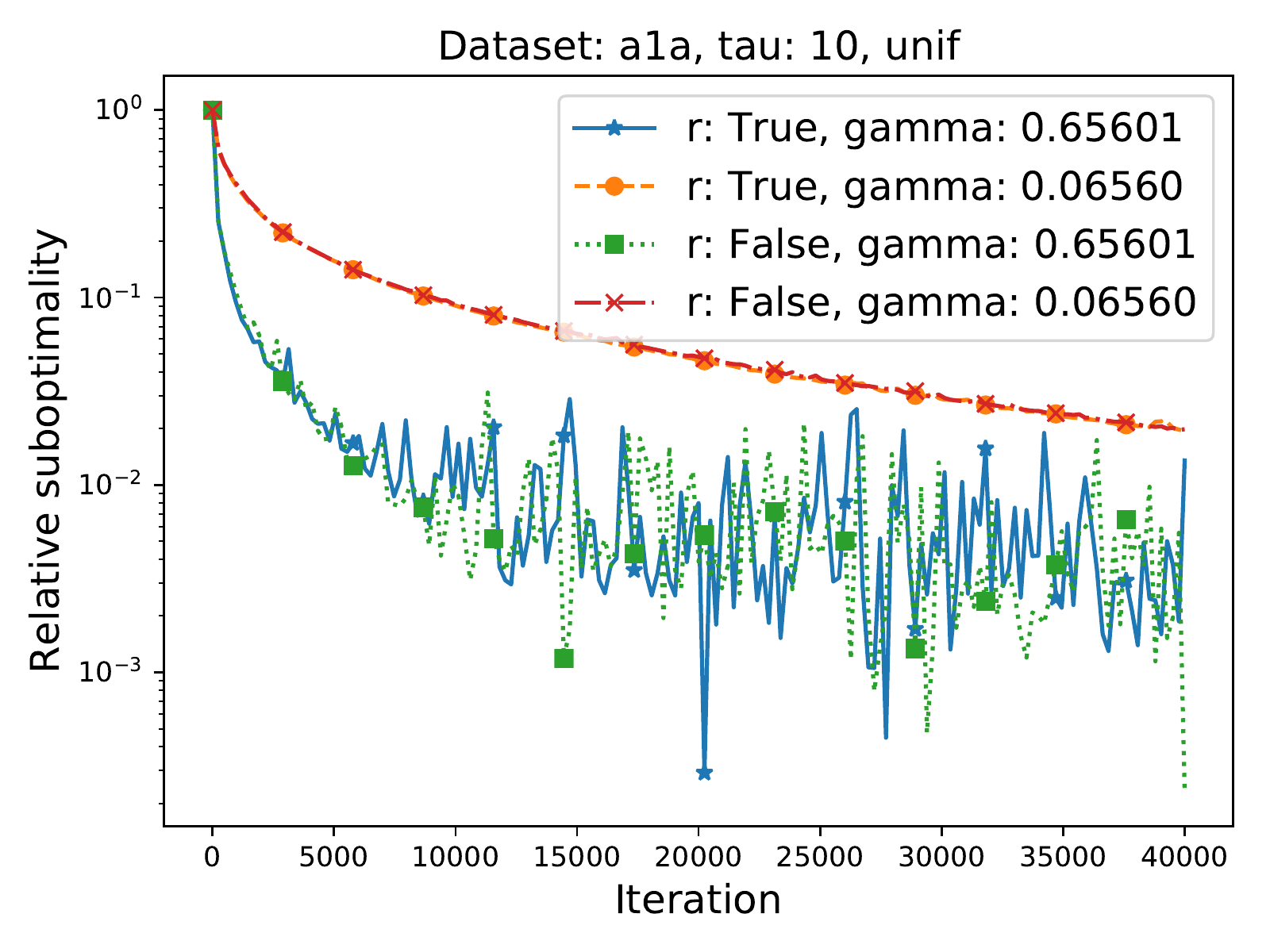}
\end{minipage}%
\begin{minipage}{0.24\textwidth}
  \centering
\includegraphics[width =  \textwidth ]{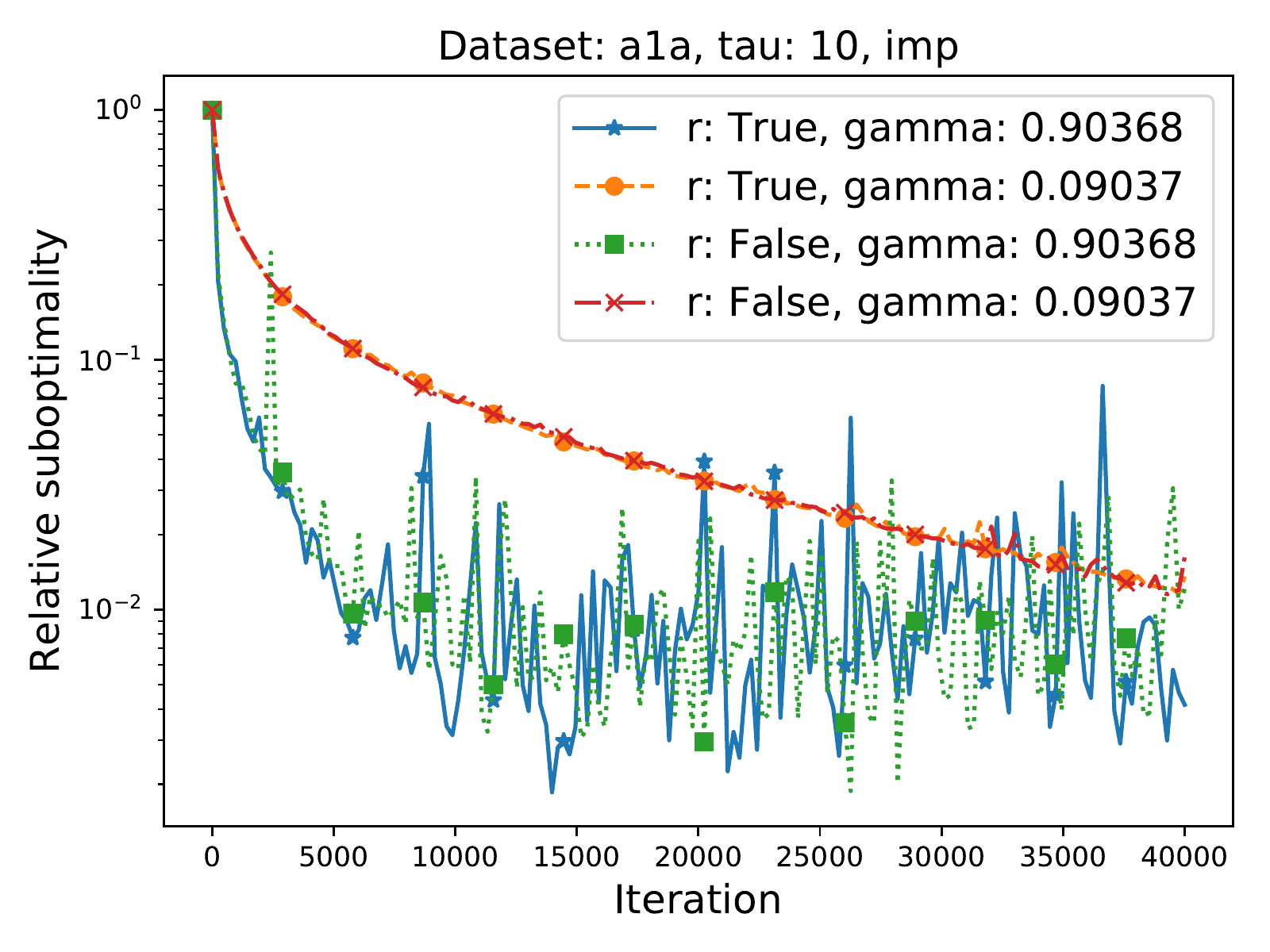}
\end{minipage}%
\begin{minipage}{0.24\textwidth}
  \centering
\includegraphics[width =  \textwidth ]{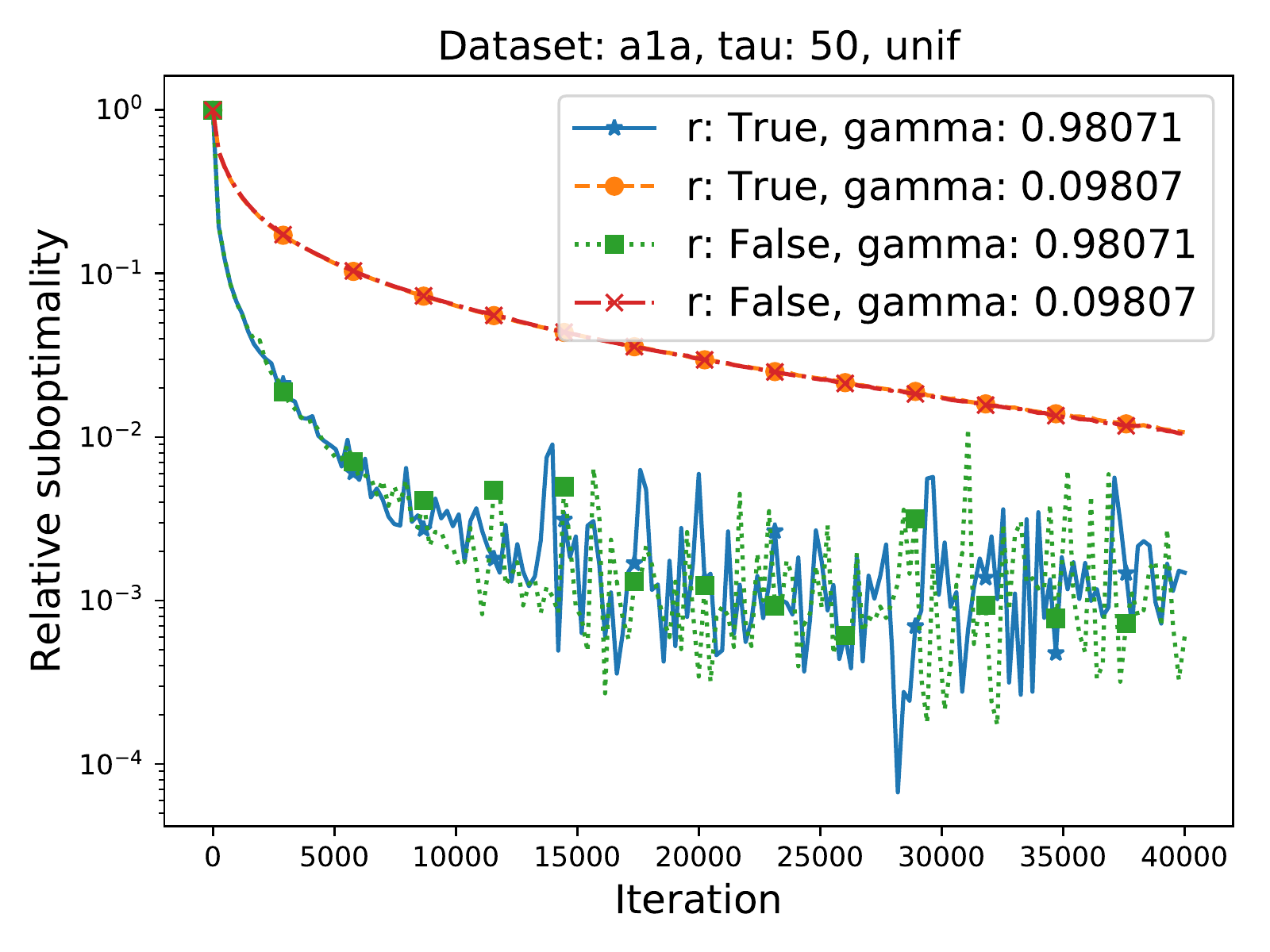}
\end{minipage}%
\begin{minipage}{0.24\textwidth}
  \centering
\includegraphics[width =  \textwidth ]{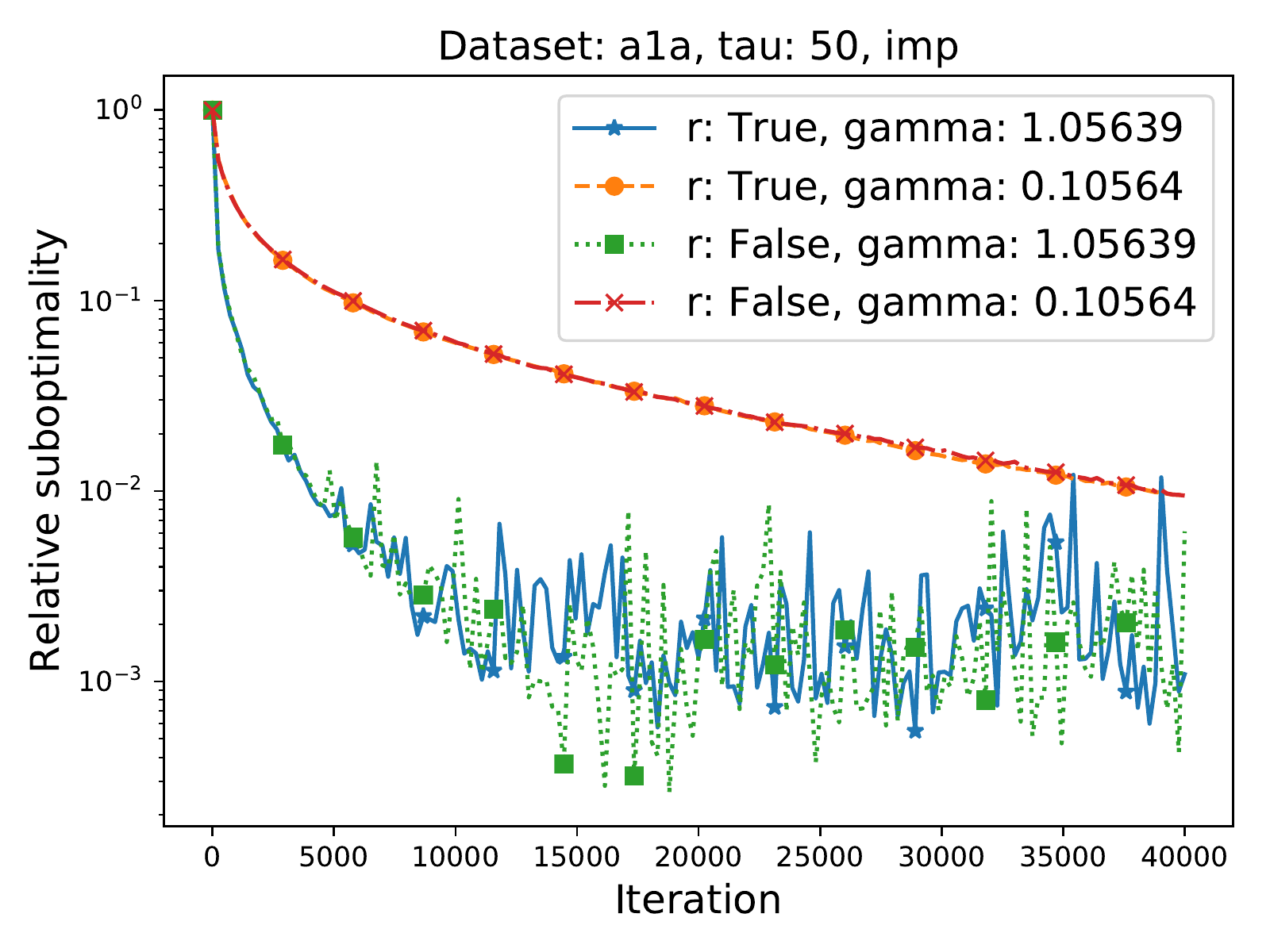}
\end{minipage}%
\\
\begin{minipage}{0.24\textwidth}
  \centering
\includegraphics[width =  \textwidth ]{SGD_w1a_tau_10_meth_3_unif.pdf}
\end{minipage}%
\begin{minipage}{0.24\textwidth}
  \centering
\includegraphics[width =  \textwidth ]{SGD_w1a_tau_10_meth_3_imp.pdf}
\end{minipage}%
\begin{minipage}{0.24\textwidth}
  \centering
\includegraphics[width =  \textwidth ]{SGD_w1a_tau_50_meth_3_unif.pdf}
\end{minipage}%
\begin{minipage}{0.24\textwidth}
  \centering
\includegraphics[width =  \textwidth ]{SGD_w1a_tau_50_meth_3_imp.pdf}
\end{minipage}%
\\
\begin{minipage}{0.24\textwidth}
  \centering
\includegraphics[width =  \textwidth ]{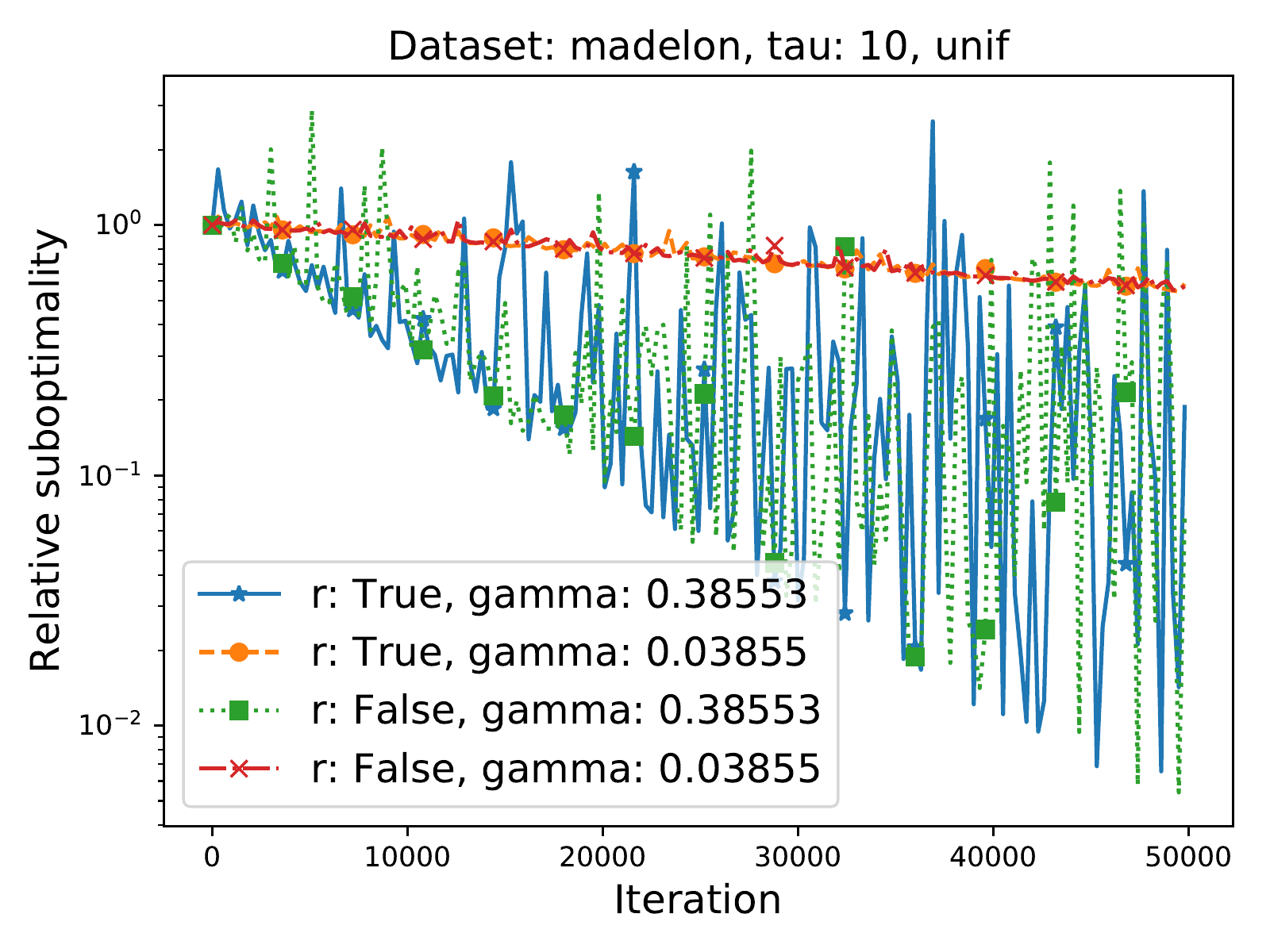}
\end{minipage}%
\begin{minipage}{0.24\textwidth}
  \centering
\includegraphics[width =  \textwidth ]{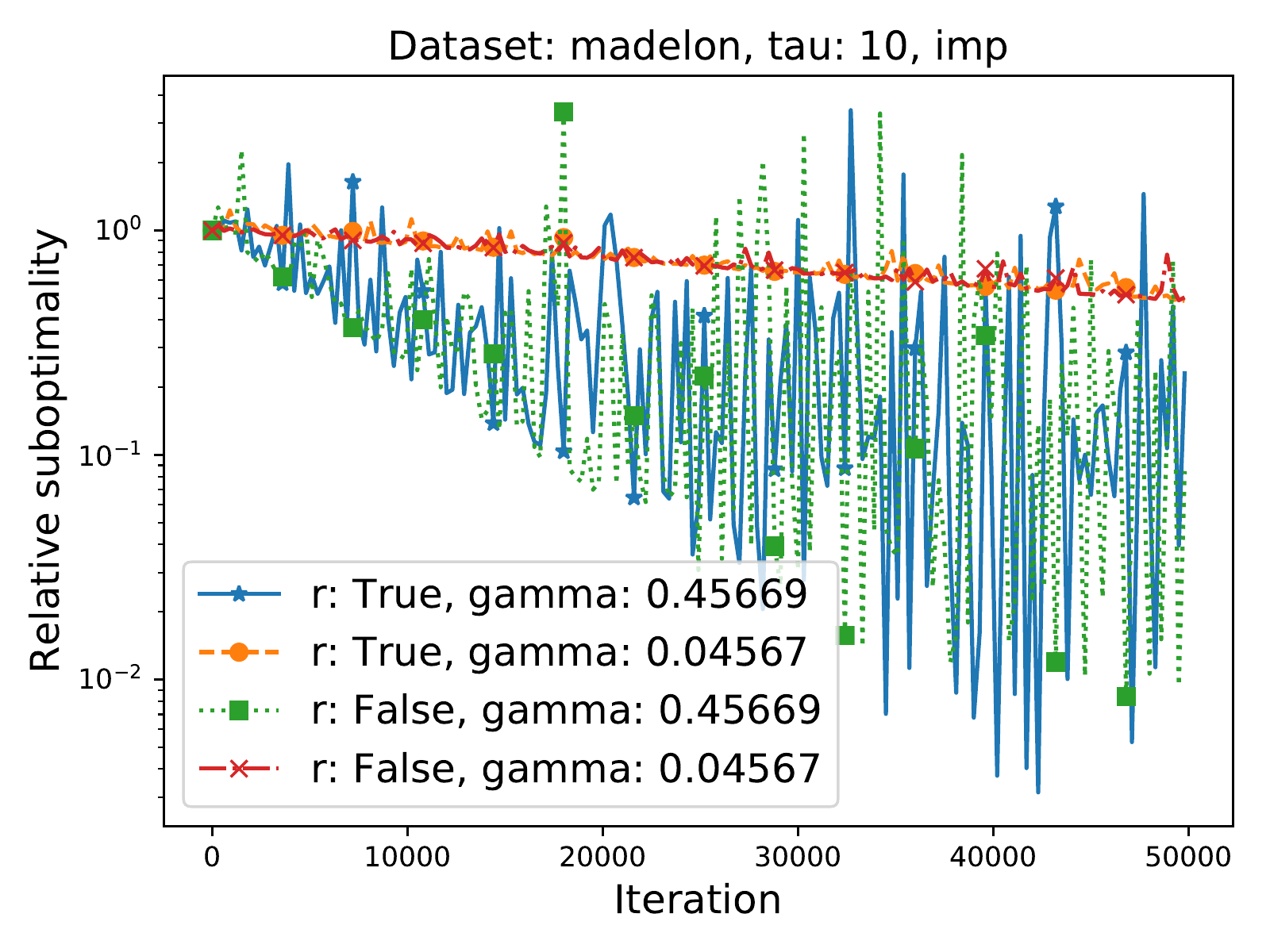}
\end{minipage}%
\begin{minipage}{0.24\textwidth}
  \centering
\includegraphics[width =  \textwidth ]{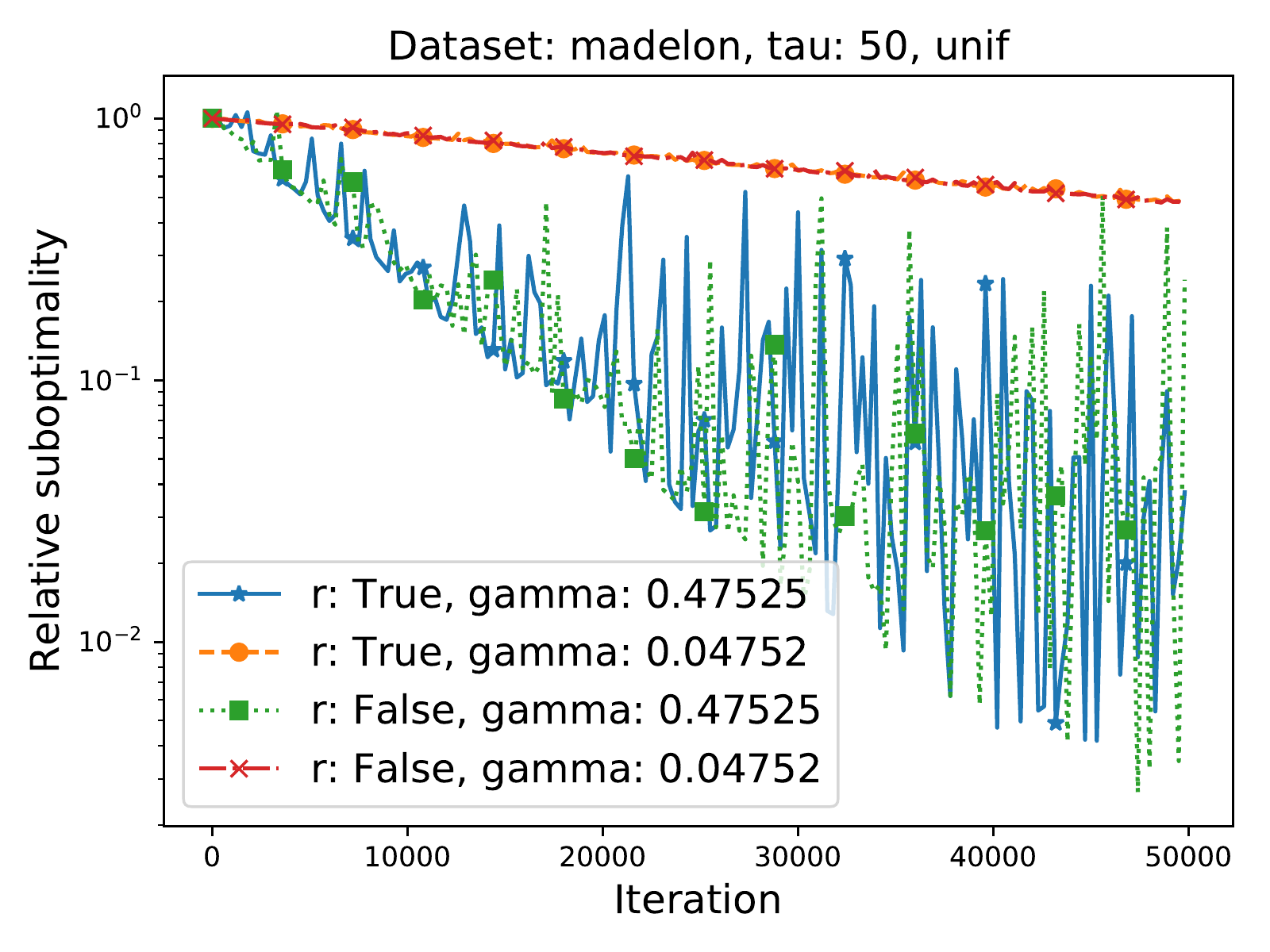}
\end{minipage}%
\begin{minipage}{0.24\textwidth}
  \centering
\includegraphics[width =  \textwidth ]{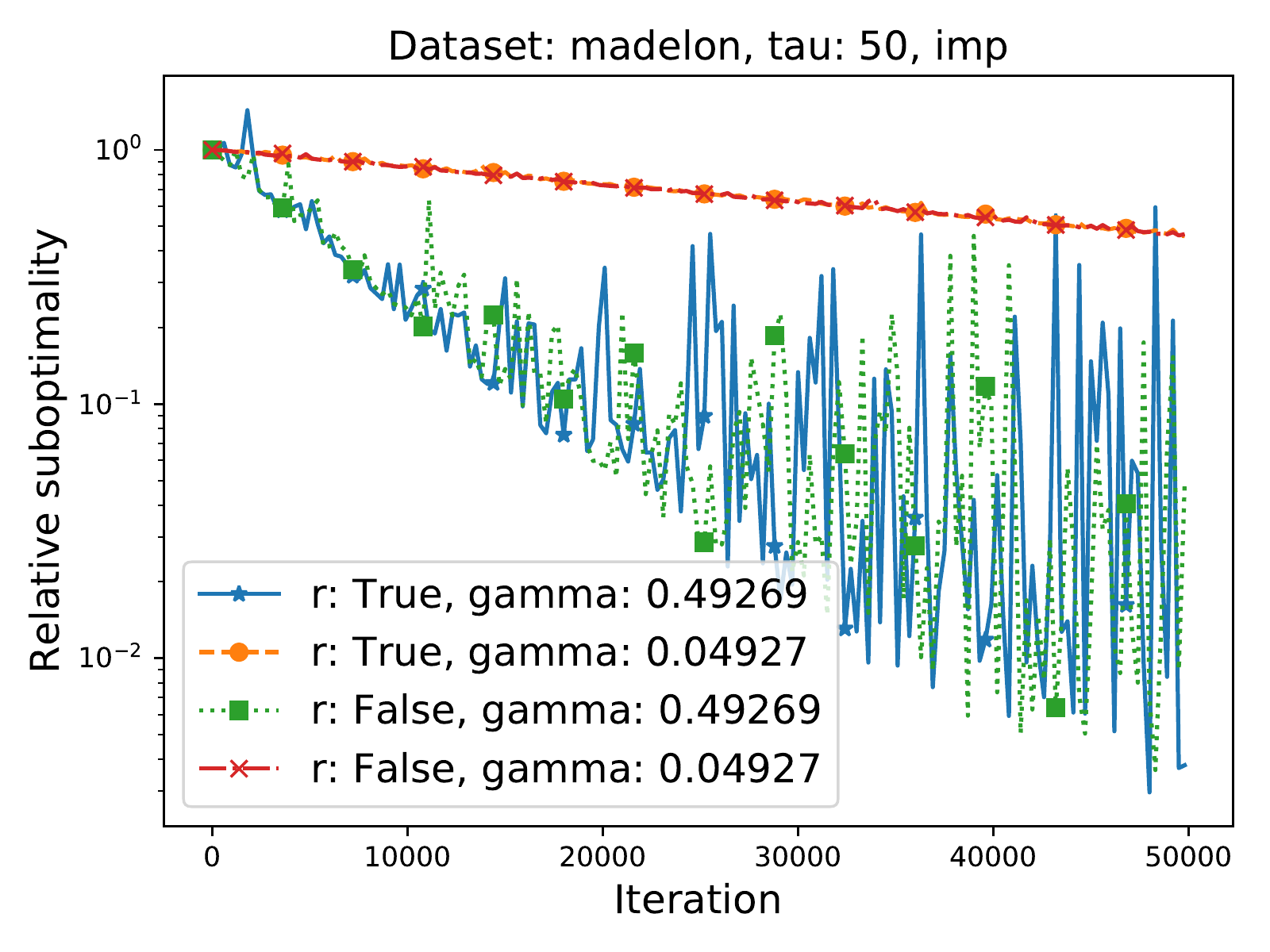}
\end{minipage}%
\\
\begin{minipage}{0.24\textwidth}
  \centering
\includegraphics[width =  \textwidth ]{SGD_gisette_scale_tau_10_meth_3_unif.pdf}
\end{minipage}%
\begin{minipage}{0.24\textwidth}
  \centering
\includegraphics[width =  \textwidth ]{SGD_gisette_scale_tau_10_meth_3_imp.pdf}
\end{minipage}%
\begin{minipage}{0.24\textwidth}
  \centering
\includegraphics[width =  \textwidth ]{SGD_gisette_scale_tau_50_meth_3_unif.pdf}
\end{minipage}%
\begin{minipage}{0.24\textwidth}
  \centering
\includegraphics[width =  \textwidth ]{SGD_gisette_scale_tau_50_meth_3_imp.pdf}
\end{minipage}%
\\
\begin{minipage}{0.24\textwidth}
  \centering
\includegraphics[width =  \textwidth ]{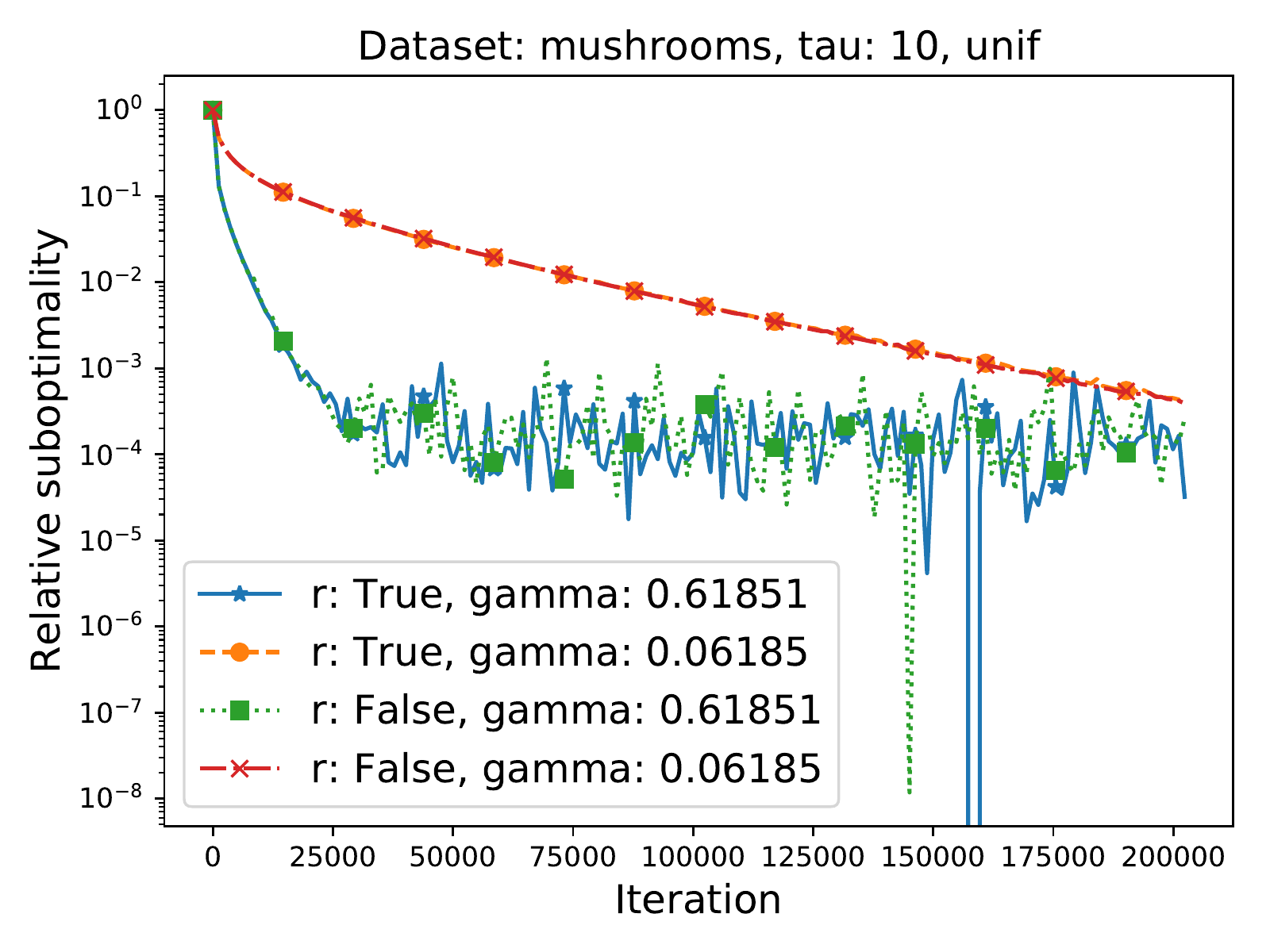}
\end{minipage}%
\begin{minipage}{0.24\textwidth}
  \centering
\includegraphics[width =  \textwidth ]{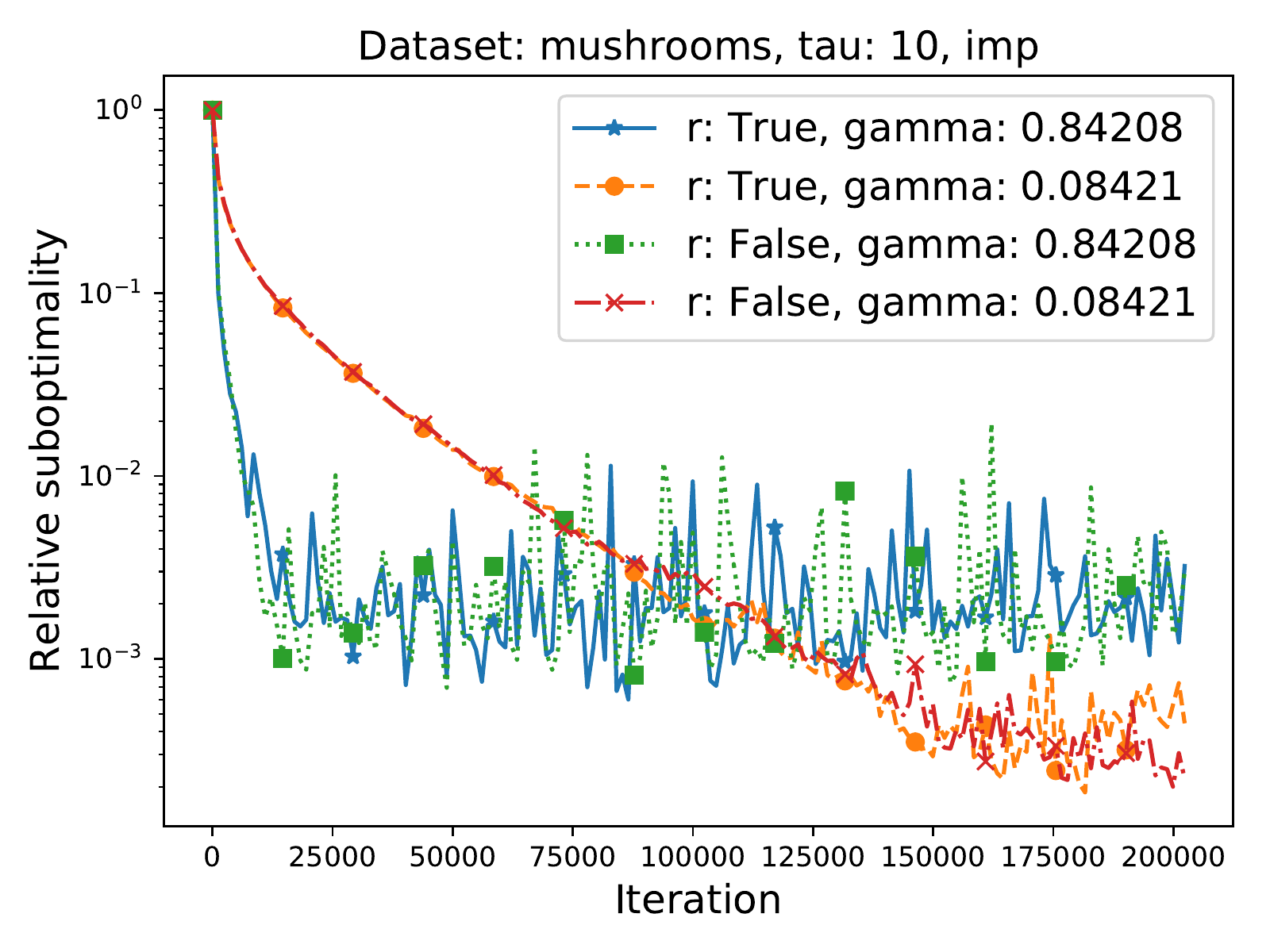}
\end{minipage}%
\begin{minipage}{0.24\textwidth}
  \centering
\includegraphics[width =  \textwidth ]{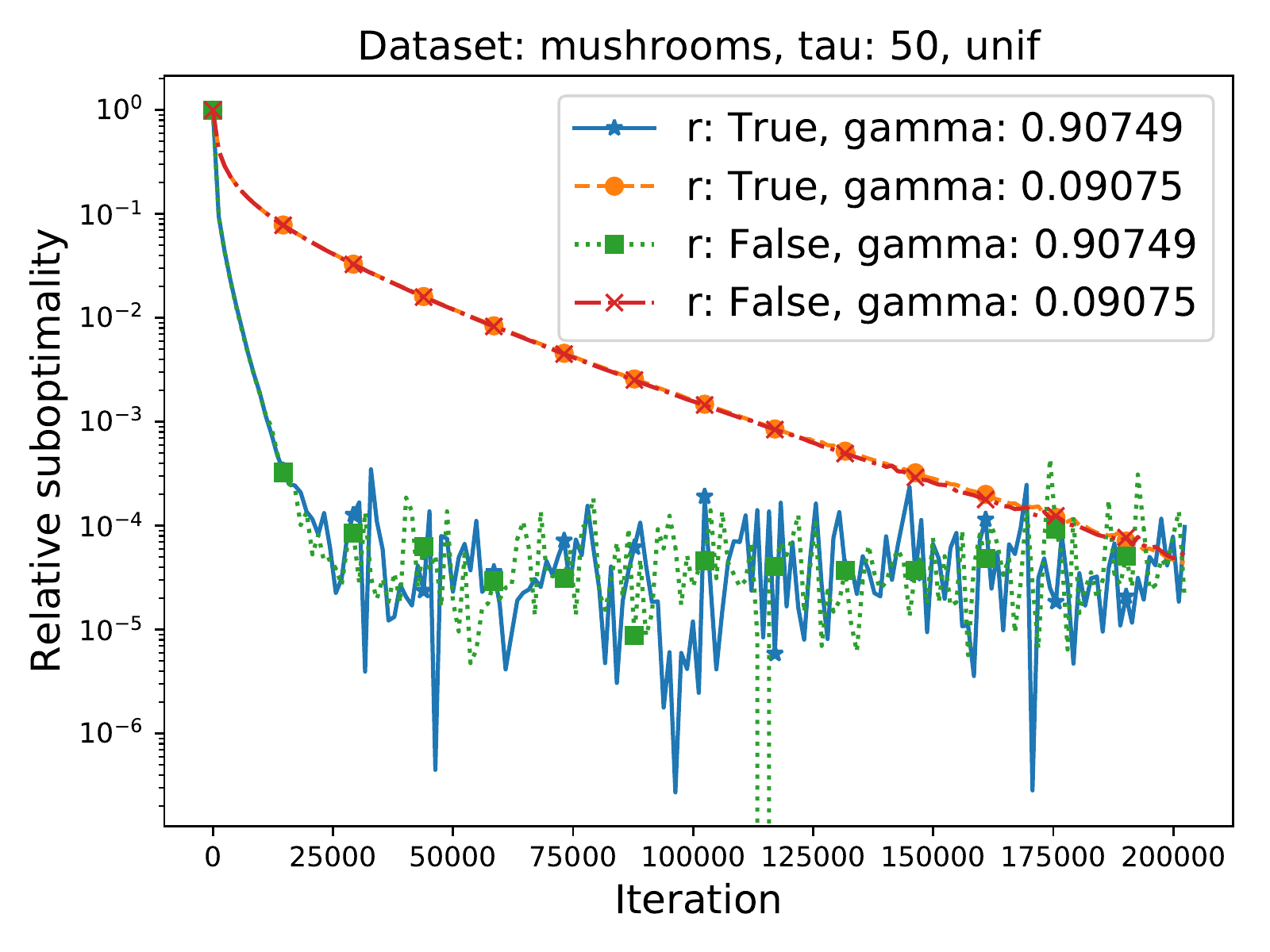}
\end{minipage}%
\begin{minipage}{0.24\textwidth}
  \centering
\includegraphics[width =  \textwidth ]{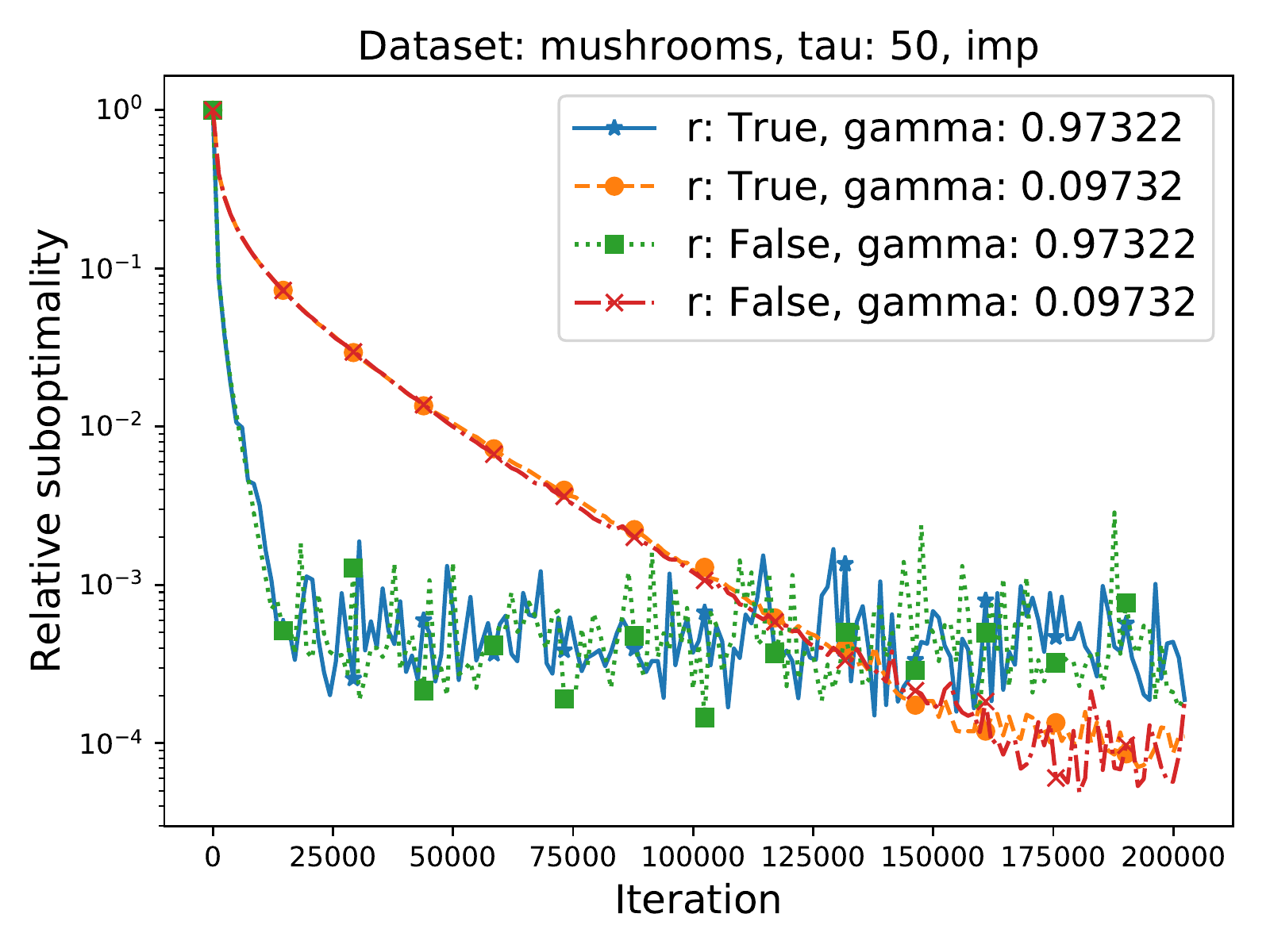}
\end{minipage}%
\caption{{\tt SGD-MB} and independent {\tt SGD} applied on LIBSVM~\cite{chang2011libsvm} datasets with regularization parameter $\lambda = 10^{-5}$. Axis $y$ stands for relative suboptimality, i.e. $\frac{f(x^k)-f(x^*)}{f(x^k)-f(x^0)}$. Title label ``unif'' corresponds to probabilities chosen by~\ref{item:unif} while label ``imp'' corresponds to probabilities chosen by~\ref{item:imp}. Lastly, legend label ``r'' corresponds to ``replacement'' with value ``True'' for {\tt SGD-MB} and value ``False'' for independent {\tt SGD}.}
\label{fig:SGDMB_full}
\end{figure}

Note that plots which are not included in the main body (due to space limitations) only support claims from Section~\ref{sec:exp}.

\subsection{Experiments on {\tt SGD-star} \label{sec:exp_star}}
In this section, we study {\tt SGD-star} and numerically verify claims from Section~\ref{sec:SGD-star}. In particular, Corollary~\ref{cor:SGD-star} shows that {\tt SGD-star} enjoys linear convergence rate which is constant times better to the rate of {\tt SAGA} (given that problem condition number is high enough). We compare 3 methods -- {\tt SGD-star}, {\tt SGD} and {\tt SAGA}. We consider simple and well-understood least squares problem $\min_x \frac12 \| \mA x-b\|^2$ where elements of $\mA,b$ were generated (independently) from standard normal distribution. Further, rows of $\mA$ were normalized so that $\|\mA_{i:}\|=1$. Thus, denoting $f_i(x) = \frac12 (\mA_{i:}^\top x-b_i )^2$, $f_i$ is 1-smooth. For simplicity, we consider {\tt SGD-star} with uniform serial sampling, i.e. $\cL=1$.

 Next, for both {\tt SGD-star} and {\tt SGD} we use stepsize $\gamma = \frac{1}{2}$ (theory supported stepsize for {\tt SGD-star}), while for {\tt SAGA} we set $\gamma = \frac{1}{5}$ (almost theory supported stepsize). Figure~\ref{fig:star} shows the results.

\begin{figure}[!h]
\centering
\begin{minipage}{0.3\textwidth}
  \centering
\includegraphics[width =  \textwidth ]{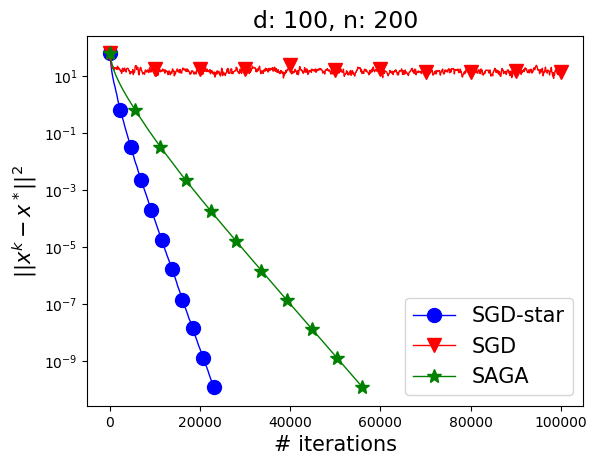}
\end{minipage}%
\begin{minipage}{0.3\textwidth}
  \centering
\includegraphics[width =  \textwidth ]{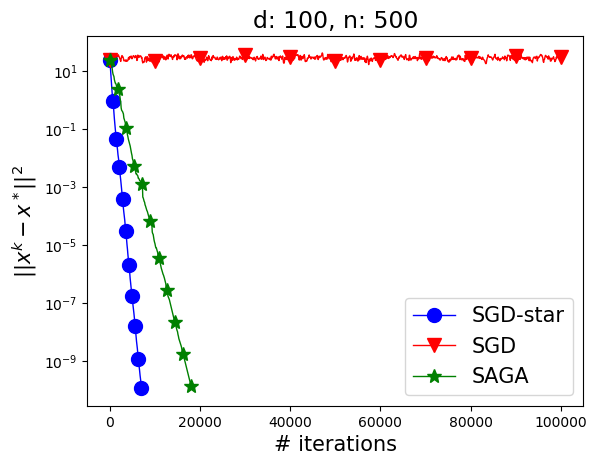}
\end{minipage}%
\begin{minipage}{0.3\textwidth}
  \centering
\includegraphics[width =  \textwidth ]{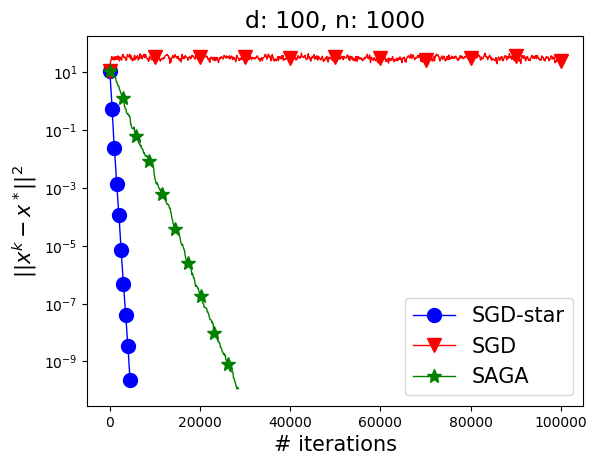}
\end{minipage}%
\\
\begin{minipage}{0.3\textwidth}
  \centering
\includegraphics[width =  \textwidth ]{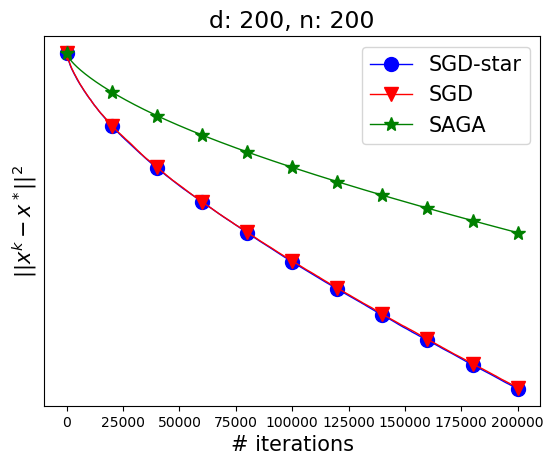}
\end{minipage}%
\begin{minipage}{0.3\textwidth}
  \centering
\includegraphics[width =  \textwidth ]{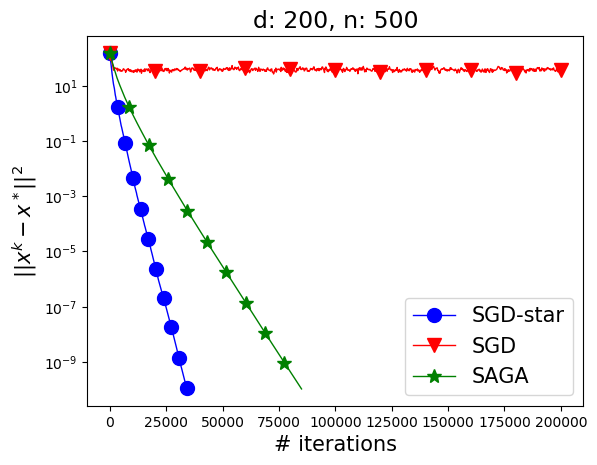}
\end{minipage}%
\begin{minipage}{0.3\textwidth}
  \centering
\includegraphics[width =  \textwidth ]{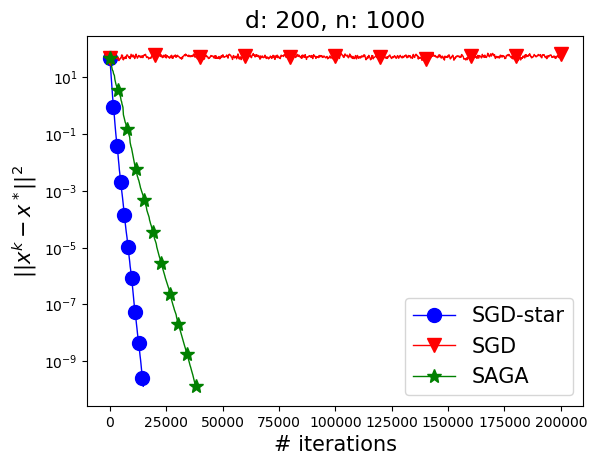}
\end{minipage}%
\\
\begin{minipage}{0.3\textwidth}
  \centering
\includegraphics[width =  \textwidth ]{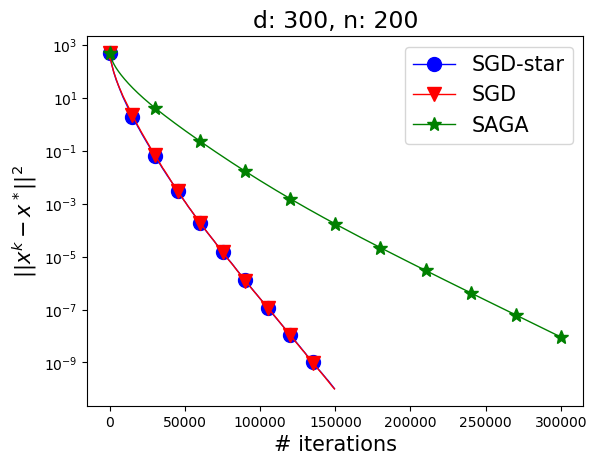}
\end{minipage}%
\begin{minipage}{0.3\textwidth}
  \centering
\includegraphics[width =  \textwidth ]{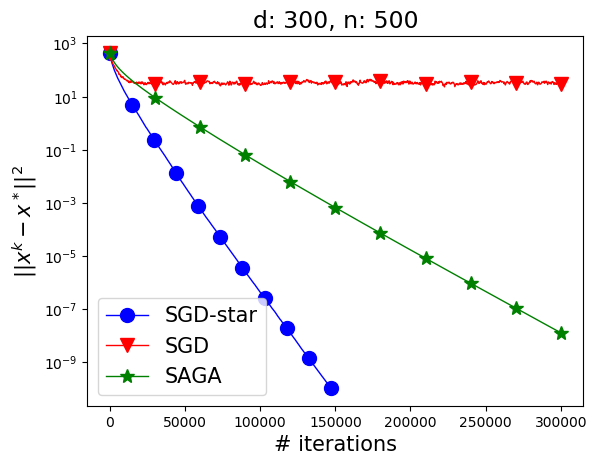}
\end{minipage}%
\begin{minipage}{0.3\textwidth}
  \centering
\includegraphics[width =  \textwidth ]{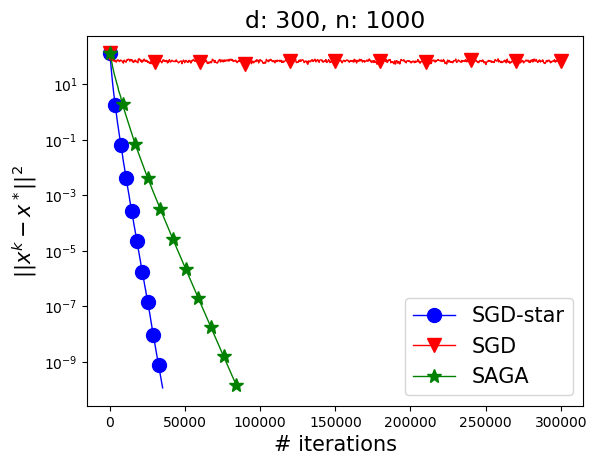}
\end{minipage}%
\caption{Comparison of {\tt SGD-star}, {\tt SGD} and {\tt SAGA} on least squares problem.}
\label{fig:star}
\end{figure}

Note that, as theory predicts, {\tt SGD-star} is always faster to {\tt SAGA}, although only constant times. Further, in the cases where $d\geq n$, performance of {\tt SGD} seems identical to the performance of {\tt SGD-shift}. This is due to a simple reason: if $d\geq n$, we must have $\nabla f_i(x^*) = 0$ for all $i$, and thus {\tt SGD} and {\tt SGD-shift} are in fact identical algorithms. 

\subsection{Experiments on {\tt N-SEGA} \label{sec:exp_nsega}}
In this experiment we study the effect of noise on {\tt N-SEGA}. We consider unit ball constrained least squares problem: $\min_{\|x\|\leq 1} f(x)$ where $f(x)=\|\mA x-b\|^2$. and we suppose that there is an oracle providing us with 
noised partial derivative $g_i(x,\zeta) = \nabla_i f(x) +\zeta$, where $\zeta \sim N(0,\sigma^2)$. For each problem instance (i.e. pair $\mA, b$), we compare performance of  {\tt N-SEGA} under various noise magnitudes $\sigma^2$.

The specific problem instances are presented in Table~\ref{tbl:leastsquares}. Figure~\ref{fig:nsega} shows the results. 

\begin{table}[!h]
\begin{center}
\begin{tabular}{|c|c|c|}
\hline
Type & $\mA $ & $b$ \\
 \hline
 \hline
 1   & $\mA_{ij}\sim N(0,1)$ (independently)  & vector of ones  \\
 \hline
  2   & Same as 1, but scaled so that $\lambda_{\max}(A^\top A)=1$ & vector of ones \\
\hline
 3   & $\mA_{ij} = \varrho_{ij}\varpi_{j}$ $\forall i,j:\varrho_{ij},\varpi_{j} \sim N(0,1)$ (independently)  & vector of ones  \\
 \hline
  4   & Same as 3, but scaled so that $\lambda_{\max}(A^\top A)=1$ & vector of ones \\
\hline
\end{tabular}
\end{center}
\caption{Four types of least squares. }
\label{tbl:leastsquares}
\end{table}

We shall mention that this experiment serves to support and give a better intuition about the results from Section~\ref{N-SEGA} and is by no means practical. The results show, as predicted by theory, linear convergence to a specific neighborhood of the objective. The effect of the noise varies, however, as a general rule, the larger strong convexity $\mu$ is (i.e. problems 1,3 where scaling was not applied), the smaller the effect of noise is.

\begin{figure}[!h]
\centering
\begin{minipage}{0.24\textwidth}
  \centering
\includegraphics[width =  \textwidth ]{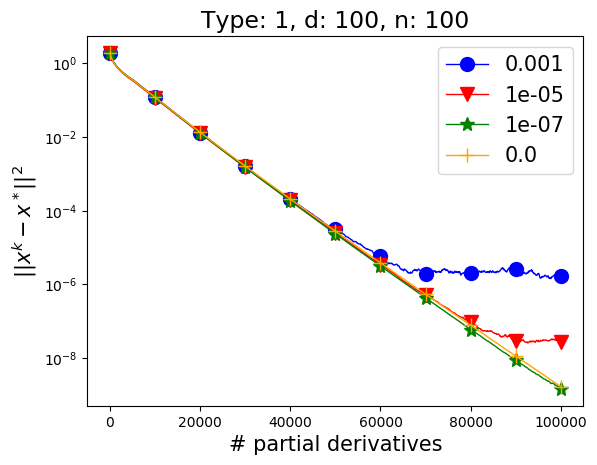}
\end{minipage}%
\begin{minipage}{0.24\textwidth}
  \centering
\includegraphics[width =  \textwidth ]{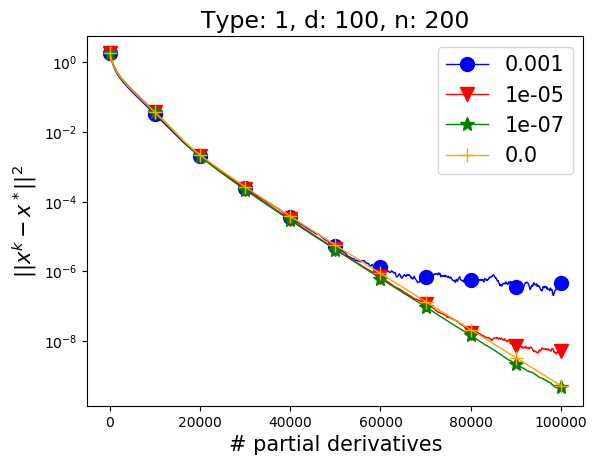}
\end{minipage}%
\begin{minipage}{0.24\textwidth}
  \centering
\includegraphics[width =  \textwidth ]{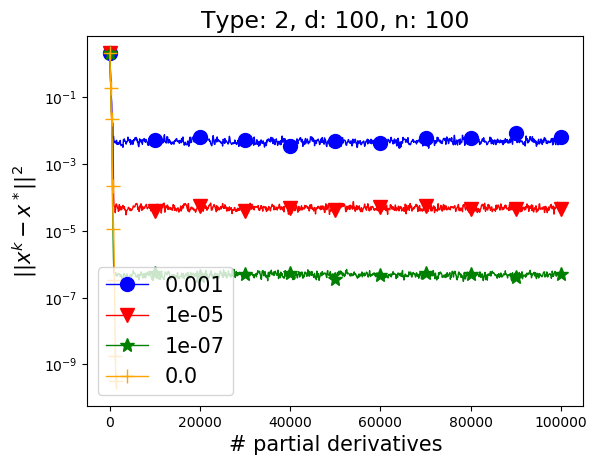}
\end{minipage}%
\begin{minipage}{0.24\textwidth}
  \centering
\includegraphics[width =  \textwidth ]{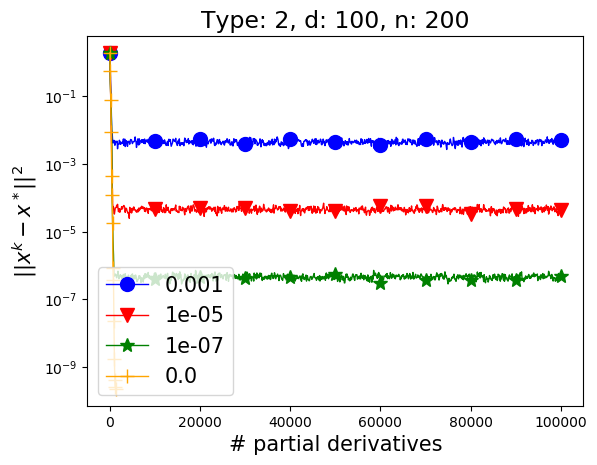}
\end{minipage}%
\\
\begin{minipage}{0.24\textwidth}
  \centering
\includegraphics[width =  \textwidth ]{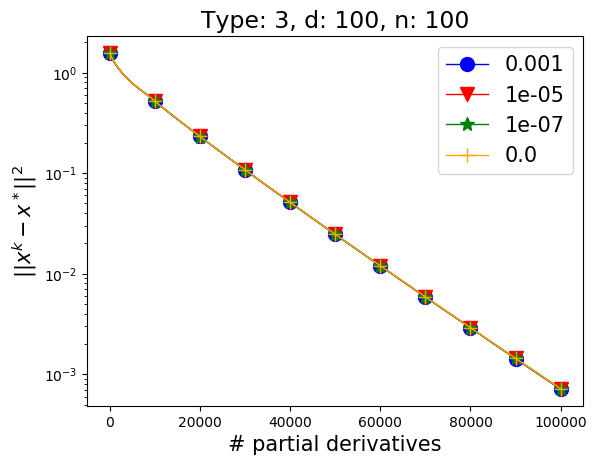}
\end{minipage}%
\begin{minipage}{0.24\textwidth}
  \centering
\includegraphics[width =  \textwidth ]{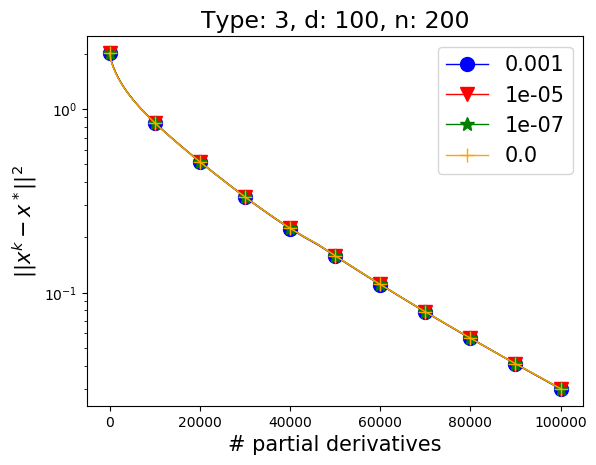}
\end{minipage}%
\begin{minipage}{0.24\textwidth}
  \centering
\includegraphics[width =  \textwidth ]{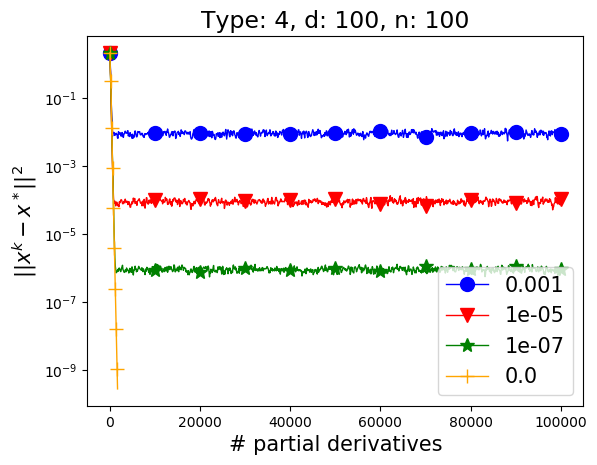}
\end{minipage}%
\begin{minipage}{0.24\textwidth}
  \centering
\includegraphics[width =  \textwidth ]{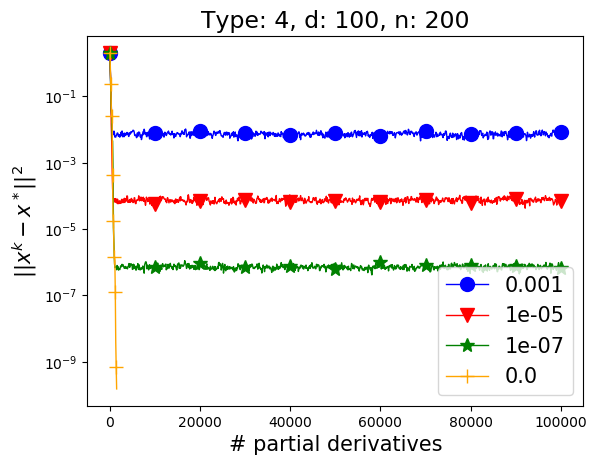}
\end{minipage}%
\caption{ {\tt N-SEGA} applied on constrained least squares problem with noised partial derivative oracle. Legend labels stand for the magnitude $\sigma^2$ of the oracle noise. }
\label{fig:nsega}
\end{figure}

\clearpage

\section{Proofs for Section~\ref{sec:main_res}}

\subsection{Basic Facts and Inequalities}\label{sec:basic_inequalities}

For all $a,b\in\R^d$ and $\xi > 0$ the following inequalities holds:
\begin{equation}\label{eq:fenchel}
    \langle a,b\rangle \le \frac{\norm{a}^2}{2\xi} + \frac{\xi\norm{b}^2}{2},
\end{equation}
\begin{equation}\label{eq:a_b_norm_squared}
    \norm{a+b}^2 \le 2\norm{a}^2 + 2\norm{b}^2,
\end{equation}
and
\begin{equation}\label{eq:1/2a_minus_b}
    \frac{1}{2}\norm{a}^2 - \norm{b}^2 \le \norm{a+b}^2.
\end{equation}

For a random vector $\xi \in \R^d$ and any $x\in \R^d$ the variance can be decomposed as
\begin{equation}\label{eq:variance_decomposition}
    \EE\left[\norm{\xi - \EE\xi}^2\right] = \EE\left[\norm{\xi-x}^2\right] - \EE\left[\norm{\EE\xi - x}^2\right] \;.
\end{equation}

\subsection{A Key Lemma}

The following lemma will be used in the proof of our main theorem.

\begin{lemma}[Key single iteration recurrence]  \label{lem:iter_dec}
Let Assumptions~\ref{as:general_stoch_gradient}~and~\ref{as:mu_strongly_quasi_convex}  be satisfied. Then the following inequality holds for all $k\geq 0$:
\begin{eqnarray*}
&& \EE\left[\norm{x^{k+1}-x^*}^2\right] + M\gamma^2\EE\left[\sigma_{k+1}^2\right]  + 2\gamma\left(1-\gamma(A+CM)\right)\EE\left[D_f(x^k,x^*)\right] \\ 
    && \qquad    \le
        (1-\gamma\mu)\EE \left[\norm{x^k - x^*}^2 \right] + \left(1 - \rho\right)M\gamma^2\EE\left[\sigma_k^2\right] + B \gamma^2\EE\left[\sigma_k^2\right] + (D_1+MD_2)\gamma^2.
\end{eqnarray*}
\end{lemma}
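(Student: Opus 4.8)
The strategy is the standard "expand the square, use unbiasedness, invoke strong quasi-convexity, then recombine" argument, familiar from the analysis of proximal {\tt SGD}, but carried out at the level of the parametric Assumption~\ref{as:general_stoch_gradient} so that the Lyapunov contribution $M\gamma^2\sigma_k^2$ is tracked in parallel. First I would use non-expansiveness of the proximal operator together with the fact that $x^* = \proxR(x^* - \gamma \nabla f(x^*))$ (the fixed-point characterization of the minimizer of $f+R$) to bound
\[
\norm{x^{k+1}-x^*}^2 = \norm{\proxR(x^k - \gamma g^k) - \proxR(x^* - \gamma\nabla f(x^*))}^2 \le \norm{x^k - x^* - \gamma(g^k - \nabla f(x^*))}^2 .
\]
Then I would expand the right-hand side as
\[
\norm{x^k-x^*}^2 - 2\gamma \ve{g^k - \nabla f(x^*)}{x^k - x^*} + \gamma^2\norm{g^k - \nabla f(x^*)}^2,
\]
take conditional expectation given $x^k$, and use unbiasedness \eqref{eq:general_stoch_grad_unbias} to replace $\EE[g^k \mid x^k]$ by $\nabla f(x^k)$ in the cross term, and the second-moment bound \eqref{eq:general_stoch_grad_second_moment} to control the last term by $2AD_f(x^k,x^*) + B\sigma_k^2 + D_1$.

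Next I would handle the cross term. Writing $-2\gamma\ve{\nabla f(x^k) - \nabla f(x^*)}{x^k-x^*} = 2\gamma\ve{\nabla f(x^k)}{x^* - x^k} + 2\gamma\ve{\nabla f(x^*)}{x^k - x^*}$ — actually it is cleaner to note $\ve{\nabla f(x^*)}{x^k-x^*}\ge 0$ is not needed; instead the clean route is: by $\mu$-strong quasi-convexity \eqref{eq:mu_strongly_quasi_convex}, $\ve{\nabla f(x^k)}{x^* - x^k} \le f(x^*) - f(x^k) - \tfrac{\mu}{2}\norm{x^k-x^*}^2$, and by convexity-type bound $\ve{\nabla f(x^*)}{x^* - x^k} \le f(x^*) - f(x^k)$ is false in general; rather one uses $D_f(x^k,x^*) = f(x^k) - f(x^*) - \ve{\nabla f(x^*)}{x^k-x^*}$, so $-2\gamma\ve{\nabla f(x^k)-\nabla f(x^*)}{x^k-x^*}$ combines with the strong-quasi-convexity inequality to give exactly $-\gamma\mu\norm{x^k-x^*}^2 - 2\gamma D_f(x^k,x^*)$. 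Concretely: $-2\gamma\ve{\nabla f(x^k)}{x^k-x^*} \le -2\gamma\big(f(x^k) - f(x^*) + \tfrac{\mu}{2}\norm{x^k-x^*}^2\big)$ by \eqref{eq:mu_strongly_quasi_convex}, and adding $2\gamma\ve{\nabla f(x^*)}{x^k-x^*}$ turns $-2\gamma(f(x^k)-f(x^*))$ into $-2\gamma D_f(x^k,x^*)$ by definition of the Bregman divergence. So the cross term contributes $-\gamma\mu\norm{x^k-x^*}^2 - 2\gamma D_f(x^k,x^*)$.

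Putting these together (after taking full expectation and using the tower property), I obtain
\[
\EE\norm{x^{k+1}-x^*}^2 \le (1-\gamma\mu)\EE\norm{x^k-x^*}^2 - 2\gamma\EE D_f(x^k,x^*) + 2A\gamma^2\EE D_f(x^k,x^*) + B\gamma^2\EE\sigma_k^2 + D_1\gamma^2 .
\]
Then I would add $M\gamma^2$ times the $\sigma$-recursion \eqref{eq:gsg_sigma}, namely $M\gamma^2\EE\sigma_{k+1}^2 \le (1-\rho)M\gamma^2\EE\sigma_k^2 + 2CM\gamma^2\EE D_f(x^k,x^*) + MD_2\gamma^2$, and collect terms: the $D_f$ coefficient becomes $-2\gamma + 2A\gamma^2 + 2CM\gamma^2 = -2\gamma(1 - \gamma(A+CM))$, which I move to the left, giving precisely the claimed inequality with additive constant $(D_1+MD_2)\gamma^2$ and the $B\gamma^2\EE\sigma_k^2$ term kept separate on the right as stated. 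The only mild subtlety — which I expect to be the main point requiring care rather than a genuine obstacle — is justifying the proximal-operator step cleanly (the identity $x^* = \proxR(x^*-\gamma\nabla f(x^*))$ and firm non-expansiveness), and making sure the conditioning in \eqref{eq:gsg_sigma} (which is on $\sigma_k^2$ rather than on $x^k$) is compatible with taking full expectations; the tower property over the full trajectory randomness resolves this, since \eqref{eq:general_stoch_grad_second_moment} is conditioned on $x^k$ and \eqref{eq:gsg_sigma} on $\sigma_k^2$, and both inequalities survive after taking $\EE[\cdot]$.
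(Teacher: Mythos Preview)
Your proposal is correct and follows essentially the same route as the paper's proof: prox non-expansiveness with the fixed-point identity $x^*=\proxR(x^*-\gamma\nabla f(x^*))$, expansion of the square, unbiasedness on the cross term, strong quasi-convexity combined with the definition of $D_f$ to produce $-\gamma\mu\norm{x^k-x^*}^2-2\gamma D_f(x^k,x^*)$, the second-moment bound \eqref{eq:general_stoch_grad_second_moment}, and then addition of $M\gamma^2$ times \eqref{eq:gsg_sigma} before rearranging. The only cosmetic difference is that the paper applies \eqref{eq:mu_strongly_quasi_convex} to the whole cross term $-2\gamma\langle x^k-x^*,\nabla f(x^k)-\nabla f(x^*)\rangle$ in one line, whereas you split off the $\nabla f(x^*)$ piece explicitly and recombine via the Bregman identity; the two computations are equivalent.
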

\begin{proof}

    We start with estimating the first term of the Lyapunov function. Let $r^k = x^k - x^*$. Then
    \begin{eqnarray*}
        \norm{r^{k+1}}^2 &=& \norm{ \prox_{\gamma R} (x^k- \gamma  g^k) - \prox_{\gamma R} ( x^* - \gamma\nabla f(x^*)) }^2 \\
         &\leq & 
          \norm{x^k- x^* - \gamma (  g^k - \nabla f(x^*)) }^2
          \\
          &= & 
           \norm{r^k}^2 - 2\gamma\langle r^k,g^k  - \nabla f(x^*)\rangle + \gamma^2\norm{ g^k  - \nabla f(x^*)}^2.
    \end{eqnarray*}
    Taking expectation conditioned on $x^k$ we get
    \begin{eqnarray*}
        \EE\left[\norm{r^{k+1}}^2\mid x^k\right] &=& \norm{r^k}^2 - 2\gamma\langle r^k,\nabla f(x^k)-\nabla f(x^*)\rangle + \gamma^2\EE\left[\norm{ g^k - \nabla f(x^*)}^2\mid x^k\right]
        \\
        &\overset{\eqref{eq:mu_strongly_quasi_convex}}{\le}& 
        (1-\gamma\mu)\norm{r^k}^2 - 2\gamma D_f(x^k,x^*) + \gamma^2\EE\left[\norm{g^k - \nabla f(x^*)}^2\mid x^k\right]
        \\
        &\overset{\eqref{eq:general_stoch_grad_unbias}+\eqref{eq:general_stoch_grad_second_moment}}{\le}& 
        (1-\gamma\mu)\norm{r^k}^2 + 2\gamma\left(A\gamma - 1\right)D_f(x^k,x^*) + B\gamma^2\sigma_k^2 + \gamma^2 D_1.
    \end{eqnarray*}
    Using this we estimate the full expectation of $V^{k+1}$ in the following way:
    \begin{eqnarray}
    && \EE\norm{x^{k+1}-x^*}^2 + M\gamma^2\EE\sigma_{k+1}^2\notag
        \\
        &\overset{\eqref{eq:gsg_sigma}}{\le}& (
        1-\gamma\mu)\EE\norm{x^k-x^*}^2 + 2\gamma\left(A\gamma - 1\right)D_f(x^k,x^*) + B\gamma^2\EE\sigma_k^2\notag
        \\
        &&\quad 
        + (1-\rho)M\gamma^2\EE\sigma_k^2  + 2CM\gamma^2\EE\left[D_f(x^k,x^*)\right] + (D_1+MD_2)\gamma^2\notag\\
        &=& (1-\gamma\mu)\EE\norm{x^k - x^*}^2 + \left(1 + \frac{B}{M} - \rho\right)M\gamma^2\EE\sigma_k^2\notag
        \\
        &&\quad
         + 2\gamma\left(\gamma(A+CM)-1\right)\EE\left[D_f(x^k,x^*)\right] + (D_1+MD_2)\gamma^2\notag \,.
        \label{eq:gsgm_recurrence}
    \end{eqnarray}
It remains to rearrange the terms.
\end{proof}

\subsection{Proof of Theorem~\ref{thm:main_gsgm}}

Note first that due to~\eqref{eq:gamma_condition_gsgm} we have $2\gamma\left(1-\gamma(A+CM)\right)\EE D_f(x^k,x^*)>0$, thus we can omit the term.

    Unrolling the recurrence from Lemma~\ref{lem:iter_dec} and using the Lyapunov function notation gives us
    \begin{eqnarray*}
        \EE V^{k} &\le& \max\left\{(1-\gamma\mu)^k,\left(1+\frac{B}{M}-\rho\right)^k\right\}V^0\\
        &&\quad + (D_1+MD_2)\gamma^2\sum\limits_{l=0}^{k-1}\max\left\{(1-\gamma\mu)^l,\left(1+\frac{B}{M}-\rho\right)^l\right\}\\
        &\le& \max\left\{(1-\gamma\mu)^k,\left(1+\frac{B}{M}-\rho\right)^k\right\}V^0\\
        &&\quad + (D_1+MD_2)\gamma^2\sum\limits_{l=0}^{\infty}\max\left\{(1-\gamma\mu)^l,\left(1+\frac{B}{M}-\rho\right)^l\right\}\\
        &\le& \max\left\{(1-\gamma\mu)^k,\left(1+\frac{B}{M}-\rho\right)^k\right\} V^0 + \frac{(D_1+MD_2)\gamma^2}{\min\left\{\gamma\mu, \rho - \frac{B}{M}\right\}}.
    \end{eqnarray*}

\end{document}